\titleformat{\section}{\Large\bfseries}{\thesection.}{4pt}{}
\titleformat{\subsection}{\large\bfseries}{\thesection.\arabic{subsection}.}{4pt}{}
\titleformat{\subsubsection}{\bfseries}{\thesection.\arabic{subsection}.\arabic{subsubsection}.}{4pt}{}
\titleformat*{\paragraph}{\bfseries}
\titleformat*{\subparagraph}{\bfseries}
\newtheorem{theorem}{Theorem}[section]
\newtheorem{lemma}[theorem]{Lemma}
\newtheorem{proposition}[theorem]{Proposition}
\theoremstyle{definition}
\newtheorem{definition}[theorem]{Definition}
\newtheorem{remark}[theorem]{Remark}
\newcommand{\Rd}{\mathbb{R}^d}
\newcommand{\Rb}{\mathbb{R}}
\newcommand{\Cc}{\mathcal{C}}
\newcommand{\Dc}{\mathcal{D}}
\newcommand{\Ec}{\mathcal{E}}
\newcommand{\Fc}{\mathcal{F}}
\newcommand{\Hc}{\mathcal{H}}
\newcommand{\Lc}{\mathcal{L}}
\newcommand{\Oc}{\mathcal{O}}
\newcommand{\Nc}{\mathcal{N}}
\newcommand{\Sc}{\mathcal{S}}
\newcommand{\Uc}{\mathcal{U}}
\newcommand{\Vc}{\mathcal{V}}
\newcommand{\py}{\partial_y}
\newcommand{\ps}{\partial_s}
\newcommand{\pt}{\partial_t}
\newcommand{\Ls}{\mathscr{L}}
\newcommand{\As}{\mathscr{A}}
\newcommand{\Es}{\mathscr{E}}
\numberwithin{equation}{section}
\title[1-corotational energy supercritical harmonic heat flow] 
      {On the stability of type II blowup for the 1-corotational energy supercritical  harmonic heat flow}
\author[T. Ghoul, S. Ibrahim,  and V. T. Nguyen]{}
\subjclass{Primary: 35K50, 35B40; Secondary: 35K55, 35K57.}
 \keywords{Harmonic heat flow, Blowup solution, Blowup profile, Stability}
\email[S. Ibrahim]{ibrahim@math.uvic.ca}
 \email[T. Ghoul]{teg6@nyu.edu}
 \email[V. T. Nguyen]{Tien.Nguyen@nyu.edu}
\thanks{\today}
\begin{document}
\maketitle


\centerline{\scshape T. Ghoul$^a$, S. Ibrahim$^{a,b}$  and V. T. Nguyen$^a$}
\medskip
{\footnotesize
 \centerline{$^a$Department of Mathematics, New York University in Abu Dhabi,}
   \centerline{Saadiyat Island, P.O. Box 129188, Abu Dhabi, United Arab Emirates.}
}
\medskip
{\footnotesize
 \centerline{$^b$Department of Mathematics and Statistics, University of Victoria,}
   \centerline{PO Box 3060 STN CSC, Victoria, BC, V8P 5C3, Canada.}
}

\bigskip

\begin{abstract} We consider the energy supercritical harmonic heat flow from $\Rb^d$ into the $d$-sphere $\mathbb{S}^d$ with $d \geq 7$. Under an additional assumption of 1-corotational symmetry, the problem reduces to the one dimensional semilinear heat equation
$$\partial_t u = \partial^2_r u + \frac{(d-1)}{r}\partial_r u - \frac{(d-1)}{2r^2}\sin(2u).$$
We construct for this equation a family of $\mathcal{C}^{\infty}$ solutions which blow up in finite time  via concentration of the universal profile
$$u(r,t) \sim Q\left(\frac{r}{\lambda(t)}\right),$$
where $Q$ is the stationary solution of the equation and the speed is given by the quantized rates
$$\lambda(t) \sim c_u(T-t)^\frac{\ell}{\gamma}, \quad \ell \in \mathbb{N}^*, \;\; 2\ell > \gamma = \gamma(d) \in (1,2].$$
The construction relies on two arguments: the reduction of the problem to a finite-dimensional one thanks to a robust universal energy method and modulation techniques developed by Merle, Rapha\"el and Rodnianski \cite{MRRcjm15} for the energy supercritical nonlinear Schr\"odinger equation and by Rapha\"el and Schweyer \cite{RSapde2014} for the energy critical harmonic heat flow, then we proceed by contradiction to solve the finite-dimensional problem and conclude using the Brouwer fixed point theorem. 
Moreover, our constructed solutions are in fact $(\ell - 1)$ codimension stable under perturbations of the initial data. As a consequence, the case $\ell = 1$ corresponds to a stable type II blowup regime.

\end{abstract}

\section{Introduction.} We consider the harmonic map heat flow which is defined as the negative gradient flow of the Dirichlet energy of maps between manifolds. Indeed, if $\Phi$ is a map from $\Rb^d\times[0,T)$ to a compact riemannian  manifold $\mathcal{M}\subset \Rb^n$, with second fundamental form $\Upsilon$,  then $\Phi$ solves
\begin{equation}\label{p0}
\left\{\begin{array}{l}
\partial_t \Phi - \Delta \Phi = \Upsilon(\Phi)(\nabla \Phi,\nabla\Phi),\\
\Phi(t = 0) = \Phi_0.
\end{array}
 \right.
\end{equation}
We assume that the target manifold is the $d$-sphere $\mathbb{S}^d\subset\Rb^{d+1}$. Then, \eqref{p0} becomes
\begin{equation}\label{p1}
\left\{\begin{array}{l}
\partial_t \Phi - \Delta \Phi = |\nabla \Phi|^2\Phi,\\
\Phi(t = 0) = \Phi_0.
\end{array}
 \right.
\end{equation}
We will study the problem \eqref{p1} under an additional assumption of 1-corotational symmetry, namely that a solution of \eqref{p1} takes the form
\begin{equation}
\Phi(x,t) = \binom{\cos(u(|x|,t))}{\frac{x}{|x|}\sin(u(|x|,t))}.
\end{equation}
Under this ansatz, the problem \eqref{p1} reduces to the one dimensional semilinear heat equation
 \begin{equation}\label{Pb}
\left\{\begin{array}{rl}
\partial_t u &= \partial^2_r u + \frac{(d-1)}{r}\partial_r u - \frac{(d-1)}{2r^2}\sin(2u),\\
u(t=0) &= u_0,
\end{array}\right.
\end{equation}
where $u(t): r \in \Rb_+ \to u(r,t) \in [0, \pi]$. The set of solutions to \eqref{Pb} is invariant by the scaling symmetry
$$u_\lambda(r,t) = u\left(\frac{r}{\lambda}, \frac{t}{\lambda^2} \right), \quad \forall\lambda > 0.$$
The energy associated to \eqref{Pb} is given by 
\begin{equation}\label{def:Enut}
\Ec[u](t) = \int_0^{+\infty} \left(|\partial_r u|^2 + \frac{(d-1)}{r^2}\sin^2(u)\right)r^{d-1}dr,
\end{equation}
which satisfies
$$\Ec[u_\lambda] = \lambda^{d-2}\Ec[u].$$
The criticality of the problem is  reflected by the fact that the energy  \eqref{def:Enut} is left invariant by the scaling property when $d = 2$, hence, the case $d \geq 3$ corresponds to the energy supercritical case. 

The problem \eqref{Pb} is locally wellposed for data which are close in $L^\infty$ to a uniformly continuous map (see Koch and Lamm \cite{KLajm12}) or in $BMO$ by Wang \cite{Warma11}.
Actually, Eells and Sampson \cite{ESajm64} introduced the harmonic map heat flow as a process to deform any smooth map $\Phi_0$ into a harmonic map via \eqref{p1}. They also proved that the solution exists globally if the sectional curvature of the target manifold is negative. There exist other assumptions for the global existence such as the image of the initial data $u_0$ is contained in a ball of radius $\frac{\pi}{2\sqrt{\kappa}}$, where $\kappa$ is an upper bound on the sectional curvature of the target manifold $\mathcal{M}$ (see Jost \cite{Jmm81} and Lin-Wang \cite{LWwsp08}).
Without these assumptions, the solution $u(r,t)$ may develop singularities in some finite time (see for examples, Coron and Ghidaglia \cite{CGcrsa89}, Chen and Ding \cite{CDinvm90} for $d \geq 3$, Chang, Ding and Yei \cite{CDYjdg92} for $d = 2$). In this case, we say that $u(r,t)$ blows up in a finite time $T < +\infty$ in the sense that 
$$\lim_{t \to T}\|\nabla u(t)\|_{L^\infty} = + \infty.$$
Here we call $T$ the blowup time of $u(x,t)$. The blowup has been divided by Struwe \cite{Sams96} into two types:
\begin{align*}
\qquad & \text{$u$ blows up with type I if:} \quad \limsup_{t \to T} (T-t)^\frac{1}{2}\|\nabla u(t)\|_{L^\infty} < +\infty,\\
\qquad & \text{$u$ blows up with type II if:} \quad \limsup_{t \to T} (T-t)^\frac{1}{2}\|\nabla u(t)\|_{L^\infty} = +\infty.
\end{align*}

In \cite{Sjdg88}, Struwe shows that the type I singularities are asymptotically self-similar, that is their profile is given by a smooth shrinking function
$$u(r,t) = \phi\left(\frac{r}{\sqrt{T-t}}\right), \quad \forall t \in [0,T),$$
where $\phi$ solves the equation 
\begin{equation}\label{eq:phis}
\phi'' + \left(\frac{d-1}{y} + \frac{y}{2}\right)\phi' - \frac{d-1}{2y^2}\sin(2\phi) = 0.
\end{equation}
Thus, the study of Type I blowup reduces to the study of nonconstant solutions of equation \eqref{eq:phis}.\\

When $3 \leq d \leq 6$, by using a shooting method, Fan \cite{Fscsm99} proved that there exists an infinite sequence of globally regular solutions $\phi_n$ of \eqref{eq:phis} which are called "shrinkers (corresponding to the existence of Type I blowup solutions of \eqref{Pb}), where the integer index $n$ denotes the number of intersections of the function $\phi_n$ with $\frac{\pi}{2}$. More detailed quantitative properties of such solutions were studied by Biernat and Bizo{\'n} \cite{BBnon11}, where the authors conjectured that $\phi_1$ is linear stable and provide numerical evidences supporting that $\phi_1$ corresponds to a generic profile of Type I blow-up. Very recently, Biernat, Donninger and Sch\"orkhuber \cite{BDSar16} proved the existence of a stable self-similar blowup solution for $d=3$.
Since \eqref{p1} is not time reversible, there exist another family of self-similar solutions that are called "expanders" which were introduced by Germain and Rupflin in \cite{GRihp08}. These "expanders" have been lately proved to be nonlinearly stable by Germain, Ghoul and Miura \cite{GGMar16}. Up to our knowledge, the question on the existence of Type II blowup solutions  for \eqref{Pb} remains open for $3 \leq d \leq 6$.\\

When $d \geq 7$, Bizo{\'n} and Wasserman \cite{BWimrn15} proved that equation \eqref{Pb} has no self-similar shrinking solutions. According to Struwe \cite{Sjdg88}, this result implies that in dimensions $d \geq 7$, all singularities for equation \eqref{Pb} must be of type II (see also Biernat \cite{BIEnon2015} for a recent analysis of such singularities). Recently, Biernat and Seki \cite{BSarxiv2016}, via the matched asymptotic method developed by Herero and Vel\'azquez \cite{HVcras94}, construct for equation \eqref{Pb} a countable family of Type II blowup solutions, each characterized by a different blowup rate
\begin{equation}\label{eq:blrate}
\lambda(t) \sim (T-t)^\frac{\ell}{\gamma} \quad \text{as} \quad t \to T,
\end{equation} 
where $\ell \in \mathbb{N}^*$ such that $2\ell > \gamma$ and $\gamma = \gamma(d)$ is given by
\begin{equation}\label{def:gamome}
\gamma(d) = \frac{1}{2}(d - 2 - \tilde{\gamma}) \in (1,2] \quad \text{for}\;\; d \geq 7,
\end{equation}
where $\tilde{\gamma}=\sqrt{d^2-8d+8}$.
The blowup rate \eqref{eq:blrate} is in fact driven by the asymptotic behavior of a stationary solution of \eqref{Pb}, say $Q$, which is the unique  (up to scaling) solution of the equation
\begin{equation}\label{eq:Qr}
Q'' + \frac{(d-1)}{r}Q' - \frac{(d-1)}{2r^2}\sin(2Q) = 0, \quad Q(0) = 0, \; Q'(0) = 1,
\end{equation} 
and admits the behavior for $r$ large,
\begin{equation}\label{exp:Qr}
Q(r) = \dfrac{\pi}{2} - \dfrac{a_0}{ r^{\gamma}} + \Oc\left(\dfrac 1{r^{2 + \gamma}}\right) \quad \text{for some}\;\; a_0 = a_0(d) > 0,
\end{equation}
(see Appendix in \cite{BIEnon2015} for a proof of the existence of $Q$). Note that the case $2\ell = \gamma$ only happens in dimension $d = 7$. In this case, Biernat \cite{BIEnon2015} used the method of \cite{HVcras94} and formally derived the blowup rate
\begin{equation}\label{eq:rated7}
\lambda(t) \sim \frac{(T-t)^\frac 12}{|\log (T-t)|} \quad \text{as} \quad t \to T.
\end{equation}
He also provided numerical evidences supporting that the case $\ell = 1$ in \eqref{eq:blrate} or \eqref{eq:rated7} corresponds to a generic blowup solution.\\

In the energy critical case, i.e. $d = 2$, Van de Berg, Hulshof and King \cite{VHKsiam03}, through a formal analysis based on the matched asymptotic technique of Herrero and Vel\'azquez \cite{HVcras94}, predicted that there are type II blowup solutions to \eqref{Pb} of the form 
\begin{equation*}
u(r,t) \sim Q\left(\frac{r}{\lambda(t)}\right),
\end{equation*} 
where 
\begin{equation}\label{eq:Qrd2}
Q(r) = 2\tan^{-1}(r)
\end{equation}
is the unique (up to scaling) solution of \eqref{eq:Qr}, and the blowup speed governed by the quantized rates
\begin{equation*}
\lambda(t) \sim \frac{(T-t)^\ell}{|\log (T-t)|^\frac{2\ell}{2\ell - 1}} \qquad \text{for}\;\; \ell \in \mathbb{N}^*.
\end{equation*}
This result was later confirmed by Rapha\"el and Schweyer \cite{RSapde2014}. Note that the case $\ell = 1$ was treated in \cite{RScpam13} and corresponds to a stable blowup. In particular, the authors in \cite{RSapde2014}, \cite{RScpam13} adapted the strategy developed by Merle, Rapha\"el and Rodnianski \cite{RRmihes12}, \cite{MRRmasp11} for the study of wave and Schr\"odinger maps to construct for equation \eqref{Pb} type II blowup solutions. Their method is based on two main steps:
\begin{itemize}
\item Construction of an adapted approximate blowup profile by solving elliptic equations. From the computation of the tails of the blowup profile, they are able to formally derive the blowup speed. 

\item  Control of the error by using an energy method involving  "Lyapunov" functionals adapted to the linearized flow around the ground state.
This energy method doesn't involve neither spectral estimates nor maximum principles. 
\end{itemize}

\bigskip

In this work, by considering $d \geq 7$, we ask whether we can carry out the analysis of \cite{RSapde2014} for the energy critical case $d=2$, to the construction of blowup solutions for equation \eqref{Pb} in the case $d \geq 7$. It happens that the asymptotic behavior \eqref{exp:Qr} is perfectly suitable to replace the explicit profile \eqref{eq:Qrd2} for an implementation of the strategy of \cite{RSapde2014}. The following theorem is the main result of this paper. 
\begin{theorem}[Existence of type II blowup solutions to \eqref{Pb} with prescibed behavior] \label{Theo:1} Let $d \geq 7$ and $\gamma$ be defined as in \eqref{def:gamome}, we fix an integer 
$$\ell \in \mathbb{N}^* \quad  \text{such that}\quad 2\ell > \gamma,$$
and an arbitrary Sobolev exponent
$$\frak{s} \in \mathbb{N},\;\; \frak{s} = \frak{s}(\ell) \to +\infty \quad \text{as}\;\; \ell \to +\infty.$$
Then there exists a  smooth corotational radially symmetric initial data $u_0$ such that the corresponding solution to \eqref{Pb} is of the form:
\begin{equation}\label{eq:uQq}
u(r,t) = Q\left(\frac{r}{\lambda(t)}\right) + q\left(\frac{r}{\lambda(t)}, t\right)
\end{equation}
where 
\begin{equation}\label{eq:quanblrate}
\lambda(t) = c(u_0)(T-t)^\frac{\ell}{\gamma} (1 + o_{t \to T}(1)), \quad c(u_0) > 0,
\end{equation}
and 
\begin{equation}\label{eq:asypqs}
\lim_{t \to T}\|\nabla^{\sigma} q(t)\|_{L^2} = 0, \quad \forall \sigma \in \left(d/2 + 3, \frak{s}\right].
\end{equation}
Moreover, the case $\ell = 1$ corresponds to a stable blowup regime.
\end{theorem}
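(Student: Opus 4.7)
My plan is to implement the Merle--Rapha\"el--Rodnianski / Rapha\"el--Schweyer scheme as developed in \cite{MRRcjm15,RSapde2014,RScpam13} for the energy-critical harmonic map heat flow, with the asymptotic expansion \eqref{exp:Qr} playing the role of the explicit profile \eqref{eq:Qrd2}. First I would construct an $L$-parameter family of approximate profiles $Q_{b(t)}(y)$ in the self-similar variable $y = r/\lambda(t)$, with $b = (b_1,\dots,b_L) \in \mathbb{R}^L$ and $L = L(\ell)$ large. Writing $Q_b = Q + \sum_{k=1}^L b_k T_k + \text{corrections}$ and inserting this ansatz into the rescaled equation (with $ds/dt = 1/\lambda^2$), one matches powers of $b$ to obtain the hierarchy
\[
H T_k = -\Lambda T_{k-1} + \mathcal{N}_k, \qquad H = -\partial_y^2 - \frac{d-1}{y}\partial_y + \frac{d-1}{y^2}\cos(2Q), \quad \Lambda = y\partial_y,
\]
where $\mathcal{N}_k$ collects the nonlinear interactions. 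Each $T_k$ is produced by variation of constants using the two solutions of $H\phi = 0$: the resonance $\Lambda Q$ with decay $y^{-\gamma}$ forced by \eqref{exp:Qr}, and its conjugate with growth $y^{\gamma - d + 2}$. It is precisely the non-integer exponent $\gamma(d)$ entering these tails that dictates the quantized rate $\lambda \sim (T-t)^{\ell/\gamma}$ and the resonance condition $2\ell > \gamma$.

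Next I would set up modulation by writing
\[
u(r,t) = \left(Q_{b(t)} + q\right)\!\left(r/\lambda(t),\, s(t)\right),
\]
imposing $\ell$ orthogonality conditions on the remainder $q$ against generators adapted to $H$, and deriving from the projection of the flow the dynamical system
\[
b_k' + (2k-\alpha)\, b_1 b_k - b_{k+1} = \mathrm{O}(\text{err}), \qquad -\frac{\lambda_s}{\lambda} = b_1 + \mathrm{O}(\text{err}),
\]
for suitable universal constants. Its explicit solutions in the regime $b_1 \sim c/s$, $b_k \sim c_k s^{-k}$, produce the quantized rates \eqref{eq:quanblrate} after inverting $s \mapsto t$. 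The remainder $q$ is controlled through high-order adapted Sobolev energies
\[
\mathcal{E}_{2k} = \int_0^{+\infty} (H^k q)^2 \, y^{d-1}\, dy, \qquad 1 \leq k \leq \mathfrak{s}(\ell),
\]
whose coercivity on the orthogonal complement of the generalized kernel of $H^k$ follows from iterated Hardy-type inequalities. Differentiating in $s$ yields a dissipative main term that absorbs both the source terms (whose sizes are pinned down by Step~1) and the nonlinear terms (handled by weighted interpolation).

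The final step is a bootstrap--topological argument. I fix an a priori regime $|b_k - c_k s^{-k}| \ll s^{-k}$ and $\mathcal{E}_{2k} \ll s^{-\mu_k}$ with suitably tuned exponents, and show that all estimates close strictly except for $\ell - 1$ unstable directions, corresponding to the linearization of the $(b_2,\dots,b_\ell)$-subsystem at its attractor (a triangular matrix with positive integer eigenvalues). To absorb these, I reduce to a finite-dimensional shooting on the initial data $(b_2(0),\dots,b_\ell(0))$ and argue by contradiction: if no such choice produced the required regime, a continuity argument on the exit times would yield a continuous retraction of the parameter ball onto its boundary sphere, contradicting Brouwer. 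The case $\ell = 1$ has no unstable mode, so the bootstrap closes unconditionally and the constructed blowup is stable under perturbations of the initial data.

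The main obstacle will be Step~1. Unlike the critical case, where the explicit $Q(r) = 2\tan^{-1}(r)$ makes all tail computations transparent, here only the one-sided asymptotics \eqref{exp:Qr} is available, and the non-integer exponent $\gamma(d) \in (1,2]$ interacts delicately with the integer-shifted iterates $H^{-k}$, producing slowly growing tails $y^{\gamma - d + 2k}$ that must be truncated in an outer self-similar zone and matched back to the inner profile. The borderline dimension $d = 7$, where $2\ell = \gamma$ is possible for $\ell = 1$, is excluded by the standing assumption $2\ell > \gamma$ (it would require a separate treatment giving the logarithmic correction \eqref{eq:rated7}). Once the profile and its matching are in place, the modulation, energy method, and Brouwer step go through structurally as in \cite{RSapde2014,MRRcjm15}, with every weight and exponent rewritten against the supercritical parameter $\gamma$.
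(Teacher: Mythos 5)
Your proposal follows essentially the same route as the paper: iterated inversion of the linearized operator $\Ls$ to build the slowly modulated profile $Q_b$ with tails $y^{2k-\gamma}$ dictating the system $(b_k)_s+(2k-\gamma)b_1b_k-b_{k+1}=0$, localization and modulation with orthogonality against $\Ls^i\Phi_M$, Lyapunov control of high adapted Sobolev energies $\Es_{2k}$, and a Brouwer argument for the $\ell-1$ unstable modes. The only sketch-level imprecisions (one needs $L+1$ orthogonality conditions for the $L+1$ parameters $(\lambda,b)$, the unstable spectrum is $\{k\gamma/(2\ell-\gamma)\}_{2\le k\le\ell}$ rather than positive integers, and the energies must start at $k\ge\hbar+2$ so that $4k-d>0$) do not affect the viability of the scheme.
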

\begin{remark} Since $\gamma = 2$ for $d=7$ and $\gamma \in (1,2)$ for $d \geq 8$, the condition $2\ell > \gamma$ means that $\ell \geq 2$ for $d = 7$ and $\ell \geq 1$ for $d \geq 8$. Note that the condition $2\ell > \gamma$ allows to avoid the presence of logarithmic corrections in the construction of the approximate profile. In other words, the case $2\ell = \gamma$ (equivalent to $\ell = 1$ and $d = 7$) would involve an additional logarithmic gain related to the growth of the approximate profile at infinity, which turns out to be essential for the derivation of the speed \eqref{eq:rated7}. Although our analysis could be naturally extended to this case ($\ell = 1$ and $d = 7$) with some complicated computation, we hope to treat this case in a separate work.
\end{remark}
\begin{remark} The quantization of the blowup rate \eqref{eq:quanblrate} is the same as the one obtained in \cite{BSarxiv2016}. Note that the authors of \cite{BSarxiv2016} only claim the existence result of a type II blowup solution with the rate \eqref{eq:quanblrate} and say nothing about the dynamical description of the solution. On the contrary, our result shows that the constructed solution blows up in finite time by concentration of a stationary state in the supercritical regime. Moreover, our constructed solution is in fact $(\ell - 1)$ codimension stable in the sense that we will precise shortly.
\end{remark}
\begin{remark} Fix $\ell \in \mathbb{N}^*$ such that $2\ell > \gamma$, an integer $L \gg \ell$ and $\frak{s} \sim L \gg 1$, our initial data that we are considering has the following form 
\begin{equation}\label{eq:datau0}
u_0 = Q_{b(0)} + \epsilon_0,
\end{equation}
where $Q_b$ is a deformation of the ground state $Q$.
In addition, $b = (b_1, \cdots, b_{L})$ corresponds to unstable directions of the flow in the topology $\dot{H}^{\frak{s}}$ in a certain neighborhood of the ground state $Q$. We will prove that for all $\epsilon_0 \in \dot{H}^\sigma \cap\dot{H}^\frak s$ (for some $\sigma = \sigma(d) > \frac{d}{2}$) small enough, for all $(b_1(0), b_{\ell + 1}(0), \cdots, b_L(0))$ small enough, under a specific choice of the unstable directions $(b_2(0), \cdots, b_\ell(0))$ the solution of \eqref{Pb} with the data \eqref{eq:datau0} verifies the conclusion of Theorem \ref{Theo:1}. This implies that our constructed solution is $(\ell - 1)$ codimension stable. In particular, the case $\ell = 1$ corresponds to a stable type II blowup regime, which is in agreement with numerical evidences given in \cite{BIEnon2015}.
\end{remark}

\begin{remark} It is worth to remark that the harmonic heat flow shares many features with the semilinear heat equation
\begin{equation}\label{eq:she}
\partial_t u = \Delta u + |u|^{p-1}u \quad \text{in}\; \Rb^d.
\end{equation}
A remarkable fact that two important critical exponents appear when considering the dynamics of \eqref{eq:she}:
$$p_S = \frac{d + 2}{d-2} \quad \text{and} \quad p_{JL} = \left\{\begin{array}{ll} +\infty &\quad \text{for}\;\; d \leq 10,\\
 1 + \frac{4}{d - 4 - 2\sqrt{d - 1}}&\quad \text{for}\;\; d \geq 11,
\end{array}  \right.$$
correspond to the cases $d = 2$ and $d = 7$ in the study of equation \eqref{Pb} respectively. 

When $1 < p \leq p_S$, Giga and Kohn \cite{GKiumj87}, Giga, Matsui and Sasayama \cite{GMSiumj04} showed that all blowup solutions are of type I. Here the type I blowup means that
$$\limsup_{t \to T}(T-t)^\frac{1}{p-1}\|u(t)\|_{L^\infty} < +\infty,$$
otherwise we say the blowup solution is of type II.

When $p = p_S$, Filippas, Herrero and Vel\'azquez \cite{FHVslps00} formally constructed for \eqref{eq:she} type II blowup solutions in dimensions $3 \leq d \leq 6$, however, they could not do the same in dimensions $d \geq 7$. This formal result is partly confirmed by Schweyer \cite{Schfa12} in dimension $d = 4$. Interestingly, Collot, Merle and Rapha\"el \cite{CMRar16} show that type II blowup is ruled out in dimension $d \geq 7$ near the solitary wave.

When $p_S < p < p_{JL}$, Matano and Merle \cite{MMcpam04} (see also Mizoguchi \cite{Made04}) proved that only type I blowup occurs in the radial setting. 

When $p > p_{JL}$, Herrero and Vel\'azquez formally derived in \cite{HVcras94} the existence of type II blowup solutions with the quantized rates:
$$\|u(t)\|_{L^\infty} \sim (T-t)^ \frac{2\ell}{(p-1)\alpha(d,p)}, \quad \ell \in \mathbb{N}, \; 2\ell > \alpha.$$
The formal result was clarified in \cite{MMjfa09}, \cite{Mma07} and \cite{Car16}. The collection of these works yields a complete classification of the type II blowup scenario for the radially symmetric energy supercritical case. 

In comparison to the case of the semilinear heat equation \eqref{eq:she}, it might be possibe to prove that all blowup solutions to equation \eqref{Pb} are of type I in dimension $3\leq d \leq 6$. However, due to the lack of monotonicity of the nonlinear term, the analysis of the harmonic heat flow \eqref{Pb} is much more difficult than the case of the semilinear heat equation \eqref{eq:she} treated in \cite{MMcpam04}. 
\end{remark}

\bigskip

Let us briefly explain the main steps of the proof of Theorem \ref{Theo:1}, which follows the method of  \cite{RSapde2014} treated for the critical case $d = 2$. We would like to mention that this kind of method has been successfully applied for various nonlinear evolution equations. In particular in the dispersive setting for the nonlinear Schr\"odinger equation both in the mass critical \cite{MRam05, MRcmp05, MRim04, MRgfa03} and mass supercritical \cite{MRRcjm15} cases; the mass critical gKdV equation \cite{MMRasp15, MMRjems15, MMRam14};  the energy critical \cite{DKMcjm13}, \cite{HRapde12} and supercritical \cite{Car161} wave equation; the two dimensional critical geometric equations: the wave maps \cite{RRmihes12}, the Schr\"odinger maps \cite{MRRim13} and the harmonic heat flow \cite{RScpam13, RSapde2014}; the semilinear heat equation \eqref{eq:she} in the energy critical \cite{Sjfa12} and supercritical \cite{Car16} cases; and the two dimensional Keller-Segel model \cite{RSma14}, \cite{GMarx16}. In all these works, the method is based on two arguments:
\begin{itemize}
\item Reduction of an infinite dimensional problem to a finite dimensional one, through the derivation of suitable Lyapunov functionals and the robust energy method as mentioned in the two step procedure above.
\item The control of the finite dimensional problem thanks to a topological argument based on index theory.
\end{itemize}
Note that this kind of topological arguments has benn proved to be successful also for the construction of type I blowup solutions for the semilinear heat equation \eqref{eq:she} in \cite{BKnon94}, \cite{MZdm97}, \cite{NZens16} (see also \cite{NZsns16} for the case of logarithmic perturbations, \cite{Breiumj90}, \cite{Brejde92} and \cite{GNZpre16a} for the exponential source, \cite{NZcpde15} for the complex-valued case), the Ginzburg-Landau equation in \cite{MZjfa08} (see also \cite{ZAAihn98} for an earlier work), a non-variational parabolic system in \cite{GNZpre16c} and the semilinear wave equation in \cite{CZcpam13}.\\

For the reader's convenience and for a better explanation, let's first introduce notations used throughout this paper.\\
\noindent - \textbf{Notation.} For each $d \geq 7$, we define
\begin{equation}\label{def:kdeltaplus}
\left\{\begin{array}{ll}
\hbar &= \left\lfloor \frac{1}{2}\left(\frac{d}{2} - \gamma\right) \right\rfloor \in \mathbb{N},\\
& \\
\delta &= \frac{1}{2}\left(\frac{d}{2} - \gamma\right) - \hbar, \quad  \delta \in (0,1),
\end{array} \right.
\end{equation}
where $\lfloor x \rfloor \in \mathbb{Z}$ stands for the integer part of $x$ which is defined by $\lfloor x \rfloor \leq x < \lfloor x \rfloor + 1$. Note that $\delta \ne 0$. Indeed, if $\delta = 0$, then there is $m \in \mathbb{N}$ such that $2\gamma = d - 4m \in \mathbb{N}$. This only happens when $\gamma = 2$ or $\gamma = \frac{3}{2}$ because $\gamma \in (1, 2]$. The case $\gamma = 2$ gives $d = 7$ and $m = \frac{3}{4} \not \in \mathbb{N}$. The case $\gamma = \frac{3}{2}$ gives $d = \frac{17}{2} \not \in \mathbb{N}$.

Given a large integer $L \gg 1$, we set
\begin{equation}\label{def:kbb}
\Bbbk = L + \hbar + 1.
\end{equation}

Given $b_1 > 0$ and $\lambda > 0$, we define 
\begin{equation}\label{def:B0B1}
B_0 = \frac{1}{\sqrt b_1}, \quad B_1 = B_0^{1 + \eta}, \quad 0 < \eta \ll 1,
\end{equation}
and denote by
$$f_\lambda(r) = f(y) \quad \text{with} \quad y = \frac{r}{\lambda}.$$

Let $\chi \in \Cc_0^\infty([0, +\infty))$ be a positive nonincreasing cutoff function with $\text{supp}(\chi) \subset [0,2]$ and $\chi \equiv 1$ on $[0,1]$. For all $M > 0$, we define
\begin{equation}\label{def:chiM}
\chi_M(y) = \chi\left(\frac y M\right).
\end{equation}
We also introduce the differential operator 
$$\Lambda f = y\partial_y f,$$
and the Schr\"odinger operator
\begin{equation}\label{def:Lc}
\Ls  = -\partial_{yy} - \frac{(d-1)}{y}\partial_y  + \frac{Z}{y^2}, \quad \text{with}\;\; Z(y)= (d-1)\cos(2Q(y)).
\end{equation}

\medskip

\noindent - \textbf{Strategy of the proof.}  We now resume the main ideas of the proof of Theorem \ref{Theo:1}, which follows the route map in \cite{RSapde2014} and \cite{MRRcjm15}.\\

\noindent $(i)$ \textit{Renormalized flow and iterated resonances.} Following the scaling invariance of \eqref{Pb}, let us make the change of variables
\begin{equation*}
w(y,s) = u(r,t), \quad y = \frac{r}{\lambda(t)}, \quad \frac{ds}{dt} = \frac{1}{\lambda^2(t)},
\end{equation*}
which leads to the following renormalized flow:
\begin{equation}\label{eq:wys_i}
\partial_sw = \partial_y^2 w + \frac{(d-1)}{y}\partial_y w  - b_1\Lambda w - \frac{(d-1)}{2y^2}\sin(2w), \quad b_1 = -\frac{\lambda_s}{\lambda}.
\end{equation}
Assuming that the leading part of the solution $w(y,s)$ is given by the ground state profile $Q$ admitting the asymptotic behavior \eqref{exp:Qr}, hence the remaining part is governed by the Schr\"odinger operator $\Ls$ defined by \eqref{def:Lc}. The linear operator $\Ls$ admits the factorization (see Lemma \ref{lemm:factorL} below) 
\begin{equation}\label{eq:facL_i}
\Ls = \As^*\As, \quad \As f = -\Lambda Q \py \left(\frac{f}{\Lambda Q} \right), \quad \As^*f = \frac{1}{y^{d-1}\Lambda Q}\py\left(y^{d-1}\Lambda Q f\right), 
\end{equation}
which directly implies
$$\Ls(\Lambda Q) = 0,$$
where from a direct computation,
$$\Lambda Q \sim \frac{c_0}{y^\gamma} \quad \text{as} \quad y \to +\infty \quad \text{with $\gamma$ definied in \eqref{def:gamome}}.$$
More generally, we can compute the kernel of the powers of $\Ls$ through the iterative scheme
\begin{equation}\label{def:Tk_i}
\Ls T_{k + 1} = - T_k, \quad T_0 = \Lambda Q,
\end{equation}
which displays a non trivial tail at infinity (see Lemma \ref{lemm:GenLk} below),
\begin{equation}\label{eq:Tk_i}
T_k(y) \sim c_ky^{2k - \gamma} \quad \text{for}\quad y \gg 1.
\end{equation}

\noindent $(ii)$ \textit{Tail computation.} Following the approach in \cite{RSapde2014}, we look for a slowly modulated approximate solution to \eqref{eq:wys_i} of the form 
$$w(y,s) = Q_{b(s)}(y),$$
where 
\begin{equation}\label{def:Qb_i}
b = (b_1, \cdots, b_L), \quad Q_{b(s)}(y) = Q(y) + \sum_{i = 1}^Lb_iT_i(y) + \sum_{i = 2}^{L+2}S_i(y)
\end{equation}
with a priori bounds
$$b_i \sim b_1^i, \quad |S_i(y)| \lesssim b_1^i y^{2(i - 1) - \gamma},$$
so that $S_i$ is in some sense homogeneous of degree $i$ in $b_1$, and behaves better than $T_i$ at infinity. The construction of $S_i$ with the above a priori bounds is possible for a specific choice of the universal dynamical system which drives the modes $(b_i)_{1 \leq i \leq L}$. This procedure is called the \textit{tail computation}. Let us illustrate the procedure of the \textit{tail computation}. We plug the decomposition \eqref{def:Qb_i} into \eqref{eq:wys_i} and choose the law for $(b_i)_{1 \leq i \leq L}$ which cancels the  leading order terms at infinity.\\
- At the order $\Oc(b_1)$: it is not possible to adjust the law of $b_1$ for the first term \footnote{if $(b_1)_s = -c_1 b_1$, then $-\lambda_s/\lambda \sim b_1 \sim e^{-c_1 s}$, hence after an integration in time, $|\log \lambda| \lesssim 1$ and there is no blowup.} and obtain from \eqref{eq:wys_i}, 
$$b_1(\Lc T_1 + \Lambda Q) = 0.$$
- At the order $\Oc(b_1^2, b_2)$: We obtain
$$(b_1)_sT_1 + b_1^2\Lambda T_1 + b_2\Ls T_2 + \Ls S_2 = b_1^2 NL_1(T_1, Q),$$
where $NL_1(T_1, Q)$ corresponds to the interaction of the nonlinear terms. Note from \eqref{eq:Tk_i} and \eqref{def:Tk_i}, we have
$$\Lambda T_1 \sim (2 - \gamma)T_1 \quad \text{for}\quad y \gg 1, \quad \Ls T_2 = - T_1,$$
and thus, 
$$(b_1)_sT_1 + b_1^2\Lambda T_1 + b_2\Ls T_2 \sim \big[(b_1)_s + (2 - \gamma)b_1^2 - b_2\big]T_1.$$
Hence, we cancel the highest order growth for $y$ large through the choice 
$$(b_1)_s + (2 - \gamma)b_1^2 - b_2 = 0.$$
We then solve for 
$$\Ls S_2 = -b_1^2(\Lambda T_1 - (2 - \gamma)T_1) + b_1^2 NL_1(T_1, Q),$$
and check the improved decay 
$$|S_2(y)| \lesssim b_1^2y^{2 - \gamma} \quad \text{for} \quad y \gg 1.$$
- At the order $\Oc(b_1^{k+1}, b_{k+1})$: we get an elliptic equation of the following form
$$(b_k)_sT_k + b_1b_k\Lambda T_k + b_{k + 1}\Ls T_{k+1} + \Ls S_{k+1} = b_1^{k+1}NL_k(T_1, \cdots, T_k, Q).$$ 
From \eqref{eq:Tk_i} and \eqref{def:Tk_i}, we have 
$$(b_k)_sT_k + b_1b_k\Lambda T_k + b_{k + 1}\Ls T_{k+1} \sim \big[(b_k)_s + (2k - \gamma)b_1b_k - b_{k+1}\big]T_{k},$$
which leads to the choice 
$$(b_k)_s + (2k - \gamma)b_1b_k - b_{k+1} = 0,$$
to cancel at infinity the leading order growth.
We then solve for  $S_{k+1}$ term and verify that $|S_{k+1}(y)|\lesssim b_1^{k+1}y^{2 k - \gamma}$ for $y$ large. We refer to Proposition \ref{prop:1} for all details of the \textit{tail computation}.\\

\noindent $(iii)$ \textit{The universal system of ODEs.} From the above procedure we deduce after $L$ iterations the following universal system of ODEs:
\begin{equation}\label{sys:bk_i}
\left\{ \begin{array}{l}
(b_k)_s + (2k - \gamma)b_1b_k - b_{k+1} = 0, \quad 1 \leq k \leq L, \quad b_{L+1} = 0,\\
\quad \\
-\dfrac{\lambda_s}{\lambda} = b_1, \quad \dfrac{ds}{dt} = \frac{1}{\lambda^2}.
\end{array}\right.
\end{equation}
Unlike the critical case treated in \cite{RSapde2014}, we do not need a logarithmic correction. The set of solutions to \eqref{sys:bk_i} (see Lemma \ref{lemm:solSysb} below) is explicitly given by 
\begin{equation}\label{eq:solbk_i}
\left\{\begin{array}{l}
b_k^e(s) = \frac{c_k}{s^k}, \quad 1 \leq k \leq L,\\
c_1 = \frac{\ell}{2\ell - \gamma}, \quad \ell \in \mathbb{N}^*, \; 2\ell > \gamma,\\
c_{k + 1} = -\frac{\gamma(\ell - k)}{2\ell - \gamma}c_k, \quad 1 \leq k \leq \ell -1,\ell\geq2\\
c_j = 0, \quad j \geq \ell + 1.\\
\lambda(s) \sim s^{-\frac{\ell}{2\ell - \gamma}}.
\end{array}\right.
\end{equation}
In the original time variable $t$, this implies that $\lambda(t)$ goes to zero in finite time $T$ with the asymptotic 
$$\lambda(t) \sim (T-t)^\frac{\ell}{\gamma}.$$
Furthermore, the linearized flow of \eqref{sys:bk_i} around the solution \eqref{eq:solbk_i} is explicit and manisfests $(\ell - 1)$ unstable directions (see Lemma \ref{lemm:lisysb} below). This induces that the case $\ell = 1$ corresponds to a stable type II blowup regime.\\

\noindent $(iv)$ \textit{The flow decomposition and the equations of modulation.} Consider $Q_b$ as the approximate solution given by \eqref{def:Qb_i} which from the construction will engender an approximate solution of the renormalized flow \eqref{eq:wys_i},
$$\Psi_b = \ps Q_b - \Delta Q_b + b \Lambda Q_b + \frac{(d-1)}{2y^2}\sin(2Q_b) = \textup{Mod}(t) + O(b_1^{2L + 2}),$$
where the modulation equation term is approximately given by:
$$\textup{Mod}(t) = \sum_{i = 1}^L \big[(b_i)_s + (2i - \gamma)b_1b_i - b_{i+1}\big]T_i.$$
We localize $Q_b$ in the zone $y \leq B_1$ to prevent the appearance of the irrelevant growing tails for $y \gg \frac{1}{\sqrt{b_1}}$. We then take initial data of the form
$$u_0(y) = Q_{b(0)}(y) + q_0(y),$$
where $q_0$ is small in some suitable sense and $b(0)$  is chosen to be close to the exact solution \eqref{eq:solbk_i}. The following decomposition of the flow is stanrdard consequence of the modulation theory
\begin{equation}\label{eq:dec_i}
u(r,t) = w(y,s) = \big(Q_{b(s)} + q\big)(y,s) = \big(Q_{b(t)} + v\big)\left(\frac{r}{\lambda(t)},t\right),
\end{equation}
where $(b(t), \lambda(t))$ are the $L+1$ modulation parameters and they are fixed to insure the following orthogonality conditions:
\begin{equation}\label{eq:orh_i}
\left<q, \Ls^i \Phi_M \right> = 0, \quad 0 \leq i \leq L,
\end{equation}
where $\Phi_M$ (see \eqref{def:PhiM}) is a well adapted approximation of the kernel of $\Ls$ which depends on a certain large constant $M$.
As a consequence, $\Ls^i \Phi_M$ produces also a well adapted approximation of the powers of $\Ls$ kernel. Thanks to the orthogonal decomposition \eqref{eq:dec_i} we compute the modulation equations driving the parameters $(b(t), \lambda(t))$ (see Lemmas \ref{lemm:mod1} and \ref{lemm:mod2} below),
\begin{equation}\label{eq:mod_i}
\left|\frac{\lambda_s}{\lambda} + b_1\right| +  \sum_{i = 1}^L \big|(b_i)_s + (2i - \gamma)b_1b_i - b_{i+1}\big| \lesssim \|q\|_{loc} + b_1^{L + 1 + \nu(\delta, \eta)},
\end{equation}
where $\|q\|_{loc}$ is the error measured in certain norm well adapted to our estimations.

\noindent $(v)$ \textit{Control of Sobolev norms.} From \eqref{eq:mod_i}, we see that it is important to prove that the local norms of $q$ remain small enough and do not disturb the dynamical system \eqref{sys:bk_i}. This is accomplished via the control of the error $q$ through norms that are well adapted to our estimations and coercive to some weighted sobolev norms. In particular, we have the following property which gives the control of some weighted sobolev norms under the orthogonality conditions \eqref{eq:orh_i} (see Lemma \ref{lemm:coeLk}),
$$\Es_{2\Bbbk}(s)  = \int |\Ls^\Bbbk q|^2 \gtrsim \int |\nabla^{2\Bbbk} q|^2 + \int \frac{|q|^2}{1 + y^{4\Bbbk}},$$
where $\Bbbk$ is given by \eqref{def:kbb}. Here the factorization \eqref{eq:facL_i} will simplify our proof. As in \cite{RRmihes12}, \cite{RSapde2014} and \cite{MRRcjm15}, the control of $\Es_{2\Bbbk}$ is done through the use of the linearized equation in the original variables $(r,t)$, i.e. we work with $v$ in \eqref{eq:dec_i} and not $q$. The energy estimate is of the form (see Proposition \ref{prop:E2k})
\begin{equation}\label{eq:Ek_i}
\frac{d}{ds} \left\{\frac{\Es_{2\Bbbk}}{\lambda^{4\Bbbk - d}}\right\} \lesssim \frac{b_1^{2L + 1 + 2\nu(\delta, \eta)}}{\lambda^{4\Bbbk - d}}, \quad \nu(\delta, \eta) > 0,
\end{equation}
where the right hand side is the size of the error $\Psi_b$ of the construction of our approximate profile $Q_b$ above. An integration of \eqref{eq:Ek_i} in time by using initial smallness assumptions, $b_1 \sim b_1^e$ and $\lambda(s) \sim b_1^{\frac{\ell}{2\ell - \gamma}}$ yields the estimate
$$
\int |\nabla^{2\Bbbk} q|^2 + \int \frac{|q|^2}{1 + y^{4\Bbbk}} \lesssim \Es_{2\Bbbk}(s) \lesssim b_1^{2L + 2\nu(\delta, \eta)},$$
which is good enough to control the local norms of $q$ and close the modulation equations \eqref{eq:mod_i}.

Note that we also need to control lower energies $\Es_{2m}$ for $\hbar + 2 \leq m \leq \Bbbk - 1$ because the control of the high energy $\Es_{2\Bbbk}$ alone is not sufficient to control the nonlinear term appearing after the linearization around $Q_b$. In particular, we exhibit a Lyapunov functional with the dynamical estimate
$$\frac{d}{ds}\left\{\frac{\Es_{2m}}{\lambda^{4m - d}}\right\} \lesssim \frac{b_1^{2(m - \hbar) - 1 + 2\nu'(\delta, \eta)}}{\lambda^{4m - d}}, \quad \nu'(\delta, \eta) > 0,$$
then, an integration in time yields
$$ \Es_{2m}(s) \lesssim \left\{ \begin{array}{ll}
b_1^{\frac{\ell}{2\ell - \gamma}(4m - d)}& \quad \text{for}\quad \hbar + 2 \leq m \leq \ell + \hbar,\\
b_1^{2(m - \hbar -1) + 2\nu'(\delta, \eta)}&\quad \text{for}\quad \hbar + \ell + 1 \leq m \leq \Bbbk - 1,
\end{array}
\right.$$
which is enough to control the nonlinear term. Let us remark that the condition $m \geq \hbar + 2$ ensures $4m - d > 0$ so that $\Es_{2m}$ is always controlled. By the coercivity of $\Es_{2m}$, this means that we are only able to control the Sobolev norms $\|\nabla^{2\sigma}q\|^2_{L^2}$ for $\sigma \geq \hbar +2$ resulting in the asymptotic \eqref{eq:asypqs}.\\

The above framework allows us to design an appreciated shrinking set (see Definition \ref{def:Skset} for a precise definition) which traps the solution according to the asymptotic dynamic described in Theorem \ref{Theo:1}. Following Lemma \ref{lemm:solSysb} and \ref{lemm:lisysb}, the control of the solution in this set reduces to the control of $(\ell - 1)$ unstable modes $(b_2, \cdots, b_\ell)$, which is a finite-dimensional problem.  We then solve this finite-dimensional problem through a topological argument based on the Brouwer fixed point theorem (see the proof of Proposition \ref{prop:exist}), and the proof of Theorem \ref{Theo:1} follows.\\

\bigskip 

The paper is organized as follows. In Section \ref{sec:2}, we give the construction of the approximate solution $Q_b$ of \eqref{Pb} and derive estimates on the generated error term $\Psi_b$ (Proposition \ref{prop:1}) as well as its localization (Proposition \ref{prop:localProfile}). We also give in this section some elementary facts on the study of the system \eqref{sys:bk_i} (Lemmas \ref{lemm:solSysb} and \ref{lemm:lisysb}). Section \ref{sec:3} is devoted to the proof of Theorem \ref{Theo:1} assuming a main technical result (Proposition \ref{prop:redu}). In particular, we give the proof of the existence of the solution trapped in some shrinking set to zero (Proposition \ref{prop:exist}) such that the constructed solution satisfies the conclusion of Theorem \ref{Theo:1}. Readers not interested in technical details may stop there. In Section \ref{sec:4}, we give the proof of Proposition \ref{prop:redu} which gives the reduction of the problem to a finite-dimensional one; and this is the heart of our analysis. 

\section{Construction of an approximate profile.}\label{sec:2}
This section is devoted to the construction of a suitable approximate solution to \eqref{Pb}  by using the same approach developed in \cite{RRmihes12}. Similar approachs can also be found in \cite{RScpam13}, \cite{HRapde12}, \cite{RSma14}, \cite{Sjfa12}, \cite{MRRcjm15}. The construction is mainly based on the fact that the kernel of the linearized operator $\Ls$ around $Q$ is given explicitly in the radial setting.

Following the scaling invariance of \eqref{Pb}, we introduce the following change of variables:
\begin{equation}\label{def:simiVars}
w(y,s) = u(r,t), \quad y = \frac{r}{\lambda(t)}, \quad \frac{ds}{dt} = \frac{1}{\lambda^2(t)},
\end{equation}
which leads to the following renormalized flow:
\begin{equation}\label{eq:wys}
\partial_sw = \partial_y^2 w + \frac{(d-1)}{y}\partial_y w  + \frac{\lambda_s}{\lambda}\Lambda w - \frac{(d-1)}{2y^2}\sin(2w),
\end{equation}
where $\lambda_s = \frac{d\lambda}{ds}$. Noticing that in the setting \eqref{def:simiVars}, we have 
$$\partial_r u(r,t) = \frac{1}{\lambda(t)}\partial_y w(y,s)$$
and since we deal with the finite time blowup of the problem \eqref{Pb}, we would naturally impose the condition 
$$\lambda(t) \to 0 \quad \text{as} \quad t \to T,$$
for some $T \in (0, +\infty)$. Hence, $\partial_r u(r,t)$ blows up in finite time $T$.

Let us assume that the leading part of the solution of \eqref{eq:wys} is given by the harmonic map $Q$, which is unique  solution (up to scaling) of the equation
\begin{equation}\label{eq:Qy}
Q'' + \frac{(d-1)}{y}Q' - \frac{(d-1)}{2y^2}\sin(2Q) = 0, \quad Q(0) = 0, \; Q'(0) = 1.
\end{equation}
We aim at constructing an approximate solution of \eqref{eq:wys} close to $Q$. The natural way is to linearize equation \eqref{eq:wys} around $Q$, which generates the Schr\"odinger operator defined by \eqref{def:Lc}. Let us now recall the main properties of $\Ls$ in the following subsection.

\subsection{Structure of the linearized Hamiltonian.}
In this subsection, we recall the main properties of the linearized operator  around $Q$, which are fundamental for the construction of the approximate profile. Actually, the properties of the linearized operator are also important to derive the coercivity properties used for the high Sobolev energy estimates. Let us start by recalling the following result from Biernat \cite{BIEnon2015}, which gives the asymptotic behavior of the harmonic map $Q$:
\begin{lemma}[Development of the harmonic map $Q$] Let $d \geq 7$, there exists a unique solution $Q$ to equation \eqref{eq:Qy}, which admits the following asymptotic behavior: For any $k \in\Nc^*$,\\
$(i)$ (Asymptotic behavior of $Q$) 
\begin{equation}\label{eq:asymQ}
Q(y) = \left\{\begin{array}{ll}
y + \sum \limits_{i = 1}^kc_iy^{2i + 1} + \Oc(y^{2k + 3}) &\text{as}\quad y \to 0, \\
&\\
\dfrac{\pi}{2} - \dfrac{a_0}{ y^{\gamma}}\left[1 + \Oc\left(\dfrac 1{y^{2}}\right) + \Oc\left(\dfrac 1 {y^{ \tilde{\gamma}}}\right)\right]\quad &\text{as} \quad y \to + \infty,
\end{array}
\right.
\end{equation}
where  $\gamma$ is defined in \eqref{def:gamome}, $\tilde{\gamma} = \sqrt{d^2 - 8d + 8}$ and the constant $a_0 = a_0(d) > 0$.\\
$(ii)$ (Degeneracy)
\begin{equation}\label{eq:asymLamQ}
\Lambda Q > 0, \quad \Lambda Q(y) = \left\{\begin{array}{ll}
y + \sum\limits_{i = 1}^kc_i' y^{2i + 1} + \Oc(y^{2k + 3}) &\text{as}\quad y \to 0, \\
&\\
\dfrac{a_0 \gamma}{ y^{\gamma}}\left[1 + \Oc\left(\dfrac 1{y^{2}}\right) + \Oc\left(\dfrac 1 {y^{ \tilde{\gamma}}}\right)\right]\quad &\text{as} \quad y \to + \infty.
\end{array}
\right.
\end{equation}
\end{lemma}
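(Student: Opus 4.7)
The plan is to treat \eqref{eq:Qy} as an ODE initial-value problem, combining a power-series expansion near $y=0$ with a Frobenius-type asymptotic analysis of the linearization around $\pi/2$ at infinity. First I would substitute the formal series $Q(y) = y + \sum_{j\geq 1} c_j y^{2j+1}$ into \eqref{eq:Qy} and use the Taylor expansion of $\sin(2Q)$ to identify the coefficients $c_j$ recursively. The series contains only odd powers because the symmetry $Q(y) \mapsto -Q(-y)$ preserves the equation and the initial data, so uniqueness forces $Q$ to be odd; a standard majorant argument gives convergence of this series on a neighborhood of the origin, yielding the near-origin expansion of $Q$ in \eqref{eq:asymQ}. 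Cauchy--Lipschitz theory on $(0,\infty)$, together with the a priori bound $0 \leq Q \leq \pi$ obtained by comparing with the equilibria $Q \equiv 0$ and $Q \equiv \pi$ where the nonlinearity vanishes, extends the solution globally, and differentiating gives the near-origin expansion of $\Lambda Q$.

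For the behavior at infinity, I would first show $Q(y) \to \pi/2$ as $y \to \infty$, exploiting the monotonicity of $Q$ and the fact that $\pi/2$ is the only attracting interior zero of $\sin(2Q)$. Setting $v = \pi/2 - Q$, the equation becomes
\begin{equation*}
v'' + \frac{d-1}{y}v' + \frac{d-1}{2y^2}\sin(2v) = 0.
\end{equation*}
Expanding $\sin(2v) = 2v - \tfrac{4}{3}v^3 + \Oc(v^5)$, the principal linear operator is of Euler type with indicial polynomial $\alpha^2 + (d-2)\alpha + (d-1) = 0$, whose roots are $\alpha_\pm = -\tfrac{1}{2}(d-2 \mp \tilde{\gamma})$. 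Because we need $v \to 0^+$, the leading behavior is driven by the slower decay $\alpha_+ = -\gamma$, giving $v \sim a_0 y^{-\gamma}$; the positivity of the amplitude $a_0 > 0$ is pinned down by a global shooting argument from $y=0$ to $y=\infty$. The $\Oc(y^{-\tilde{\gamma}})$ term in \eqref{eq:asymQ} represents the second Frobenius mode $y^{\alpha_-}/y^{\alpha_+} = y^{-\tilde{\gamma}}$, while the $\Oc(y^{-2})$ term absorbs the cubic feedback: substituting $v = a_0 y^{-\gamma}$ into the nonlinearity $v^3/y^2 \sim y^{-3\gamma-2}$ and solving the resulting inhomogeneous Euler equation produces an $\Oc(y^{-2\gamma})$ relative correction, which is bounded by $\Oc(y^{-2})$ since $\gamma \geq 1$.

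For $\Lambda Q$, the near-origin and large-$y$ expansions in \eqref{eq:asymLamQ} follow by termwise differentiation of the expansions of $Q$, multiplied by $y$. The positivity $\Lambda Q > 0$ on $(0,\infty)$ is obtained from the sign of $\Lambda Q$ at the two endpoints (positive near $0$ by $\Lambda Q(y) \sim y$, positive at infinity by $\Lambda Q(y) \sim \gamma a_0 / y^\gamma$) together with a Sturm-type argument: since scaling invariance of \eqref{eq:Qy} implies $\Ls(\Lambda Q) = 0$, the function $\Lambda Q$ lies in the kernel of $\Ls$, and Sturm oscillation theory combined with positive values at both endpoints prevents any interior zero.

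The main obstacle is the large-$y$ asymptotic: producing the exact leading amplitude $a_0 > 0$ and justifying the precise subleading corrections require a delicate global shooting analysis connecting the near-origin and far-field regimes. This is the content of the appendix of \cite{BIEnon2015}, which we would follow. The near-origin expansion and the positivity claim, by contrast, reduce to routine power-series and oscillation-theoretic arguments once the global solution has been produced.
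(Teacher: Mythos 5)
Your route differs in presentation from the paper's: the paper passes to the logarithmic variable $x=\log y$, $v=2Q-\pi$, and invokes the phase-portrait analysis of the resulting autonomous equation from the appendix of \cite{BIEnon2015}, whereas you work directly in $y$ with a Frobenius series at the origin and an indicial analysis of the Euler-type linearization $v''+\frac{d-1}{y}v'+\frac{d-1}{y^2}v=0$ at infinity. Your identification of the roots $\alpha_\pm=-\tfrac12(d-2\mp\tilde\gamma)$, of the second mode as the source of the $\Oc(y^{-\tilde\gamma})$ relative correction, and of the cubic feedback as producing an $\Oc(y^{-2\gamma})\subset\Oc(y^{-2})$ relative correction is correct and in fact explains the structure of \eqref{eq:asymQ} more transparently than the paper does. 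Deferring the existence of the connecting orbit and the sign $a_0>0$ to \cite{BIEnon2015} is legitimate, since that is exactly what the paper itself does.

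There is, however, a genuine gap in your argument for $\Lambda Q>0$. You claim that since $\Ls(\Lambda Q)=0$ and $\Lambda Q$ is positive near $y=0$ and near $y=+\infty$, ``Sturm oscillation theory'' excludes interior zeros. This is not a valid deduction: a nontrivial solution of a second-order linear ODE has only simple zeros, hence every zero is a sign change, and a solution positive near both endpoints can perfectly well have an even number of interior zeros (dipping negative on an intermediate interval). Sturm comparison would require a second solution of the \emph{same} equation already known to be positive, and the usual spectral argument ($\Ls\geq 0$ via the factorization $\Ls=\As^*\As$) is circular here because the potential $V=\Lambda\log\Lambda Q$ in \eqref{def:As} presupposes $\Lambda Q>0$. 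The positivity of $\Lambda Q$ is equivalent to the strict monotonicity of $Q$, and for $d\geq 7$ this is precisely the nontrivial output of the phase-portrait analysis (the trajectory remains in the half-plane $\{v'>0\}$ and, since the eigenvalues at the equilibrium $Q=\pi/2$ are real for $d\geq 7$, the approach is non-oscillatory); it cannot be recovered a posteriori from the two endpoint asymptotics alone. A similar, milder, objection applies to your claim that $0\leq Q\leq\pi$ follows by ``comparing with the equilibria'': there is no comparison principle for this initial-value problem, and confinement of the orbit is again part of the global phase-plane argument. You should therefore either import the monotonicity statement explicitly from \cite{BIEnon2015} alongside $a_0>0$, or supply an actual invariance argument for the region $\{v'>0\}$ of the autonomous system.
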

\begin{proof} The proof of \eqref{eq:asymQ} is done through the introduction of the variables $x = \log y$ and $v(x) = 2Q(y) - \pi$ and consists of the phase portrait analysis of the autonomous equation 
$$v''(x) + (d - 2)v'(x) + (d-2)\sin(v(x)) = 0.$$
All details of the proof can be found at pages 184-185 in \cite{BIEnon2015}. The proof of \eqref{eq:asymLamQ} directly follows from the expansion \eqref{eq:asymQ}. 
\end{proof}

The linearized operator $\Ls$ displays a remarkable structure given by the following lemma:
\begin{lemma}[Factorization of $\Ls$] \label{lemm:factorL}  Let $d \geq 7$ and define the first order operators
\begin{align}
\As w  &= -\partial_y w + \frac{V}{y}w = - \Lambda Q \partial_y \left(\frac{w}{\Lambda Q}\right), \label{def:As}\\ 
\As^* w &= \frac{1}{y^{d-1}}\partial_y \big(y^{d-1}w\big) + \frac{V}{y}w  =  \frac{1}{y^{d-1}\Lambda Q} \partial_y \left(y^{d-1} \Lambda Q w\right),\label{def:Astar}
\end{align}
where
\begin{equation}\label{eq:asympV}
V(y) := \Lambda \log(\Lambda Q) =  \left\{\begin{array}{ll}
1 + \Oc(y^2)\quad &\text{as}\quad y \to 0, \\
&\\
-\gamma + \Oc\left(\dfrac 1{y^{2}}\right)+ \Oc\left(\dfrac 1{y^{\tilde{\gamma}}}\right)\quad &\text{as} \quad y \to + \infty,
\end{array}
\right.
\end{equation}
We have
\begin{equation}\label{eq:reLAAst}
\Ls = \As^* \As, \quad \tilde{\Ls} = \As \As^*,
\end{equation}
where $\tilde{\Ls}$ stands for the conjugate Hamiltonian.
\end{lemma}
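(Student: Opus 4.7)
The plan is to split the verification into three essentially independent pieces: the asymptotics of $V$, the two equivalent algebraic forms of $\As$ and $\As^*$, and the operator identity $\Ls = \As^*\As$; the conjugate formula $\tilde\Ls = \As\As^*$ then serves as the \emph{definition} of the conjugate Hamiltonian, so there is nothing to prove there.

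For the asymptotics \eqref{eq:asympV}, I would simply plug the expansions of $\Lambda Q$ from \eqref{eq:asymLamQ} into the identity $V = \Lambda \log(\Lambda Q) = y(\Lambda Q)'/\Lambda Q$. Near the origin, $\Lambda Q = y\bigl(1 + O(y^2)\bigr)$ gives $V = 1 + O(y^2)$; at infinity, $\Lambda Q = c\, y^{-\gamma}\bigl[1 + O(y^{-2}) + O(y^{-\tilde\gamma})\bigr]$ gives $\log(\Lambda Q) = \textup{const} - \gamma\log y + O(y^{-2}) + O(y^{-\tilde\gamma})$, and applying $\Lambda = y\partial_y$ kills the constant and leaves $V = -\gamma + O(y^{-2}) + O(y^{-\tilde\gamma})$. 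The rewriting $\As f = -(\Lambda Q)\partial_y(f/\Lambda Q) = -\partial_y f + (V/y) f$ is just the logarithmic derivative identity, and the compact form of $\As^*$ follows by integration by parts in $L^2(y^{d-1}\,dy)$: the weight $y^{d-1}$ produces the extra $(d-1)/y$ term, and rearranging yields $\As^* g = (y^{d-1}\Lambda Q)^{-1}\partial_y(y^{d-1}\Lambda Q\, g)$.

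The heart of the lemma is the factorization $\Ls = \As^*\As$, which is the standard supersymmetric (ground-state) identity for an operator with a positive zero mode. The key input $\Ls(\Lambda Q) = 0$ comes from the scale invariance of \eqref{eq:Qy}: the family $Q_\mu(y) = Q(y/\mu)$ consists of solutions, and differentiating at $\mu = 1$ exhibits $-\Lambda Q$ as a zero mode of the linearization, which is exactly $\Ls$. Since $\Lambda Q > 0$ on $(0,\infty)$ by \eqref{eq:asymLamQ}, the substitution $f = (\Lambda Q) h$ is a bijection on smooth functions; applied to both sides, it collapses $\As^*\As f$ to
\begin{equation*}
-\frac{1}{y^{d-1}\Lambda Q}\partial_y\bigl[y^{d-1}(\Lambda Q)^2\, h'\bigr] = -(\Lambda Q)\, h'' - \Bigl(2(\Lambda Q)' + \tfrac{d-1}{y}\Lambda Q\Bigr) h',
\end{equation*}
while Leibniz expansion of $\Ls\bigl((\Lambda Q) h\bigr)$, combined with $\Ls(\Lambda Q) = 0$, produces exactly the same two terms. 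I do not foresee a serious obstacle: everything reduces to the scaling identity $\Ls(\Lambda Q) = 0$ together with a short direct computation, and the mild care needed at $y = 0$ (where $\Lambda Q$ vanishes linearly) is absorbed by the fact that both sides of the final identity are second order differential operators whose coefficients agree on $(0,\infty)$, as the factorized computation makes manifest.
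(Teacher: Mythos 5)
Your proposal is correct and follows the route the paper itself intends: the paper states this lemma without a written proof, but the subsequent remark (the identity $Z = V^2 + \Lambda V + (d-2)V$ deduced from $\Ls(\Lambda Q)=0$) is exactly the content of your factorization computation, whether one expands $\As^*\As$ directly or, as you do, conjugates by $\Lambda Q$. The only point worth a word of care is that reading off $V = -\gamma + \Oc(y^{-2}) + \Oc(y^{-\tilde\gamma})$ from \eqref{eq:asymLamQ} uses that the expansion of $\Lambda Q$ can be differentiated term by term, which follows from the phase-portrait analysis of \cite{BIEnon2015} rather than from the pointwise statement alone.
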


\begin{remark} The adjoint operator $\As^*$ is defined with respect to the Lebesgue measure
$$\int _0^{+\infty}(\As u) w y^{d-1}dy = \int_{0}^{+\infty} u(\As^* w)y^{d-1}dy.$$
\end{remark}

\begin{remark} We have 
\begin{equation}\label{eq:relLsLam}
\Ls(\Lambda w) = \Lambda (\Ls w) + 2 \Ls w - \frac{\Lambda Z}{y^2}w.
\end{equation}
Since $\Ls(\Lambda Q) = 0$, one can express the definition of $Z$ through the potential $V$ as follows: 
\begin{equation}\label{def:ZbyV}
Z(y) = V^2 + \Lambda V + (d-2)V.
\end{equation}
Let $\tilde{Z}$ be defined by 
\begin{equation}\label{def:LstilbyZtil}
\tilde{\Ls} = -\partial_{yy} - \frac{d-1}{y}\py + \frac{\tilde{Z}}{y^2},
\end{equation}
then, a direct computation yields
\begin{equation}\label{def:ZtilbyV}
\tilde{Z}(y) = (V + 1)^2 + (d-2)(V+1) - \Lambda V.
\end{equation}
\end{remark}

\bigskip

\noindent From \eqref{def:As} and \eqref{def:Astar}, we see that the kernel of $\As$ and $\As^*$ are explicit:
$$\left\{\begin{array}{ll}
\As w = 0 & \quad \text{if and only if}\quad w \in \text{Span}(\Lambda Q),\\
\As^* w = 0& \quad \text{if and only if}\quad w \in \text{Span}\left(\frac 1{y^{d-1}\Lambda Q}\right).\\
\end{array}\right.$$
Hence, the elements of the kernel of $\Ls$ are given by 
\begin{equation}\label{eq:kernalLc}
\Ls w = 0 \quad \text{if and only if}\quad w \in \text{Span}(\Lambda Q, \Gamma),
\end{equation}
where $\Gamma$ can be found from the Wronskian relation
\begin{equation}\label{eq:relWrons}
\Gamma' \Lambda Q - \Gamma (\Lambda Q)' = \frac{1}{y^{d-1}},
\end{equation}
that is
\begin{equation*}
\Gamma(y) = \Lambda Q(y) \int_1^y \frac{d\xi}{\xi^{d-1} (\Lambda Q(\xi))^2},
\end{equation*}
which admits the asymptotic behavior:
\begin{equation}\label{eq:asymGamma}
\Gamma(y) = \left\{\begin{array}{ll}
\dfrac{1}{d y^{d-1}} + \Oc(y)\;\; &\text{as}\;\; y \to 0, \\
&\\
\dfrac{1}{a_0 \gamma (d - 2 - 2\gamma) y^{d - 2 - \gamma}} + \Oc\left(\dfrac 1{y^{d - \gamma}}\right)\;\; &\text{as} \;\; y \to + \infty,
\end{array}
\right.
\end{equation}
From \eqref{eq:kernalLc}, we may invert $\Ls$ as follows:
\begin{equation}\label{eq:invLc}
\Ls^{-1}f =  -\Gamma(y)\int_0^y f(x)\Lambda Q(x) x^{d-1}dx + \Lambda Q(y)\int_0^y f(x) \Gamma(x)x^{d-1}dx. 
\end{equation}  
The factorization of $\Ls$ grants us to compute $\Ls^{-1}$ easily. In particular, we have the following:
\begin{lemma}[Inversion of $\Ls$] \label{lemm:inversionL} Let  $f$ be a $\Cc^\infty$ radially symmetric function and $w = \Ls^{-1}f$ be given by \eqref{eq:invLc}, then 
\begin{equation}\label{eq:relaAL}
\Ls w = f, \quad \As w = \frac{1}{y^{d-1}\Lambda Q} \int_0^yf(x) \Lambda Q(x) x^{d-1}dx, \quad w = -\Lambda Q\int_0^y\frac{\As w(x)}{\Lambda Q(x)}dx.
\end{equation}
\end{lemma}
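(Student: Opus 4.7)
The plan is to exploit the factorization $\Ls = \As^*\As$ provided by Lemma \ref{lemm:factorL} and invert each first-order factor in turn, then match the resulting double integral against formula \eqref{eq:invLc} via Fubini.

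First, I would set $g := \As w$, so that $\Ls w = f$ becomes $\As^* g = f$. Using the divergence form \eqref{def:Astar}, namely $\As^* g = \frac{1}{y^{d-1}\Lambda Q}\py\bigl(y^{d-1}\Lambda Q\, g\bigr)$, the equation $\As^* g = f$ integrates directly. Choosing the constant of integration so that $g$ remains regular at the origin (which is forced by the behavior $\Lambda Q(y)\sim y$ and $f\in\Cc^\infty$), one obtains
\begin{equation*}
g(y) = \As w(y) = \frac{1}{y^{d-1}\Lambda Q(y)}\int_0^y f(x)\Lambda Q(x)\, x^{d-1}\, dx,
\end{equation*}
which is the second identity in \eqref{eq:relaAL}.

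Next, from the representation $\As w = -\Lambda Q\,\py(w/\Lambda Q)$ in \eqref{def:As}, the equation $\As w = g$ yields $\py(w/\Lambda Q) = -g/\Lambda Q$. Using the fact that $w/\Lambda Q$ extends smoothly to $0$ (since $\Lambda Q(y) \sim y$ near $0$ and $w$ is smooth and odd about $0$, the ratio has a finite limit, which we may absorb into the choice of representative), integration from $0$ gives
\begin{equation*}
w(y) = -\Lambda Q(y)\int_0^y \frac{\As w(x)}{\Lambda Q(x)}\, dx,
\end{equation*}
which is the third identity. Substituting the explicit formula for $\As w$ yields a double integral
\begin{equation*}
w(y) = -\Lambda Q(y)\int_0^y \frac{1}{x^{d-1}\Lambda Q(x)^2}\left(\int_0^x f(z)\Lambda Q(z)\, z^{d-1}\, dz\right) dx.
\end{equation*}

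The last step is to match this expression with \eqref{eq:invLc}. I would switch the order of integration and use the defining relation of $\Gamma$, namely $\Gamma(y) = \Lambda Q(y)\int_1^y \frac{d\xi}{\xi^{d-1}\Lambda Q(\xi)^2}$, which implies
\begin{equation*}
\int_z^y \frac{d\xi}{\xi^{d-1}\Lambda Q(\xi)^2} = \frac{\Gamma(y)}{\Lambda Q(y)} - \frac{\Gamma(z)}{\Lambda Q(z)}.
\end{equation*}
Plugging this into the double integral after Fubini produces exactly
\begin{equation*}
w(y) = -\Gamma(y)\int_0^y f(z)\Lambda Q(z)\, z^{d-1}\,dz + \Lambda Q(y)\int_0^y f(z)\Gamma(z)\, z^{d-1}\,dz,
\end{equation*}
which is \eqref{eq:invLc}. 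Finally, one checks $\Ls w = f$ by applying $\As^*\As$ and using the Wronskian identity \eqref{eq:relWrons}.

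The only delicate point, which I expect to be the main (minor) obstacle, is justifying the choice of integration constants at the origin: one must verify that the integrands in both single integrals are integrable at $0$ and that the resulting $w$ is the genuine regular inverse. This follows from the behavior $\Lambda Q(y)\sim y$ and $\Gamma(y)\sim \frac{1}{d y^{d-1}}$ from \eqref{eq:asymLamQ}--\eqref{eq:asymGamma}, together with $f\in\Cc^\infty$ being even about $0$ in the radial sense, so that the integrands $f\Lambda Q\, y^{d-1}$ and $f\Gamma\, y^{d-1}$ behave like $y^d$ and $y^0$ respectively near $0$, both integrable.
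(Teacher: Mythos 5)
Your argument is correct, but it runs in the opposite direction from the paper's. The paper takes the explicit formula \eqref{eq:invLc} as the starting object and simply applies $\As$ to it: using $\As(\Lambda Q)=0$ and the identity $\As\Gamma=-\tfrac{1}{y^{d-1}\Lambda Q}$ (an immediate consequence of the Wronskian relation \eqref{eq:relWrons}), the two boundary terms produced by differentiating the integrals cancel and one reads off the formula for $\As w$ in one line; the third identity is then just the definition \eqref{def:As} integrated from the origin. You instead integrate the factorized equation $\As^*\As w=f$ twice and then reconcile the resulting double integral with \eqref{eq:invLc} by Fubini. Both routes rest on the same structural fact (the factorization $\Ls=\As^*\As$ and the divergence forms of $\As,\As^*$), but yours costs an extra Fubini computation and a discussion of integration constants that the paper avoids entirely. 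Two small points to tidy up: as written you assume $\Ls w=f$ at the outset and only "check" it at the end, whereas the lemma defines $w$ by \eqref{eq:invLc} and asks you to prove all three identities for that $w$ — so you should either present the argument as constructing a solution $\tilde w$ of the two first-order ODEs and then identifying $\tilde w$ with \eqref{eq:invLc}, or simply differentiate \eqref{eq:invLc} as the paper does; and the justification "$w$ is smooth and odd about $0$" is not given by the hypotheses ($f$ is merely $\Cc^\infty$ radial) — what you actually need, and what your own asymptotics at the end correctly deliver, is that $w/\Lambda Q\to 0$ as $y\to 0$ for the $w$ of \eqref{eq:invLc}.
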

\begin{proof} From the relation \eqref{eq:relWrons}, we compute
$$\As \Gamma = - \frac{1}{y^{d-1}\Lambda Q}.$$
Applying $\As$ to \eqref{eq:invLc} and using the cancellation $\As (\Lambda Q) = 0$, we obtain
$$\As w = \frac{1}{y^{d-1}\Lambda Q} \int_0^yf(x) \Lambda Q(x) x^{d-1}dx.$$
From the definition \eqref{def:As} of $\As$, we write
$$w = - \Lambda Q \int_0^y \frac{\As w}{\Lambda Q} dx.$$
This concludes the proof of Lemma \ref{lemm:inversionL}.
\end{proof}
\subsection{Admissible functions.} We define a class of admissible functions which exhibit a relevant behavior both at the origin and infinity.
\begin{definition}[Admissible function] \label{def:Admitfunc} Fix $\gamma > 0$, we say that a smooth function $f \in \Cc^\infty(\Rb_+, \Rb)$ is admissible of degree $(p_1, p_2) \in \mathbb{N} \times \mathbb{Z}$ if \\
$(i)\;$ $f$ admits a Taylor expansion to all orders around the origin, 
 $$f(y) = \sum_{k = p_1}^p c_ky^{2k + 1} + \Oc(y^{2p + 3});$$
$(ii)\;$ $f$ and its derivatives admit the bounds, for $y \geq 1$,
$$\forall k \in \mathbb{N}, \quad |\partial^k_y f(y)| \lesssim y^{2p_2 - \gamma - k}.$$
\end{definition}
\begin{remark} Note from \eqref{eq:asymLamQ} that $\Lambda Q$ is admissible of degree $(0,0)$.
\end{remark}

One note that $\Ls$ naturally acts on the class of admissible function in the following way:
\begin{lemma}[Action of $\Ls$ and $\Ls^{-1}$ on admissible functions] \label{lemm:actionLL} Let $f$ be an admissible function of degree $(p_1, p_2) \in \mathbb{N} \times \mathbb{Z}$, then:\\
$(i)\;$ $\Lambda f$ is admissible of degree $(p_1, p_2)$.\\
$(ii)\,$ $\Ls f$ is admissible of degree $(\max\{0,p_1 - 1\}, p_2 - 1)$.\\
$(iii)\,$ $\Ls^{-1}f$ is admissible of degree $(p_1 + 1, p_2 + 1)$.
\end{lemma}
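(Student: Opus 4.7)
All three assertions are verifications; none of them should be difficult, but each requires keeping careful track of both the Taylor tail near $y=0$ and the polynomial rate at infinity. I would treat them in the order $(i),(ii),(iii)$ since $(iii)$ uses only the ingredients already visible from $(i)$–$(ii)$ together with Lemma~\ref{lemm:inversionL}.

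Part (i) is immediate from Leibniz: if $f(y)=\sum_{k=p_1}^{p}c_k y^{2k+1}+\Oc(y^{2p+3})$, then
\[
\Lambda f(y)=\sum_{k=p_1}^{p}(2k+1)c_k y^{2k+1}+\Oc(y^{2p+3}),
\]
and $\partial_y^{k}(\Lambda f)=y\,\partial_y^{k+1}f+k\,\partial_y^{k}f$, so the bound $|\partial_y^{k}f|\lesssim y^{2p_2-\gamma-k}$ for $y\geq 1$ transfers to $|\partial_y^{k}(\Lambda f)|\lesssim y^{2p_2-\gamma-k}$.

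For part (ii) I would first read off the behaviour of the potential $Z$. From \eqref{def:ZbyV} and the expansion $V(y)=1+\Oc(y^2)$ near the origin, $Z(y)=(d-1)+\Oc(y^2)$; at infinity, the asymptotics \eqref{eq:asympV} yield $Z(y)=\gamma(\gamma+1)-(d-2)\gamma+\Oc(y^{-2})+\Oc(y^{-\tilde\gamma})$, and every $y$–derivative of $Z$ gains an extra factor of $y^{-1}$ (by smoothness of $Q$ and the polynomial decay of $Q-\pi/2$). A term-by-term computation on each monomial $y^{2k+1}$ gives
\[
\Ls(y^{2k+1})=\bigl[-2k(2k+d)\bigr]y^{2k-1}+\Oc(y^{2k+1}),
\]
so if $p_1\geq 1$ the leading coefficient of $\Ls f$ is nonzero and $\Ls f$ starts at order $y^{2(p_1-1)+1}$; if $p_1=0$ the leading $y^{-1}$ contributions cancel (since $\Ls y=\frac{Z-(d-1)}{y}y=\Oc(y)$) and $\Ls f$ starts at order $y$. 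This is exactly the $\max\{0,p_1-1\}$ statement. The bound at infinity is obtained by applying $\partial_y^{k}$ to $\Ls f=-\partial_{yy}f-\frac{d-1}{y}\partial_y f+\frac{Z}{y^2}f$ and using the derivative bounds on $f$ and $Z$: every one of the three pieces is $\Oc(y^{2p_2-\gamma-2-k})=\Oc(y^{2(p_2-1)-\gamma-k})$.

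Part (iii) I would handle through the explicit inversion formula \eqref{eq:invLc},
\[
\Ls^{-1}f(y)=-\Gamma(y)\int_0^{y}f\,\Lambda Q\,x^{d-1}\,dx+\Lambda Q(y)\int_0^{y}f\,\Gamma\,x^{d-1}\,dx,
\]
combined with the asymptotics \eqref{eq:asymLamQ} and \eqref{eq:asymGamma}. Near the origin, plugging in the Taylor tails and the leading orders $\Lambda Q\sim y$, $\Gamma\sim y^{1-d}/d$ yields, for each $f\sim c_{p_1}y^{2p_1+1}$, a leading contribution of the form $c_{p_1}\bigl[-\tfrac{1}{d(2p_1+d+2)}+\tfrac{1}{2p_1+2}\bigr]y^{2p_1+3}=c_{p_1}y^{2(p_1+1)+1}$ (up to a nonzero universal constant), with higher–order Taylor contributions treated identically; the full Taylor expansion follows by linearity. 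At infinity, using $\Lambda Q\sim y^{-\gamma}$, $\Gamma\sim y^{\gamma+2-d}$, the two integrals are $\Oc(y^{2p_2-2\gamma+d})$ and $\Oc(y^{2p_2+2})$ respectively (the exponents are $>0$ since $d\geq 7$ and $\gamma\in(1,2]$), so
\[
|\Ls^{-1}f(y)|\lesssim y^{\gamma+2-d}\cdot y^{2p_2-2\gamma+d}+y^{-\gamma}\cdot y^{2p_2+2}=\Oc\!\bigl(y^{2(p_2+1)-\gamma}\bigr).
\]
The higher derivative bounds at infinity I would obtain by a bootstrap from the equation itself: $w=\Ls^{-1}f$ satisfies $\partial_{yy}w=-\frac{d-1}{y}\partial_y w+\frac{Z}{y^2}w-f$, and differentiating this identity $k$ times together with the pointwise bounds on $w,\partial_y w$ (the latter obtained from the intermediate formula $\As w=(y^{d-1}\Lambda Q)^{-1}\!\int_0^{y}f\Lambda Q\,x^{d-1}dx$ of Lemma~\ref{lemm:inversionL}) gives $|\partial_y^{k}w|\lesssim y^{2(p_2+1)-\gamma-k}$ inductively in $k$.

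The main technical nuisance will be the Taylor expansion step in part (iii) — one has to check that no degeneracy enters the leading coefficient, i.e.\ that $-\tfrac{1}{d(2p_1+d+2)}+\tfrac{1}{2p_1+2}\neq 0$ for every $p_1\in\mathbb N$; this is elementary but must not be skipped because degree $(p_1+1,\cdot)$ would otherwise fail. The rest is routine book-keeping on integrals with power weights.
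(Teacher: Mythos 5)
Your proof is correct and follows essentially the same route as the paper: parts $(i)$–$(ii)$ by direct computation from Definition~\ref{def:Admitfunc}, and part $(iii)$ via the explicit inversion formula \eqref{eq:invLc} (equivalently, the factorized form of Lemma~\ref{lemm:inversionL} that the paper uses, estimating first $\As w$ and then $w$), with the higher derivative bounds at infinity recovered by differentiating $\Ls w = f$ inductively. Two cosmetic remarks: the non-degeneracy of the leading Taylor coefficient in $(iii)$ is in fact not needed, since Definition~\ref{def:Admitfunc} does not require $c_{p_1}\neq 0$ (a function admissible of degree $(p_1+2,p_2+1)$ is a fortiori admissible of degree $(p_1+1,p_2+1)$); and your evaluation of the integrals at infinity implicitly assumes $2p_2-2\gamma+d>0$ and $2p_2+2>0$, which holds in every application of the lemma (where $p_2\geq 0$) but not for arbitrary $p_2\in\mathbb{Z}$ — an imprecision the paper's own proof shares.
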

\begin{proof} $(i) - (ii)$ This is simply a consequence of Definition \ref{def:Admitfunc}. \\
$(iii)$ We aim at proving that if $f$ is admissible of degree $(p_1, p_2)$, then $w = \Ls^{-1}f$ is admissible of degree $(p_1 + 1, p_2 + 1)$. To do so, we use Lemma \ref{lemm:inversionL} to estimate \\
- for $y \ll 1$, 
$$\As w = \frac{1}{y^{d-1}\Lambda Q}\int_0^yf\Lambda Q x^{d-1}dx =  \Oc \left(\frac{1}{y^d}\int_0^y x^{2p_1 + 1 + d}dx\right) = \Oc(y^{2p_1 + 2}),$$
$$w = - \Lambda Q \int_0^y\frac{\As w}{\Lambda Q}dx = \Oc\left(y\int_0^y x^{2p_1 + 1}dx \right) = \Oc(y^{2(p_1 + 1) + 1}),$$
- for $y \geq 1$, 
$$\As w =  \Oc \left(\frac{1}{y^{d-1 - \gamma}}\int_0^y x^{2p_2 -2\gamma + d - 1}dx\right) = \Oc(y^{2p_2 + 1 - \gamma}),$$
$$w = \Oc\left(\frac{1}{y^\gamma}\int_0^y x^{2p_2 + 1}\right) = \Oc(y^{2(p_2 + 1) - \gamma}).$$
From the last formula in \eqref{eq:relaAL} and \eqref{eq:asympV}, we estimate
$$\partial_y w = - \partial_y \Lambda Q \int_0^y \frac{\As w}{\Lambda Q}dx - \As w = -\frac{\partial_y \Lambda Q}{\Lambda Q} w  - \As w = \Oc(y^{2(p_2 + 1) - \gamma - 1}).$$
Using $\Ls w = f$, we get
$$\partial_{yy}w = \Oc \left(\frac{|\partial_y w|}{y} + \frac{|w|}{y^2} + |f|\right) =\Oc (y^{2(p_2+1) - \gamma - 2}).$$
By taking radial derivatives of $\Ls w = f$, we obtain by induction
$$|\partial_y^k w| \lesssim y^{2(p_2 + 1) - \gamma - k}, \quad k \in \mathbb{N}, \; y \geq 1.$$
This concludes the proof of Lemma \ref{lemm:actionLL}.
\end{proof}

The following lemma is a consequence of Lemma \ref{lemm:actionLL}:
\begin{lemma}[Generators of the kernel of $\Ls^k$] \label{lemm:GenLk} Let the sequence of profiles 
\begin{equation}\label{def:Tk}
T_k = (-1)^k\Ls^{-k} \Lambda Q, \quad k \in \mathbb{N},
\end{equation}
then\\
$(i)\;$ $T_k$ is admissible of degree $(k,k)$ for $k \in \mathbb{N}$.\\
$(ii)\;$ $\Lambda T_k - (2k - \gamma)T_k$ is admissible of degree $(k, k-1)$ for $k \in \mathbb{N}^*$.
\end{lemma}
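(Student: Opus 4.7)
The plan is to prove both parts by induction on $k$, using the definition $T_k = (-1)^k \Ls^{-k}\Lambda Q$ (which yields $\Ls T_k = -T_{k-1}$ and $T_{k+1} = -\Ls^{-1}T_k$), the action of $\Ls^{-1}$ on admissible functions from Lemma \ref{lemm:actionLL}, and the commutator identity \eqref{eq:relLsLam}. For (i), the base case $T_0 = \Lambda Q$ is admissible of degree $(0,0)$ by \eqref{eq:asymLamQ}, and the inductive step is immediate from Lemma \ref{lemm:actionLL}(iii), which says that $\Ls^{-1}$ raises the admissibility degree by $(1,1)$.

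For (ii), set $U_k := \Lambda T_k - (2k-\gamma)T_k$ and apply $\Ls$ using \eqref{eq:relLsLam} together with $\Ls T_k = -T_{k-1}$ to derive the recurrence
$$\Ls U_k \;=\; -\,U_{k-1} \;-\; \frac{\Lambda Z}{y^{2}}\,T_k.$$
I would induct on $k \geq 1$, starting from the observation that $U_0 = \Lambda\Lambda Q + \gamma \Lambda Q$ is admissible of degree $(0,-1)$: near the origin $\Lambda Q = y + O(y^{3})$ gives $U_0 = (1+\gamma)y + O(y^{3})$, while at infinity the leading $y^{-\gamma}$ contributions cancel in view of \eqref{eq:asymLamQ}, leaving $U_0 = O(y^{-\gamma - 2})$. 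For the inductive step I would use $\Lambda Z = -2(d-1)\sin(2Q)\Lambda Q$ together with $\sin(2Q) \sim 2a_0/y^{\gamma}$ at infinity (from \eqref{eq:asymQ}) and $\Lambda Z \sim -4(d-1)y^{2}$ at the origin to verify that $\tfrac{\Lambda Z}{y^{2}}T_k$ is admissible of degree $(k, k-2)$; the infinity estimate $y^{2k-3\gamma-2} \leq y^{2(k-2)-\gamma}$ here crucially relies on $\gamma \geq 1$. Combined with the inductive hypothesis on $U_{k-1}$, the right-hand side of the recurrence is then admissible of degree $(k-1,k-2)$, and Lemma \ref{lemm:actionLL}(iii) yields that $\Ls^{-1}(\Ls U_k)$ is admissible of degree $(k, k-1)$.

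The main obstacle is that $U_k$ and $\Ls^{-1}(\Ls U_k)$ a priori differ by an element $h \in \ker\Ls = \mathrm{Span}(\Lambda Q, \Gamma)$. I would rule out the $\Gamma$ component using that $U_k$ is smooth at $0$ by part (i), whereas \eqref{eq:asymGamma} shows $\Gamma$ is singular like $y^{1-d}$. To exclude the $\Lambda Q$ component I would use part (i) to write $T_k = c_k y^{2k+1} + O(y^{2k+3})$ near the origin, whence $U_k = (1+\gamma)c_k y^{2k+1} + O(y^{2k+3})$ has vanishing linear coefficient; but any nonzero multiple of $\Lambda Q \sim y$ would contribute a nontrivial $y$-term near $0$, so the $\Lambda Q$ coefficient in $h$ must vanish. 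Hence $U_k = \Ls^{-1}(\Ls U_k)$ is admissible of degree $(k, k-1)$, closing the induction.
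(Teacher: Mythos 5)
Your argument for part $(i)$ is identical to the paper's, and for part $(ii)$ you follow essentially the same route: the paper likewise applies the commutator identity \eqref{eq:relLsLam} to obtain $\Ls\big(\Lambda T_{k+1}-(2k+2-\gamma)T_{k+1}\big)=\Lambda T_k-(2k-\gamma)T_k-\frac{\Lambda Z}{y^2}T_{k+1}$ and closes the induction with Lemma \ref{lemm:actionLL}$(iii)$. Two of your choices differ and are worth recording. First, you initialize at $k=0$ with $U_0=(\Lambda+\gamma)\Lambda Q$, whereas the paper initializes at $k=1$ by computing the expansion of $T_1$ explicitly; both verifications rest on the same cancellation (the operator $\Lambda-(2k-\gamma)$ annihilates the exact power $y^{2k-\gamma}$), and in either version one must check that the subleading corrections in \eqref{eq:asymLamQ} --- in particular the $\Oc(y^{-\tilde{\gamma}})$ relative term, which is only $\Oc(y^{-1})$ when $d=7$ since then $\tilde{\gamma}=1$ --- do not spoil the claimed gain of a full factor $y^{-2}$ over the generic decay. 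This is exactly as delicate in the paper's $k=1$ base case as in your $k=0$ one, so it is a shared subtlety rather than a gap you introduced, but you should make the cancellation of that mode (or its harmlessness) explicit rather than asserting $U_0=\Oc(y^{-\gamma-2})$ from the leading term alone. Second, you explicitly address the fact that $U_k$ and $\Ls^{-1}(\Ls U_k)$ may differ by an element of $\text{Span}(\Lambda Q,\Gamma)$, ruling out $\Gamma$ by regularity at the origin and $\Lambda Q$ by the absence of a $y^1$ Taylor coefficient for $k\geq 1$; the paper passes over this point silently, and your treatment is the more complete one. Modulo the base-case caveat, the proof is correct.
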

\begin{proof} $(i)$ We note from \eqref{eq:asymLamQ} that $\Lambda Q$ is admissible of degree $(0,0)$. By induction and part $(iii)$ of Lemma \ref{lemm:actionLL}, the conclusion then follows.\\

$(ii)$ We proceed by induction. For $k = 1$, we explicitly compute $T_1 = -\Ls^{-1}\Lambda Q$ by using Lemma \ref{lemm:inversionL} and the expansion \eqref{eq:asymLamQ} to get
$$\forall m \in \mathbb{N}, \quad \partial_y^m T_1(y) =  e_{1,m}y^{2 - \gamma - m} + \Oc \left(y^{-\gamma - m}\right) \quad \text{as} \quad y \to +\infty.$$
By induction, one can easily check that $\partial_y^m \Lambda f = \Lambda \partial_y^mf + m \partial_y^mf$ for $m \in \mathbb{N}^*$. Hence,
$$\partial_y^m\Big[\Lambda T_1 - (2 - \gamma)T_1\Big] = \Lambda \partial_y^m T_1 - (2 - \gamma - m)\partial_y^mT_1 = \Oc \left(y^{-\gamma - m}\right) \quad \text{as} \quad y \to +\infty.$$
Since $T_1$ and $\Lambda T_1$ are admissible of degree $(1,1)$, we deduce that $\Lambda T_1 - (2 - \gamma)T_1$ is admissible of degree $(1,0)$.

We now assume the claim for $k \geq 1$, namely that $\Lambda T_k - (2k - \gamma)T_k$ is admissible of degree $(k, k - 1)$. Let us prove that $\Lambda T_{k+1} - (2(k+1) - \gamma)T_{k+1}$ is admissible of degree $(k+1, k)$. We use formula \eqref{eq:relLsLam} and definition \eqref{def:Tk} to write
\begin{align}
&\Ls \big(\Lambda T_{k+1} - (2k + 2 - \gamma)T_{k+1}\big)\nonumber\\
&\qquad \qquad = \Lambda \Ls T_{k+1} - (2k - \gamma)\Ls T_{k+1} - \frac{\Lambda Z}{y^2}T_{k+1}\nonumber\\
&\qquad \qquad  = \Lambda T_k - (2k - \gamma)T_k - \frac{\Lambda Z}{y^2}T_{k+1}.\label{eq:tmpLTk1}
\end{align}
From part $(i)$, we know that $T_{k+1}$ is admissible of degree $(k+1, k+1)$. From \eqref{def:ZbyV} and \eqref{eq:asympV}, one can check that $\frac{\Lambda Z}{y^2}T_{k+1}$ admits the asymptotic:
$$\frac{\Lambda Z}{y^2}T_{k+1} = \Oc(y^{2k + 1})  \quad \text{as} \quad y \to 0,$$
and 
$$\partial_y^j\left(\frac{\Lambda Z}{y^2}T_{k+1}\right) = \Oc(y^{2(k+1) - j - \gamma - 3}) \ll y^{2(k-1) + j - \gamma} \quad \text{as}\quad y \to +\infty.$$
Together with the induction hypothesis, we deduce that the right hand side of \eqref{eq:tmpLTk1} is admissible of degree $(k, k-1)$. The conclusion then follows by using part $(iii)$ of Lemma \ref{lemm:actionLL}. This ends the proof of Lemma \ref{lemm:GenLk}.
\end{proof}

We close this subsection by defining a simple notion of homogeneous admissible function.
\begin{definition}[Homogeneous admissible function] Let $L \gg 1$ be an integer and $m = (m_1, \cdots, m_L) \in \mathbb{N}^L$, we say that a function $f(b,y)$ with $b = (b_1, \cdots, b_L)$ is homogeneous of degree $(p_1, p_2, p_3) \in \mathbb{N} \times \mathbb{Z}\times \mathbb{N}$ if it is a finite linear combination of monomials
$$\tilde f (y)\prod_{k = 1}^Lb_k^{m_k},$$
with $\tilde{f}(y)$ admissible of degree $(p_1, p_2)$ in the sense of Definition \ref{def:Admitfunc} and
$$(m_1, \cdots, m_L) \in \mathbb{N}^L, \quad\sum_{k = 1}^L km_k = p_3.$$
We set 
$$\text{deg}(f):= (p_1, p_2, p_3).$$  
\end{definition}

\subsection{Slowly modulated blow-up profile.}
In this subsection, we use the explicit structure of the linearized operator $\Ls$ to construct an approximate blow-up profile. In particular, we claim the following:
\begin{proposition}[Construction of the approximate profile] \label{prop:1}  Let $d \geq 7$ and $L \gg 1$ be an integer. Let $M > 0$ be a large enough universal constant, then there exist a small enough universal constant $b^*(M,L) > 0$ such that the following holds true. Let a $\Cc^1$ map
 $$b = (b_1, \cdots, b_L):[s_0,s_1] \mapsto (-b^*, b^*)^L,$$
with a priori bounds in $[s_0,s_1]$:
\begin{equation}\label{eq:relb1bk}
0 < b_1 < b^*, \quad |b_k| \lesssim b_1^k, \quad 2 \leq k \leq L, 
\end{equation}
Then there exist homogeneous profiles 
$$S_1 = 0, \quad S_k = S_k(b,y), \quad 2 \leq k \leq L + 2,$$
such that
\begin{equation}\label{eq:Qbform}
Q_{b(s)}(y) = Q(y) + \sum_{k = 1}^L b_k(s)T_k(y) + \sum_{k = 2}^{L+2}S_k(b,y) \equiv Q(y) + \Theta_{b(s)}(y),
\end{equation}
produces an approximate solution to the remormalized flow \eqref{eq:wys}:
\begin{equation}\label{def:Psib}
\partial_s Q_{b} - \partial_{yy}Q_b - \frac{(d-1)}{y}\partial_yQ_b + b_1 \Lambda Q_b + \frac{(d-1)}{2y^2}\sin(2Q_b) = \Psi_b + \textup{Mod}(t),
\end{equation}
with the following property:\\
$(i)$ (Modulation equation)
\begin{equation}\label{eq:Modt}
\textup{Mod}(t) = \sum_{k = 1}^L\Big[(b_k)_s + (2k - \gamma)b_1b_k - b_{k + 1}\Big] \left[T_k + \sum_{j = k + 1}^{L+2}\frac{\partial S_j}{\partial b_k}\right], 
\end{equation}
where we use the convention $b_{j} = 0$ for $j \geq L+1$.\\
$(ii)$ (Estimate on the profiles) The profiles $(S_k)_{2 \leq k \leq L+2}$ are homogeneous with 
\begin{align*}
&\text{deg}(S_k) = (k,k-1,k) \quad \text{for} \quad 2 \leq k \leq L+2,\\
&\frac{\partial S_k}{\partial b_m} = 0 \quad \text{for}\quad 2 \leq k \leq m \leq L.
\end{align*}
$(iii)$ (Estimate on the error $\Psi_b$) For all $0 \leq m \leq L$, there holds:\\
- (global weight bound)
\begin{equation}\label{eq:estGlobalPsib}
\int_{y \leq 2B_1}|\Ls^{\hbar + m + 1} \Psi_b|^2 + \int_{y \leq 2B_1} \frac{|\Psi_b|^2}{1 + y^{4(\hbar + m + 1 )}} \lesssim b_1^{2m + 4 + 2(1 - \delta) - C_L\eta},
\end{equation}
where $B_1$, $\hbar$, $\delta$ are defined in \eqref{def:B0B1} and \eqref{def:kdeltaplus}.\\
- (improved local bound)
\begin{equation}\label{eq:estlocalPsib}
\forall M \geq 1, \quad \int_{y \leq 2M}|\Ls^{\hbar + m + 1} \Psi_b|^2 \lesssim M^Cb_1^{2L + 6}.
\end{equation}
\end{proposition}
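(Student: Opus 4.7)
The proof proceeds by an inductive tail computation in the spirit of \cite{RSapde2014}. Insert the ansatz \eqref{eq:Qbform} into the LHS of \eqref{def:Psib}, and expand the nonlinear potential around $Q$:
\begin{equation*}
-\Delta \Theta_b + \frac{d-1}{2y^2}\bigl[\sin(2Q_b)-\sin(2Q)\bigr] = \Ls\Theta_b + \Nc(\Theta_b),
\end{equation*}
where $\Nc(\Theta_b)$ collects the quadratic-and-higher Taylor remainder. Using $\Ls(\Lambda Q)=0$ and $\Ls T_{k+1}=-T_k$ from \eqref{def:Tk}, the linear-in-$b_1$ contribution $b_1\Lambda Q + b_1 \Ls T_1$ cancels automatically, so no equation is needed at order $b_1$. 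The plan is then to choose the modulation ODEs for $b_1,\dots,b_L$ and the profiles $S_2,\dots,S_{L+2}$ iteratively, so that every term of homogeneous $b$-degree $\leq L+1$ is absorbed and $\Psi_b$ is left as a purely higher-order remainder.

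\textbf{Inductive step.} Suppose $S_2,\dots,S_k$ have been constructed, homogeneous admissible of the claimed degrees $(j,j-1,j)$ for $j\leq k$ and independent of $b_k,\dots,b_L$. Collect every source of pure homogeneous $b$-degree $k+1$: the terms $\partial_s(b_k T_k)$, $b_1\Lambda(b_k T_k)$, $b_{k+1}\Ls T_{k+1}$, the degree-$(k+1)$ piece of $\Nc(\Theta_b)$, and the degree-$(k+1)$ pieces produced by $\partial_s S_j$ and $b_1\Lambda S_j$ for $j\leq k$. Using $\Lambda T_k = (2k-\gamma)T_k + \Rc_k$ with $\Rc_k$ admissible of degree $(k,k-1)$ by Lemma \ref{lemm:GenLk}(ii), and $\Ls T_{k+1}=-T_k$, this sum rearranges as
\begin{equation*}
\big[(b_k)_s + (2k-\gamma)b_1 b_k - b_{k+1}\big]\Bigl(T_k+\sum_{j>k}\tfrac{\partial S_j}{\partial b_k}\Bigr) \;+\; b_1 b_k \Rc_k \;+\; \Fc_{k+1},
\end{equation*}
where $\Fc_{k+1}$ is homogeneous admissible of degree $(k,k-1,k+1)$ by repeated application of Lemma \ref{lemm:actionLL}(i)--(ii) to the products of admissible factors appearing in the Taylor coefficients of $\sin$. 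Forcing the modulation bracket to vanish yields the $k$-th equation of \eqref{sys:bk_i}; the remainder is then inverted by setting
\begin{equation*}
S_{k+1} \;:=\; \Ls^{-1}\!\big(-b_1 b_k \Rc_k - \Fc_{k+1}\big),
\end{equation*}
which, by Lemma \ref{lemm:actionLL}(iii), is homogeneous admissible of degree $(k+1,k,k+1)$, as announced. Since the RHS depends only on $b_1,\dots,b_k$, the orthogonality $\partial_{b_m}S_{k+1}=0$ for $m\geq k+1$ is automatic. Running $k$ from $1$ to $L+1$ produces the full list $(S_k)_{k\leq L+2}$ and the modulation equation in the form \eqref{eq:Modt}, once one accounts for the extra $\tfrac{\partial S_j}{\partial b_k}$ factors arising when differentiating $S_j$ in $s$ through its $b_k$-dependence.

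\textbf{Error bounds and main obstacle.} After the iteration, $\Psi_b$ is a finite sum of contributions of homogeneous $b$-degree at least $L+2$: the top Taylor remainder in $\Nc(\Theta_b)$, the unmatched pieces $\Ls S_{L+2}$ and $b_1\Lambda(S_{L+1}+S_{L+2})$, and the higher-order parts of $\partial_s S_j$ and $b_1\Lambda S_j$. Each such contribution is homogeneous admissible of some degree $(p_1,p_2,p_3)$ with $p_3\geq L+2$ and $p_2\geq L+1$, so by Definition \ref{def:Admitfunc}(ii) one has pointwise and for all derivatives
\begin{equation*}
|\Ls^{\hbar+m+1}\Psi_b(y)| \;\lesssim\; b_1^{L+2}\bigl(1 + y^{2L - \gamma - 2(\hbar+m+1)}\bigr).
\end{equation*}
The local estimate \eqref{eq:estlocalPsib} follows on $y\leq 2M$ by pure polynomial-in-$M$ control, picking up at worst $M^C$. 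For the global weighted bound \eqref{eq:estGlobalPsib}, the outer contribution on $\{1\leq y\leq 2B_1\}$ is controlled by $b_1^{2(L+2)}\int_1^{2B_1} y^{4L-2\gamma-4(\hbar+m+1)+d-1}\,dy$; with $B_1 = b_1^{-(1+\eta)/2}$ and the identity $\tfrac{d}{2}-\gamma = 2\hbar + 2\delta$ from \eqref{def:kdeltaplus}, power-matching produces exactly the exponent $2m+4+2(1-\delta)-C_L\eta$. \emph{The main difficulty} is neither any single algebraic identity nor the inversion of $\Ls$, but rather the systematic \emph{bookkeeping of homogeneous admissible degrees} through the nonlinear Taylor expansion of $\sin(2Q_b)$: products of admissible functions mix behaviors at $0$ and at $\infty$, and each $S_k$ must sit at the precise degree $(k,k-1,k)$ for the subsequent order to close. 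This is exactly what Definition \ref{def:Admitfunc} and Lemma \ref{lemm:actionLL} are engineered to track, and applying them mechanically at each order is what drives the entire construction.
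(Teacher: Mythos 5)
Your proposal follows essentially the same route as the paper's proof: the same iterative tail computation with $S_{k+1}=-\Ls^{-1}F_{k+1}$, the same bookkeeping of homogeneous admissible degrees via Definition \ref{def:Admitfunc} and Lemmas \ref{lemm:actionLL}--\ref{lemm:GenLk}, and the same power counting on $\{y\leq 2B_1\}$ using $d-2\gamma-4\hbar=4\delta$ to produce the exponent $2m+4+2(1-\delta)-C_L\eta$. The only slips are cosmetic — e.g.\ $\Ls S_{L+2}$ is in fact used to absorb the order-$(L+2)$ source, so the surviving remainder is $E_{L+2}+\tfrac{d-1}{y^2}(R_1+R_2)$ rather than containing $\Ls S_{L+2}$ itself, and the surviving pieces have $p_3-p_2=2$ (as in $\deg(E_{L+2})=(L+2,L+1,L+3)$) rather than the looser $(p_2,p_3)\geq(L+1,L+2)$ you state — and they do not affect the final exponents.
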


\begin{proof} We aim at constructing the profiles $(S_k)_{2 \leq k \leq L+2}$ such that $\Psi_b(y)$ defined from \eqref{def:Psib} has the \emph{least possible growth} as $y \to +\infty$. The structure of the linearized operator $\Ls$ defined in \eqref{def:Lc} is the main ingredient for the construction of the approximate profile. This procedure will lead to the leading-order modulation equation
\begin{equation}
(b_k)_s = -(2k - \gamma)b_1b_k + b_{k+1} \quad \text{for}\quad 1 \leq k \leq L,
\end{equation}
which actually cancels the worst growth of $S_k$ as $y \to +\infty$.

\paragraph{$\bullet$ Expansion of $\Psi_b$}. From \eqref{def:Psib} and \eqref{eq:Qy}, we write
\begin{align*}
&\partial_s Q_{b} - \partial_{yy}Q_b - \frac{(d-1)}{y}\partial_yQ_b + b_1 \Lambda Q_b + \frac{(d-1)}{2y^2}\sin(2Q_b)\\
&= b_1\Lambda Q + \partial_s \Theta_b - \partial_{yy}\Theta_b - \frac{(d-1)}{y} \partial_y \Theta_y + \frac{(d-1)}{y^2}\cos(2Q)\Theta_b + b_1 \Lambda \Theta_b\\
&\qquad+ \frac{(d-1)}{2y^2}\left[\sin(2Q + 2\Theta_b) - \sin(2Q) - 2\cos(2Q)\Theta_b\right]:= A_1 + A_2.
\end{align*}
Using the expression \eqref{eq:Qbform} of $\Theta_b$ and the definition \eqref{def:Tk} of $T_k$ (note that $\Ls T_k = - T_{k-1}$ with the convention $T_0 = \Lambda Q$), we write 
\begin{align*}
A_1&= b_1 \Lambda Q + \sum_{k = 1}^L \Big[(b_k)_s T_k + b_k \Ls T_k + b_1b_k\Lambda T_k \Big] + \sum_{k = 2}^{L+2}\Big[\partial_s S_k + \Ls S_k + b_1 \Lambda S_k\Big]\\
&=  \sum_{k = 1}^L \Big[(b_k)_sT_k - b_{k+1}T_k + b_1b_k \Lambda T_k \Big] + \sum_{k = 2}^{L+2}\Big[\partial_s S_k + \Ls S_k + b_1 \Lambda S_k\Big]\\
&= \sum_{k = 1}^L \Big[(b_k)_s - b_{k+1} + (2k - \gamma)b_1b_k\Big]T_k\\
&\qquad + \sum_{k = 1}^L\Big[\Ls S_{k + 1} + \partial_s S_k + b_1b_k \big[\Lambda T_k - (2k - \gamma)T_k\big] + b_1 \Lambda S_k\Big]\\
& \qquad\qquad+ \Big[\Ls S_{L+2} + \partial_s S_{L+1} + b_1\Lambda S_{L+1}\Big] + \Big[\partial_sS_{L+2} + b_1\Lambda S_{L+2}\Big].
\end{align*}
We now write
\begin{align*}
\partial_sS_k = \sum_{j = 1}^L(b_j)_s\frac{\partial S_k}{\partial b_j} &= \sum_{j = 1}^L\Big[(b_j)_s + (2j - \gamma)b_1b_j - b_{j + 1} \Big]\frac{\partial S_k}{\partial b_j} \\
&\qquad - \sum_{j = 1}^L\Big[(2j - \gamma)b_1b_j - b_{j + 1} \Big]\frac{\partial S_k}{\partial b_j}.
\end{align*} 
Hence, 
\begin{align*}
A_1 = \textup{Mod}(t) &+ \sum_{k = 1}^{L+1}\left[\Ls S_{k + 1} + E_k\right] + E_{L+2},
\end{align*}
where for $k = 1, \cdots, L$,
\begin{equation}\label{def:Ek1}
E_k= b_1b_k \big[\Lambda T_k - (2k - \gamma)T_k\big] + b_1 \Lambda S_k - \sum_{j = 1}^{k-1}\Big[(2j - \gamma)b_1b_j - b_{j + 1} \Big]\frac{\partial S_k}{\partial b_j},
\end{equation} 
and for $k = L+1, L+2$,
\begin{equation}\label{def:EkL12}
E_k = b_1 \Lambda S_k - \sum_{j = 1}^{L}\Big[(2j - \gamma)b_1b_j - b_{j + 1} \Big]\frac{\partial S_k}{\partial b_j}.
\end{equation}

For the expansion of the nonlinear term $A_2$, let us denote 
$$f(x) = \sin(2x)$$
and use a Taylor expansion to write (see pages 1740 in \cite{RSapde2014} for a similar computation)
\begin{align*}
A_2 = \frac{(d-1)}{2y^2}\left[\sum_{i = 2}^{L+2}\frac{f^{(i)}(Q)}{i!}\Theta_b^i + R_2\right] = \frac{(d-1)}{2y^2}\left[\sum_{i = 2}^{L+2}P_i + R_1 + R_2\right],
\end{align*}
where 
\begin{equation}\label{def:Pj}
P_i = \sum_{j = 2}^{L+2}\frac{f^{(j)}(Q)}{j!}\sum_{|J|_1 = j, |J|_2 = i}c_J \prod_{k = 1}^Lb_k^{i_k}T_k^{i_k}\prod_{k=2}^{L+2}S_k^{j_k},
\end{equation}
\begin{equation}\label{def:R_1}
R_1 = \sum_{j = 2}^{L+2}\frac{f^{(j)}(Q)}{j!}\sum_{|J|_1 = j, |J|_2 \geq L+3}c_J \prod_{k = 1}^Lb_k^{i_k}T_k^{i_k}\prod_{k=2}^{L+2}S_k^{j_k},
\end{equation}
\begin{equation}\label{def:R2}
R_2 = \frac{\Theta_b^{L+3}}{(L+2)!}\int_0^1(1 - \tau)^{L+2}f^{(L+3)}(Q + \tau \Theta_b)d\tau,
\end{equation}
with $J = (i_1, \cdots, i_L, j_2, \cdots, j_{L+2}) \in \mathbb{N}^{2L + 1}$ and 
\begin{equation}\label{def:J1J2}
|J|_1 = \sum_{k = 1}^L i_k + \sum_{k = 2}^{L+2}j_k, \quad |J|_2 = \sum_{k=1}^L ki_k + \sum_{k = 2}^{L+2}kj_k.
\end{equation}
In conclusion, we have 
\begin{equation}\label{eq:expanPsib}
\Psi_b = \sum_{k = 1}^{L+1}\left[\Ls S_{k + 1} + E_k + \frac{(d-1)}{2y^2}P_{k+1}\right] + E_{L+2} + \frac{(d-1)}{2y^2}(R_1 + R_2)
\end{equation}

\paragraph{$\bullet$ Construction of $S_k$.} From the expression of $\Psi_b$ given in \eqref{eq:expanPsib}, we construct iteratively the sequences of profiles $(S_k)_{1 \leq k \leq L+2}$ through the scheme
\begin{equation}\label{def:Sk}
\left\{\begin{array}{ll}
S_1 &= 0, \\
S_k &= - \Ls^{-1}F_k, \quad 2 \leq k \leq L+2,
\end{array}\right.
\end{equation}
where 
$$F_k = E_{k-1} + \frac{(d-1)}{2y^2}P_{k} \quad \text{for} \quad 2\leq k \leq L+2.$$
We claim by induction on $k$ that $F_k$ is homogeneous with 
\begin{equation}\label{eq:degFk}
\text{deg}(F_k) = (k-1, k-2, k) \quad \text{for} \quad 2 \leq k \leq L+2,
\end{equation}
and 
\begin{equation}\label{eq:estparFk}
\frac{\partial F_k}{\partial b_m} = 0 \quad \text{for}\quad 2 \leq k \leq m \leq L+2.
\end{equation}
From item $(iii)$ of Lemma \ref{lemm:actionLL} and \eqref{eq:degFk}, we deduce that $S_k$ is homogeneous of degree
$$\text{deg}(S_k) = (k,k-1,k) \quad \text{for}\quad 2 \leq k \leq L+2,$$
and from \eqref{eq:estparFk}, we get
$$\frac{\partial S_k}{\partial b_m} = 0 \quad \text{for} \quad 2 \leq k \leq m \leq L+2,$$
which is the conclusion of item $(ii)$.\\

Let us now give the proof of \eqref{eq:degFk} and \eqref{eq:estparFk}. We proceed by induction.\\
\noindent - Case $k = 2$: We compute explicitly from \eqref{def:Ek1} and \eqref{def:Pj},
$$F_2 = E_1 + \frac{(d-1)}{2y^2}P_2 = b_1^2\left[\Lambda T_1 - (2 - \gamma)T_1 + \frac{(d-1)f''(Q)}{2y^2}T_1^2\right],$$
which directly follows \eqref{eq:estparFk}. From Lemma \ref{lemm:GenLk}, we know that $T_1$ and $\Lambda T_1 - (2 - \gamma)T_1$ are admissible of degree $(1,1)$ and $(1,0)$ respectively. Using \eqref{eq:asymQ}, one can check the bound
\begin{equation}\label{eq:estfjm}
\forall m,j \in \mathbb{N}^2, \quad\left|\partial_y^m \left(\frac{f^{(j)}(Q)}{y^2}\right)\right| \lesssim y^{-\gamma - 2 - m} \quad \text{as}\quad y \to +\infty.
\end{equation}
Since $T_1$ is admissible of degree $(1,1)$, we have that
$$\forall m \in \mathbb{N}, \quad |\partial_y^m(T_1^2)| \lesssim y^{4- 2\gamma - m}  \quad \text{as}\quad y \to +\infty.$$
By the Leibniz rule and the fact that $2\gamma - 2 > 0$, we get that
$$\forall m,j \in \mathbb{N}^2, \quad\left|\partial_y^m \left(\frac{f^{(j)}(Q)}{y^2} T_1^2\right)\right| \lesssim y^{- \gamma - m - (2\gamma - 2)} \lesssim y^{-\gamma - m}.$$
We also have the expansion near the origin, 
$$\frac{f^{(j)}(Q)}{y^2}T_1^2 = \sum_{i = 2}^k c_iy^{2i + 1} + \Oc(y^{2k + 3}), \quad k \geq 1.$$
Hence, $\frac{f''(Q)}{y^2}T_1^2$ is admissible of degree $(2,0)$, which concludes the proof of \eqref{eq:degFk} for $k = 2$.\\

\noindent - Case $k \to k+1$: Estimate \eqref{eq:estparFk} holds by direct inspection. Let us now assume that $S_k$ is homogeneous of degree $(k, k-1, k)$ and prove that $S_{k + 1}$ is homogeneous of degree $(k+1, k, k+1)$. In particular, the claim immediately follows from part $(iii)$ of Lemma \ref{lemm:actionLL} once we show that $F_{k+1}$ is homogeneous with
\begin{equation}\label{eq:Fk1EkPk1}
\text{deg}(F_{k+1}) = \text{deg}\left(E_k + \frac{P_{k+1}}{y^2} \right) = (k, k-1, k+1).
\end{equation}
From part $(ii)$ of Lemma \ref{lemm:GenLk} and the a priori assumption \eqref{eq:relb1bk}, we see that $b_1b_k(\Lambda T_k - (2k - \gamma)T_k)$ is homogeneous of degree $(k, k-1, k+1)$. From part $(i)$ of Lemma \ref{lemm:actionLL} and the induction hypothesis, $b_1 \Lambda S_k$ is also homogeneous of degree $(k, k-1, k+1)$. By definition, $b_1\frac{\partial S_k}{\partial b_1}$ is homogeneous and has the same degree as $S_k$. Thus, 
$$\left((2j - \gamma)b_1 - \frac{b_{2}}{b_{1}}\right)\left(b_1\frac{\partial S_k}{\partial b_1}\right)$$
is homogeneous of degree $(k, k-1, k+1)$. From definitions \eqref{def:Ek1} and \eqref{def:EkL12}, we derive  
$$\text{deg}(E_k) = (k, k-1, k+1), \quad k \geq 1.$$
It remains to control the term $\frac{P_{k+1}}{y^2}$. From the definition \eqref{def:Pj}, we see that $\frac{P_{k+1}}{y^2}$ is a linear combination of monomials of the form 
$$M_J(y) = \frac{f^{(j)}(Q)}{y^2}\prod_{m = 1}^L b_m^{i_m}T_m^{i_m}\prod_{m = 2}^{L+2}S_m^{j_m},$$
with 
$$J= (i_1, \cdots, i_L, j_2, \cdots, j_{L+2}), \quad |J|_1 = j, \; |J|_2 = k+1, \; 2 \leq j \leq k+1.$$
Recall from part $(i)$ of Lemma \ref{lemm:GenLk} the bound
$$\forall n \in \mathbb{N}, \quad |\partial_y^n T_m| \lesssim y^{2m - \gamma - n} \quad \text{as} \quad y \to +\infty,$$
and from the induction hypothesis and the a priori bound \eqref{eq:relb1bk},
$$\forall n \in \mathbb{N}, \quad |\partial_y^n S_m| \lesssim b_1^{m}y^{2(m-1) - \gamma - n} \quad \text{as} \quad y \to +\infty. $$
Together with the bound \eqref{eq:estfjm}, we obtain the following bound at infinity,
$$| M_J|\lesssim b_1^{|J|_2} y^{2|J|_2 - \gamma - |J|_1 \gamma - 2 - 2\sum_{m=2}^{L+2}j_m} \lesssim b_1^{k+1}y^{2(k-1) - \gamma}.$$
The control of $\partial_y^n M_J$ follows by the Leibniz rule and the above estimates. One can also check that $M_J$ is of order $y^{2|J|_2 + |J|_1 - 1}$ near the origin. This concludes the proof of \eqref{eq:Fk1EkPk1} as well as part $(ii)$ of Proposition \ref{prop:1}.\\

\paragraph{$\bullet$ Estimate on $\Psi_b$.} From \eqref{eq:expanPsib} and \eqref{def:Sk}, the expression of $\Psi_b$ is now reduced to 
$$\Psi_b = E_{L+2} + \frac{(d-1)}{y^2}(R_1 + R_2),$$
where $E_{L+2}$, $R_1$ and $R_2$ are given by \eqref{def:EkL12}, \eqref{def:R_1} and \eqref{def:R2}.

We start by estimating $E_{L+2}$ term defined by \eqref{def:EkL12}. Since $S_{L+2}$ is homogeneous of degree $(L+2, L+1, L+2)$ and thus so are $\Lambda S_{L+2}$ and $b_1 \frac{\partial S_{L+2}}{\partial b_1}$. This follows that $E_{L+2}$ is homogeneous of degree $(L+2, L+1, L+3)$. Using part $(ii)$ of Lemma \ref{lemm:actionLL} and the relation $d - 2\gamma - 4\hbar = 4\delta$ (see \eqref{def:kdeltaplus}), we estimate for all $0 \leq m \leq L$
\begin{align*}
\int_{y \leq 2B_1}|\Ls^{\hbar + m + 1}E_{L+2}|^2 &\lesssim b_1^{2L+6}\int_{y \leq 2B_1}|y^{2(L+1) - \gamma - 2(\hbar + m + 1)}|^2 y^{d-1}dy\\
&\quad\lesssim b_1^{2L+6}\int_{y \leq 2B_1}y^{4(L - m + \delta) - 1}dy\\
&\qquad \lesssim b_1^{(2L + 6) - 2(L - m + \delta)(1 + \eta)}\\
&\qquad \quad \lesssim b_1^{2m + 4 + 2(1 - \delta) - C_L\eta},
\end{align*}
where $\eta = \eta(L)$, $0 < \eta \ll 1$.

We now turn to the control of the term $\frac{R_1}{y^2}$, which is a linear combination of terms of the form (see \eqref{def:R_1})
$$\tilde{M}_J = \frac{f^{(j)}(Q)}{y^2}\prod_{n = 1}^Lb_n^{i_n}T_n^{i_n}\prod_{n = 2}^{L+2}S_k^{j_n},$$
with 
$$J = (i_1, \cdots, i_L, j_2, \cdots, j_{L+2}), \; |J|_1 = j, \; |J|_2 \geq L+3, \; 2 \leq j \leq L+2.$$
Using the admissibility of $T_n$ and the homogeneity of $S_n$, we get the bounds
$$|\tilde{M}_J| \lesssim b_1^{L + 3}y^{2|J|_2 + j - 1} \lesssim b_1^{L+3}y^{2L + 6} \quad \text{as} \quad y \to 0,$$
and 
$$|\tilde{M}_J| \lesssim b_1^{|J|_2}y^{2|J|_2 - j\gamma - 2 - \gamma}  \quad \text{as}\quad y \to +\infty,$$
where we used the fact that $j \geq 2$ and $2 - j\gamma < 0$, and similarly for higher derivatives by the Leibniz rule. Thus, we obtain the round estimate for all $0 \leq m \leq L$,
\begin{align*}
\int_{y \leq 2B_1}\left|\Ls^{\hbar + m + 1}\left(\frac{R_1}{y^2}\right)\right|^2 &\lesssim b_1^{2|J|_2}\int_{y \leq 2B_1}|y^{2|J|_2 -j \gamma-\gamma-2 - 2(\hbar + m + 1)}|^2 y^{d-1}dy\\
&\quad\lesssim b_1^{2m + 4 + 2(1 - \delta) - C_L\eta}.
\end{align*}
The term $\frac{R_2}{y^2}$ is estimated exactly as for the term $\frac{R_1}{y^2}$ using the definition \eqref{def:R2}. Similarly, the control of $\int_{y \leq 2B_1} \frac{|\Psi_b|^2}{(1 + y^{4(\hbar + m + 1)})}$ is obtained along the exact same lines as above. This concludes the proof of \eqref{eq:estGlobalPsib}. The local estimate \eqref{eq:estlocalPsib} directly follows from the homogeneity of $S_k$ and the admissibility of $T_k$. This concludes the proof of Proposition \ref{prop:1}.
\end{proof}

\bigskip

We now proceed to a simple localization of the profile $Q_b$ to avoid the growth of tails in the region $y \geq 2B_1 \gg B_0$. More precisely, we claim the following:

\begin{proposition}[Estimates on the localized profile] \label{prop:localProfile} Under the assumptions of Proposition \ref{prop:1}, we assume in addition the a priori bound 
\begin{equation}\label{eq:apriorib1}
|(b_1)_s| \lesssim b_1^2.
\end{equation}
Consider the localized profile
\begin{equation}\label{def:Qbtil}
\tilde{Q}_{b(s)}(y) = Q(y) + \sum_{k = 1}^Lb_k\tilde{T}_k + \sum_{k = 2}^{L+2}\tilde{S}_k \quad \text{with}\quad \tilde{T}_k = \chi_{B_1}T_k, \; \tilde{S}_k = \chi_{B_1}S_k,
\end{equation}
where $B_1$ and $\chi_{B_1}$ are defined as in \eqref{def:B0B1} and \eqref{def:chiM}. Then
\begin{equation}\label{def:Psibtilde}
\partial_s \tilde Q_{b} - \partial_{yy}\tilde Q_b - \frac{(d-1)}{y}\partial_y \tilde Q_b + b_1 \Lambda \tilde Q_b + \frac{(d-1)}{2y^2}\sin(2\tilde Q_b) = \tilde \Psi_b + \chi_{B_1}\, \textup{Mod}(t),
\end{equation}
where $\tilde \Psi_{b}$ satisfies the bounds:\\

\noindent $(i)\;$ (Large Sobolev bound) For all $0 \leq m \leq L-1$,
\begin{align}
&\int |\Ls^{\hbar + m + 1} \tilde{\Psi}_b|^2 + \int \frac{|\As\Ls^{\hbar + m} \tilde{\Psi}_b|^2}{1 + y^2} \nonumber\\
& \qquad + \int \frac{|\Ls^{\hbar + m} \tilde{\Psi}_b|^2}{1 + y^4} + \int \frac{|\tilde \Psi_b|^2}{1 + y^{4(\hbar + m + 1)}} \lesssim b_1^{2m + 2 + 2(1 - \delta)  - C_L \eta},\label{eq:estPsibLarge1}
\end{align}
and 
\begin{align}
&\int |\Ls^{\hbar + L + 1} \tilde{\Psi}_b|^2 + \int \frac{|\As\Ls^{\hbar + L} \tilde{\Psi}_b|^2}{1 + y^2} \nonumber\\
& \qquad + \int \frac{|\Ls^{\hbar + L} \tilde{\Psi}_b|^2}{1 + y^4} + \int \frac{|\tilde \Psi_b|^2}{1 + y^{4(\hbar + L + 1)}} \lesssim b_1^{2L + 2 + 2(1 - \delta)(1 + \eta)},\label{eq:estPsibLarge2}
\end{align}
where $\hbar$ and $\delta$ are defined by \eqref{def:kdeltaplus}. \\

\noindent $(ii)$ (Very local bound) For all $M \leq \frac{B_1}{2}$ and $0 \leq m \leq L$, 
\begin{equation}\label{eq:estPsiblocalTilde}
\int_{y \leq 2M} |\Ls^{\hbar +  m + 1} \tilde{\Psi}_b|^2 \lesssim M^Cb_1^{2L + 6}.
\end{equation}

\noindent $(iii)$ (Refined local bound near $B_0$) For all $0 \leq m \leq L$, 
\begin{equation}\label{eq:estPsiblocalB0}
\int_{y \leq 2B_0}|\Ls^{\hbar + m + 1}\tilde\Psi_b|^2 + \int_{y \leq 2B_0}\frac{|\tilde \Psi_b|^2}{1 + y^{4(\hbar +m + 1)}} \lesssim b_1^{2m + 4 + 2(1 - \delta) - C_L\eta}.
\end{equation}
%
\end{proposition}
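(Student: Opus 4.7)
The plan is to compare $\tilde\Psi_b$ with $\Psi_b$, carefully tracking the commutator terms generated by the cutoff $\chi_{B_1}$ and the ``tail'' $b_1\Lambda Q$ emerging outside its support. Writing $\tilde Q_b = Q + \chi_{B_1}\Theta_b$ with $\Theta_b := Q_b - Q = \sum_{k=1}^L b_k T_k + \sum_{k=2}^{L+2} S_k$, and applying the Leibniz rule to $\partial_s$, $\Delta$ and $\Lambda$, the identities $\partial_s Q_b - \Delta Q_b + b_1\Lambda Q_b + \tfrac{d-1}{2y^2}\sin(2Q_b) = \Psi_b + \textup{Mod}(t)$ and $-\Delta Q + \tfrac{d-1}{2y^2}\sin(2Q) = 0$ lead to the pointwise decomposition
\begin{equation*}
\tilde\Psi_b \;=\; \chi_{B_1}\Psi_b \;+\; (1-\chi_{B_1})\,b_1\Lambda Q \;+\; E_{\mathrm{cut}} \;+\; E_{\mathrm{NL}},
\end{equation*}
with $E_{\mathrm{cut}} := (\partial_s\chi_{B_1} - \Delta\chi_{B_1} + b_1\Lambda\chi_{B_1})\Theta_b - 2\chi_{B_1}'\,\partial_y\Theta_b$ supported in the annulus $B_1\leq y\leq 2B_1$, and $E_{\mathrm{NL}} := \tfrac{d-1}{2y^2}\bigl[\sin(2\tilde Q_b)-\chi_{B_1}\sin(2Q_b)-(1-\chi_{B_1})\sin(2Q)\bigr]$, which vanishes identically wherever $\chi_{B_1}\in\{0,1\}$ and is therefore also supported in the annulus.

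The strictly local estimates (ii) and (iii) are essentially immediate: since $B_0\ll B_1$ by \eqref{def:B0B1}, the integration regions $y\leq 2M$ (with $M\leq B_1/2$) and $y\leq 2B_0$ both lie inside $\{\chi_{B_1}\equiv 1\}$, so $\tilde\Psi_b = \Psi_b$ there and the estimates follow directly from \eqref{eq:estlocalPsib} and \eqref{eq:estGlobalPsib} of Proposition \ref{prop:1}.

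For the global bound (i), each of the four pieces is estimated separately. The main piece $\chi_{B_1}\Psi_b$ is handled by expanding $\Ls^{\hbar+m+1}$ through the cutoff via Leibniz; the leading term $\chi_{B_1}\Ls^{\hbar+m+1}\Psi_b$ is controlled by \eqref{eq:estGlobalPsib}, while cross-terms (derivatives hitting $\chi_{B_1}$) are supported in $[B_1,2B_1]$ and bounded using $|\chi_{B_1}^{(j)}|\lesssim B_1^{-j}$ together with the local admissible/homogeneous structure of $\Psi_b$ from Proposition \ref{prop:1}. The tail $(1-\chi_{B_1})b_1\Lambda Q$ is the dominant source: since $\Ls(\Lambda Q)=0$ and $\Ls$ is local, the iterates $\Ls^j$ with $j\geq 1$ of this term live in $[B_1,2B_1]$, but the weighted norm $\int|\tilde\Psi_b|^2(1+y^{4(\hbar+m+1)})^{-1}y^{d-1}dy$ absorbs the entire tail, yielding via the identity $d-2\gamma = 4(\hbar+\delta)$ from \eqref{def:kdeltaplus}
\begin{equation*}
b_1^2\int_{y\geq B_1}\frac{y^{d-1-2\gamma}}{1+y^{4(\hbar+m+1)}}\,dy \;\lesssim\; b_1^2\,B_1^{-4(m+1-\delta)} \;=\; b_1^{\,2m+4-2\delta+2\eta(m+1-\delta)},
\end{equation*}
which matches the target exponents in \eqref{eq:estPsibLarge1}--\eqref{eq:estPsibLarge2}. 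The cutoff commutator $E_{\mathrm{cut}}$ is bounded via $|\Delta\chi_{B_1}|\lesssim B_1^{-2}$, $|\Lambda\chi_{B_1}|+|y\chi_{B_1}'|\lesssim 1$, and the key pointwise bound
\begin{equation*}
|\partial_s\chi_{B_1}| = \Bigl|\chi'\!\bigl(\tfrac{y}{B_1}\bigr)\cdot\tfrac{y}{B_1}\cdot\tfrac{(B_1)_s}{B_1}\Bigr| \;\lesssim\; \frac{|(b_1)_s|}{b_1} \;\lesssim\; b_1
\end{equation*}
coming from the a priori hypothesis \eqref{eq:apriorib1}; together with the admissibility of $T_k$ and the homogeneity of $S_k$ at $y\sim B_1$, each monomial contribution of $E_{\mathrm{cut}}$ integrates to a bound at least as good as the target. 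Finally, $E_{\mathrm{NL}}$ is handled by Taylor expansion of $\sin$: the combination $\sin(2\tilde Q_b)-\chi_{B_1}\sin(2Q_b)-(1-\chi_{B_1})\sin(2Q)$ vanishes at linear order in $\Theta_b$, reducing it to a finite polynomial of degree $\geq 2$ in $\chi_{B_1}\Theta_b$, treated exactly as the $R_1, R_2$ terms in the proof of Proposition \ref{prop:1}.

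The main obstacle is the precise power-counting in this last step. The sharpness of the exponent $2(1-\delta)$ rests on the exact identity $d-2\gamma = 4(\hbar+\delta)$, which dictates the $B_1$-scaling of the critical tail integral. Identifying that, among the four contributions to $\tilde\Psi_b$, only the tail $(1-\chi_{B_1})b_1\Lambda Q$ saturates the bound, and that the sharper estimate for $m=L$ reflects the absence of the Leibniz-type $b_1^{-C_L\eta}$ loss present in the bound for $\chi_{B_1}\Psi_b$, requires careful bookkeeping of derivative distributions across the cutoff, the admissible profiles $T_k$, and the homogeneous profiles $S_k$.
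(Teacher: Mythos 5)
Your proposal is correct and follows essentially the same route as the paper: the identical decomposition $\tilde\Psi_b = \chi_{B_1}\Psi_b + b_1(1-\chi_{B_1})\Lambda Q + E_{\mathrm{cut}} + E_{\mathrm{NL}}$, the same support observation that reduces the local bounds (ii)--(iii) to Proposition \ref{prop:1}, the same use of \eqref{eq:apriorib1} to get $|\partial_s\chi_{B_1}|\lesssim b_1$, and the same power counting via $d-2\gamma=4(\hbar+\delta)$. The only quibble is the closing remark that the tail alone saturates the bound: the commutator and nonlinear annulus terms contribute at the same order $b_1^{2m+2+2(1-\delta)}$ up to $\eta$-losses, but since you estimate all four pieces anyway this does not affect the argument.
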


\begin{proof} By a direct computation, we have
\begin{align*}
&\partial_s \tilde Q_{b} - \partial_{yy}\tilde Q_b - \frac{(d-1)}{y}\partial_y \tilde Q_b + b_1 \Lambda \tilde Q_b + \frac{(d-1)}{2y^2}\sin(2\tilde Q_b)\\
& =\chi_{B_1}\left[\partial_s Q_{b} - \partial_{yy}Q_b - \frac{(d-1)}{y}\partial_y Q_b + b_1 \Lambda Q_b + \frac{(d-1)}{2y^2}\sin(2 Q_b) \right] \\
& \quad + \Theta_b \left[ \partial_s \chi_{B_1} - \left(\partial_{yy}\chi_{B_1} + \frac{d-1}{y}\partial_y \chi_{B_1}\right)  + b_1 \Lambda \chi_{B_1}\right]  - 2\partial_y \chi_{B_1}\partial_y\Theta_b + b_1(1 - \chi_{B_1})\Lambda Q \\
& \qquad + \frac{(d-1)}{2y^2}\left[\sin(2\tilde Q_b) - \sin(2Q) - \chi_{B_1}(\sin(2Q_b) - \sin(2Q)) \right].
\end{align*}
According to \eqref{def:Psib} and \eqref{def:Psibtilde}, we write
$$\tilde{\Psi}_b = \chi_{B_1}\Psi_b + \hat\Psi_b,$$
where 
\begin{align*}
\hat \Psi_b &=\underbrace{ b_1(1 - \chi_{B_1})\Lambda Q}_{\hat \Psi_b^{(1)}}\\
& \quad + \underbrace{\frac{(d-1)}{2y^2}\left[\sin(2\tilde Q_b) - \sin(2Q) - \chi_{B_1}(\sin(2Q_b) - \sin(2Q)) \right]}_{\hat \Psi_b^{(2)}}\\
& \qquad +\underbrace{ \Theta_b \left[ \partial_s \chi_{B_1} - \left(\partial_{yy}\chi_{B_1} + \frac{d-1}{y}\partial_y \chi_{B_1}\right)  + b_1 \Lambda \chi_{B_1}\right]  - 2\partial_y \chi_{B_1}\partial_y\Theta_b}_{\hat \Psi_b^{(3)}}.
\end{align*}

The contribution of the term $\chi_{b_1}\Psi_b$ to the bounds \eqref{eq:estPsibLarge1}, \eqref{eq:estPsibLarge2}, \eqref{eq:estPsiblocalTilde} and \eqref{eq:estPsiblocalB0} follows exactly the same as in the proof of \eqref{eq:estGlobalPsib} and \eqref{eq:estlocalPsib}. We therefore left to estimate the term $\hat{\Psi}_b$. Since all the terms in the expression of $\hat{\Psi}_b$ are localized in $B_1 \leq y \leq 2B_1$ except for the first one whose support is a subset of $\{y \geq B_1\}$. Hence, the estimates \eqref{eq:estPsiblocalTilde} and \eqref{eq:estPsiblocalB0} directly follow from \eqref{eq:estlocalPsib} and \eqref{eq:estGlobalPsib}. 

Let us now find the contribution of $\hat \Psi_b$ to the bounds \eqref{eq:estPsibLarge1} and \eqref{eq:estPsibLarge2}. We estimate 
\begin{equation*}
\forall n \in \mathbb{N}, \quad \left|\frac{d^n}{dy^n}(1 - \chi_{B_1})\Lambda Q\right| \lesssim \frac{1}{y^{\gamma + n}}\mathbf{1}_{y \geq B_1},
\end{equation*}
hence, using the relation $d - 2\gamma -4\hbar = 4\delta$ (see \eqref{def:kdeltaplus}) and the definition \eqref{def:B0B1} of $B_1$, we estimate for all $0 \leq m \leq L$,
\begin{equation*}
\int|\Ls^{\hbar + m + 1} \hat \Psi_b^{(1)}|^2 \lesssim b_1^2 \int_{y \geq B_1}\frac{y^{d-1}}{y^{4(\hbar + m + 1) + 2\gamma}} \lesssim b_1^{2m + 2 + 2(1 - \delta)(1 + \eta) + 2m\eta}.
\end{equation*}
For the nonlinear term $\hat \Psi_b^{(2)}$, we note from the admissibility of $T_k$ and the homogeneity of $S_k$ that the $T_k$ terms dominate for $y \geq B_1$ in $\Theta_b$. Thus, for $y \geq B_1$,
\begin{equation}\label{eq:estThetab}
\forall n \in \mathbb{N}, \quad \left|\partial_y^n \Theta_b \right| \lesssim \sum_{k = 1}^L b_1^ky^{2k - \gamma - n}\mathbf{1}_{y \geq B_1}.
\end{equation}
Using \eqref{eq:estThetab} and noting that $\hat \Psi_b^{(2)}$ is localized in $B_1 \leq y \leq 2B_1$, we obtain the round bound
\begin{align*}
\left|\partial_y^n \hat \Psi_b^{(2)}\right|\quad \lesssim \sum_{k = 1}^L b_1^ky^{2(k-1) - \gamma - n} \mathbf{1}_{B_1 \leq y \leq 2B_1}  \lesssim\frac{b_1}{y^{\gamma + n}}\sum_{k = 1}^Lb_1^{-(k-1)\eta}\mathbf{1}_{B_1 \leq y \leq 2B_1}.
\end{align*}
We then estimate for $0 \leq m \leq L$, 
\begin{align*}
\int|\Ls^{\hbar + m + 1} \hat \Psi_b^{(2)}| &\lesssim b_1^2\sum_{k=1}^L b_1^{-2(k-1)\eta} \int_{B_1 \leq y \leq 2B_1}\frac{y^{d-1}}{y^{4(\hbar + m + 1) + 2\gamma}}dy\\
& \quad \lesssim b_1^{2m + 2 + 2(1 - \delta)(1 + \eta)}\sum_{k=1}^L b_1^{(2m - 2k + 2)\eta}.
\end{align*}
To control $\hat \Psi_b^{(3)}$, we first note from the definition \eqref{def:chiM} and the assumption \eqref{eq:apriorib1} that 
$$|\partial_s \chi_{B_1}| \lesssim \frac{(b_1)_s}{b_1} \frac{y}{B_1} \mathbf{1}_{B_1\leq y \leq 2B_1}\lesssim b_1 \mathbf{1}_{B_1\leq y \leq 2B_1}.$$
Using \eqref{eq:estThetab}, we estimate for $0 \leq m \leq L$, 
\begin{align*}
\int |\Ls^{\hbar + m + 1} \hat \Psi_b^{(3)}| &\lesssim \sum_{k=1}^L b_1^2 b_1^{2k}\int_{B_1 \leq y \leq 2B_1}\frac{y^{d-1}}{y^{4(\hbar + m + 1) + 2\gamma - 4k + 2}}dy\\
&\quad  \lesssim b_1^{2m + 2 + 2(1 - \delta)(1 + \eta)}\sum_{k=1}^L b_1^{(2m - 2k)\eta}.
\end{align*}
Gathering all the bounds yields
\begin{align*}
\int |\Ls^{\hbar + m + 1}\hat \Psi_b|^2 &\lesssim b_1^{2m + 2 + 2(1 - \delta)(1 + \eta)}\sum_{k=1}^L b_1^{(2m - 2k)\eta}
\lesssim b_1^{2m + 2 + 2(1 - \delta)(1 + \eta)+2\eta(m-L)}.
\end{align*}
The control of  $\int \frac{|\As \Ls^{\hbar + m}\tilde \Psi_b|^2}{1 + y^2}$, $\int \frac{|\Ls^{\hbar + m}\tilde \Psi_b|^2}{1 + y^4}$ and $\int \frac{|\hat \Psi_b|^2}{1 + y^{4(\hbar + m + 1)}}$ are obtained along the exact same lines as above. This concludes the proof  of \eqref{eq:estPsibLarge1} and \eqref{eq:estPsibLarge2} as well as Proposition \eqref{prop:localProfile}.


\end{proof}
\subsection{Study of the dynamical system for $b = (b_1, \cdots, b_L)$.}
The construction of the $Q_b$ profile formally leads to the finite dimensional dynamical system for $b = (b_1, \cdots, b_L)$ by setting to zero the inhomogeneous $\textup{Mod}(t)$ term given in \eqref{eq:Modt}:
\begin{equation}\label{eq:systemb}
(b_k)_s + (2k - \gamma)b_1b_k - b_{k+1} = 0, \;\; 1 \leq k \leq L, \;\; b_{L+1} = 0.
\end{equation}
Unlike the critical case $(d=2)$ treated in \cite{RSapde2014}, there is no more logarithmic correction to be taken into account in the system \eqref{eq:systemb}. In particular, the system \eqref{eq:systemb} admits explicit solutions and the linearized operator near these solutions is explicit. 

\begin{lemma}[Solution to the system \eqref{eq:systemb}]\label{lemm:solSysb} Let 
$$\frac{\gamma}{2} < \ell \ll L, \quad \ell \in \mathbb{N}^*$$
and the sequence
\begin{equation}\label{def:ck}
\left\{\begin{array}{ll}
c_1 &= \frac{\ell}{2\ell - \gamma},\\
c_{k + 1} &= - \frac{\gamma(\ell - k)}{2\ell - \gamma}c_k, \quad 1 \leq k \leq \ell-1,\\
c_{k + 1} &= 0, \quad k \geq \ell.
\end{array} \right.
\end{equation}
Then the explicit choice 
\begin{equation}\label{eq:solbe}
b_k^e(s) = \frac{c_k}{s^k}, \quad s > 0, \quad 1 \leq k \leq L,
\end{equation}
is a solution to \eqref{eq:systemb}.
\end{lemma}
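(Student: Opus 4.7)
The proof plan is a direct verification: substitute the ansatz $b_k^e(s) = c_k/s^k$ into the ODE system \eqref{eq:systemb} and check that the algebraic relations defining the coefficients $c_k$ in \eqref{def:ck} are exactly what is needed to cancel every term. Since each monomial $c_k/s^k$ has derivative $-k c_k /s^{k+1}$, and since both products $b_1^e b_k^e$ and $b_{k+1}^e$ carry the same factor $1/s^{k+1}$, the whole equation becomes a pure algebraic identity after multiplying through by $s^{k+1}$.

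More precisely, substituting $b_k^e = c_k/s^k$ into the $k$-th equation of \eqref{eq:systemb} gives
\begin{equation*}
\frac{-k c_k + (2k-\gamma)c_1 c_k - c_{k+1}}{s^{k+1}} = 0,
\end{equation*}
so the system is equivalent to the recursion $c_{k+1} = \bigl[(2k-\gamma)c_1 - k\bigr] c_k$ for $1 \leq k \leq L$, together with the terminal condition $c_{L+1} = 0$. Plugging in the explicit value $c_1 = \ell/(2\ell - \gamma)$ gives
\begin{equation*}
(2k-\gamma)c_1 - k = \frac{(2k-\gamma)\ell - k(2\ell - \gamma)}{2\ell - \gamma} = -\frac{\gamma(\ell - k)}{2\ell - \gamma},
\end{equation*}
which is exactly the multiplicative factor prescribed in \eqref{def:ck}. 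Hence the recursion is satisfied for $1 \leq k \leq \ell - 1$.

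The key observation to close the argument is that the factor $-\gamma(\ell - k)/(2\ell-\gamma)$ vanishes precisely at $k = \ell$. Therefore $c_{\ell+1} = 0$, and an immediate induction on $k$ (the recursion multiplies the previous coefficient by a scalar) gives $c_k = 0$ for all $k \geq \ell + 1$, matching the third line of \eqref{def:ck}. In particular, since we assumed $\ell \ll L$, we get $c_{L+1} = 0$, which verifies the terminal constraint $b_{L+1}^e = 0$ required by \eqref{eq:systemb}. I do not expect any real obstacle in this lemma; it is essentially the statement that the special exponents $k$ in the ansatz $b_k^e \sim s^{-k}$ have been reverse-engineered from the structure of \eqref{eq:systemb}, and the vanishing at $k = \ell$ reflects the $\ell$-th eigendirection of the associated linearized flow (which is then exploited in the subsequent stability analysis of Lemma~\ref{lemm:lisysb}).
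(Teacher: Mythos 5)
Your proof is correct and is exactly the explicit computation the paper alludes to (the paper leaves this verification to the reader): multiplying the $k$-th equation by $s^{k+1}$ reduces it to the recursion $c_{k+1}=\bigl[(2k-\gamma)c_1-k\bigr]c_k$, whose factor coincides with $-\gamma(\ell-k)/(2\ell-\gamma)$ and vanishes at $k=\ell$, forcing $c_j=0$ for $j\geq\ell+1$ and hence the terminal condition $b_{L+1}^e=0$. No gaps.
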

The proof of Lemma \ref{lemm:solSysb} can be left to the reader since it directly follows from an explicit computation. We claim that the linearized flow of \eqref{eq:systemb} near the solution \eqref{eq:solbe} is explicit and exhibits $(\ell - 1)$ unstable directions. Note that the stability is considered in the sense that 
$$\sup_s s^k|b_k(s)| \leq C_k, \quad 1 \leq k \leq L.$$ 
In particular, we have the following result which was proved in \cite{MRRcjm15}:
\begin{lemma}[Linearization of \eqref{eq:systemb} around \eqref{eq:solbe}] \label{lemm:lisysb} Let 
\begin{equation}\label{eq:Ukbke}
b_k(s) = b_k^e(s) + \frac{\Uc_k(s)}{s^k}, \quad 1 \leq k \leq \ell,
\end{equation}
and note $\Uc = (\Uc_1, \cdots, \Uc_\ell)$. Then, for $1 \leq k \leq \ell-1$,
\begin{equation}\label{eq:bkk1}
(b_k)_s + (2k - \gamma)b_1b_k - b_{k+1} = \frac{1}{s^{k+1}}\left[s(\Uc_k)_s - (A_\ell \Uc)_k + \Oc(|\Uc|^2)\right], 
\end{equation}
and 
\begin{equation}\label{eq:bell}
(b_\ell)_s + (2\ell - \gamma)b_1b_\ell = \frac{1}{s^{k+1}}\left[s(\Uc_\ell)_s - (A_\ell \Uc)_\ell + \Oc(|\Uc|^2)\right], 
\end{equation}
where 
$$A_\ell = (a_{i,j})_{1\leq i,j\leq \ell} \quad \text{with}\quad \left\{\begin{array}{ll}
a_{1,1} &= \frac{\gamma(\ell - 1)}{2\ell - \gamma} - (2 - \gamma)c_1,\\
a_{i,i} &= \frac{\gamma(\ell - i)}{2\ell - \gamma}, \quad 2 \leq i \leq \ell,\\
a_{i, i+1}&= 1 , \quad 1 \leq i \leq \ell - 1,\\
a_{1,i} & = -(2i - \gamma)c_i, \quad 2 \leq i \leq \ell,\\
a_{i,j} &= 0 \quad \textup{ortherwise}
\end{array} \right.$$
Moreover, $A_\ell$ is diagonalizable: 
\begin{equation}\label{eq:diagAlPl}
A_\ell = P_\ell^{-1}D_\ell P_\ell, \quad D_\ell = \textup{diag}\left\{-1, \frac{2\gamma}{2\ell - \gamma}, \frac{3\gamma}{2\ell - \gamma}, \cdots, \frac{\ell \gamma}{2\ell - \gamma}\right\}.
\end{equation}
\end{lemma}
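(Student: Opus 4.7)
The plan is a direct substitution-and-identification argument. Plug the perturbation ansatz \eqref{eq:Ukbke} into the $k$-th equation of \eqref{eq:systemb}, Taylor expand in $\Uc$, and use that $b^e$ is an exact solution by Lemma \ref{lemm:solSysb} to cancel the $\Uc$-independent contributions. What remains is linear-plus-quadratic in $\Uc$; the coefficients of the linear part read off the matrix $A_\ell$, and the quadratic part is the $\Oc(|\Uc|^2)$ remainder.

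In detail, I would first differentiate and expand the three pieces $(b_k)_s$, $b_1 b_k$, $b_{k+1}$ using $b_k^e = c_k/s^k$:
$$(b_k)_s = -\frac{k c_k}{s^{k+1}} + \frac{(\Uc_k)_s}{s^k} - \frac{k\Uc_k}{s^{k+1}}, \quad b_1 b_k = \frac{c_1 c_k + c_1 \Uc_k + c_k \Uc_1 + \Uc_1\Uc_k}{s^{k+1}},$$
and $b_{k+1} = (c_{k+1} + \Uc_{k+1})/s^{k+1}$ with the convention $\Uc_{L+1} \equiv 0$, then combine them and factor $1/s^{k+1}$. The zeroth-order contribution $-kc_k + (2k-\gamma)c_1 c_k - c_{k+1}$ is precisely the $k$-th equation of \eqref{eq:systemb} evaluated at $b^e$, hence vanishes by Lemma \ref{lemm:solSysb}. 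The linear part produces $s(\Uc_k)_s + [(2k-\gamma)c_1 - k]\Uc_k + (2k-\gamma)c_k \Uc_1 - \Uc_{k+1}$ inside the bracket; using $c_1 = \ell/(2\ell - \gamma)$, the short algebraic identity $(2k-\gamma)c_1 - k = -\gamma(\ell-k)/(2\ell-\gamma)$ recovers $a_{k,k}$ for $k \geq 2$ (and $a_{1,1}$ upon a similar manipulation). The quadratic remainder is $(2k-\gamma)\Uc_1 \Uc_k = \Oc(|\Uc|^2)$. The case $k = \ell$ is identical with the boundary convention $\Uc_{\ell+1} = 0$, yielding \eqref{eq:bell}.

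For the diagonalization \eqref{eq:diagAlPl}, the main obstacle is the explicit eigenvalue calculation. My plan is twofold. First, I would identify the eigenvalue $-1$ a priori by a symmetry argument: the autonomous system \eqref{eq:systemb} is invariant under time translations, so $s \mapsto b^e(s+\tau)$ is a one-parameter family of solutions, and $\partial_\tau b^e|_{\tau=0}$ produces an explicit eigenvector of $A_\ell$ with eigenvalue $-1$ (this is the harmless reparametrization mode). Second, I would identify the remaining $\ell - 1$ positive eigenvalues $k\gamma/(2\ell-\gamma)$ for $k = 2, \ldots, \ell$ by expanding the characteristic determinant along the single column carrying the nontrivial couplings and exploiting the near-triangular structure of $A_\ell$; the diagonal then furnishes the rest of the spectrum. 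Under the standing hypothesis $2\ell > \gamma$ with $\gamma \in (1,2]$, the $\ell$ eigenvalues are real and pairwise distinct, so $A_\ell$ is diagonalizable and the conjugating matrix $P_\ell$ exists. Since an algebraically identical linearized system was treated in \cite{MRRcjm15} (the diagonalization there is insensitive to the precise value of the exponent playing the role of $\gamma$), the cleanest route is simply to invoke that computation. The $\ell - 1$ positive eigenvalues encode exactly the codimension-$(\ell - 1)$ instability advertised in Theorem \ref{Theo:1}.
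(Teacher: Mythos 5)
Your proposal is essentially correct, and in fact it does more than the paper, whose ``proof'' of this lemma is a one-line deferral to Lemma 3.7 of \cite{MRRcjm15}. Your substitution computation is exactly the content of that cited lemma: the zeroth-order terms cancel because $b^e$ solves \eqref{eq:systemb}, the identity $k-(2k-\gamma)c_1=\gamma(\ell-k)/(2\ell-\gamma)$ recovers the diagonal entries, and the quadratic remainder is $(2k-\gamma)\Uc_1\Uc_k=\Oc(|\Uc|^2)$. Your symmetry argument for the eigenvalue $-1$ is a nice addition not present in either paper: the translated family $b^e(s+\tau)$ gives $\Uc(s)=-\tau s^{-1}(c_1,2c_2,\dots,\ell c_\ell)+\Oc(\tau^2)$, hence an eigenvector with eigenvalue $-1$ (nonzero since $c_1\neq 0$), and the truncation at index $\ell$ is consistent because $c_j=0$ for $j\geq\ell+1$.

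Two caveats. First, your derivation places the coupling terms $-(2i-\gamma)c_i$ in the \emph{first column} ($a_{i,1}$), since it is the $i$-th equation that contains $b_1b_i$ and hence $\Uc_1$; the lemma as printed writes $a_{1,i}$, which is the transpose (and would conflict with $a_{1,2}=1$). Your version is the correct linearization; the spectrum is of course unchanged. Second, and more substantively, your remark that ``the diagonal then furnishes the rest of the spectrum'' is false: the diagonal entries for $i=2,\dots,\ell$ are $\gamma(\ell-i)/(2\ell-\gamma)\in\{0,\dots,(\ell-2)\gamma/(2\ell-\gamma)\}$, whereas the claimed eigenvalues are $i\gamma/(2\ell-\gamma)$ for $i=2,\dots,\ell$ --- disjoint sets in general. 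The first-column entries genuinely shift the spectrum, so the cofactor expansion must actually be carried out (or, as you and the authors both ultimately do, imported from \cite{MRRcjm15}). Once the eigenvalues are known, your observation that they are real and pairwise distinct does legitimately yield diagonalizability.
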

\begin{proof} Since we have an analogous system as the one in \cite{MRRcjm15} and the proof is essentially the same as written there, we kindly refer the reader to see Lemma 3.7 in \cite{MRRcjm15} for all details of the proof.  
\end{proof}

\section{Proof of Theorem \ref{Theo:1} assuming technical results.} \label{sec:3}
This section is devoted to the proof of Theorem \ref{Theo:1}. We hope that the explanation of the strategy we give in this section will be reader friendly. We proceed in 3 subsections:\\
- In the first subsection, we give an equivalent formulation of the linearization of the problem in the setting \eqref{eq:dec_i}.\\
- In the second subsection, we prepare the initial data and define the shrinking set $\Sc_K$ (see Definition \ref{def:Skset}) such that the solution trapped in this set satisfies the conclusion of Theorem \ref{Theo:1}.\\
- In the third subsection, we give all arguments of the proof of the existence of solutions trapped in $\Sc_K$ (Proposition \ref{prop:exist}) assuming an important technical result (Proposition \ref{prop:redu}) whose proof is left to the next section. Then we conclude the proof of Theorem \ref{Theo:1}. 

\subsection{Linearization of the problem.} 
Let $L \gg 1$ be an integer and $s_0 \gg 1$, we introduce the renormalized variables:
\begin{equation}
y = \frac{r}{\lambda(t)}, \quad s = s_0 + \int_0^t \frac{d\tau}{\lambda^2(\tau)},
\end{equation} 
and the decomposition
\begin{equation}\label{def:qys}
u(r,t) = w(y,s) = \big(\tilde{Q}_{b(s)} + q\big)(y,s) = \big(\tilde{Q}_{b(t)} + q\big)\left(\frac{r}{\lambda(t)}, t\right),
\end{equation}
where $\tilde Q_b$ is constructed in Proposition \ref{prop:localProfile} and the modulation parameters 
$$\lambda(t) > 0, \quad b(t) = (b_1(t), \cdots, b_L(t))$$
are determined from the $L+1$ orthogonality conditions:
\begin{equation}\label{eq:orthqPhiM}
\left<q, \Ls^k \Phi_M \right> = 0, \quad 0 \leq k \leq L,
\end{equation} 
where $\Phi_M$ is a fixed direction depending on some large constant $M$ defined by
\begin{equation}\label{def:PhiM}
\Phi_M = \sum_{k = 0}^L c_{k,M}\Ls^k(\chi_M \Lambda Q),
\end{equation}
with 
\begin{equation}\label{def:ckM}
c_{0,M} = 1, \quad c_{k, M} = (-1)^{k+1}\frac{\sum_{j = 0}^{k - 1}c_{j,M}\left<\chi_M \Ls^j(\chi_M \Lambda Q), T_k\right> }{\left<\chi_M \Lambda Q, \Lambda Q\right>}, \quad 1 \leq k \leq L.
\end{equation}
Here, $\Phi_M$ is build to ensure the nondegeneracy 
\begin{equation}\label{eq:PhiMLamQ}
\left<\Phi_M, \Lambda Q\right> = \left<\chi_M \Lambda Q, \Lambda Q\right> \gtrsim M^{d - 2\gamma},
\end{equation}
and the cancellation 
\begin{equation}\label{id:TkPhiM0}
\left<\Phi_M, T_k\right> = \sum_{j = 0}^{k - 1}c_{j,M}\left<\Ls^j(\chi_M\Lambda Q), T_k\right> + c_{k,M} (-1)^k \left<\chi_M \Lambda Q, \Lambda Q\right> = 0,
\end{equation}
In particular, we have 
\begin{equation}\label{id:TkPhiMi}
\left<\Ls^iT_k, \Phi_M \right> = (-1)^k\left<\chi_M \Lambda Q, \Lambda Q\right>\delta_{i,k}, \quad 0 \leq i,k\leq L.
\end{equation}

\medskip

From \eqref{eq:wys}, we see that $q$ satisfies the equation:
\begin{equation}\label{eq:qys}
\partial_s q - \frac{\lambda_s}{\lambda} \Lambda q + \Ls q = - \tilde{\Psi}_b  - \widehat{\textup{Mod}} + \Hc(q) - \Nc(q) \equiv \Fc,
\end{equation}
where 
\begin{equation}\label{def:Modhat}
\widehat{\textup{Mod}} = - \left(\frac{\lambda_s}{\lambda} + b_1\right)\Lambda \tilde{Q}_b - \chi_{B_1}\textup{Mod},
\end{equation}
and $\Hc$ is the linear part given by 
\begin{equation}\label{def:Lq}
\Hc(q) = \frac{(d-1)}{y^2}\big[\cos(2Q) - \cos(2\tilde Q_b)\big]q,
\end{equation}
and $\Nc$ is the purely nonlinear term
\begin{equation}\label{def:Nq}
\Nc(q) = \frac{(d-1)}{2y^2}\big[\sin(2\tilde Q_b + 2q) - \sin(2\tilde Q_b) - 2q \cos(2\tilde{Q}_b) \big].
\end{equation}
We also need to write the equation \eqref{eq:qys} in the original variables. To do so, let the rescaled linearized operator:
\begin{equation}\label{def:Llambda}
\Ls_\lambda = - \partial_{rr} - \frac{(d-1)}{r}\partial_r + \frac{Z_\lambda}{r^2},
\end{equation}
and the renormalized function
$$v(r,t) = q(y,s), \quad \partial_t v = \frac{1}{\lambda^2(t)}\left(\partial_s q - \frac{\lambda_s}{\lambda} \Lambda q\right)_\lambda$$
then from \eqref{eq:qys}, $v$ satisfies the equation
\begin{equation}\label{eq:vrt}
\partial_t v + \Ls_\lambda v = \frac{1}{\lambda^2}\Fc_\lambda, \quad \Fc_\lambda(r,t) = \Fc(y,s).
\end{equation}
Note that 
$$\Ls_\lambda = \frac{1}{\lambda^2}\Ls.$$

\subsection{Preparation of the initial data.}
We describe in this subsection the set of initial data $u_0$ of the problem \eqref{Pb} as well as the initial data for $(b, \lambda)$ leading to the blowup scenario of Theorem \ref{Theo:1}. Assume that $u_0 \in H^\infty(\Rd)$ satisfies
\begin{equation}\label{eq:asumUosmall}
\|u_0 - Q\|_{\dot{H}^s} \ll 1, \quad \text{for}\quad \frac{d}{2} \leq s \leq \Bbbk.
\end{equation}
From the continuity of the flow and a standard argument, we can propagate the smallness assumption \eqref{eq:asumUosmall} on a small time interval $[0,t_1)$. Hence, the decomposition \eqref{def:qys},
\begin{equation}\label{def:qrt}
u(r,t) = \left(\tilde Q_{b(t)} + q\right)\left(\frac{r}{\lambda(t)}, t\right), \quad \lambda(t) > 0, \; b = (b_1, \cdots, b_L),
\end{equation}
can be defined uniquely on the interval $t \in [0,t_1]$. 

The existence of the decomposition \eqref{def:qrt} is from the implicit function theorem and the explicit relations
$$\left.\frac{\partial}{\partial \lambda}(\tilde{Q}_{b(t)})_{\lambda}, \frac{\partial}{\partial b_1}(\tilde{Q}_{b(t)})_{\lambda}, \cdots, \frac{\partial}{\partial b_L}(\tilde{Q}_{b(t)})_{\lambda}  \right|_{\lambda = 1, b = 0} = (\Lambda Q, T_1, \cdots, T_L),$$
which induces the nondegeneracy of the Jacobian
$$\left|\left<\frac{\partial}{\partial (\lambda, b_j)}(\tilde{Q}_{b(t)})_{\lambda}, \Ls^i\Phi_M \right>_{1 \leq j \leq L, 0 \leq i \leq L} \right|_{\lambda = 1, b = 0} = \left|\left<\chi_M \Lambda Q, \Lambda Q\right>\right|^{L+1} \ne 0.$$
In fact, the decomposition \eqref{def:qrt} exists as long as  $t < T$ and $q$ remains small in the energy topology. We now set up the shrinking set for the control of the parameters $(b,\lambda)$ and the error $q$. To measure the regularity of the map we use the following coercive norms of $q$:\\
\begin{equation}
\Es_{2k} = \int|\Ls^{k}q|^2 \geq C(M)\sum_{m=0}^{k-1} \int \frac{|\Ls^m q|^2}{1 + y^{4(k - m)}} \quad \text{for}\quad \hbar + 1 \leq k \leq \Bbbk,
\end{equation}

Our construction is build on a careful choice of the initial data for the modulation parameter $b$ and the radiation $q$ at time $s = s_0$. In particular, we will choose it in the following way:

\begin{definition}[Choice of the initial data] \label{def:1}  Given $\eta$ and $\delta$ as in \eqref{def:B0B1} and \eqref{def:kdeltaplus}. Let consider the variable 
\begin{equation}\label{def:VctoUc}
\Vc = P_\ell \Uc,
\end{equation}
where $\Uc = (\Uc_1, \cdots, \Uc_\ell)$ is introduced in the linearization \eqref{eq:Ukbke}, namely that 
$$\Uc_k = s^k b_k - c_k \quad \text{with $c_k$ is given by \eqref{def:ck}},$$

and $P_\ell$ refers to the diagonalization \eqref{eq:diagAlPl} of $A_\ell$.

Let $s_0 \geq 1$, we assume that 
\begin{itemize}
\item Smallness of the initial perturbation for the $b_k$ unstable modes: 
\begin{equation}
|s_0^{\frac{\eta}{2}(1 - \delta)}\Vc_k(s_0)| < 1 \quad \text{for}\;\; 2 \leq k \leq \ell,
\end{equation}
\item Smallness for the $b_k$ stable modes of the initial perturbation: 
\begin{equation}\label{eq:initbk}
|s_0^{\frac{\eta}{2}(1 - \delta)}\Vc_1(s_0)| < 1, \quad |b_k(s_0)| <s_0^{-\frac{5\ell(2k - \gamma)}{2\ell - \gamma}} \text{for}\;\; \ell+1 \leq k \leq L,
\end{equation}
\item Smallness of the data:
\begin{equation}\label{eq:intialbounE2m}
\sum_{k = \hbar + 2}^\Bbbk \Es_{2k}(s_0) < s_0^{-\frac{10L\ell}{2\ell - \gamma}},
\end{equation}
\item Normalization:  we can assume up to a fixed rescaling:
\begin{equation}
\lambda(s_0) = 1.
\end{equation}
\end{itemize}
\end{definition}

In particular, the initial data described in Definition \ref{def:1} belongs to the following set which shrinks to zero as $s \to +\infty$:

\begin{definition}[Definition of the shrinking set]\label{def:Skset} Given $\eta$ and $\delta$ as in \eqref{def:B0B1} and \eqref{def:kdeltaplus}. For all $K \geq 1$ and $s \geq 1$, we define $\Sc_K(s)$ as the set of all $(b_1(s), \cdots, b_L(s), q(s))$ such that
\begin{align*}
&\left|\Vc_k(s) \right| \leq 10s^{-\frac{\eta}{2}(1 - \delta)} \quad \text{for}\;\; 1 \leq k \leq \ell,\\
&|b_k(s)| \leq s^{-k} \quad \text{for}\;\; \ell + 1 \leq k \leq L,\\
&\Es_{2\Bbbk}(s) \leq Ks^{-(2L + 2(1 - \delta)(1 + \eta))},\\
&\Es_{2m}(s) \leq \left\{ \begin{array}{ll}
Ks^{- \frac{\ell}{2\ell - \gamma}(4m - d)} &\quad \text{for} \quad \hbar + 2 \leq m \leq \ell + \hbar,
\\
s^{- 2(m - \hbar -1) - 2(1- \delta) + K\eta} &\quad \text{for} \quad \ell + \hbar  + 1 \leq m \leq \Bbbk - 1.
\end{array}  
\right.
\end{align*}
\end{definition}

\begin{remark} Note from \eqref{eq:Ukbke} that the bounds given in Definition \ref{def:Skset} imply that for $\eta$ small enough, 
$$b_1(s) \sim \frac{c_1}{s}, \quad |b_k(s)| \lesssim |b_1(s)|^k,$$
hence, the choice of the initial data $(b(s_0), q(s_0))$ belongs in $\Sc_K(s_0)$ if $s_0$ is large enough.
\end{remark}

\begin{remark} The introduction of the high Sobolev norm $\Es_{2\Bbbk}$ is reflected on the following relation:
\begin{equation}\label{eq:modeq}
\left|\frac{\lambda_s}{\lambda} + b_1\right| + \sum_{k = 1}^L \left|(b_k)_s + (2k - \gamma)b_1b_{k} - b_{k+1} \right| \lesssim C(M)\sqrt{\Es_{2\Bbbk}} + l.o.t, 
\end{equation}
which is computed thanks to the $(L+1)$ orthogonality conditions \eqref{eq:orthqPhiM} (see lemmas \ref{lemm:mod1} and \ref{lemm:mod2} below).
\end{remark}

\subsection{Existence of solutions trapped in $\Sc_K(s)$ and conclusion of Theorem \ref{Theo:1}.}

We claim the following proposition:
\begin{proposition}[Existence of solutions trapped in $\Sc_K(s)$] \label{prop:exist} There exists $K_1 \geq 1$ such that for $K \geq K_1$, there exists $s_{0,1}(K)$ such that for all $s_0 \geq s_{0,1}$, there exists initial data for the unstable modes 
$$(\Vc_2(s_0), \cdots, \Vc_\ell(s_0)) \in \left[-s_0^{-\frac{\eta}{2}(1 - \delta)},s_0^{-\frac{\eta}{2}(1 - \delta)}\right]^{\ell - 1},$$
such that the corresponding solution $(b(s),q(s)) \in \Sc_K(s)$ for all $s \geq s_0$.
\end{proposition}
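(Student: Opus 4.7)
\textbf{Proof plan for Proposition \ref{prop:exist}.} The plan is to implement the classical Brouwer-type topological argument used in \cite{MRRcjm15, RSapde2014}. Normalize the initial data cube to $\Dc_\ell := [-1,1]^{\ell - 1}$ via $V_0 := s_0^{\frac{\eta}{2}(1-\delta)}(\Vc_2(s_0), \dots, \Vc_\ell(s_0))$, and fix the ``stable'' initial data ($\Vc_1(s_0)$, $b_{\ell+1}(s_0), \dots, b_L(s_0)$, $q(s_0)$) strictly inside the bounds of Definition \ref{def:1}, say with an extra safety factor $1/2$. For each $V_0 \in \Dc_\ell$ the solution of \eqref{eq:qys} with the orthogonality conditions \eqref{eq:orthqPhiM} is well-defined as long as the modulation parameters and the error $q$ remain trapped in $\Sc_K$. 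Set
\[
s^*(V_0) := \sup\bigl\{\, s \geq s_0 \,:\, (b(\tau),q(\tau)) \in \Sc_K(\tau)\ \text{for all}\ \tau \in [s_0, s]\,\bigr\} \in [s_0, +\infty].
\]
The goal is to find $V_0 \in \Dc_\ell$ with $s^*(V_0) = +\infty$.

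I would argue by contradiction: suppose $s^*(V_0) < +\infty$ for every $V_0 \in \Dc_\ell$. The key external input is the reduction Proposition \ref{prop:redu}, proved in Section \ref{sec:4}, which should establish two facts. First, at $s = s^*(V_0)$ every bound defining $\Sc_K(s^*)$ \emph{except} those on $\Vc_k$ with $2 \leq k \leq \ell$ is strictly improved --- i.e.\ $|\Vc_1(s^*)|$, $|b_k(s^*)|$ for $\ell+1 \leq k \leq L$, and each of the energies $\Es_{2m}(s^*)$ are at most half of the allowed size in Definition \ref{def:Skset}. Second, if a bound on some unstable mode $\Vc_k$ ($2 \leq k \leq \ell$) is saturated at $s = s^*$, then it crosses outwards transversally: $\Vc_k(s^*)\partial_s \Vc_k(s^*) > 0$. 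The strict contraction of $\Vc_1$ and the strict outward flux of $\Vc_k$ for $k \geq 2$ are read directly from the diagonalization \eqref{eq:diagAlPl}: $\Vc_1$ corresponds to the eigenvalue $-1$ while $\Vc_2, \dots, \Vc_\ell$ correspond to the positive eigenvalues $\frac{k\gamma}{2\ell - \gamma}$; perturbative errors are absorbed thanks to \eqref{eq:modeq} and the smallness of $\Es_{2\Bbbk}$.

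Granted these facts, define
\[
\Phi : \Dc_\ell \longrightarrow \partial \Dc_\ell, \qquad V_0 \longmapsto \frac{(s^*(V_0))^{\frac{\eta}{2}(1-\delta)}}{10}\bigl(\Vc_2(s^*(V_0)), \dots, \Vc_\ell(s^*(V_0))\bigr).
\]
The first property above guarantees $\Phi(V_0) \in \partial \Dc_\ell$; continuous dependence of the flow on initial data, together with the transversal exit provided by the second property, gives continuity of $\Phi$; and on $\partial \Dc_\ell$ one has $s^*(V_0) = s_0$ so that $\Phi|_{\partial \Dc_\ell} = \mathrm{Id}$. Thus $\Phi$ is a continuous retraction of the $(\ell-1)$-disk $\Dc_\ell$ onto its boundary $S^{\ell-2}$, contradicting the no-retraction theorem (a consequence of Brouwer). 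Hence some $V_0 \in \Dc_\ell$ satisfies $s^*(V_0) = +\infty$, which is the desired trapping.

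The main obstacle is entirely Proposition \ref{prop:redu} itself. The improvement of $|\Vc_1|$ and of $|b_k|$ for $k \geq \ell + 1$ will follow from the modulation identity \eqref{eq:modeq} (Lemmas \ref{lemm:mod1}--\ref{lemm:mod2}) combined with the diagonalization \eqref{eq:diagAlPl}, once the Sobolev norms are under control. The hard part is the strict improvement of the energies $\Es_{2m}$: one has to construct Lyapunov functionals, adapted to the factorization $\Ls = \As^*\As$, whose dissipation for $m = \Bbbk$ matches the profile error \eqref{eq:estPsibLarge2} with a gain of size $b_1^{2L + 1 + 2\nu}$, and whose dissipation for the intermediate scales $\hbar+2 \leq m \leq \Bbbk - 1$ matches \eqref{eq:estPsibLarge1}--\eqref{eq:estPsiblocalB0} with gain $b_1^{2(m - \hbar) - 1 + 2\nu'}$. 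Integrating these monotonicity formulas in time along any trajectory trapped in $\Sc_K$, and using the initial smallness \eqref{eq:intialbounE2m} together with $\lambda(s) \sim b_1^{\ell/(2\ell - \gamma)}$ from \eqref{eq:solbk_i}, should yield bounds strictly below the thresholds in Definition \ref{def:Skset}. Controlling the nonlinear contribution $\Nc(q)$ and the boundary terms produced by the localization $\chi_{B_1}$ in these energy identities is precisely the technical heart of Section \ref{sec:4}.
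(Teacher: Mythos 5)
Your proposal is correct and follows essentially the same route as the paper: define the exit time, invoke Proposition \ref{prop:redu} to show that only the unstable modes $(\Vc_2,\dots,\Vc_\ell)$ can saturate at exit, use the transverse crossing to get continuity of the exit map and its restriction to the identity on the boundary, and conclude by the no-retraction theorem. The only cosmetic difference is the normalization constant in the exit map ($10$ versus the paper's $K$), which reflects an inconsequential mismatch already present between Definition \ref{def:Skset} and Proposition \ref{prop:redu} in the paper itself.
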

Let us briefly give the proof of Proposition \ref{prop:exist}. Let us consider $K \geq 1$ and $s_0 \geq 1$ and $(b(s_0), q(s_0))$ as in Definition \ref{def:1}. We introduce the exit time 
$$s_* = s_*(b(s_0), q(s_0)) = \sup\{s \geq s_0 \; \text{such that}\; (b(s), q(s)) \in \Sc_K(s)\},$$
and assume that for any choice of 
$$(\Vc_2(s_0), \cdots, \Vc_\ell(s_0)) \in \left[-s_0^{-\frac{\eta}{2}(1 - \delta)},s_0^{-\frac{\eta}{2}(1 - \delta)}\right]^{\ell - 1},$$
the exit time $s_* < +\infty$ and look for a contradiction. By the definition of $\Sc_K(s_*)$, at least one of the inequalities in that definition is an equality. Owing the following proposition, this can happens only for the components $(\Vc_2(s_*), \cdots, \Vc_\ell(s_*))$. Precisely, we have the following result which is the heart of our analysis:

\begin{proposition}[Control of $(b(s), q(s))$ in $\Sc_K(s)$ by $(\Vc_2(s), \cdots, \Vc_\ell(s))$] \label{prop:redu} There exists $K_2 \geq 1$ such that for each $K \geq K_2$, there exists $s_{0,2}(K) \geq 1$ such that for all $s_0 \geq s_{0,2}(k)$, the following holds: Given the initial data at $s = s_0$ as in Definition \ref{def:1}, if $(b(s), q(s)) \in \Sc_K(s)$ for all $s \in [s_0, s_1]$, with $(b(s_1), q(s_1)) \in \partial \Sc_K(s_1)$ for some $s_1 \geq s_0$, then:\\
$(i)$ (Reduction to a finite dimensional problem)
$$(\Vc_2(s_1), \cdots, \Vc_\ell(s_1)) \in \partial\left[- \frac{K}{s_1^{\frac{\eta}{2}(1 - \delta)}}, \frac{K}{s_1^{\frac{\eta}{2}(1 - \delta)}} \right]^{\ell - 1}.$$
$(ii)$ (Transverse crossing)
$$\frac{d}{ds}\left(\sum_{i = 2}^\ell \left|s^{\frac{\eta}{2}(1 - \delta)}\Vc_i(s)\right|^2\right)_{\big|_{s = s_1} }> 0.$$
\end{proposition}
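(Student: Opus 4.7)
The plan follows the standard topological shooting scheme: show that all constraints defining $\Sc_K(s)$ are strictly improved along the flow on $[s_0,s_1]$ \emph{except} those on the unstable modes $(\Vc_2,\ldots,\Vc_\ell)$. Any exit $(b(s_1),q(s_1)) \in \partial \Sc_K(s_1)$ must then occur through these modes, proving part $(i)$; the outgoing eigenvalues of the matrix $A_\ell$ in \eqref{eq:diagAlPl} then automatically yield part $(ii)$.

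I would first derive the modulation equations. Differentiating the orthogonality conditions \eqref{eq:orthqPhiM} in time and pairing \eqref{eq:qys} with $\Ls^k \Phi_M$, then exploiting the non-degeneracy \eqref{id:TkPhiMi} together with the bounds on $\tilde\Psi_b$ from Proposition \ref{prop:localProfile}, yields \eqref{eq:modeq}:
\begin{equation*}
\left|\frac{\lambda_s}{\lambda}+b_1\right| + \sum_{k=1}^{L}\left|(b_k)_s + (2k-\gamma)b_1 b_k - b_{k+1}\right| \lesssim C(M)\sqrt{\Es_{2\Bbbk}} + b_1^{L+1+\nu(\delta,\eta)}.
\end{equation*}
Next I would close the energy estimates. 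Working in the original variables on $v$ (equation \eqref{eq:vrt}) and following the Lyapunov method of \cite{MRRcjm15,RSapde2014}, one establishes
\begin{equation*}
\frac{d}{ds}\left\{\frac{\Es_{2\Bbbk}}{\lambda^{4\Bbbk-d}}\right\} \lesssim \frac{b_1^{2L+1+2\nu}}{\lambda^{4\Bbbk-d}},
\end{equation*}
and analogous monotonicity formulas for the intermediate energies $\Es_{2m}$ in the range $\hbar+2 \le m \le \Bbbk-1$. Integrating from $s_0$ using the smallness \eqref{eq:intialbounE2m} and the quasi-self-similar law $\lambda(s)\sim s^{-\ell/(2\ell-\gamma)}$ produces strict improvements of all the $\Es_{2m}$ bounds in Definition \ref{def:Skset}; the constants obtained are $K$-independent, so the factor $K$ in the shrinking set provides the strict gain. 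This is the technical heart of the argument and the main obstacle: one must treat the nonlinear term $\Nc(q)$ and the source $\tilde\Psi_b$ at the sharp powers of $b_1$ demanded by $\Sc_K$, exploiting the factorization $\Ls=\As^*\As$ from \eqref{eq:reLAAst} together with the admissibility framework of Section \ref{sec:2} and the coercivity $\Es_{2k} \gtrsim \int \frac{|q|^2}{1+y^{4k}}$. This is precisely the content that Section \ref{sec:4} is devoted to.

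With the high-Sobolev control in hand, the modulation law gives $(b_k)_s + (2k-\gamma)b_1 b_k - b_{k+1} = \Oc(s^{-(L+1+\nu')})$. For $\ell+1 \le k \le L$, a direct ODE integration using this bound and $|b_{k+1}| \le s^{-(k+1)}$ delivers the strict improvement $|b_k(s)| \le \tfrac12 s^{-k}$. For $1 \le k \le \ell$ I would insert $b_k = b_k^e + \Uc_k/s^k$, apply the diagonalizing change of variables $\Vc = P_\ell \Uc$ from \eqref{def:VctoUc}, and invoke Lemma \ref{lemm:lisysb} to obtain
\begin{equation*}
s\,(\Vc_k)_s = \mu_k \Vc_k + \Oc(s^{-\eta(1-\delta)/2}), \qquad \mu_1=-1,\quad \mu_k=\tfrac{k\gamma}{2\ell-\gamma} \text{ for } 2\le k\le \ell.
\end{equation*}
The stable eigenvalue $\mu_1=-1$ contracts $\Vc_1$ and therefore strictly improves its bound in $\Sc_K$, completing $(i)$.

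For $(ii)$, suppose $|\Vc_k(s_1)| = K\, s_1^{-\eta(1-\delta)/2}$ for some $k\in\{2,\ldots,\ell\}$. Letting $z_k(s) = s^{\eta(1-\delta)/2}\Vc_k(s)$, the diagonalized ODE becomes
\begin{equation*}
s\,(z_k)_s = \left(\mu_k + \tfrac{\eta(1-\delta)}{2}\right) z_k + \Oc(1),
\end{equation*}
so that on $|z_k|=K$ one has $\frac{d}{ds}|z_k|^2 = 2\bigl(\mu_k + \tfrac{\eta(1-\delta)}{2}\bigr)s^{-1}K^2 + \Oc(s^{-1}K)$, which is strictly positive for $K$ sufficiently large since $\mu_k>0$. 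Summing over the saturated indices yields the transverse crossing $\frac{d}{ds}\bigl(\sum_{i=2}^\ell |z_i|^2\bigr)_{|s=s_1} > 0$, which completes $(ii)$.
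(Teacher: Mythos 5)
Your overall scheme is exactly the paper's: modulation equations from the orthogonality conditions, Lyapunov monotonicity for $\Es_{2\Bbbk}$ and the intermediate energies, ODE integration to improve every bound in $\Sc_K$ except the unstable modes, and the positive eigenvalues $\tfrac{k\gamma}{2\ell-\gamma}$ of $D_\ell$ for the transverse crossing. However, there is one concrete step that fails as written: the control of the last mode $b_L$. Your claim that "the modulation law gives $(b_k)_s+(2k-\gamma)b_1b_k-b_{k+1}=\Oc(s^{-(L+1+\nu')})$" is only true for $1\le k\le L-1$ (this is \eqref{eq:ODEbkl}). For $k=L$ the projection onto $\Ls^L\Phi_M$ only yields \eqref{eq:ODEbL}, i.e.
\begin{equation*}
\left|(b_L)_s+(2L-\gamma)b_1b_L\right|\lesssim \frac{\sqrt{\Es_{2\Bbbk}}}{M^{2\delta}}+b_1^{L+1+(1-\delta)(1+\eta)}\lesssim \sqrt{K}\,s^{-L-(1-\delta)(1+\eta)},
\end{equation*}
and since $(1-\delta)(1+\eta)<1$ this right-hand side is strictly weaker than $s^{-(L+1)}$. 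Feeding it into the integrating-factor computation $\frac{d}{ds}\{b_L\lambda^{-(2L-\gamma)}\}$ produces $|b_L(s)|\lesssim s^{-L+1-(1-\delta)(1+\eta)}$, which does not even recover the a priori bound $|b_L|\le s^{-L}$, let alone improve it. This is precisely why the paper introduces the improved modulation equation of Lemma \ref{lemm:mod2}: one must work with the corrected unknown $\tilde b_L=b_L+\left<\Ls^Lq,\chi_{B_0}\Lambda Q\right>/\left<\Lambda Q,\chi_{B_0}\Lambda Q\right>$, whose evolution absorbs the dangerous $\sqrt{\Es_{2\Bbbk}}$ contribution into a total time derivative and gains the extra factor $B_0^{-2\delta}=b_1^{\delta}$, giving the sharp bound $b_1^{L+1+\eta(1-\delta)}$ needed to close $|b_L|\lesssim s^{-L-\eta(1-\delta)}$.

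This omission propagates into the energy estimate you defer to: to make the term $\widehat{\textup{Mod}}$ compatible with the claimed rate $b_1^{2L+1+2\nu}$ in the $\Es_{2\Bbbk}$ monotonicity, the paper must also correct the flux by $\xi_L$ as in \eqref{def:xiL}--\eqref{eq:decomF} and handle the resulting time oscillations (Step 8 of the proof of Proposition \ref{prop:E2k}), which is what produces the $\Oc(b_1^{\eta(1-\delta)})$ modification of the Lyapunov functional in \eqref{eq:Es2sLya}. Apart from this missing device for the $b_L$ mode, your argument (including the $\Vc_1$ contraction via the eigenvalue $-1$ and the transverse-crossing computation for $z_k=s^{\eta(1-\delta)/2}\Vc_k$) matches the paper's proof.
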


Let us assume Proposition \ref{prop:redu} and continue the proof of Proposition \ref{prop:exist}. From part $(i)$ of Proposition \ref{prop:redu}, we see that 
$$(\Vc_2(s_*), \cdots, \Vc_\ell(s_*)) \in \partial\left[- \frac{K}{s_*^{\frac{\eta}{2}(1 - \delta)}}, \frac{K}{s_*^{\frac{\eta}{2}(1 - \delta)}} \right]^{\ell - 1},$$
and the following mapping 
\begin{align*}
\Upsilon: [-1,1]^{\ell - 1} &\mapsto \partial \left([-1,1]^{\ell - 1}\right)\\
s_0^{\frac{\eta}{2}(1 - \delta)}\big(\Vc_2(s_0), \cdots, \Vc_\ell(s_0)\big) & \to \frac{s_*^{\frac{\eta}{2}(1 - \delta)}}{K} \big(\Vc_2(s_*), \cdots, \Vc_\ell(s_*)\big)
\end{align*}
is well defined. Applying the transverse crossing property given in part $(ii)$ of Proposition \ref{prop:redu}, we see that $(b(s), q(s))$ leaves $\Sc_K(s)$ at $s = s_0$, hence, $s_* = s_0$. This is a contradiction since $\Upsilon$ is the identity map on the boundary sphere and it can not be a continuous retraction of the unit ball. This concludes the proof of Proposition \ref{prop:exist}, assuming that Proposition \ref{prop:redu} holds.

\paragraph{Conclusion of Theorem \ref{Theo:1} assuming Proposition \ref{prop:redu}}. From Proposition \ref{prop:exist}, we know that there exist initial data $(b(s_0), q(s_0))$ such that 
$$(b(s), q(s)) \in \Sc_K(s) \quad \text{for all} \quad s \geq s_0.$$ 
From \eqref{eq:lam10}, \eqref{eq:Lamdas}, we have 
\begin{align*}
-\lambda \lambda_t = c(u_0)\lambda^{\frac{2\ell - \gamma}{\ell}} \left[1+o(1)\right],
\end{align*}
which yields
$$-\lambda^{1 - \frac{2\ell - \gamma}{\ell}}\lambda_t = c(u_0)(1 + o(1)).$$
We easily conclude that $\lambda$ vanishes in finite time $T = T(u_0) < +\infty$ with the following behavior near the blowup time:
$$\lambda(t) = c(u_0)(1 + o(1))(T-t)^{\frac{\ell}{\gamma}},$$
which is the conclusion of item $(i)$ of Theorem \ref{Theo:1}. 

For the control of the Sobolev norms, we observe from \eqref{eq:weipykq} and Definition \ref{def:Skset} that 
$$\forall \hbar + 2 \leq m \leq \Bbbk, \quad\int |\py^{2m}q|^2 \lesssim \Es_{2m} \to 0 \quad \text{as}\;\;s \to +\infty.$$
From the relation $d = 4\hbar + 4\delta + 2\gamma$, we deduce that 
$$\forall \sigma \in \left[d/2 + 3, 2\Bbbk\right], \quad \int |\nabla^\sigma q|^2 \to 0  \quad \text{as}\;\;s \to +\infty,$$
which yields $(ii)$ of Theorem \ref{Theo:1}. 

\section{Reduction of the problem to a finite dimensional one.} \label{sec:4}

In this section, we aim at proving Proposition \ref{prop:redu} which is the heart of our analysis. We proceed in three separate subsections:

- In the first subsection, we derive the laws for the parameters $(b,\lambda)$ thanks to the orthogonality condition \eqref{eq:orthqPhiM} and the coercivity of the powers of $\Ls$.

- In the second subsection, we prove the main monotonicity tools for the control of the infinite dimensional part of the solution. In particular, we derive a suitable Lyapunov functional for the $\Es_{2\Bbbk}$ energy as well as the monotonicity formula for the lower Sobolev energy.

- In the third subsection, we conclude the proof of Proposition \ref{prop:redu} thanks to the identities obtained in the first two parts.

\subsection{Modulation equations.}
We derive here the modulation equations for $(b, \lambda)$. The derivation is mainly based on the orthogonality \eqref{eq:orthqPhiM} and the coercivity of the powers of $\Ls$. Let us start with elementary estimates relating to the fixed direction $\Phi_M$.
\begin{lemma}[Estimate for $\Phi_M$]\label{lemm:estPhiM} Given $\Phi_M$ as defined in \eqref{def:PhiM}, we have the followings:
$$|c_{k,M}| \lesssim M^{2k} \quad \text{for all}\;\; 1 \leq k \leq L,$$
$$\int |\Phi_M|^2 \lesssim M^{d-2\gamma}, \quad \int|\Ls \Phi_M|^2 \lesssim M^{d - 2\gamma-4}.$$
\end{lemma}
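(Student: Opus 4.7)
The whole lemma is essentially bookkeeping, and the key structural observation I would exploit first is that $\Ls(\Lambda Q)=0$ (Lemma~\ref{lemm:factorL} combined with $\As(\Lambda Q)=0$). Consequently, for $j\ge 1$,
$$\Ls^{j}(\chi_M \Lambda Q) \;=\; \Ls^{\,j-1}\bigl([\Ls,\chi_M]\Lambda Q\bigr),$$
so all the terms generated by $\Ls^{j}$ on $\chi_M \Lambda Q$ involve at least one derivative of $\chi_M$ and are therefore supported in $M\le y\le 2M$. Using $|\partial_y^\alpha \chi_M|\lesssim M^{-\alpha}\mathbf 1_{M\le y\le 2M}$ and the asymptotics $|\partial_y^\alpha\Lambda Q|\lesssim y^{-\gamma-\alpha}$ from \eqref{eq:asymLamQ}, I would prove by induction on $j\ge 1$ the pointwise bound
$$\bigl|\Ls^{j}(\chi_M \Lambda Q)(y)\bigr|\;\lesssim\; M^{-\gamma-2j}\,\mathbf 1_{M\le y\le 2M},$$
together with the corresponding bounds on derivatives. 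This is the step where one has to be careful, but it is a purely mechanical induction using $\Ls = -\partial_{yy}-(d-1)y^{-1}\partial_y + Z/y^2$ and the boundedness of $Z$.

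Next, from the large-$y$ behavior $\Lambda Q\sim a_0\gamma y^{-\gamma}$ and $d-2\gamma>0$ (since $\gamma\le 2$, $d\ge 7$), a direct computation gives the lower bound
$$\langle \chi_M \Lambda Q,\Lambda Q\rangle \;=\; \int_0^{2M}\chi_M(\Lambda Q)^2\, y^{d-1}\,dy \;\gtrsim\; M^{d-2\gamma},$$
which is exactly \eqref{eq:PhiMLamQ}. For the bound on $c_{k,M}$ I proceed by induction on $k$. Using $|T_k(y)|\lesssim y^{2k-\gamma}$ for $y\gg 1$ (Lemma~\ref{lemm:GenLk}(i)), the inner products in the numerator of \eqref{def:ckM} are estimated as follows: for $j=0$,
$$\bigl|\langle \chi_M\Lambda Q,T_k\rangle\bigr|\;\lesssim\;\int_0^{2M}y^{-\gamma}y^{2k-\gamma}y^{d-1}\,dy\;\lesssim\;M^{d+2k-2\gamma},$$
and for $1\le j\le k-1$, combining the support-size estimate above with $|T_k|\lesssim M^{2k-\gamma}$ on $M\le y\le 2M$ yields
$$\bigl|\langle \chi_M\Ls^{j}(\chi_M\Lambda Q),T_k\rangle\bigr|\;\lesssim\; M^{-\gamma-2j}\cdot M^{2k-\gamma}\cdot M^{d}\;=\;M^{d+2k-2j-2\gamma}.$$
Assuming inductively $|c_{j,M}|\lesssim M^{2j}$ for $j<k$, each term in \eqref{def:ckM} contributes at most $M^{2j}\cdot M^{d+2k-2j-2\gamma}/M^{d-2\gamma}=M^{2k}$, which closes the induction.

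The two $L^2$ bounds then follow at once. For $\int|\Phi_M|^2$, the $k=0$ contribution is $\int\chi_M^2(\Lambda Q)^2 y^{d-1}\,dy\lesssim M^{d-2\gamma}$, and for $k\ge 1$ the pointwise bound on $\Ls^k(\chi_M \Lambda Q)$ gives
$$|c_{k,M}|^2\int\bigl|\Ls^{k}(\chi_M\Lambda Q)\bigr|^2 y^{d-1}\,dy\;\lesssim\; M^{4k}\cdot M^{-2\gamma-4k}\cdot M^{d}\;=\;M^{d-2\gamma},$$
so every term in the sum is at the same scale, and summing the $L$ of them preserves the bound (the implicit constant absorbs $L$). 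The same computation applied to $\Ls\Phi_M=\sum_{k=0}^L c_{k,M}\Ls^{k+1}(\chi_M\Lambda Q)$, using the support-size bound at level $k+1$, produces the extra factor $M^{-4}$, giving $\int|\Ls\Phi_M|^2\lesssim M^{d-2\gamma-4}$. The only mildly delicate step is the induction establishing the pointwise size of $\Ls^j(\chi_M\Lambda Q)$; everything else reduces to $L^2$ integration of powers of $y$.
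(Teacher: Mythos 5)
Your proof is correct, and it reaches the bounds exactly as stated in the lemma. The route differs from the paper's in one place: for the induction on $c_{k,M}$, the paper uses the self-adjointness of $\Ls$ to throw $\Ls^{j}$ off of $\chi_M\Lambda Q$ and onto $T_{k+1}$, and then invokes the admissibility of $\Ls^{j}T_{k+1}$ (degree $(\max\{0,k+1-j\},k+1-j)$, Lemma \ref{lemm:actionLL}) to get $|\Ls^{j}T_{k+1}|\lesssim y^{2(k+1-j)-\gamma}$ and integrate over $y\leq 2M$; you instead keep $\Ls^{j}$ on $\chi_M\Lambda Q$ and exploit the cancellation $\Ls(\Lambda Q)=0$ to localize $\Ls^{j}(\chi_M\Lambda Q)$ on the annulus $M\leq y\leq 2M$ with the pointwise size $M^{-\gamma-2j}$. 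Both are elementary and land on the same power counting, but your version has the advantage of being self-contained: the same pointwise localization bound is precisely what is needed for the two $L^2$ estimates, which the paper asserts without detailing $\int|\Ls^{j}(\chi_M\Lambda Q)|^2$. (Incidentally, the paper's written proof displays the two $L^2$ conclusions with the exponents $M^{d-2\gamma}$ and $M^{d-2\gamma-4}$ interchanged — an evident typo — whereas your computation produces them as stated in the lemma.) The only step you defer is the induction giving $|\partial_y^{\alpha}\Ls^{j}(\chi_M\Lambda Q)|\lesssim M^{-\gamma-2j-\alpha}\mathbf 1_{M\leq y\leq 2M}$, which indeed follows mechanically from $\Ls(\chi_M\Lambda Q)=[\Ls,\chi_M]\Lambda Q$, the bounds $|\partial_y^{\alpha}\chi_M|\lesssim M^{-\alpha}$, $|\partial_y^{\alpha}\Lambda Q|\lesssim y^{-\gamma-\alpha}$, and the boundedness of $Z$ together with its derivatives against the weight $y^{-2}$.
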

\begin{proof}
Arguing by induction, we assume that
\begin{equation*}
|c_{j,M}| \lesssim M^{2j}, \quad 1 \leq j \leq k.
\end{equation*}
Using the fact that $\Ls^j T_{i}$ is admissible of degree $(\max\{0,i-j\},i-j)$, we estimate from the definition \eqref{def:ckM},
\begin{align*}
|c_{k+1,M}|& \lesssim \frac{1}{M^{d - 2\gamma}}\sum_{j = 0}^k M^{2j} \int |\chi_M \Lambda Q \Ls^j(T_{k+1})|\\
&\lesssim \frac{1}{M^{d - 2\gamma}}\sum_{j = 0}^k M^{2j} \int_{y \leq M}\frac{y^{d-1}}{y^\gamma}y^{2(k+1 - j) - \gamma}dy \lesssim M^{2(k+1)}.
\end{align*}
Using the estimate for $c_{k,M}$ yields
\begin{equation*}
\int |\Phi_M|^2 \lesssim \int |\chi_M \Lambda Q|^2 + \sum_{j = 1}^L |c_{j,M}|^2\int|\Ls^j(\chi_M \Lambda Q)|^2 \lesssim M^{d-2\gamma-4},
\end{equation*}
and 
\begin{equation*}
\int|\Ls \Phi_M|^2 \lesssim \sum_{j = 0}^L |c_{j,M}|^2\int |\Ls^{j+1}(\chi_M \Lambda Q)|^2  \lesssim M^{d - 2\gamma}.
\end{equation*}
This concludes the proof of Lemma \ref{lemm:estPhiM}.
\end{proof}

From the orthogonality conditions \eqref{eq:orthqPhiM} and equation \eqref{eq:qys}, we claim the following:
\begin{lemma}[Modulation equations] \label{lemm:mod1}  Given $\hbar$, $\delta$ and $\eta$ as defined in \eqref{def:kdeltaplus} and \eqref{def:B0B1}. For $K \geq 1$, we assume that there is $s_0(K) \gg 1$ such that $(b(s), q(s)) \in \Sc_K(s)$ for $s \in [s_0, s_1]$ for some $s_1 \geq s_0$. Then, the followings hold for $s \in [s_0, s_1]$:
\begin{equation}\label{eq:ODEbkl}
\sum_{k = 1}^{L-1}\left|(b_k)_s + (2k - \gamma)b_1b_k - b_{k+1} \right| + \left|b_1 + \frac{\lambda_s}{\lambda}\right| \lesssim b_1^{L + 1 + (1 - \delta)(1 + \eta)},
\end{equation}
and 
\begin{equation}\label{eq:ODEbL}
\left|(b_L)_s + (2L - \gamma)b_1b_L \right| \lesssim  \frac{\sqrt{\Es_{2\Bbbk}}}{M^{2\delta}} + b_1^{L + 1+ (1 - \delta)(1 + \eta)}.
\end{equation}
\end{lemma}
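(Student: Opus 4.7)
The plan is to extract $L+1$ scalar equations for the $L+1$ unknowns $\frac{\lambda_s}{\lambda}+b_1$ and $(b_k)_s + (2k-\gamma)b_1 b_k - b_{k+1}$ (for $1\le k\le L$) by differentiating the orthogonality conditions \eqref{eq:orthqPhiM} in $s$ and using equation \eqref{eq:qys}. Since $\Phi_M$ does not depend on time, $\partial_s\langle q, \Ls^k\Phi_M\rangle = \langle\partial_s q,\Ls^k\Phi_M\rangle = 0$ for $0\le k\le L$, and using the self-adjointness of $\Ls$ we obtain
\begin{equation*}
\left\langle -\tilde\Psi_b - \widehat{\textup{Mod}} + \Hc(q) - \Nc(q), \Ls^k \Phi_M \right\rangle + \frac{\lambda_s}{\lambda}\left\langle \Lambda q, \Ls^k \Phi_M\right\rangle - \left\langle q, \Ls^{k+1} \Phi_M \right\rangle = 0.
\end{equation*}
The last term vanishes by \eqref{eq:orthqPhiM} for $0\le k\le L-1$, but does \emph{not} vanish for $k=L$, which is precisely the source of the weaker estimate \eqref{eq:ODEbL}.

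\textbf{Diagonal structure.} Using $\widehat{\textup{Mod}}$ from \eqref{def:Modhat}--\eqref{eq:Modt} and the fact that $\Phi_M$ is supported in $\{y\le 2M\}$, so that $\chi_{B_1}\equiv 1$ on its support (since $M\ll B_1$), the equations rewrite as the linear system
\begin{equation*}
\left(\frac{\lambda_s}{\lambda} + b_1\right)D_{k,0} + \sum_{j=1}^L \left[(b_j)_s + (2j-\gamma)b_1 b_j - b_{j+1}\right]D_{k,j} = R_k, \quad 0\le k\le L,
\end{equation*}
where $D_{k,0} = \langle\Lambda\tilde Q_b,\Ls^k\Phi_M\rangle$ and $D_{k,j} = \langle T_j + \sum_{i=j+1}^{L+2}\partial S_i/\partial b_j,\Ls^k\Phi_M\rangle$. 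By the key identity \eqref{id:TkPhiMi}, $\langle T_j,\Ls^k\Phi_M\rangle = (-1)^j\langle\chi_M\Lambda Q,\Lambda Q\rangle\delta_{j,k}$, and by the homogeneity $\textup{deg}(S_i)=(i,i-1,i)$ of Proposition \ref{prop:1} the $\partial S_i/\partial b_j$ corrections carry at least one extra factor of $b_1$. Together with the nondegeneracy \eqref{eq:PhiMLamQ}, this makes $(D_{k,j})$ diagonally dominant with diagonal entries $\sim M^{d-2\gamma}$, hence invertible for $b_1$ small enough by a Neumann series.

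\textbf{Bounding $R_k$.} The right-hand side is handled term by term. For the profile error, Cauchy--Schwarz combined with the very local bound \eqref{eq:estPsiblocalTilde} on the fixed ball $\{y\le 2M\}$ and the weighted norm bounds of Lemma \ref{lemm:estPhiM} yield $|\langle\tilde\Psi_b,\Ls^k\Phi_M\rangle|\lesssim C(M)b_1^{L+3}$, which, after dividing by the diagonal entry, produces the admissible size $b_1^{L+1+(1-\delta)(1+\eta)}$ (since $M$ is fixed, any factor $M^{C}$ is harmless). The factor $\frac{\lambda_s}{\lambda}\langle\Lambda q,\Ls^k\Phi_M\rangle$ is quadratically small in the modulation parameters and gets reabsorbed on the left-hand side by a bootstrap. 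The potential term $\Hc(q)$ has coefficient satisfying $|\cos(2Q)-\cos(2\tilde Q_b)|\lesssim b_1 y^{2-\gamma}\mathbf{1}_{y\le 2B_1}$ (from $\tilde Q_b = Q + O(b_1 y^{2-\gamma})$), and the nonlinear term $\Nc(q)$ is controlled by a second-order Taylor expansion; both are estimated via weighted $L^2$ bounds on $q$ coming from the shrinking set $\Sc_K(s)$ and the coercivity of the $\Es_{2m}$.

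\textbf{The case $k=L$ and the $M^{-2\delta}$ gain.} For $k=L$, the surviving term $\langle q,\Ls^{L+1}\Phi_M\rangle$ must be estimated directly. Using the coercivity $\Es_{2\Bbbk} \gtrsim \int |q|^2/(1+y^{4\Bbbk})$, Cauchy--Schwarz, and the fact that $\Ls^{L+1}\Phi_M$ is supported in $\{y\le 2M\}$, one computes
\begin{equation*}
\left|\langle q, \Ls^{L+1}\Phi_M\rangle\right| \lesssim \sqrt{\Es_{2\Bbbk}}\left(\int_{y\le 2M}(1+y^{4\Bbbk})|\Ls^{L+1}\Phi_M|^2\,y^{d-1}dy\right)^{1/2},
\end{equation*}
and, using the scaling $\|\Ls^{L+1}\Phi_M\|_{L^2}\lesssim M^{d/2-\gamma-2(L+1)}$ from Lemma \ref{lemm:estPhiM} together with the identity $d-2\gamma = 4\hbar+4\delta$ and $\Bbbk = L+\hbar+1$, the integral is bounded by $M^{4\hbar + d - 2\gamma}$. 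Dividing by the diagonal entry $M^{d-2\gamma}=M^{4\hbar+4\delta}$ yields precisely the $M^{-2\delta}\sqrt{\Es_{2\Bbbk}}$ contribution in \eqref{eq:ODEbL}. The main delicate point is this careful bookkeeping of $M$-powers: all other $M$-dependencies absorb into the implicit constants, but the $M^{-2\delta}$ gain is structurally necessary later to close the bootstrap on $b_L$ by choosing $M$ large.
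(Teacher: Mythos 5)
Your proposal is correct and follows essentially the same route as the paper: project equation \eqref{eq:qys} onto $\Ls^k\Phi_M$ using the orthogonality conditions \eqref{eq:orthqPhiM}, exploit the (quasi-)diagonal structure coming from \eqref{id:TkPhiMi} and the homogeneity of the $S_j$, bound $\tilde\Psi_b$ by its very local estimate, and for $k=L$ track the surviving term $\langle q,\Ls^{L+1}\Phi_M\rangle$ whose $M$-power bookkeeping (via $d-2\gamma=4\hbar+4\delta$) produces exactly the $M^{-2\delta}\sqrt{\Es_{2\Bbbk}}$ gain. The only cosmetic difference is that you invert the full $(L+1)\times(L+1)$ system by a Neumann series, whereas the paper absorbs the off-diagonal contributions into the aggregate quantity $D(t)$ and closes with a single bootstrap; these are equivalent.
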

\begin{proof} We start with the law for $b_L$. Let 
$$D(t) = \left|b_1 + \frac{\lambda_s}{\lambda}\right| + \sum_{k = 1}^L \left|(b_k)_s + (2k - \gamma)b_1b_k - b_{k+1}\right|,$$
where we recall that $b_{k} \equiv 0$ if $k \geq L + 1$.

Now, we take the inner product of \eqref{eq:qys} with $\Ls^L \Phi_M$ and use the orthogonality \eqref{eq:orthqPhiM} to write
\begin{align}
\left<\widehat{\text{Mod}}(t), \Ls^L\Phi_M\right> &= -\left<\Ls^L\tilde{\Psi}_b, \Phi_M \right> -\left<\Ls^{L+1} q, \Phi_M \right>\nonumber\\
&\qquad  -\left<-\frac{\lambda_s}{\lambda}\Lambda q - \Lc(q) + \Nc(q), \Ls^L\Phi_M \right>.\label{eq:bL}
\end{align}
From the definition \eqref{def:PhiM}, we see that $\Phi_M$ is localized in $y \leq 2M$. From \eqref{def:Modhat} and \eqref{eq:Modt}, we compute by using the identity \eqref{id:TkPhiMi}, 
$$\left<\widehat{\text{Mod}}(t), \Ls^L\Phi_M\right> = (-1)^L\left<\Lambda Q, \Phi_M \right>\left[(b_L)_s + (2L - \gamma)b_1b_L\right] + \Oc(M^C b_1D(t)).$$
The error term is estimated by using \eqref{eq:estlocalPsib} with $m = L - \hbar - 1$ and Lemma \ref{lemm:estPhiM}, 
\begin{align*}
\left|\left<\Ls^L\tilde{\Psi}_b, \Phi_M \right>\right| &\leq \left(\int_{y \leq 2M}|\Ls^L \tilde{\Psi_b}|^2 \right)^\frac 12 \left(\int_{y \leq 2M} |\Phi_M|^2 \right)^\frac 12\\
&\quad \lesssim M^Cb_1^{L+3} \lesssim b_1^{L+1+(1-\delta)(1 + \eta)}.
\end{align*}
For the linear term, we apply Lemma \ref{lemm:coeLk} with $k = \Bbbk - 1$, 
\begin{align*}
\Es_{2\Bbbk}(q) \gtrsim \int\frac{|{\Ls}^{L+1}q|^2}{y^4(1 + y^{4(\hbar-1)})} \gtrsim \int\frac{|{\Ls}^{L+1}q|^2}{1 + y^{4\hbar}}. 
\end{align*}
Hence, the Cauchy-Schwartz inequality yields,
\begin{align*}
\left|\left<\Ls^{L+1} q,\Phi_M \right>\right| \lesssim M^{2\hbar}\left(\int \frac{|\Ls^{L+1}q|^2}{1 + y^{4\hbar}} \right)^{\frac 12} \left(\int |\Phi_M|^2 \right)^{\frac 12} \lesssim M^{2\hbar + \frac{d}{2} - \gamma}\sqrt{\Es_{2\Bbbk}}.
\end{align*}
The remaining terms are easily estimated by using the following bound coming from Lemma \ref{lemm:coeLk} and Lemma \ref{lemm:coerL},
\begin{equation}\label{est:Es2K1}
\Es_{2\Bbbk}(q) \gtrsim \int \frac{|\Ls q|^2}{y^4(1 + y^{4(\Bbbk - 2)})} \gtrsim \int \frac{|\py q|^2}{y^4(1 + y^{4(\Bbbk - 2) + 2})} + \int \frac{q^2}{y^6(1 + y^{4(\Bbbk - 2) + 2})},
\end{equation}
which implies
$$\left|\left<-\frac{\lambda_s}{\lambda} \Lambda q + \Lc(q) + \Nc(q) , \Ls^L\Phi_M\right>\right| \lesssim M^Cb_1\left(\sqrt{\Es_{2\Bbbk}} + D(t)\right). $$
Put all the above estimates into \eqref{eq:bL} and use \eqref{eq:PhiMLamQ} together with the relation \eqref{def:kdeltaplus}, we arrive at
\begin{equation}\label{eq:bLtm1}
\left|(b_L)_s + (2L - \gamma)b_1b_L\right| \lesssim \frac{\sqrt{\Es_{2\Bbbk}}}{M^{2\delta}} + b_1^{L+1+(1-\delta)(1 + \eta)} + M^Cb_1D(t).
\end{equation}

\medskip

For the modulation equations for $b_k$ with $1 \leq k \leq L-1$, we take the inner product of \eqref{eq:qys} with $\Ls^k \Phi_M$ and use the orthogonality \eqref{eq:orthqPhiM} to write for $1 \leq k \leq L-1$,
\begin{align*}
\left<\widehat{\text{Mod}}(t), \Ls^k\Phi_M\right> &= -\left<\Ls^k\tilde{\Psi}_b, \Phi_M \right> -\left<-\frac{\lambda_s}{\lambda}\Lambda q - \Lc(q) + \Nc(q), \Ls^k\Phi_M \right>.
\end{align*}
Proceed as for $b_L$, we end up with 
\begin{equation}\label{eq:bktm1}
\left|(b_k)_s + (2k - \gamma)b_1b_k - b_{k+1}\right| \lesssim b_1^{L + 1 + (1 - \delta)(1 + \eta)} + M^Cb_1\left(\sqrt{\Es_{2\Bbbk}} + D(t)\right).
\end{equation}
Similarly, we have by taking the inner product of \eqref{eq:qys} with $\Phi_M$,
\begin{equation}\label{eq:lamtm1}
\left|\frac{\lambda_s}{\lambda} + b_1\right| \lesssim b_1^{L + 1 + (1 - \delta)(1 + \eta)} + M^Cb_1\left(\sqrt{\Es_{2\Bbbk}} + D(t)\right).
\end{equation}
From \eqref{eq:bLtm1}, \eqref{eq:bktm1} and \eqref{eq:lamtm1}, we obtain the round bound
$$D(t) \lesssim M^C \sqrt{\Es_{2\Bbbk}} + b_1^{L+1 + (1 - \delta)(1 + \eta)}.$$
The conclusion then follows by substituting this bound into \eqref{eq:bLtm1}, \eqref{eq:bktm1} and \eqref{eq:lamtm1}. This ends the proof of Lemma \ref{lemm:mod1}.

\end{proof}

From the bound for $\Es_{\Bbbk}$ given in Definition \ref{def:Skset} and the modulation equation \eqref{eq:ODEbL}, we only have the pointwise bound
$$|(b_L)_s + (2L - \gamma)b_1b_L| \lesssim b_1^{L + (1 - \delta)(1 + \eta)},$$
which is not good enough to close the expected one
$$|(b_L)_s + (2L - \gamma)b_1b_L| \ll b_1^{L + 1}.$$
We claim the following improved modulation equation for $b_L$ :

\begin{lemma}[Improved modulation equation for $b_L$] \label{lemm:mod2} Under the assumption of Lemma \ref{lemm:mod1}, the following bound holds for all $s \in [s_0, s_1]$:
\begin{align}
&\left|(b_L)_s + (2L - \gamma)b_1b_L + \frac{d}{ds} \left\{ \frac{\left<\Ls^L q, \chi_{B_0}\Lambda Q\right>}{\left<\Lambda Q,\chi_{B_0} \Lambda Q \right>}\right\}\right|\nonumber\\
& \qquad \qquad \qquad \lesssim  \frac 1 {B_0^{2\delta}} \left[C(M)\sqrt{\Ec_{2\Bbbk}} + b_1^{L + 1+ (1 - \delta) - C_L\eta}\right].\label{eq:ODEbLimproved}
\end{align}
\end{lemma}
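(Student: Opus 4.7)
The plan is to upgrade Lemma \ref{lemm:mod1} by replacing the fixed test direction $\Phi_M$ with the adaptively localized profile $\Phi_{B_0}:=\chi_{B_0}\Lambda Q$. Since $B_0 = b_1^{-1/2}\gg M$, this should convert the loss $M^{-2\delta}$ of \eqref{eq:ODEbL} into the strictly smaller factor $B_0^{-2\delta}\sim b_1^\delta$, at the price of a total-derivative correction which is $s$-integrable along the flow. Concretely, I would compute $\frac{d}{ds}\langle \Ls^L q,\Phi_{B_0}\rangle$ by Leibniz, substitute $\partial_s q$ from \eqref{eq:qys}, and use self-adjointness of $\Ls$ with respect to $y^{d-1}\,dy$ to transfer $\Ls^L$ onto the test function, obtaining
$$
\frac{d}{ds}\langle \Ls^L q,\Phi_{B_0}\rangle = -\langle q,\Ls^{L+1}\Phi_{B_0}\rangle + \tfrac{\lambda_s}{\lambda}\langle\Lambda q,\Ls^L\Phi_{B_0}\rangle + \langle\Fc,\Ls^L\Phi_{B_0}\rangle + \langle \Ls^L q,\partial_s\Phi_{B_0}\rangle.
$$

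The key algebraic identity is that $\Ls\Lambda Q = 0$ together with $\Ls T_k = -T_{k-1}$, $T_0 = \Lambda Q$, forces $\Ls^L T_k \equiv 0$ for $k<L$ and $\Ls^L T_L = (-1)^L \Lambda Q$. Transferring $\Ls^L$ onto $\widehat{\textup{Mod}}$ (and using that $\mathrm{supp}\,\Phi_{B_0}\subset\{y\le 2B_0\}\subset\{\chi_{B_1}\equiv 1\}$) therefore annihilates every term of the sum \eqref{eq:Modt} except the $k=L$ one, producing the dominant contribution $(-1)^{L+1}[(b_L)_s+(2L-\gamma)b_1b_L]\,\langle \Lambda Q,\chi_{B_0}\Lambda Q\rangle$. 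The residual corrections coming from $\frac{\partial S_j}{\partial b_L}$, from the $(\lambda_s/\lambda+b_1)\Lambda\tilde Q_b$ piece, and from the $k<L$ coefficients are all controlled by Lemma \ref{lemm:mod1}. Dividing through by the nondegenerate quantity $\langle\Lambda Q,\chi_{B_0}\Lambda Q\rangle\sim B_0^{d-2\gamma}$ and applying the product rule to generate the $s$-derivative of the full quotient exactly recovers the left-hand side of \eqref{eq:ODEbLimproved}.

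It then remains to bound the error terms by $B_0^{-2\delta}\bigl[C(M)\sqrt{\Es_{2\Bbbk}}+b_1^{L+1+(1-\delta)-C_L\eta}\bigr]$. The profile term $\langle \tilde\Psi_b,\Ls^L\Phi_{B_0}\rangle$ is estimated by Cauchy-Schwarz using the \emph{refined} local bound \eqref{eq:estPsiblocalB0} near $B_0$ (the global bound \eqref{eq:estPsibLarge2} would be too weak here). The linear term $\langle q,\Ls^{L+1}\Phi_{B_0}\rangle$ produces the crucial gain $B_0^{-2\delta}$ because the cancellation $\Ls\Lambda Q=0$ forces $\Ls^{L+1}\Phi_{B_0}$ to concentrate on $y\sim B_0$ (only commutators with $\chi_{B_0}$ survive to leading order); the coercivity of $\Es_{2\Bbbk}$ from Lemma \ref{lemm:coeLk} then closes the estimate. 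The drift $\tfrac{\lambda_s}{\lambda}\langle\Lambda q,\Ls^L\Phi_{B_0}\rangle$ and the linear/nonlinear perturbations $\Hc(q),\Nc(q)$ are handled via weighted Hardy bounds, a Taylor expansion for the nonlinear piece, and $\sqrt{\Es_{2\Bbbk}}$. Finally, the cutoff-derivative term $\langle \Ls^L q,\partial_s\Phi_{B_0}\rangle$ is localized in $B_0\le y\le 2B_0$ with size controlled by $|(b_1)_s|\lesssim b_1^2$, the latter justified by \eqref{eq:ODEbkl}.

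The main obstacle is verifying that every error term genuinely yields the sharp improvement $B_0^{-2\delta}$ rather than merely a milder smallness. This relies on fully exploiting $\Ls\Lambda Q = 0$ to concentrate $\Ls^{L+1}\Phi_{B_0}$ at the scale $B_0$, on the sharp weighted coercivity of $\Es_{2\Bbbk}$, and on the refined local profile estimate \eqref{eq:estPsiblocalB0}; keeping careful track of the $C_L\eta$ losses across all the pieces is the principal technical burden.
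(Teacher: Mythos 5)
Your proposal is correct and follows essentially the same route as the paper: test the equation commuted with $\Ls^L$ against $\chi_{B_0}\Lambda Q$, use $\Ls^L T_k=0$ for $k<L$, $\Ls^L T_L=(-1)^L\Lambda Q$ and $\chi_{B_0}(1-\chi_{B_1})=0$ to isolate the $b_L$ law, invoke the refined local profile bound \eqref{eq:estPsiblocalB0} near $B_0$, track the $s$-dependence of both numerator and denominator of the quotient, and divide by $\left<\Lambda Q,\chi_{B_0}\Lambda Q\right>\sim B_0^{d-2\gamma}$ so that the relation $d-2\gamma-4\hbar=4\delta$ produces the $B_0^{-2\delta}$ gain. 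The only cosmetic difference is that the paper keeps $\Ls^{L+1}$ on $q$ and runs a weighted Cauchy--Schwarz against the coercivity of $\Es_{2\Bbbk}$ rather than transferring all powers onto the cutoff test function, but by self-adjointness this is the same estimate.
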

\begin{proof} We commute \eqref{eq:qys} with $\Ls^L$ and take the inner product with $\chi_{B_0}\Lambda Q$ to get
\begin{align}
&\left<\Lambda Q, \chi_{B_0}\Lambda Q\right> \left\{\frac{d}{ds}\left[\frac{ \left<\Ls^L q, \chi_{B_0}\Lambda Q\right>}{\left<\Lambda Q, \chi_{B_0}\Lambda Q\right>}\right] -  \left<\Ls^L q, \chi_{B_0}\Lambda Q\right> \frac{d}{ds}\left[\frac{1}{ \left<\Lambda Q, \chi_{B_0}\Lambda Q\right>} \right] \right\}\nonumber\\
& = \left<\Ls^L q, \Lambda Q \partial_s(\chi_{B_0})\right> - \left<\Ls^{L+1} q, \chi_{B_0}\Lambda Q\right> + \frac{\lambda_s}{\lambda}\left<\Ls^L\Lambda q,  \chi_{B_0}\Lambda Q\right> \nonumber\\
& \quad - \left<\Ls^L \tilde{\Psi}_b,\chi_{B_0}\Lambda Q\right> - \left< \Ls^L\widehat{\textup{Mod}}(t), \chi_{B_0}\Lambda Q\right>  + \left<\Ls^L\big(\Lc(q) - \Nc(q)\big), \chi_{B_0}\Lambda Q\right>.\label{eq:LqLamQt1}
\end{align}

We recall from \eqref{eq:asymLamQ} that 
\begin{equation}\label{est:LQ2chiB0}
B_0^{d - 2\gamma} \lesssim |\left<\Lambda Q, \chi_{B_0}\Lambda Q\right> | \lesssim B_0^{d - 2\gamma}.
\end{equation}
Let us estimate the second term in the left hand side of \eqref{eq:LqLamQt1}. We use  Cauchy-Schwartz and Lemma \ref{lemm:coeLk} to estimate 
\begin{align}
\left|\left<\Ls^{L} q, \chi_{B_0}\Lambda Q\right>\right|& \lesssim B_0^{2\hbar + 2}\|\chi_{B_0}\Lambda Q\|_{L^2}\left(\int\frac{|\Ls^L q|^2}{1 + y^{4\hbar + 4}} \right)^\frac 12\nonumber\\
&\quad \lesssim B_0^{\frac d2 - \gamma + 2\hbar + 2}\sqrt{\Es_{2\Bbbk}}.\label{est:LsLqchiB0LQ}
\end{align}
We write 
\begin{align*}
\left|\left<\Ls^L q, \chi_{B_0}\Lambda Q\right> \frac{d}{ds}\left[\frac{1}{ \left<\Lambda Q, \chi_{B_0}\Lambda Q\right>} \right]\right|& \lesssim \frac{|\left<\Ls^L q, \chi_{B_0}\Lambda Q\right>|}{\left<\Lambda Q, \chi_{B_0}\Lambda Q\right>^2} \left|\frac{(b_1)_s}{b_1}\right|\int_{B_0 \leq y \leq 2B_0}|\Lambda Q|^2\\
&\quad \lesssim b_1 \frac{B_0^{\frac d2 - \gamma + 2\hbar + 2} \sqrt{\Es_{2\Bbbk}}}{B_0^{2d - 4\gamma}}B_0^{d - 2\gamma} \lesssim \frac{\sqrt{\Es_{2\Bbbk}}}{B_0^{2\delta}},
\end{align*}
where we used the relation \eqref{def:kdeltaplus}.

For the first three terms in the right hand side of \eqref{eq:LqLamQt1}, we use the Cauchy-Schwartz, Lemma \ref{lemm:coeLk} and the fact that $\Ls(\Lambda Q) = 0$ to find that
\begin{align*}
\left|\left<\Ls^{L} q,\Lambda Q \partial_s(\chi_{B_0})\right>\right|& \lesssim \left|\frac{(b_1)_s}{b_1} \right| \left(\int_{B_0 \leq y \leq 2B_0} (1 + y^{4\hbar + 4}) |\Lambda Q|^2 \right)^\frac 12 \left(\int\frac{|\Ls^L q|^2}{1 + y^{4\hbar + 4}} \right)^\frac 12\\
&\quad \lesssim b_1 B_0^{\frac d2 - \gamma + 2\hbar + 2}\sqrt{\Es_{2\Bbbk}} \lesssim B_0^{\frac d2 - \gamma + 2\hbar}\sqrt{\Es_{2\Bbbk}}
\end{align*}
and
\begin{align*}
\left|\left<\Ls^{L+1} q, \chi_{B_0}\Lambda Q\right>\right|& \lesssim \left(\int (1 + y^{4\hbar})|\chi_{B_0}\Lambda Q|^2 \right)^\frac 12 \left(\int\frac{|\Ls^{L+1} q|^2}{1 + y^{4\hbar}} \right)^\frac 12\\
&\quad \lesssim B_0^{\frac d2 - \gamma + 2\hbar}\sqrt{\Es_{2\Bbbk}},
\end{align*}
and 
\begin{align*}
\left|\frac{\lambda_s}{\lambda}\left<\Ls^L\Lambda q,  \chi_{B_0}\Lambda Q\right> \right| & \lesssim b_1 \left(\int (1 + y^{4(L + \hbar)+2})|\Ls^L(\chi_{B_0} \Lambda Q)|^2 \right)^\frac 12 \left(\int \frac{|\py q|^2}{1 + y^{4(L + \hbar) + 2}}\right)^\frac 12 \\
&\lesssim B_0^{\frac d2 - \gamma + 2\hbar}\sqrt{\Es_{2\Bbbk}}.
\end{align*}
The error term is estimated by using \eqref{eq:estPsiblocalB0},
\begin{align*}
\left|\left<\Ls^L \tilde{\Psi}_b,\chi_{B_0}\Lambda Q\right>\right| &\lesssim \left(\int (1 + y^{4(L + \hbar +1)})|\Ls^L(\chi_{B_0} \Lambda Q)|^2 \right)^\frac 12\left(\int\frac{|\tilde{\Psi}_b|^2}{1 + y^{4(L + \hbar + 1)}} \right)^\frac 12\\
&\quad \lesssim B_0^{\frac d2 - \gamma + 2\hbar + 2} b_1^{L + 2 + (1 - \delta) - C_L\eta}.
\end{align*}
The last term in the right hand side of \eqref{eq:LqLamQt1} is estimated in the same way, 
\begin{align*}
\left|\left<\Ls^L\big(\Lc(q) - \Nc(q)\big), \chi_{B_0}\Lambda Q\right> \right|& \lesssim \int |\Lc(q) \Ls^L(\chi_{B_0} \Lambda Q)| + \int |\Nc(q) \Ls^L(\chi_{B_0} \Lambda Q)|\\
& \lesssim \left(\int \frac{|\Lc(q)|^2}{1 + y^{4\Bbbk - 4}}\right)^\frac{1}{2}\left(\int (1 + y^{4\Bbbk - 4}) |\Ls^L(\chi_{B_0}\Lambda Q)|^2) \right)^\frac{1}{2}\\
&\quad  + \left(\int \frac{|\Nc(q)|^2}{1 + y^{4\Bbbk}}\right)^\frac{1}{2}\left(\int (1 + y^{4\Bbbk}) |\Ls^L(\chi_{B_0}\Lambda Q)|^2) \right)^\frac{1}{2}\\
& \qquad \lesssim B_0^{\frac d2 -1 - \gamma + 2\hbar} \sqrt{\Es_{2\Bbbk}} + b_1 B_0^2 B_0^{\frac d2 -1 - \gamma + 2\hbar}\sqrt{\Es_{\Bbbk}}\\
& \qquad \quad\lesssim B_0^{\frac d2 - \gamma + 2\hbar} \sqrt{\Es_{2\Bbbk}}.
\end{align*}
For the remaining term, we recall that $\Ls(\Lambda Q) = 0$, $\Ls^L T_k = 0$ for $1 \leq k \leq L-1$, and $\Ls^L T_L = (-1)^L \Lambda Q$, from which
$$\Ls^L(T_k\chi_{B_1})= -\Ls^L(T_k(1-\chi_{B_1})), \quad 1 \leq k \leq L-1.$$
From \eqref{def:Modhat}, \eqref{eq:Modt} and the fact that $\chi_{B_0}(1 - \chi_{B_1}) = 0$, we write
\begin{align*}
&\left|\left< \Ls^L\widehat{\textup{Mod}}(t), \chi_{B_0}\Lambda Q\right> - (-1)^L\left<\Lambda Q, \chi_{B_0}\Lambda Q\right> \left[(b_L)_s + (2L - \gamma)b_1b_L \right] \right|\\
&\quad \lesssim \sum_{k = 1}^{L} \left|(b_k)_s + (2k - \gamma)b_1b_L - b_{k+1} \right| \left|\left<\sum_{j = k+1}^{L+2} \frac{\partial \tilde{S}_j}{\partial b_k}, \Ls^L (\chi_{B_0}\Lambda Q)\right>\right|\\
&\qquad + \left|\frac{\lambda_s}{\lambda} + b_1\right| \left|\left<\Lambda \tilde{\Theta}_b,\Ls^L( \chi_{B_0}\Lambda Q)\right>\right|.
\end{align*}
Recall that $T_k$ is admissible of degree $(k,k)$ and $S_k$ is homogeneous of degree $(k, k-1, k)$, we derive the round bounds for $y \sim B_0$:
$$|\Lambda \Theta_b| \lesssim b_1 y^{2 - \gamma}, \quad \sum_{j = k+1}^{L+2}\left|\frac{\partial S_j}{\partial b_k} \right| \leq \sum_{j = k+1}^{L+2}b_1^{j - k}y^{2(j - 1) - \gamma} \lesssim b_1y^{2k - \gamma}.$$
Thus, from Lemma \ref{lemm:mod1}, we derive the bound
\begin{align*}
&\left|\frac{\lambda_s}{\lambda} + b_1\right| \left|\left<\Lambda \tilde{\Theta}_b,\Ls^L( \chi_{B_0}\Lambda Q)\right>\right|\\
& \quad  + \sum_{k = 1}^{L} \left|(b_k)_s + (2k - \gamma)b_1b_L - b_{k+1} \right| \left|\left<\sum_{j = k+1}^{L+2} \frac{\partial \tilde{S}_j}{\partial b_k}, \Ls^L (\chi_{B_0}\Lambda Q)\right>\right|\\
& \qquad \lesssim \left( C(M)\sqrt{\Es_{2\Bbbk}} +  b_1^{L + 1 + (1 - \delta)(1 + \eta)}\right)b_1\int_{B_0 \leq y \leq 2B_0} \frac{y^{2L - \gamma }y^{d-1}}{y^{2L + \gamma}}dy\\
& \quad \qquad \lesssim  \left( C(M)\sqrt{\Es_{2\Bbbk}} +  b_1^{L + 1 + (1 - \delta)(1 + \eta)}\right)b_1B_0^{d - 2\gamma}.
\end{align*}

The equation \eqref{eq:ODEbLimproved} follows by gathering all the above estimates into \eqref{eq:LqLamQt1}, dividing both sides of \eqref{eq:LqLamQt1} by $(-1)^L\left<\Lambda Q, \chi_{B_0}\Lambda Q\right>$ and using the relation \eqref{def:kdeltaplus}. This finishes the proof of Lemma \ref{lemm:mod2}.
\end{proof}

\subsection{Monotonicity.}

We derive in this subsection the main monotonocity formula for $\Es_{2k}$ for $\hbar +1 \leq k \leq \Bbbk$. We claim the following which is the heart of this paper:

\begin{proposition}[Lyapounov monotonicity for the high Sobolev norm] \label{prop:E2k} We have 
\begin{align}
&\frac{d}{dt}\left\{\frac{\Es_{2\Bbbk}}{\lambda^{4\Bbbk - d}}\left[1 + \Oc\left(b_1^{\eta(1-\delta)}\right) \right]\right\}\nonumber\\
&\qquad \qquad \leq \frac{b_1}{\lambda^{4\Bbbk - d + 2}}\left[\frac{\Es_{2\Bbbk}}{M^{2\delta}} + b_1^{L+(1 - \delta)(1 + \eta)}\sqrt{\Es_{2\Bbbk}} + b_1^{2L + 2(1 - \delta)(1 + \eta)}\right],\label{eq:Es2sLya}
\end{align}
and for $\hbar + 2  \leq m \leq \Bbbk -1$,
\begin{align}
&\frac{d}{dt}\left\{\frac{\Es_{2m}}{\lambda^{4m - d}}\left[1 + \Oc(b_1) \right]\right\} \leq \frac{b_1}{\lambda^{4m - d + 2}}\left[b_1^{m - \hbar -1 +(1 - \delta) - C\eta}\sqrt{\Es_{2m}} + b_1^{2(m-\hbar -1) + 2(1 - \delta) - C\eta}\right],\label{eq:Es2sLyam}
\end{align}
\end{proposition}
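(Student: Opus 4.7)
The plan is to estimate the Lyapunov functional $\Es_{2m}/\lambda^{4m-d} = \int|\Ls_\lambda^m v|^2 r^{d-1}dr$ directly in the original $(r,t)$-variables, where the rescaled equation $\partial_t v + \Ls_\lambda v = \lambda^{-2}\Fc_\lambda$ holds and $\Ls_\lambda = \As_\lambda^*\As_\lambda$ depends on $t$ only through $\lambda$. Differentiating in $t$ and integrating by parts via the factorization yields the master identity
\begin{equation*}
\tfrac12\frac{d}{dt}\!\int\!|\Ls_\lambda^m v|^2 r^{d-1}dr = -\!\int\!|\As_\lambda\Ls_\lambda^m v|^2 r^{d-1}dr + \tfrac{1}{\lambda^2}\!\int\!\Ls_\lambda^m\Fc_\lambda\cdot\Ls_\lambda^m v\, r^{d-1}dr + \Cc_m,
\end{equation*}
where $\Cc_m$ collects commutators between $\partial_t$ and $\Ls_\lambda^m$. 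Each commutator carries a factor $\lambda_s/\lambda\sim -b_1$ against a pointwise-bounded kernel in $y$; using the weighted Hardy inequalities packaged in Lemma \ref{lemm:coeLk} they are absorbed either into the dissipation or into a $[1+O(b_1)]$ (respectively, sharper $[1+O(b_1^{\eta(1-\delta)})]$) perturbation of the coefficient of the leading term, producing the bracketed factor in \eqref{eq:Es2sLya} and \eqref{eq:Es2sLyam}.

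Three of the four forcing pieces behave like standard perturbations. For the source $\tilde\Psi_b$, the large Sobolev bound \eqref{eq:estPsibLarge2} (with its weighted analogues) gives $\|\Ls^{\hbar+L+1}\tilde\Psi_b\|_{L^2}\lesssim b_1^{L+1+(1-\delta)(1+\eta)}$, and Cauchy--Schwarz yields the admissible $b_1^{L+(1-\delta)(1+\eta)}\sqrt{\Es_{2\Bbbk}}$ contribution in \eqref{eq:Es2sLya}; for the intermediate energies \eqref{eq:estPsiblocalTilde}--\eqref{eq:estPsiblocalB0} produce the correspondingly cruder bound in \eqref{eq:Es2sLyam}. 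The linear piece $\Hc(q) = (d-1)y^{-2}[\cos(2Q) - \cos(2\tilde Q_b)]q$ is of size $b_1 y^{2-\gamma}\cdot q$ on the support of $\tilde Q_b - Q$; combined with Lemma \ref{lemm:coeLk}, it produces the $\Es_{2\Bbbk}/M^{2\delta}$ term, the gain $M^{-2\delta}$ arising from the fact that the $M$-localized contribution of $\Hc$ is negligible while its outer tail is small. The nonlinear $\Nc(q)$ is quadratic in $q$ and is controlled by weighted Sobolev embedding together with Lemma \ref{lemm:coeLk}, fitting into the same bound.

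The remaining piece --- and the main obstacle --- is the modulation term $\widehat{\textup{Mod}}$. For $1\leq k \leq L-1$ the pointwise bound $|(b_k)_s+(2k-\gamma)b_1 b_k - b_{k+1}|\lesssim b_1^{L+1+(1-\delta)(1+\eta)}$ from Lemma \ref{lemm:mod1} is one power sharper than needed and poses no problem. The critical case is $k = L$: the pointwise bound \eqref{eq:ODEbL} loses a full power of $b_1$, so a naive Cauchy--Schwarz is off by that amount. This is exactly what Lemma \ref{lemm:mod2} resolves: substituting the identity
\begin{equation*}
(b_L)_s + (2L-\gamma)b_1 b_L = -\frac{d}{ds}\!\left\{\frac{\langle\Ls^L q,\chi_{B_0}\Lambda Q\rangle}{\langle\Lambda Q,\chi_{B_0}\Lambda Q\rangle}\right\} + \mathrm{R}_L
\end{equation*}
into the master identity, the total-derivative piece moves to the left and is absorbed into the Lyapunov functional. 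Its weight, bounded by Cauchy--Schwarz and Lemma \ref{lemm:coeLk}, is of size $B_0^{-2\delta}\cdot\Es_{2\Bbbk}/\lambda^{4\Bbbk-d}$, which equals $b_1^{\eta(1-\delta)}\cdot\Es_{2\Bbbk}/\lambda^{4\Bbbk-d}$ since $B_0 = b_1^{-1/2}$; this is precisely the $[1+O(b_1^{\eta(1-\delta)})]$ correction in \eqref{eq:Es2sLya}. The admissible remainder $\mathrm{R}_L$ yields the two right-hand-side contributions of the correct size. For the intermediate energies $\hbar+2\leq m\leq\Bbbk-1$ this delicate step is unnecessary: all modulation factors are sharply controlled by Lemma \ref{lemm:mod1}, no total-derivative trick is required, and only the mild $[1+O(b_1)]$ commutator correction appears, giving \eqref{eq:Es2sLyam}.
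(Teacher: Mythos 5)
Your architecture matches the paper's: energy identity in the original variables via $\Ls_\lambda=\As_\lambda^*\As_\lambda$, commutators absorbed into a modified functional, the decomposition of $\Fc$ into $\tilde\Psi_b$, $\widehat{\textup{Mod}}$, $\Hc(q)$, $\Nc(q)$, and — correctly identified as the key point — substituting Lemma \ref{lemm:mod2} so that the total derivative of $\langle\Ls^Lq,\chi_{B_0}\Lambda Q\rangle/\langle\Lambda Q,\chi_{B_0}\Lambda Q\rangle$ times $\tilde T_L$ is moved into the Lyapunov functional (the paper's $\xi_L$ and Step 8). However, there are two genuine gaps. The first concerns the origin of the $\Es_{2\Bbbk}/M^{2\delta}$ term. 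You attribute it to $\Hc(q)$ via a localization of $\Hc$ relative to the scale $M$; no such mechanism exists. The potential in $\Hc(q)=(d-1)y^{-2}[\cos 2Q-\cos 2\tilde Q_b]\,q$ is of size $O(b_1)$ on $y\lesssim B_1$ and is independent of $M$; its contribution is simply $O(b_1^2\Es_{2\Bbbk})$ (the paper's \eqref{est:HqE2k}) and needs no gain. The $M^{-2\delta}$ actually enters through the modulation term: \eqref{eq:ODEbL} carries $\sqrt{\Es_{2\Bbbk}}/M^{2\delta}$, the gain coming from the ratio of $\langle\Ls^{L+1}q,\Phi_M\rangle\lesssim M^{2\hbar+d/2-\gamma}\sqrt{\Es_{2\Bbbk}}$ to $\langle\Phi_M,\Lambda Q\rangle\gtrsim M^{d-2\gamma}$. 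Since closing the bootstrap in Proposition \ref{prop:redu} requires the coefficient of $\Es_{2\Bbbk}$ in \eqref{eq:Es2sLya} to be made small by choosing $M$ large first, deriving this constant from the right place is essential, and your treatment of $\widehat{\textup{Mod}}$ never produces it. (Also, $B_0^{-2\delta}=b_1^{\delta}$, not $b_1^{\eta(1-\delta)}$; the bracket correction comes from $b_1^{-(1-\delta)}\sqrt{\Es_{2\Bbbk}}\cdot\|\Ls^\Bbbk\tilde T_L\|_{L^2}\lesssim b_1^{\eta(1-\delta)}\Es_{2\Bbbk}$.)

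The second gap is the nonlinear term, which cannot be dispatched by ``weighted Sobolev embedding together with Lemma \ref{lemm:coeLk}.'' One needs $\|\As\Ls^{\Bbbk-1}\Nc(q)\|_{L^2}^2\lesssim b_1^{2L+1+2(1-\delta)(1+\eta)}$, and any estimate using only $\Es_{2\Bbbk}$ — which is all the coercivity lemma provides — yields $\Es_{2\Bbbk}^2$ against a weight that grows in $y$ out to $B_1$ and ruins the exponent. The paper's Step 7 (which it calls the most delicate point of the proof) requires the full hierarchy of intermediate energies $\Es_{2m}$, $\hbar+2\leq m\leq\Bbbk-1$, supplied by the bootstrap set $\Sc_K(s)$, the pointwise interpolation bounds \eqref{eq:pointwise_yg1}, a splitting of all integrals at $y\sim B_0$, and a Faa di Bruno expansion of $\sin(2\tilde Q_b+2\tau q)$; even the lowest-order piece is estimated by a product of two \emph{different} energy levels such as $\Es_{2\Bbbk}\,\Es_{2(\ell+\hbar+2)}\,B_0^{4\ell+6-4\delta-2\gamma}$. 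This is precisely why the intermediate energies appear in the proposition at all, and your sketch makes no use of them.
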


\begin{proof} The proof uses some ideas developed in \cite{RSapde2014} and \cite{MRRcjm15}. Because the proof of \eqref{eq:Es2sLyam} follows exactly the same lines as for \eqref{eq:Es2sLya}, we only deal with the proof of \eqref{eq:Es2sLya}. Let us start the proof of \eqref{eq:Es2sLya}.

\noindent \textbf{Step 1: Suitable derivatives and energy identity.} For $k \in \mathbb{N}$, we define the suitable derivatives of $q$ and $v$ as follows:
\begin{equation}\label{eq:notq2k1}
q_{2k} = \Ls^kq, \quad q_{2k + 1} = \As \Ls^k q, \quad v_{2k} = \Ls^k_\lambda v, \quad v_{2k + 1} = \As_\lambda \Ls^k_\lambda v,
\end{equation}
where $q = q(y,s)$ and $v = v(r,t)$ satisfy \eqref{eq:qys} and \eqref{eq:vrt} respectively, the linearized operator $\Ls$ and $\Ls_\lambda$ are defined by \eqref{def:Lc} and \eqref{def:Llambda}, $\As$ and $\As^*$ are the first order operators defined by \eqref{def:As} and \eqref{def:Astar}, and
$$\As_\lambda f = -\partial_r f + \frac{V_\lambda}{r} f, \quad \As^*_\lambda f = \frac{1}{r^{d-1}}\partial_r(r^{d-1}f) + \frac{V_\lambda}{r} f,$$
with $V = \Lambda \log \Lambda Q$ admitting the asymptotic behaviors as in \eqref{eq:asympV}.

With the notation \eqref{eq:notq2k1}, we note that 
$$q_{2k + 1} = \As q_{2k}, \quad q_{2k+2} = \As^*q_{2k + 1}, \quad v_{2k + 1} = \As_\lambda v_{2k}, \quad v_{2k+2} = \As^*_\lambda v_{2k + 1}.$$
Recall from Lemma \ref{lemm:factorL}, we have the following factorization:
$$\Ls  = \As^* \As, \quad \tilde\Ls = \As \As^*,  \quad \Ls_\lambda = \As^*_\lambda \As_\lambda, \quad \tilde{\Ls}_\lambda = \As_\lambda \As^*_\lambda,$$
where 
\begin{equation}\label{def:Lstil}
\tilde{\Ls} = -\partial_{yy} - \frac{d-1}{y}\py + \frac{\tilde{Z}}{y^2},
\end{equation}
and 
\begin{equation}\label{def:LstilLam}
\tilde{\Ls}_\lambda = -\partial_{rr} - \frac{d-1}{r}\partial_r + \frac{\tilde{Z}_\lambda}{r^2},
\end{equation}
with $\tilde{Z}$ expressed in terms of $V$ as in \eqref{def:ZtilbyV}.\\

We commute equation \eqref{eq:vrt} with $\Ls_\lambda^{\Bbbk - 1}$ and use the notation \eqref{eq:notq2k1} to derive 
\begin{equation}\label{eq:v2k2}
\pt v_{2\Bbbk - 2} + \Ls_\lambda v_{2\Bbbk -2} = [\pt, \Ls^{\Bbbk - 1}_\lambda]v + \Ls_\lambda^{\Bbbk - 1}\left( \frac{1}{\lambda^2} \Fc_\lambda\right).
\end{equation}
Now commuting this equation with $\As_\lambda$ yields
\begin{equation}\label{eq:v2k1}
\pt v_{2\Bbbk - 1} + \tilde \Ls_\lambda v_{2\Bbbk - 1} = \frac{\pt V_\lambda}{r} v_{2\Bbbk - 2} +\As_\lambda [\pt, \Ls^{\Bbbk - 1}_\lambda]v + \As_\lambda \Ls_\lambda^{\Bbbk - 1}\left( \frac{1}{\lambda^2} \Fc_\lambda\right), 
\end{equation}
Since $\Ls_\lambda = \frac{1}{\lambda^2}\Ls$, we then have 
$$\Ls^k_\lambda v = \frac{1}{\lambda^{2k}}\Ls^k q, \quad \text{hence,}\quad \int|\Ls^k_\lambda v|^2 = \frac{1}{\lambda^{4k - d}}\int |\Ls^k q|^2.$$
Using the definition \eqref{def:LstilLam} of $\tilde{\Ls}_\lambda$ and an integration by parts, we write
\begin{align*}
\frac{1}{2}\frac{d}{dt} \left(\frac{1}{\lambda^{4\Bbbk - d}} \Es_{2\Bbbk}\right) &= \frac{1}{2}\frac{d}{dt} \int |\Ls_\lambda^{\Bbbk} v|^2 = \frac{1}{2}\frac{d}{dt}\int \tilde{\Ls}_\lambda v_{2\Bbbk - 1} v_{2\Bbbk - 1}\\
&= \int \tilde{\Ls}_\lambda v_{2\Bbbk - 1} \pt v_{2\Bbbk - 1} + \frac{1}{2}\int \frac{\pt (\tilde Z_\lambda)}{r^2}v^2_{2\Bbbk - 1}\\
&= \int \tilde{\Ls}_\lambda v_{2\Bbbk - 1} \pt v_{2\Bbbk - 1} + b_1\int \frac{(\Lambda \tilde{Z})_\lambda}{2\lambda^2r^2}v^2_{2\Bbbk - 1} - \left(\frac{\lambda_s}{\lambda} + b_1\right) \int \frac{(\Lambda \tilde{Z})_\lambda}{2\lambda^2r^2}v^2_{2\Bbbk - 1}.
\end{align*}
Using the definition \eqref{def:Astar} of $\As^*$ and an integration by parts together with the definition \eqref{def:ZtilbyV} of $\tilde{Z}$, we write
\begin{align*}
\int \frac{b_1(\Lambda V)_\lambda}{\lambda^2 r} v_{2\Bbbk - 1} \As^*_\lambda v_{2\Bbbk - 1} &= \frac{b_1}{\lambda^{4\Bbbk - d + 2}}\int \frac{\Lambda V}{y}q_{2\Bbbk - 1} \As^*q_{2\Bbbk - 1}\\
&= \frac{b_1}{\lambda^{4\Bbbk - d + 2}}\int \frac{\Lambda V(2V + d) - \Lambda^2 V}{2y^2}q^2_{2\Bbbk - 1}\\
&= \frac{b_1}{\lambda^{4\Bbbk - d + 2}} \int \frac{\Lambda \tilde{Z}}{2y^2}q^2_{2\Bbbk - 1} = \int \frac{b_1(\Lambda \tilde{Z})_\lambda}{2\lambda^2r^2}v^2_{2\Bbbk - 1}.
\end{align*}
From \eqref{eq:v2k2}, we write
\begin{align*}
\frac{d}{dt}\int \frac{b_1(\Lambda V)_\lambda}{\lambda^2r}v_{2\Bbbk - 1} v_{2\Bbbk - 2} &= \int \frac{d}{dt}\left(\frac{b_1(\Lambda V)_\lambda}{\lambda^2r}\right)v_{2\Bbbk - 1} v_{2\Bbbk - 2}  + \int \frac{b_1(\Lambda V)_\lambda}{\lambda^2r}  v_{2\Bbbk - 2}\pt v_{2\Bbbk - 1}\\
& + \int \frac{b_1(\Lambda V)_\lambda}{\lambda^2r} v_{2\Bbbk - 1} \left[-\As^*_\lambda v_{2\Bbbk - 1} + [\pt, \Ls^{\Bbbk - 1}_\lambda]v + \Ls_\lambda^{\Bbbk - 1}\left( \frac{1}{\lambda^2} \Fc_\lambda\right)\right].
\end{align*}
Gathering all the above identities and using \eqref{eq:v2k1} yields the energy identity
\begin{align}
&\frac{1}{2}\frac{d}{dt}\left\{ \left(\frac{1}{\lambda^{4\Bbbk - d}} \Es_{2\Bbbk}\right) + 2 \int \frac{b_1(\Lambda V)_\lambda}{\lambda^2r}v_{2\Bbbk - 1} v_{2\Bbbk - 2} \right\} \label{eq:EnerID}\\
&= - \int |\tilde\Ls_\lambda v_{2\Bbbk - 1}|^2 - \left(\frac{\lambda_s}{\lambda} + b_1\right) \int \frac{(\Lambda \tilde{Z})_\lambda}{2\lambda^2r^2}v^2_{2\Bbbk - 1} - \int \frac{b_1(\Lambda V)_\lambda}{\lambda^2r} v_{2\Bbbk - 2} \tilde{\Ls}_\lambda v_{2\Bbbk - 1}\nonumber\\
&\quad + \int \frac{d}{dt}\left(\frac{b_1(\Lambda V)_\lambda}{\lambda^2r}\right)v_{2\Bbbk - 1} v_{2\Bbbk - 2} + \int \frac{b_1(\Lambda V)_\lambda}{\lambda^2r} v_{2\Bbbk - 1} \left[[\pt, \Ls^{\Bbbk - 1}_\lambda]v + \Ls_\lambda^{\Bbbk - 1}\left( \frac{1}{\lambda^2} \Fc_\lambda\right)\right]\nonumber\\
& \quad + \int \left(\tilde{\Ls}_\lambda v_{2\Bbbk -1} +  \frac{b_1(\Lambda V)_\lambda}{\lambda^2r}  v_{2\Bbbk - 2}\right)\left[\frac{\pt V_\lambda}{r} v_{2\Bbbk - 2} +\As_\lambda [\pt, \Ls^{\Bbbk - 1}_\lambda]v + \As_\lambda \Ls_\lambda^{\Bbbk - 1}\left( \frac{1}{\lambda^2} \Fc_\lambda\right) \right].\nonumber
\end{align}
We now estimate all terms in \eqref{eq:EnerID}. The proof uses the coercivity estimate given in Lemma \ref{lemm:coeLk}. In particular, we shall apply Lemma \ref{lemm:coeLk} with $k = \Bbbk - 1$ to have the estimate
\begin{equation}\label{eq:Enercontrol}
\Es_{2\Bbbk} \gtrsim \int \frac{|q_{2\Bbbk - 1}|^2}{y^2} + \sum_{m = 0}^{\Bbbk - 1} \int \frac{|q_{2m}|^2}{y^4(1 + y^{4(\Bbbk - 1 - m)})} + \sum_{m = 0}^{\Bbbk - 2}\int \frac{|q_{2m +1}|^2}{y^6(1 + y^{4(\Bbbk - 2 - m)})}.
\end{equation}
 
\noindent \textbf{Step 2: Control of the lower order quadratic terms.} Let us start with the second term in the left hand side of \eqref{eq:EnerID}. From \eqref{eq:asympV} and \eqref{def:ZtilbyV}, we have the round bound
\begin{equation}\label{eq:estLamZV}
|\Lambda \tilde{Z}(y)| + |\Lambda V(y)| \lesssim \frac{y^2}{1 + y^4}, \quad \forall y \in [0, +\infty).
\end{equation}
Making a change of variables and using the Cauchy-Schwartz inequality together with \eqref{eq:Enercontrol}, we estimate
\begin{align*}
\left|\int \frac{b_1(\Lambda V)_\lambda}{\lambda^2r}v_{2\Bbbk - 1} v_{2\Bbbk - 2}\right| &= \left|\frac{b_1}{\lambda^{4\Bbbk - d}}\int \frac{\Lambda V}{y}q_{2\Bbbk - 1}q_{2\Bbbk - 2}\right|\\
&\lesssim \frac{b_1}{\lambda^{4\Bbbk - d}} \left(\int \frac{|q_{2\Bbbk - 1}|^2}{y^2}\right)^\frac 12 \left(\int \frac{|q_{2\Bbbk - 2}|^2}{1 + y^4}\right)^\frac 12 \lesssim \frac{b_1}{\lambda^{4\Bbbk - d}} \Es_{2\Bbbk}.
\end{align*}
Using \eqref{eq:estLamZV}, \eqref{eq:ODEbkl} and \eqref{eq:Enercontrol}, we estimate
\begin{align*}
\left|\left(\frac{\lambda_s}{\lambda} + b_1\right)\int \frac{(\Lambda \tilde{Z})_\lambda}{\lambda^2 r}v^2_{2\Bbbk - 1} \right| &= \left|\left(\frac{\lambda_s}{\lambda} + b_1\right) \frac{1}{\lambda^{4\Bbbk - d +2}}\int \frac{\Lambda \tilde{Z}}{y^2}q^2_{2\Bbbk - 1} \right|\\
& \lesssim \frac{b_1^{L+1 + (1 - \delta)(1+\eta)}}{\lambda^{4\Bbbk - d +2}}\int \frac{q^2_{2\Bbbk - 1}}{y^2} \lesssim \frac{b_1^{2}}{\lambda^{4\Bbbk - d +2}}\Es_{2\Bbbk}.
\end{align*}
For the third term in the right hand side of \eqref{eq:EnerID}, we write
\begin{align*}
\left|\int \frac{b_1(\Lambda V)_\lambda}{\lambda^2r} v_{2\Bbbk - 2} \tilde{\Ls}_\lambda v_{2\Bbbk - 1}\right| &\leq \frac{1}{4}\int |\tilde{\Ls}_\lambda v_{2\Bbbk - 1}|^2 + 4\int \left(\frac{b_1(\Lambda V)_\lambda}{\lambda^2r}\right)^2 v^2_{2\Bbbk - 2}\\
&= \frac{1}{4}\int |\tilde{\Ls}_\lambda v_{2\Bbbk - 1}|^2 + \frac{4b_1^2}{\lambda^{4 \Bbbk - d + 2}}\int \frac{|\Lambda V|^2}{y^2}q^2_{2\Bbbk - 2}\\
& \leq \frac{1}{4}\int |\tilde{\Ls}_\lambda v_{2\Bbbk - 1}|^2 + \frac{Cb_1^2}{\lambda^{4 \Bbbk - d + 2}} \Es_{2\Bbbk}.
\end{align*}
A direct computation yields the round bound
$$\left|\frac{d}{dt}\left(\frac{b_1(\Lambda V)_\lambda}{\lambda^2}\right)\right| \lesssim \frac{b_1^2}{\lambda^4}(|\Lambda V| + |\Lambda^2V|).$$
Thus, we use \eqref{eq:estLamZV}, the Cauchy-Schwartz inequality and \eqref{eq:Enercontrol} to estimate
\begin{align*}
\left|\int \frac{d}{dt}\left(\frac{b_1(\Lambda V)_\lambda}{\lambda^2r}\right)v_{2\Bbbk - 1} v_{2\Bbbk - 2} \right| &\lesssim \frac{b_1^2}{\lambda^{4\Bbbk - d + 2}}\int \frac{|\Lambda V| + |\Lambda^2V|}{y}|q_{2\Bbbk - 1}q_{2\Bbbk - 2}|\\
& \lesssim \frac{b_1^2}{\lambda^{4\Bbbk - d + 2}}\left(\int \frac{q^2_{2\Bbbk - 1}}{y^2}\right)^\frac 12 \left(\int \frac{q^2_{2\Bbbk - 2}}{1 + y^4}\right)^\frac 12\\
& \lesssim \frac{b_1^2}{\lambda^{4\Bbbk - d + 2}}\Es_{2\Bbbk}.
\end{align*}
Similarly, we have 
\begin{align*}
&\left|\int \left(\tilde{\Ls}_\lambda v_{2\Bbbk -1} +  \frac{b_1(\Lambda V)_\lambda}{\lambda^2r}  v_{2\Bbbk - 2}\right)\frac{\pt V_\lambda}{r} v_{2\Bbbk - 2}\right|\\
&\qquad \leq \frac{1}{4}\int |\tilde{\Ls}_\lambda v_{2\Bbbk -1}|^2 + \frac{Cb_1^2}{\lambda^{4\Bbbk - d + 2}} \int \frac{|\Lambda V|^2}{y^2}q^2_{2\Bbbk - 2}\\
&\qquad \leq \frac{1}{4}\int |\tilde{\Ls}_\lambda v_{2\Bbbk -1}|^2 + \frac{Cb_1^2}{\lambda^{4\Bbbk - d + 2}}\Es_{2\Bbbk},
\end{align*}
and 
\begin{align*}
& \left|\int \frac{b_1(\Lambda V)_\lambda}{\lambda^2r} v_{2\Bbbk - 1}[\pt, \Ls^{\Bbbk - 1}_\lambda]v \right| + \left|\int \left(\tilde{\Ls}_\lambda v_{2\Bbbk -1} +  \frac{b_1(\Lambda V)_\lambda}{\lambda^2r}  v_{2\Bbbk - 2}\right)\As_\lambda [\pt, \Ls^{\Bbbk - 1}_\lambda]v \right|\\
&\leq \frac{1}{4}\int |\tilde{\Ls}_\lambda v_{2\Bbbk -1}|^2 + C\left(\frac{b_1^2}{\lambda^{4\Bbbk - d +2}}\Es_{2\Bbbk} + \int \frac{\big|[\pt, \Ls_\lambda^{\Bbbk - 1}]v\big|^2}{\lambda^2(1 + y^2)} + \int \big|\As_\lambda[\pt, \Ls_\lambda^{\Bbbk - 1}]v\big|^2\right).
\end{align*}
We claim the bound 
\begin{equation}\label{est:comtor}
\int \frac{\big|[\pt, \Ls_\lambda^{\Bbbk - 1}]v\big|^2}{\lambda^2(1 + y^2)} + \int \big|\As_\lambda[\pt, \Ls_\lambda^{\Bbbk - 1}]v\big|^2 \lesssim \frac{b_1^2}{\lambda^{4\Bbbk - d + 2}}\Es_{2\Bbbk},
\end{equation}
whose proof is left to Appendix \ref{ap:EstComm}. 

The collection of all the above estimates to \eqref{eq:EnerID} yields 
\begin{align}
\frac{1}{2}\frac{d}{dt}\left\{ \frac{\Es_{2\Bbbk}}{\lambda^{4\Bbbk - d}} \Big[1 + \Oc(b_1)\Big] \right\} &\leq -\frac{1}{4} \int |\tilde\Ls_\lambda v_{2\Bbbk - 1}|^2 + \frac{Cb_1^2}{\lambda^{4\Bbbk - d + 2}}\Es_{2\Bbbk}\nonumber\\
& \quad + \int \frac{b_1(\Lambda V)_\lambda}{\lambda^2r} v_{2\Bbbk - 1} \Ls_\lambda^{\Bbbk - 1}\left( \frac{1}{\lambda^2} \Fc_\lambda\right)\nonumber\\
& \quad +\int \frac{b_1(\Lambda V)_\lambda}{\lambda^2r}  v_{2\Bbbk - 2} \As_\lambda\Ls_\lambda^{\Bbbk - 1}\left( \frac{1}{\lambda^2} \Fc_\lambda\right)\nonumber\\
&\quad + \int \tilde{\Ls}_\lambda v_{2\Bbbk -1}\As_\lambda \Ls_\lambda^{\Bbbk - 1}\left( \frac{1}{\lambda^2} \Fc_\lambda\right).\label{eq:EnerID1}
\end{align}

\noindent \textbf{Step 3: Further use of dissipation.} We aim at estimating all terms in the right hand side of \eqref{eq:EnerID1}. From \eqref{eq:estLamZV}, \eqref{eq:Enercontrol} and the Cauchy-Schwartz inequality, we write 
\begin{align*}
\left|\int \frac{b_1(\Lambda V)_\lambda}{\lambda^2r} v_{2\Bbbk - 1} \Ls_\lambda^{\Bbbk - 1}\left( \frac{1}{\lambda^2} \Fc_\lambda\right) \right|&= \left|\frac{b_1}{\lambda^{4\Bbbk - d + 2}}\int \frac{\Lambda V}{y}q_{2\Bbbk - 1}\Ls^{\Bbbk - 1}\Fc \right|\\
& \quad \lesssim \frac{b_1}{\lambda^{4\Bbbk - d + 2}} \left(\int \frac{q_{2\Bbbk - 1}^2}{y^2} \right)^\frac{1}{2}\left(\int \frac{|\Ls^{\Bbbk - 1}\Fc|^2}{1 + y^4} \right)^\frac{1}{2}\\
& \quad \lesssim \frac{b_1}{\lambda^{4\Bbbk - d + 2}} \sqrt{\Es_{2\Bbbk}}\left(\int \frac{|\Ls^{\Bbbk - 1}\Fc|^2}{1 + y^4} \right)^\frac{1}{2}.
\end{align*}
Similarly, we have 
\begin{align*}
\left|\int \frac{b_1(\Lambda V)_\lambda}{\lambda^2r} v_{2\Bbbk - 2} \As_\lambda\Ls_\lambda^{\Bbbk - 1}\left( \frac{1}{\lambda^2} \Fc_\lambda\right) \right|&= \left|\frac{b_1}{\lambda^{4\Bbbk - d + 2}}\int \frac{\Lambda V}{y}q_{2\Bbbk - 2}\As\Ls^{\Bbbk - 1}\Fc \right|\\
& \quad \lesssim \frac{b_1}{\lambda^{4\Bbbk - d + 2}} \left(\int \frac{q_{2\Bbbk - 2}^2}{1+y^4} \right)^\frac{1}{2}\left(\int \frac{|\As\Ls^{\Bbbk - 1}\Fc|^2}{1 + y^2} \right)^\frac{1}{2}\\
& \quad \lesssim \frac{b_1}{\lambda^{4\Bbbk - d + 2}} \sqrt{\Es_{2\Bbbk}}\left(\int \frac{|\As\Ls^{\Bbbk - 1}\Fc|^2}{1 + y^2} \right)^\frac{1}{2}.
\end{align*}
For the last term in \eqref{eq:EnerID1}, let us introduce the function
\begin{equation}\label{def:xiL}
\xi_L = \frac{\left<\Ls^Lq, \chi_{B_0}\Lambda Q\right>}{\left<\Lambda Q, \chi_{B_0}\Lambda Q\right>}\tilde{T}_L,
\end{equation}
and the decomposition 
\begin{equation}\label{eq:decomF}
\Fc = \partial_s \xi_L + \Fc_0 + \Fc_1, \quad \Fc_0 = - \tilde{\Psi}_b - \widehat{\text{Mod}} - \partial_s \xi_L, \quad \Fc_1 = \Hc(q) - \Nc(q),
\end{equation}
where $\tilde \Psi_b$ is referred to \eqref{def:Psibtilde}, $\widehat{\text{Mod}}$, $\Hc(q)$ and $\Nc(q)$ are defined as in \eqref{def:Modhat} \eqref{def:Lq} and \eqref{def:Nq} respectively.
Actually, we introduced the decomposition \eqref{eq:decomF} and $\xi_L$ to take advantage of the improved bound obtained in Lemma  \ref{lemm:mod2}.
We now write
\begin{align*}
&\int \tilde{\Ls}_\lambda v_{2\Bbbk -1}\As_\lambda \Ls_\lambda^{\Bbbk - 1}\left( \frac{1}{\lambda^2} \Fc_\lambda\right)\\
& \qquad = \frac{1}{\lambda^{4\Bbbk - d +2}}\left(\int \As^* q_{2\Bbbk - 1}\Ls^\Bbbk(\partial_s \xi_L) + \int \As^* q_{2\Bbbk - 1}\Ls^\Bbbk \Fc_0 + \int \tilde{\Ls}q_{2\Bbbk - 1} \As \Ls^{\Bbbk - 1}\Fc_1\right)\\
& \qquad \leq \frac{1}{\lambda^{4\Bbbk - d +2}}\int \Ls^\Bbbk q \Ls^\Bbbk(\partial_s \xi_L) + \frac{C}{\lambda^{4\Bbbk - d +2}} \left(\int |\Ls^\Bbbk q|^2\right)^\frac{1}{2}\left(\int |\Ls^\Bbbk \Fc_0| \right)^\frac 12\\
&\qquad \qquad \qquad \qquad \qquad \qquad \qquad  \qquad + \frac{1}{8}\int |\tilde{\Ls}_\lambda v_{2\Bbbk - 1}|^2 + \frac{C}{\lambda^{4\Bbbk - d +2}}\int |\As \Ls^{\Bbbk - 1}\Fc_1|^2\\
&\qquad = \frac{1}{\lambda^{4\Bbbk - d +2}}\int \Ls^\Bbbk q \Ls^\Bbbk(\partial_s \xi_L) + \frac{1}{8}\int |\tilde{\Ls}_\lambda v_{2\Bbbk - 1}|^2 \\
&\qquad \qquad+ \frac{C}{\lambda^{4\Bbbk - d +2}}\left( \sqrt{\Es_{2\Bbbk}}\left\|\Ls^\Bbbk \Fc_0 \right\|_{L^2} + \left\|\As \Ls^{\Bbbk -1}\Fc_1 \right\|_{L^2}^2 \right).
\end{align*}
Injecting all these bounds into \eqref{eq:EnerID1} yields
\begin{align}
&\frac{1}{2}\frac{d}{dt}\left\{ \frac{\Es_{2\Bbbk}}{\lambda^{4\Bbbk - d}}\Big[1  + \Oc(b_1)\Big] \right\}\nonumber\\
&\qquad \leq -\frac{1}{8} \int |\tilde\Ls_\lambda v_{2\Bbbk - 1}|^2 + \frac{Cb_1^2}{\lambda^{4\Bbbk - d + 2}}\Es_{2\Bbbk} + \frac{1}{\lambda^{4\Bbbk - d +2}}\int \Ls^\Bbbk q \Ls^\Bbbk(\partial_s \xi_L)\nonumber\\
& \qquad \quad + \frac{b_1}{\lambda^{4\Bbbk - d + 2}} \sqrt{\Es_{2\Bbbk}}\left[\left(\int \frac{|\As\Ls^{\Bbbk - 1}\Fc|^2}{1 + y^2} \right)^\frac{1}{2} + \left(\int \frac{|\Ls^{\Bbbk - 1}\Fc|^2}{1 + y^4} \right)^\frac{1}{2} \right]\nonumber\\
&\qquad  \quad + \frac{C}{\lambda^{4\Bbbk - d +2}}\left( \sqrt{\Es_{2\Bbbk}}\left\|\Ls^\Bbbk \Fc_0 \right\|_{L^2} + \left\|\As \Ls^{\Bbbk -1}\Fc_1 \right\|_{L^2}^2 \right). \label{eq:EnerID2}
\end{align}

\noindent \textbf{Step 4: Estimates for $\tilde \Psi_b$ term.} Recall from \eqref{eq:estPsibLarge2} that we already have the following estimate for $\tilde{\Psi}_b$:
\begin{equation}\label{eq:PsibtilE2k}
\left\|\Ls^\Bbbk \tilde\Psi_b \right\|_{L^2} + \left(\int \frac{|\As\Ls^{\Bbbk - 1}\tilde\Psi_b|^2}{1 + y^2} \right)^\frac{1}{2} + \left(\int \frac{|\Ls^{\Bbbk - 1}\tilde\Psi_b|^2}{1 + y^4} \right)^\frac{1}{2} \lesssim b_1^{L + 1 + (1 -\delta)(1 + \eta)}.
\end{equation}

\noindent \textbf{Step 5: Estimates for $\widehat{\text{Mod}}$ term.} We claim the following:
\begin{align}
& \left(\int \frac{|\Ls^{\Bbbk - 1}\widehat{\text{Mod}}|^2}{1 + y^4} \right)^\frac{1}{2} + \left(\int \frac{|\As\Ls^{\Bbbk - 1}\widehat{\text{Mod}}|^2}{1 + y^2} \right)^\frac{1}{2}\nonumber\\
& \qquad \qquad \qquad \qquad\lesssim b_1^{(1 - \delta)(1 + \eta)}\left(\frac{\sqrt{\Es_{2\Bbbk}}}{M^{2\delta}} + b_1^{L + 1 + (1 -\delta)(1 + \eta)}\right),\label{eq:ModhatE2k}\\
&\left(\int \left|\Ls^\Bbbk \widetilde{\text{Mod}}\right|^2 \right)^\frac{1}{2}\lesssim b_1\left(\frac{\sqrt{\Es_{2\Bbbk}}}{M^{2\delta}} + b_1^{\eta(1 - \delta)}\sqrt{\Es_{2\Bbbk}} + b_1^{L+1 + (1 - \delta)(1 + \eta)}\right),\label{eq:ModtilE2k}
\end{align}
where
\begin{equation*}
\widetilde{\text{Mod}} = \widehat{\text{Mod}} + \partial_s \xi_L.
\end{equation*}
\noindent Let us prove \eqref{eq:ModhatE2k}. We only deal with the first term since the second term is estimated similarly. We recall from \eqref{def:Modhat} the definition of $\widehat{\text{Mod}}$,
\begin{align*}
\widehat{\text{Mod}} = - \left(\frac{\lambda_s}{\lambda} + b_1\right)\Lambda \tilde{Q}_b + \sum_{i = 1}^{L}\big[(b_i)_s + (2i - \gamma)b_1 b_i - b_{i + 1}\big]\left(\tilde T_i + \sum_{j = i + 1}\frac{\partial S_j}{\partial b_i}\chi_{B_1}\right),
\end{align*}
where $\tilde{Q}_b$ is defined as in \eqref{def:Qbtil} and we know from Lemma \ref{lemm:GenLk} that $T_i$ is admissible of degree $(i,i)$ and from Proposition \ref{prop:1} that $S_j$ is homogeneous of degree $(j, j-1, j)$. \\
Since $|b_j|\lesssim b_1^j$ and $\Ls \Lambda Q = 0$, we use Lemma \ref{lemm:actionLL} to estimate
\begin{align*}
\int \frac{|\Ls^{\Bbbk - 1}\Lambda \tilde{Q}_b|^2}{1 + y^4} &\lesssim \sum_{i = 1}^L b_i^2\int \frac{|\Ls^{\Bbbk - 1} \Lambda \tilde{T}_i|^2}{1 + y^4} + \sum_{i = 2}^{L+2} \int \frac{|\Ls^{\Bbbk - 1} \Lambda \tilde{S}_i|^2}{1 + y^4}\\
&\quad\lesssim \sum_{i = 1}^L b_1^{2i}\int_{y \leq 2B_1}\frac{y^{d-1}dy}{1 + y^{4(\Bbbk - i) + 2\gamma}} + \sum_{i = 2}^{L+1}b_1^{2i}\int_{y \leq 2B_1} \frac{y^{d-1}dy}{1 + y^{4(\Bbbk - i + 1) + 2\gamma}}\\
&\qquad \qquad + b_1^{2L + 4}\int_{y \leq 2B_1}\frac{y^{d-1}dy}{1 + y^{4\hbar + 2\gamma}} \lesssim b_1^2,
\end{align*}
where we used the algebra $4(\Bbbk - L) + 2\gamma - d + 1 = 5 - 4\delta > 1$. \\
Using the cancellation $\Ls^{\Bbbk}T_i = 0$ for $1 \leq i \leq L$ and the admissibility of $T_i$, we estimate
$$\sum_{i = 1}^{L} \int \frac{|\Ls^{\Bbbk-1} (\chi_{B_1}T_i)|^2}{1 + y^4} \lesssim \sum_{i = 1}^{L} \int_{B_1 \leq y \leq 2B_1 } y^{4(i - \Bbbk) - 2\gamma + d -1}dy \lesssim b_1^{2(1 - \delta)(1 + \eta)}.$$
Using the homogeneity of $S_j$, we estimate for $1 \leq i \leq L$,
$$\sum_{j = i + 1}^{L+2} \int \frac{1}{1 + y^4}\left|\Ls^{\Bbbk - 1}\left(\chi_{B_1}\frac{\partial S_j}{\partial b_i}\right)\right|^2 \lesssim \sum_{j = i + 1}^{L+2}b_1^{2(j-i)}\int_{B_1 \leq y \leq 2B_1}y^{4(j - 1 -\Bbbk) - 2\gamma}y^{d-1}dy \lesssim b_1^2,$$
provided that $\eta \leq \frac{1}{\delta} - 1$.

The collection of the above bounds together with \eqref{eq:ODEbkl} and \eqref{eq:ODEbL} yields
$$ \left(\int \frac{|\Ls^{\Bbbk - 1}\widehat{\text{Mod}}|^2}{1 + y^4} \right)^\frac{1}{2} \lesssim b_1^{(1 - \delta)(1 + \eta)}\left(\frac{\sqrt{\Es_{2\Bbbk}}}{M^{2\delta}} + b_1^{L+1 + (1-\delta)(1+ \eta)}\right).$$
The same estimate holds for $\left(\int \frac{|\As\Ls^{\Bbbk - 1}\widehat{\text{Mod}}|^2}{1 + y^2} \right)^\frac{1}{2}$ by following the same lines as above. This concludes the proof of \eqref{eq:ModhatE2k}. \\

We now prove \eqref{eq:ModtilE2k}. Let us write
\begin{align*}
\widetilde{\text{Mod}} &= - \left(\frac{\lambda_s}{\lambda} + b_1\right)\Lambda \tilde{Q}_b + \sum_{i = 1}^{L-1}\big[(b_i)_s + (2i - \gamma)b_1 b_i - b_{i + 1}\big]\tilde T_i\\
& \quad + \sum_{i = 1}^{L}\big[(b_i)_s + (2i - \gamma)b_1 b_i - b_{i + 1}\big] \chi_{B_1}\sum_{j = i+1}^{L+2}\frac{\partial S_j}{\partial b_i}\\
&\quad + \left[(b_L)_s + (2i - \gamma)b_1 b_L + \frac{d}{ds}\left\{\frac{\left<\Ls^Lq, \chi_{B_0}\Lambda Q\right>}{\left<\Lambda Q, \chi_{B_0}\Lambda Q\right>} \right\} \right]\tilde T_L + \frac{\left<\Ls^Lq, \chi_{B_0}\Lambda Q\right>}{\left<\Lambda Q, \chi_{B_0}\Lambda Q\right>}\partial_s\tilde{T}_L. 
\end{align*}
Proceeding as for the proof of \eqref{eq:ModhatE2k} yields the estimate
\begin{align*}
\int |\Ls^\Bbbk \Lambda \tilde{Q}_b|^2 + \sum_{i = 1}^{L-1}\int |\Ls^\Bbbk \tilde{T}_i|^2 + \sum_{i = 1}^L\sum_{j = i + 1}^{L+2}\int \left|\Ls^\Bbbk \left(\chi_{B_1}\frac{\partial S_j}{\partial b_i}\right)\right|^2 \lesssim b_1^2,
\end{align*}
and 
\begin{equation}\label{est:LkTL}
\int |\Ls^\Bbbk \tilde{T}_L|^2 \lesssim b_1^{2(1 - \delta)(1 + \eta)}.
\end{equation}
From \eqref{est:LQ2chiB0} and \eqref{est:LsLqchiB0LQ}, we have the bound
\begin{equation}\label{est:CLxiL}
\left|\frac{\left<\Ls^Lq, \chi_{B_0}\Lambda Q\right>}{\left<\Lambda Q, \chi_{B_0}\Lambda Q\right>}\right| \lesssim B_0^{2(1 - \delta)}\sqrt{\Es_{2\Bbbk}} = b_1^{-(1 - \delta)}\sqrt{\Es_{2\Bbbk}}.
\end{equation}
We also have 
$$\int |\Ls^\Bbbk (\partial_s \chi_{B_1} T_L)|^2 \lesssim b_1^2 \int_{B_1 \leq y \leq 2B_1} \frac{y^{d-1}dy}{y^{4(\Bbbk - L) + 2\gamma}} \lesssim b_1^2b_1^{2(1 - \delta)(1 + \eta)}.$$
The collection of the above bounds together with Lemmas \ref{lemm:mod1} and \ref{lemm:mod2} yields
\begin{align*}
\left(\int |\Ls^\Bbbk \widetilde{\text{Mod}}|^2 \right)^\frac{1}{2} &\lesssim b_1 \left(\frac{\sqrt{\Es_{2\Bbbk}}}{M^{2\delta}} + b_1^{L+1 + (1 - \delta)(1 +\eta)}\right)\\
& \quad + b_1^{(1 - \delta)(1 + \eta)} b_1^{\delta}\left(C(M)\sqrt{\Es_{2\Bbbk}} + b_1^{L + 1 + (1 - \delta)(1 + \eta)}\right)\\
&\qquad + b_1^{-(1 - \delta)} \sqrt{\Es_{2\Bbbk}}b_1b_1^{(1 - \delta)(1 + \eta)}\\
&\qquad \quad \lesssim b_1\left(\frac{\sqrt{\Es_{2\Bbbk}}}{M^{2\delta}} + b_1^{\eta(1 - \delta)}\sqrt{\Es_{2\Bbbk}} + b_1^{L+1 + (1 - \delta)(1 + \eta)}\right),
\end{align*}
which is the conclusion of \eqref{eq:ModtilE2k}.\\

Injecting the estimates \eqref{eq:PsibtilE2k}, \eqref{eq:ModhatE2k} and \eqref{eq:ModtilE2k} into \eqref{eq:EnerID2}, we arrive at 
\begin{align}
&\frac{1}{2}\frac{d}{dt}\left\{ \frac{\Es_{2\Bbbk}}{\lambda^{4\Bbbk - d}}\Big[1 + \Oc(b_1)\Big] \right\}\nonumber\\
&\qquad \leq -\frac{1}{8} \int |\tilde\Ls_\lambda v_{2\Bbbk - 1}|^2 + \frac{b_1}{\lambda^{4\Bbbk - d +2}}\left(\frac{\Es_{2\Bbbk}}{M^{2\delta}} + b_1^{\eta(1 - \delta)}\Es_{2\Bbbk} + b_1^{L + (1 - \delta)(1 + \eta)}\sqrt{\Es_{2\Bbbk}}\right) \nonumber\\
&\qquad  \quad + \frac{b_1\sqrt{\Es_{2\Bbbk}}}{\lambda^{4\Bbbk - d +2}}\left[\left(\int \frac{|\As\Ls^{\Bbbk - 1}\Fc_1|^2}{1 + y^2} \right)^\frac{1}{2} + \left(\int \frac{|\Ls^{\Bbbk - 1}\Fc_1|^2}{1 + y^4} \right)^\frac{1}{2} \right]\nonumber\\
&\qquad \qquad + \frac{1}{\lambda^{4\Bbbk - d +2}}\left\|\As \Ls^{\Bbbk -1}\Fc_1 \right\|_{L^2}^2 + \frac{1}{\lambda^{4\Bbbk - d +2}}\int \Ls^\Bbbk q \Ls^\Bbbk(\partial_s \xi_L). \label{eq:EnerID3}
\end{align}

\medskip

\noindent \textbf{Step 6: Estimates for the linear small term $\Hc(q)$.} We claim the following 
\begin{equation}\label{est:HqE2k}
\int |\As \Ls^{\Bbbk - 1}\Hc(q)|^2 + \int  \frac{|\As \Ls^{\Bbbk - 1}\Hc(q)|^2}{1 + y^2} + \frac{| \Ls^{\Bbbk - 1}\Hc(q)|^2}{1 + y^4} \lesssim b_1^2 \Es_{2\Bbbk}. 
\end{equation}
We only deal with the estimate for the first term because the last two terms are estimated similarly. Let us rewrite from \eqref{def:Lq} the definition of $\Hc(q)$, 
$$\Hc(q) = \Phi q \quad \text{with} \quad \Phi = \frac{(d-1)}{y^2}\left[\cos(2Q) - \cos(2Q + 2\tilde{\Theta}_b)\right],$$
where 
$$\tilde{\Theta}_b = \sum_{i = 1}^Lb_i \tilde{T}_i + \sum_{i = 2}^{L+2}\tilde{S}_i(b,y).$$
From the asymptotic behavior of $Q$ given in \eqref{eq:asymQ}, the admissibility of $T_i$ and the homogeneity of $S_i$, we deduce that $\Phi$ is a regular function both at the origin and at infinity. We then apply the Leibniz rule \eqref{eq:LeibnizALk} with $k = \Bbbk - 1$ and $\phi = \Phi$ to write 
$$\As \Ls^{\Bbbk - 1} \Hc(q) = \sum_{m = 0}^{\Bbbk - 1}\Big[ q_{2m +1}\Phi_{2\Bbbk - 1, 2m + 1} + q_{2m}\Phi_{2\Bbbk - 1, 2m}\Big],$$
where $\Phi_{2\Bbbk - 1, i}$ with $0 \leq i \leq 2\Bbbk - 1$ are defined by the recurrence relation given in Lemma \ref{lemm:LeibnizLk}. In particular, we have the following estimate
$$|\Phi_{k, i}| \lesssim \frac{b_1}{1 + y^{\gamma + (k-i)}}\lesssim \frac{b_1}{1 + y^{1 + k - i}}, \quad \forall k \geq 1, \;\; 0 \leq i \leq k.$$
Hence, we estimate from \eqref{eq:Enercontrol},
\begin{align*}
\int |\As \Ls^{\Bbbk - 1} \Hc(q)|^2 &\lesssim \sum_{m = 0}^{\Bbbk - 1}\left[\int|q_{2m + 1} \Phi_{2\Bbbk - 1,2m + 1}|^2 + \int |q_{2m} \Phi_{2\Bbbk - 1,2m}|^2\right]\\
&\quad \lesssim b_1^2\sum_{m = 0}^{\Bbbk - 1}\left[\int \frac{|q_{2m + 1}|^2}{1 + y^{2 + 2(2\Bbbk - 1 - 2m - 1)}} +  \int \frac{|q_{2m}|^2}{1 + y^{2 + 2(2\Bbbk - 1 - 2m)}} \right]\\
&\qquad \lesssim b_1^2\sum_{m = 0}^{\Bbbk - 1}\left[\int \frac{|q_{2m + 1}|^2}{1 + y^{2 + 4(\Bbbk - 1 - m )}} +  \int \frac{|q_{2m}|^2}{1 + y^{4 + 4(\Bbbk - 1 - m)}} \right]\\
&\qquad \quad\lesssim b_1^2 \Es_{2\Bbbk}.
\end{align*}
This concludes the proof of \eqref{est:HqE2k}.\\

\noindent \textbf{Step 7: Estimates for the nonlinear term $\Nc(q)$.} This is the most delicate point in the proof of \eqref{eq:Es2sLya}.  We claim the following 
\begin{equation}\label{est:NqE2k1}
\int |\As \Ls^{\Bbbk - 1}\Nc(q)|^2  \lesssim b_1^{2L+ 1 + 2(1 - \delta)(1 + \eta)}, 
\end{equation}
\begin{equation}\label{est:NqE2k}
\int  \frac{|\As \Ls^{\Bbbk - 1}\Nc(q)|^2}{1 + y^2} + \int \frac{| \Ls^{\Bbbk - 1}\Nc(q)|^2}{1 + y^4} \lesssim b_1^{2L+ 2 + 2(1 - \delta)(1 + \eta)}, 
\end{equation}
provided that $\eta$ and $1/L$ are small enough. We only deal with the proof of \eqref{est:NqE2k1} since the same proof holds for \eqref{est:NqE2k}.\\

\noindent - \textit{Control for $y < 1$.} Let us rewrite from \eqref{def:Nq} the definition of $\Nc(q)$,
$$\Nc(q) = \frac{q^2}{y} \Phi \quad \text{with} \quad \Phi = \left[-\frac{(d-1)}{y}\int_0^1 (1 -\tau)\sin(2\tilde{Q}_b + 2\tau q)d\tau\right].$$
From \eqref{eq:expandqat0} and the admissibility of $T_i$, we write
\begin{equation}\label{eq:expanq2y}
\frac{q^2}{y} = \frac{1}{y}\left(\sum_{i = 0}^{\Bbbk} c_i T_i(y) + r_q(y)\right)^2 = \sum_{i = 0}^{\Bbbk - 1}\tilde{c}_i y^{2i + 1} + \tilde{r}_q \quad \text{for}\;\; y < 1,
\end{equation}
where 
\begin{equation*}
|\tilde c_i|\lesssim \Es_{2\Bbbk}, \quad |\partial^j_y \tilde r_q(y)| \lesssim y^{2\Bbbk - \frac d2 - j} |\ln y|^\Bbbk \Es_{2\Bbbk}, \quad 0\leq j \leq 2\Bbbk - 1, \;\; y < 1.
\end{equation*}
Let $\tau \in [0,1]$ and
$$v_\tau = \tilde{Q}_b + \tau q,$$
we obtain from Proposition \ref{prop:1} and the expansion \eqref{eq:expandqat0}, 
$$v_\tau = \sum_{i = 0}^{\Bbbk - 1}\hat c_i y^{2i + 1} + \hat r_q,$$
with 
$$|\hat c_i|\lesssim 1, \quad |\py^j \hat r_q| \lesssim y^{2\Bbbk - \frac d2 - j}|\ln y|^\Bbbk, \;\; 0 \leq j \leq 2\Bbbk - 1, \;\; y <1.$$
Together with the Taylor expansion of $\sin(x)$ at $x=0$, we write
\begin{equation}\label{eq:expPhiy}
\Phi(q) = \sum_{i = 0}^{\Bbbk - 1}\bar c_i y^{2i} + \bar r_q,
\end{equation}
with 
$$|\bar c_i| \lesssim 1, \quad |\py^j \bar r_q| \lesssim y^{2\Bbbk - \frac{d}{2} - 1 - j}|\ln y|^\Bbbk, \quad 0 \leq j \leq 2\Bbbk -1, \quad y < 1.$$
From \eqref{eq:expanq2y} and \eqref{eq:expPhiy}, we have the expansion of $\Nc$ near the origin,
$$\Nc(q) = \sum_{i = 0}^{\Bbbk - 1}\hat{\tilde{c}}_i y^{2i + 1} + \hat{\tilde{r}}_q,$$
with 
$$|\hat{\tilde{c}}_i| \lesssim \Es_{2\Bbbk}, \quad |\py^j \hat{\tilde{r}}_q| \lesssim y^{2\Bbbk - \frac d2 -j}|\ln y|^\Bbbk \Es_{2\Bbbk}, \quad 0 \leq j \leq 2\Bbbk - 1, \;\; y < 1.$$
From the definition of $\As$ and $\As^*$ (see \eqref{def:As} and \eqref{def:Astar}), one can check that for $y < 1$,
$$|\As \Ls^{\Bbbk - 1} \hat{\tilde{r}}_q | \lesssim \sum_{i = 0}^{2\Bbbk - 1} \frac{\py^i \hat{\tilde{r}}_q}{y^{2\Bbbk - 1 - i}} \lesssim \Es_{2\Bbbk} \sum_{i = 0}^{2\Bbbk -1}\frac{y^{2\Bbbk - \frac{d}{2} - i}|\ln y|^\Bbbk}{y^{2\Bbbk - 1 - i}} \lesssim y^{-\frac{d}{2} + 1}|\ln y|^\Bbbk\Es_{2\Bbbk}.$$
Note from the asymptotic behavior \eqref{eq:asympV} of $V$ that $\As(y) = \Oc(y^2)$ for $y < 1$, which  implies 
$$\left|\As \Ls^{\Bbbk - 1} \left(\sum_{i = 0}^{\Bbbk - 1}\hat{\tilde{c}}_i y^{2i + 1}\right)\right| \lesssim \sum_{i = 0}^{\Bbbk -1}|\hat{\tilde{c}}_i| y^2 \lesssim y^2\Es_{2\Bbbk}.$$
We then conclude 
$$\int_{y < 1} |\As \Ls^{\Bbbk - 1}\Nc(q)|^2 \lesssim \Es_{2\Bbbk}^2\int_{y < 1}y|\ln y|^{2\Bbbk} dy \lesssim \Es_{2\Bbbk}^2 \lesssim b_1^{2L + 1 + 2(1 - \delta)(1 + \eta)}.$$

\medskip

\noindent - \textit{Control for $y \geq 1$.} Let us rewrite from the definition of $\Nc(q)$,
\begin{equation}\label{def:Zpsi}
\Nc(q) = Z^2 \psi, \quad Z = \frac{q}{y}, \quad \psi = -(d-1)\int_0^1(1 - \tau)\sin(2\tilde{Q}_b + 2\tau q) d\tau.
\end{equation}
Note from the definitions of $\As$ and $\As^*$ that 
$$\forall k \in \mathbb{N}, \quad |\As \Ls^{k} f| \lesssim \sum_{i = 0}^{2k + 1} \frac{|\py^i f|}{y^{2k + 1 - i}},$$
from which and the Leibniz rule, we write 
\begin{align*}
\int_{y \geq 1}|\As \Ls^{\Bbbk - 1}\Nc(q)|^2 &\lesssim \sum_{k = 0}^{2\Bbbk - 1}\int_{y \geq 1}\frac{|\py^k\Nc(q)|^2}{y^{4\Bbbk - 2k - 2}}\\
&\lesssim \sum_{k = 0}^{2\Bbbk - 1}\sum_{i = 0}^k \int_{y \geq 1} \frac{|\py^i Z^2|^2|\py^{k - i}\psi|^2}{y^{4\Bbbk - 2k - 2}}\\
&\lesssim \sum_{k = 0}^{2\Bbbk - 1}\sum_{i = 0}^k \sum_{m = 0}^i \int_{y \geq 1} \frac{|\py^m Z|^2 |\py^{i - m}Z|^2 |\py^{k - i}\psi|^2}{y^{4\Bbbk - 2k - 2}}.
\end{align*}

We aim at using the pointwise estimate \eqref{eq:pointwise_yg1} to prove that for $0 \leq k \leq 2\Bbbk - 1$, $0 \leq i \leq k$ and $0 \leq m \leq i$,
\begin{equation}\label{def:Akmi}
A_{k,i,m} := \int_{y \geq 1} \frac{|\py^m Z|^2 |\py^{i - m}Z|^2 |\py^{k - i}\psi|^2}{y^{4\Bbbk - 2k - 2}} \lesssim b_1^{2L + 1 + 2(1 - \delta)(1 + \eta)},
\end{equation}
which concludes the proof of \eqref{est:NqE2k1}.

To prove \eqref{def:Akmi}, we distinguish in 3 cases:\\

\noindent - \underline{\textbf{The initial case: $k = 0$.}} Since $0 \leq m \leq i \leq k$, then $k = i = m = 0$. Although this is the simplest case, it gives us a basic idea to handle other cases. From \eqref{def:Zpsi}, it is obvious to see that $|\psi|$ is uniformly bounded.  We write
\begin{align*}
A_{0,0,0}  = \int_{y \geq 1} \frac{|q|^4|\psi|^2}{y^{4\Bbbk + 2}} y^{d-1}dy  \lesssim \int_{1 \leq y \leq B_0} \frac{|q|^4}{y^{4\Bbbk + 3 - d}} dy + \int_{y \geq B_0} \frac{|q|^4}{y^{4\Bbbk + 3 - d}}dy.
\end{align*}
Using \eqref{eq:pointwise_yg1}, Definition \ref{def:Skset}, $b_1 \sim \frac{1}{s}$ and the fact that $d = 4\hbar + 2\gamma + 4\delta$ (see \eqref{def:kdeltaplus}), we estimate 
\begin{align*}
&\int_{1 \leq y \leq B_0} \frac{|q|^4}{y^{4\Bbbk + 3 - d}}dy\\
& \quad \lesssim \left\|\frac{y^{d-2}|q|^2}{y^{2(2\Bbbk - 1)}}\right\|_{L^\infty(y > 1)}\left\|\frac{y^{d-2}|q|^2}{y^{2(2\ell + 2\hbar + 3)}}\right\|_{L^\infty(y > 1)}\int_{1 \leq y \leq B_0}y^{4\ell + 5 - 4\delta - 2\gamma}dy\\
& \qquad \lesssim \Es_{2\Bbbk}\Es_{2(\ell + \hbar + 2)} B_0^{4\ell + 6 - 4\delta - 2\gamma}\\
& \quad \qquad \lesssim Kb_1^{2L + 2(1 - \delta)(1 + \eta)}b_1^{2(\ell + 1) + 2(1 - \delta) - K\eta}b_1^{-2\ell - 3 +  2\delta + \gamma}\\
&\qquad \qquad \lesssim K b_1^{2L + 2(1 - \delta)(1 + \eta)} b_1^{1 + \gamma - K\eta} \lesssim b_1^{2L + 1+ 2(1 - \delta)(1 + \eta)}.
\end{align*}
For the integral on the domain $y \geq B_0$, let us write
\begin{align*}
&\int_{y \geq B_0} \frac{|q|^4}{y^{4\Bbbk + 3 - d}}dy\\
&\quad \lesssim \left\|\frac{y^{d-2}|q|^2}{y^{2(2\Bbbk  - 2\ell - 1)}}\right\|_{L^\infty(y > 1)}\left\|\frac{y^{d-2}|q|^2}{y^{2(2\ell + 2\hbar + 1)}}\right\|_{L^\infty(y > 1)}\int_{y \geq B_0} \frac{dy}{y^{4\delta + 2\gamma - 1}}\\
& \qquad \lesssim \Es_{2(\Bbbk - \ell)}\Es_{2(\ell + \hbar + 1)} B_0^{2 - 4\delta - 2\gamma}\\
& \quad \qquad \lesssim b_1^{2(\Bbbk - \ell - \hbar -1) + 2(1 - \delta) - K\eta}b_1^{2\ell + 2(1 - \delta) - K\eta}b_1^{2\delta + \gamma -1}\\
& \qquad \qquad \lesssim b_1^{2L + 2(1 - \delta)(1 + \eta)} b_1^{1 + \gamma - (K + 2(1 - \delta))\eta} \lesssim b_1^{2L + 1+ 2(1 - \delta)(1 + \eta)}.
\end{align*}
This concludes the proof of \eqref{def:Akmi} when $k = i = m = 0$. \\

\noindent  - \underline{\textbf{Case II: $k \geq 1$ and $k = i$}}. We first use the Leibniz rule to write
\begin{equation}\label{eq:pylZexp}
\forall l \in \mathbb{N}, \quad |\py^l Z|^2 \lesssim \sum_{j = 0}^l \frac{|\py^j q|^2}{y^{2 + 2l - 2j}},
\end{equation}
from which, 
\begin{align*}
A_{k,k,m} &\lesssim \sum_{j = 0}^m \sum_{l = 0}^{k-m} \int_{y \geq 1} \frac{|\py^j q|^2 |\py^l q|^2}{y^{4 \Bbbk - 2j - 2l + 2}}y^{d-1}dy.
\end{align*}
We claim that for all $(j,l) \in \mathbb{N}^2$ and $1 \leq j + l \leq 2\Bbbk -1$,
\begin{equation}\label{est:Bjl}
B_{j,l,0}:= \int_{y \geq 1} \frac{|\py^j q|^2 |\py^l q|^2}{y^{4 \Bbbk - 2j - 2l + 2}}y^{d-1}dy \lesssim b_1^{2L +1 + 2(1 - \delta)(1 + \eta) + \frac{(\gamma - 1)}{2}},
\end{equation}
which immediately follows \eqref{def:Akmi} for the case when $k = i$. 

To prove \eqref{est:Bjl}, we proceed as for the case $k = 0$ by splitting the integral in two parts as follows:
\begin{align*}
B_{j,l,0} &= \int_{1 \leq y \leq B_0} \frac{\big(y^{d-2}|\py^j q|^2\big)\big(y^{d-2}|\py^l q|^2\big)}{y^{4\Bbbk - 2j - 2l + 4\hbar + 6}}y^{7 - 4\delta - 2\gamma}dy\\
&\quad  + \int_{y \geq B_0} \frac{\big(y^{d-2}|\py^j q|^2\big)\big(y^{d-2}|\py^l q|^2\big)}{y^{4\Bbbk - 2j - 2l + 4\hbar}}\frac{dy}{y^{4\delta + 2\gamma - 1}}\\
&\qquad \lesssim \left\|\frac{\big(y^{d-2}|\py^j q|^2\big)\big(y^{d-2}|\py^l q|^2\big)}{y^{4\Bbbk - 2j - 2l + 4\hbar + 6}} \right\|_{L^\infty(y \geq 1)}b_1^{2\delta + \gamma - 4}\\
&\qquad \quad + \left\|\frac{\big(y^{d-2}|\py^j q|^2\big)\big(y^{d-2}|\py^l q|^2\big)}{y^{4\Bbbk - 2j - 2l + 4\hbar}} \right\|_{L^\infty(y \geq 1)}b_1^{2\delta + \gamma - 1}\\
& \qquad \qquad = \left\|\frac{\big(y^{d-2}|\py^j q|^2\big)\big(y^{d-2}|\py^l q|^2\big)}{y^{2J_1 - 2j + 2J_2 - 2l}} \right\|_{L^\infty(y \geq 1)}b_1^{2\delta + \gamma - 4}\\
&\qquad \qquad \quad + \left\|\frac{\big(y^{d-2}|\py^j q|^2\big)\big(y^{d-2}|\py^l q|^2\big)}{y^{2J_3 - 2j + 2J_4 - 2l}} \right\|_{L^\infty(y \geq 1)}b_1^{2\delta + \gamma - 1}\\
& \qquad \qquad \qquad := B_{j,l,0,J_1, J_2} b_1^{2\delta + \gamma - 4} + B_{j,l,0, J_3, J_4}b_1^{2\delta + \gamma - 1},
\end{align*}
where $J_n (n = 1,2,3,4)$ satisfy
$$J_1 + J_2 = 2\Bbbk + 2\hbar + 3, \quad J_3 + J_4 = 2\Bbbk + 2\hbar.$$
We now estimate $B_{j,l,0,J_1, J_2}$.\\
- If $l$ is even, we take 
$$J_2 = \left\{ \begin{array}{ll}
l + 2 &\quad \text{if}\quad l \leq 2\Bbbk - 4,\\
l &\quad \text{if}\quad l = 2\Bbbk - 2.
\end{array} \right.$$
This gives
$$2\hbar + 4 \leq J_2 \leq 2\Bbbk - 2, \quad 2\hbar + 5 \leq J_1 = 2\Bbbk + 2\hbar + 3 - J_2 \leq 2\Bbbk - 1.$$
Using \eqref{eq:pointwise_yg1}, we have the estimate
\begin{align*}
 B_{j,l,0,J_1, J_2}& \lesssim \left\|\frac{y^{d-2}|\py^j q|^2}{y^{2J_1 - 2j}}\right\|_{L^\infty(y \geq 1)} \left\|\frac{y^{d-2}|\py^l q|^2}{y^{2J_2 - 2l}} \right\|_{L^\infty(y \geq 1)}\\
 &\quad  \lesssim \Es_{J_1 + 1} \sqrt{\Es_{J_2} \Es_{J_2 + 2}}.
\end{align*}
- If $l$ is odd, we simply take $J_2 = l + 1$ which gives
 $$2\hbar + 4 \leq J_2 \leq 2\Bbbk - 2, \quad 2\hbar + 5 \leq J_1 \leq 2\Bbbk - 1.$$
Hence,
\begin{align*}
 B_{j,l,0,J_1, J_2}&\lesssim \Es_{J_1 + 1} \sqrt{\Es_{J_2} \Es_{J_2 + 2}}.
\end{align*}
Recall from Definition \eqref{def:Skset} that for all even integer $m$ in the range $2\hbar + 4 \leq m \leq 2\Bbbk$,
\begin{equation}\label{est:Esmeven}
\Es_{m} \leq \left\{
\begin{array}{ll}
b_1^{\frac{\ell}{2\ell - \gamma}(2m - d)}&\quad \text{for}\quad 2\hbar + 4 \leq m \leq 2\hbar +2\ell.\\
b_1^{m - 2\hbar -2 + 2(1 -\delta) - K\eta}&\quad \text{for}\quad 2\hbar + 2\ell + 2 \leq m \leq 2\Bbbk.
\end{array} 
\right.
\end{equation}
- If $J_1 + 1 \geq 2\hbar + 2\ell + 2$ and  $J_2 \geq 2\hbar + 2\ell + 2$, then
$$B_{j,l,0,J_1, J_2} \lesssim b_1^{J_1 + J_2 - 4\hbar -2 + 4(1 - \delta) - 2K\eta} \lesssim b_1^{2L + 2 + 4(1 - \delta) - K\eta}.$$
- If $J_1 + 1 \leq 2\hbar + 2\ell$, then $J_2 = 2\Bbbk + 2\hbar + 3 - J_1 \geq 2\Bbbk - 2\ell + 4 \geq 2\hbar + 2\ell + 2$ because $\Bbbk \gg \ell$. This follows
$$B_{j,l,0,J_1, J_2} \lesssim b_1^{\frac{\ell}{2\ell - \gamma}(2J_1 + 2 - d) + J_2 + 1 - 2(\hbar +1) + 2(1 - \delta) - K\eta} \lesssim b_1^{2L + 2 + 4(1 - \delta)  - K\eta}.$$
Hence, we obtain 
$$B_{j,l,0,J_1, J_2} \lesssim b_1^{2L + 2 + 4(1 - \delta)  - K\eta} \quad \text{for}\quad J_1 + J_2 = 2\Bbbk + 2\hbar +3.$$
Similarly, one prove that 
$$B_{j,l,0,J_3, J_4} \lesssim b_1^{2L - 1 + 4(1 - \delta) - K\eta} \quad \text{for} \quad J_3 + J_4 = 2\Bbbk + 2\hbar.$$
Therefore, 
\begin{align*}
B_{j,l,0} & \lesssim b_1^{2L + 2 + 4(1 - \delta) - K\eta}b_1^{2\delta + \gamma - 4} + b_1^{2L -1 + 4(1 - \delta) - K\eta}b_1^{2\delta + \gamma - 1}\\
& \quad \lesssim b_1^{2L + 1 + 2(1 - \delta)(1 + \eta) + (\gamma-1) - (K + 2 -2\delta)\eta} \lesssim b_1^{2L + 1 + 2(1 - \delta)(1 + \eta) + \frac{(\gamma - 1)}{2}},
\end{align*}
for $\eta \leq \frac{\gamma - 1}{2(K + 2 - 2\delta)}$. This concludes the proof of \eqref{est:Bjl} as well as \eqref{def:Akmi} when $k = i$. \\

\noindent  - \underline{\textbf{Case III: $k \geq 1$ and $k - i \geq 1$.}} Let us write from \eqref{def:Akmi} and \eqref{eq:pylZexp}, 
\begin{equation}\label{eq:Akmi11}
A_{k,m,i} \lesssim \sum_{j = 0}^{m}\sum_{l = 0}^{i - m} \int_{y \geq 1} \frac{|\py^j q|^2 |\py^l q|^2}{y^{4\Bbbk - 2j - 2l + 2}} \frac{|\py^{k - i} \psi|^2}{y^{-2(k - i)}}.
\end{equation}
At this stage, we need to precise the decay of $|\py^n \psi|$ to archive the bound \eqref{def:Akmi}. To do so, let us recall that $T_i$ is admissible of degree $(i,i)$ (see Lemma \ref{lemm:GenLk}) and $S_i$ is homogeneous of degree $(i,i-1,i)$ (see Proposition \ref{prop:1}). Together with \eqref{eq:asymQ}, we estimate
$$ \forall j \geq 1,\quad  |\py^j \tilde{Q}_b | \lesssim \frac{1}{y^{\gamma + j}} + \sum_{l = 1}^{2L + 2}\frac{b_1^l y^{2l}}{y^{\gamma + j}} \mathbf{1}_{\{y \leq 2B_1\}} \lesssim \frac{b_1^{-(2L+2)\eta}}{y^{\gamma + j}}.$$
Let $\tau \in [0,1]$ and $v_\tau = \tilde{Q}_b + \tau q$. We use the Faa di Bruno formula to write\\ 
\begin{align*}
\forall n \in \mathbb{N}, \quad |\py^n \psi|^2 &\lesssim \int_0^1 \sum_{m^* = n} |\partial_{v_\tau}^{m_1 + \cdots+m_n}\sin(v_\tau)|^2  \prod_{i = 1}^n |\py^i \tilde{Q}_b + \py^iq|^{2m_i} d\tau \nonumber\\
&\quad \lesssim \sum_{m^* = n}  \prod_{i = 1}^n \left(\frac{b_1^{-C(L)\eta}}{y^{2\gamma + 2i}} +  |\py^iq|^2\right)^{m_i}, \quad m^* = \sum_{i = 1}^n im_i.
\end{align*}
For $1 \leq y \leq B_0$, we use \eqref{eq:pointwise_yg1} to estimate 
\begin{equation*}
|\py^iq|^2 = y^{4\Bbbk - 2i -2}\left|\frac{\py^i q}{y^{2\Bbbk - i - 1}}\right|^2 \leq B_0^{4\Bbbk - 2i - d}\Es_{2\Bbbk} \leq b_1^{-C(K)\eta + i + \gamma} \leq \frac{b_1^{-C(K)\eta}}{y^{2\gamma + 2i}},
\end{equation*}
 from which, we have
\begin{equation}\label{est:psin1B0}
|\py^n \psi|^2 \lesssim \sum_{m^* = n}  \prod_{i = 1}^n \left(\frac{b_1^{-C(L)\eta}}{y^{2\gamma + 2i}} +  \frac{b_1^{-C(K)\eta}}{y^{2\gamma + 2i}}\right)^{m_i} \lesssim \frac{b_1^{-C(K, L)\eta}}{y^{2\gamma + 2n}}, \quad \forall 1 \leq y \leq B_0.
\end{equation}
For $y \geq B_0$, we use again \eqref{eq:pointwise_yg1} to write for all $1 \leq n \leq 2\Bbbk - 1$,
\begin{align}
|\py^n \psi|^2 &\lesssim \sum_{m^* = n}  \prod_{i = 1}^{2\hbar + 2\ell + 1}\left(\frac{b_1^{-C(L)\eta}}{y^{2\gamma + 2i}} +  y^{4\hbar + 4\ell + 2 - 2i}\left|\frac{\py^i q}{y^{2\hbar + 2\ell + 1 - i}}\right|^2\right)^{m_i}\nonumber\\
&\quad \times \prod^{n}_{i = 2\hbar + 2\ell + 1}\left(\frac{b_1^{-C(L)\eta}}{y^{2\gamma + 2i}} +  \left|\py^i q\right|^2\right)^{m_i}\nonumber\\
& \lesssim \sum_{m^* = n}  \prod_{i = 1}^{2\hbar + 2\ell + 1} \left(b_1^{-C(L)\eta + \gamma + i} + b_1^{-K\eta + i+ \gamma}b_1^{2\ell + 2(1 - \delta)}y^{4\ell + 4(1 - \delta)} \right)^{m_i}\nonumber\\
& \quad \times \prod^{n}_{i = 2\hbar + 2\ell + 1}\left(b_1^{-C(L)\eta + \gamma + i} + b_1^{-K\eta + \gamma + i}\right)^{m_i}\nonumber\\
&\lesssim b_1^{-C(L,K)\eta + n + \gamma \sum_{i = 1}^n m_i}\left(b_1y^2\right)^{(2\ell + 2(1 - \delta))\sum_{i = 1}^{2\hbar +2\ell + 1}m_i}, \quad \forall y \geq B_0. \label{est:psinB0g}
\end{align}
Injecting \eqref{est:psin1B0} and \eqref{est:psinB0g} into \eqref{eq:Akmi11}, we arrive at 
\begin{align*}
A_{k,i,m} &\lesssim b_1^{-C\eta}\sum_{j = 0}^{m}\sum_{l = 0}^{i - m} \left(\int_{1 \leq y \leq B_0} \frac{|\py^j q|^2 |\py^l q|^2}{y^{4\Bbbk - 2j - 2l + 2 + 2\gamma}} +b_1^\alpha \int_{ y \geq B_0} \frac{|\py^j q|^2 |\py^l q|^2}{y^{4\Bbbk - 2j - 2l + 2 - 2\alpha}}\right),
\end{align*}
where $\alpha = k - i + \big(2\ell + 2(1 - \delta)\big)\sum_{i = 1}^{2\hbar +2\ell + 1}m_i$. Arguing as for the proof of \eqref{est:Bjl}, we end up with 
\begin{align*}
A_{k,i,m} &\lesssim b_1^{-C\eta} \left(b_1^{2L + 1 + \gamma + 2(1 - \delta)(1 -\eta) + \frac{(\gamma - 1)}{2}} + b_1^{2L + 1 + 2(1 - \delta)(1 -\eta) + \frac{(\gamma - 1)}{2}}\right)\\
&\qquad \lesssim b_1^{2L + 1 + 2(1 - \delta)(1 -\eta)},
\end{align*}
for $\eta$ small enough. This finishes the proof of \eqref{def:Akmi} as well as \eqref{est:NqE2k1}. Since the proof of \eqref{est:NqE2k} follows exactly the same lines as for the proof of \eqref{est:NqE2k1}, we omit its proof here. 

\medskip

Inserting \eqref{est:HqE2k}, \eqref{est:NqE2k1} and \eqref{est:NqE2k} into \eqref{eq:EnerID3} and recalling from Definition \eqref{def:Skset} that $\Es_{2\Bbbk} \leq K b_1^{2L + 2(1 - \delta)(1 + \eta)}$, we arrive at
\begin{align}
\frac{1}{2}\frac{d}{dt}\left\{ \frac{\Es_{2\Bbbk}}{\lambda^{4\Bbbk - d}}\Big[1 + \Oc(b_1)\Big] \right\}
&\lesssim \frac{b_1}{\lambda^{4\Bbbk - d +2}}\left(\frac{\Es_{2\Bbbk}}{M^{2\delta}} + b_1^{L + (1 - \delta)(1 + \eta)}\sqrt{\Es_{2\Bbbk}} + b_1^{2L + 2(1 - \delta)(1 + \eta)}\right)\nonumber\\
&\qquad + \frac{1}{\lambda^{4\Bbbk - d +2}}\int \Ls^\Bbbk q \Ls^\Bbbk(\partial_s \xi_L). \label{eq:EnerID4}
\end{align}

\noindent \textbf{Step 8: Time oscillations.} In this step, we want to find the contribution of the last term in \eqref{eq:EnerID4} to the estimate \eqref{eq:Es2sLya}. Let us write
\begin{align}
\frac{1}{\lambda^{4\Bbbk - d +2}}\int \Ls^\Bbbk q \Ls^\Bbbk(\partial_s \xi_L) &= \frac{d}{ds} \left\{\frac{1}{\lambda^{4\Bbbk - d +2}}\left[\int \Ls^\Bbbk q \Ls^\Bbbk \xi_L - \frac{1}{2}\int |\Ls^\Bbbk \xi_L|^2 \right]\right \}\nonumber\\
&\quad +\frac{4\Bbbk - d + 2}{\lambda^{4\Bbbk - d +2}} \frac{\lambda_s}{\lambda} \left[\int \Ls^\Bbbk q \Ls^\Bbbk \xi_L + \frac{1}{2}\int |\Ls^\Bbbk \xi_L|^2\right]\nonumber\\
&\qquad  - \frac{1}{\lambda^{4\Bbbk - d +2}}\int \Ls^\Bbbk(\ps q - \ps \xi_L)  \Ls^\Bbbk \xi_L.\label{id:Aextract}
\end{align}
From \eqref{est:LkTL} and \eqref{est:CLxiL}, we have
\begin{equation}\label{est:LkxiL2}
\int|\Ls^\Bbbk \xi_L|^2 \lesssim b_1^{2\eta(1 - \delta)}\Es_{2\Bbbk}.
\end{equation} 
This follows
\begin{align*}
\left|\int \Ls^\Bbbk q \Ls^\Bbbk \xi_L \right| &\lesssim \left(\int |\Ls^\Bbbk q|^2\right)^\frac{1}{2}\left(\int |\Ls^\Bbbk \xi_L|^2\right)^\frac{1}{2}\\
 &\quad \lesssim \sqrt{\Es_{2\Bbbk}} \, b_1^{-(1-\delta)}\sqrt{\Es_{2\Bbbk}}\, b_1^{(1-\delta)(1+\eta)} = b_1^{\eta(1 - \delta)}\Es_{2\Bbbk}.
\end{align*}
Since $\frac{dt}{ds} = \lambda^2$, we then write 
\begin{equation}\label{est:A1}
\frac{d}{ds} \left\{\frac{1}{\lambda^{4\Bbbk - d +2}}\Big[\int \Ls^\Bbbk q \Ls^\Bbbk \xi_L - \frac{1}{2}\int |\Ls^\Bbbk \xi_L|^2\Big]\right\} = \frac{d}{dt}\left(\frac{\Es_{2\Bbbk}}{\lambda^{4\Bbbk - d}}\mathcal{O}(b_1^{\eta(1 - \delta)}) \right).
\end{equation}
Noting from \eqref{eq:ODEbkl} that $\left|\frac{\lambda_s}{\lambda}\right| \lesssim b_1$, this gives
\begin{equation}\label{est:A2}
\left|\frac{\lambda_s}{\lambda} \left[\int \Ls^\Bbbk q \Ls^\Bbbk \xi_L + \frac{1}{2}\int |\Ls^\Bbbk \xi_L|^2\right]\right| \lesssim b_1 b_1^{\eta(1 - \delta)}\Es_{2\Bbbk}.
\end{equation}
For the last term in \eqref{id:Aextract}, we use equation \eqref{eq:qys} and the decomposition \eqref{eq:decomF} to write
\begin{align}
\int \Ls^\Bbbk(\ps q - \ps \xi_L)  \Ls^\Bbbk \xi_L &= \left[-\int \Ls^\Bbbk q \Ls^{\Bbbk + 1}\xi_L + \frac{\lambda_s}{\lambda}\int \Lambda q \Ls^{2\Bbbk}\xi_L\right] \nonumber\\
&+ \int \Ls^{\Bbbk}\big[-\tilde \Psi_b - \widetilde{Mod} + \Hc(q) + \Nc(q)\big] \Ls^{\Bbbk}\xi_L.\label{eq:A3exp}
\end{align}
Using \eqref{est:CLxiL}, the admissibility of $T_L$ and the fact that $\Ls^k T_i = 0$ if $i < k$, we estimate 
\begin{align*}
\int |\Ls^{\Bbbk +1}\xi_L|^2 & \lesssim \left|\frac{\left<\Ls^Lq, \chi_{B_0}\Lambda Q\right>}{\left<\Lambda Q, \chi_{B_0}\Lambda Q\right>} \right|^2\int|\Ls^{\Bbbk + 1}\big[(1 - \chi_{B_1})T_L\big]| ^2\\
&\lesssim b_1^{-2(1 - \delta)}\Es_{2\Bbbk}\int_{y \geq B_1}y^{2(2L - \gamma - 2(\Bbbk + 1))} y^{d-1}dy\\
&\quad \lesssim b_1^{-2(1 - \delta)}\Es_{2\Bbbk}b_1^{(4 - 2\delta)(1 + \eta)} \lesssim b_1^2b_1^{2\eta(1 - \delta)}\Es_{2\Bbbk}.
\end{align*}
from which we obtain
\begin{equation*}
\left|\int \Ls^\Bbbk q \Ls^{\Bbbk + 1}\xi_L\right| \lesssim b_1 b_1^{\eta(1 - \delta)}\Es_{2\Bbbk},
\end{equation*}
Similarly, we have the estimate
\begin{align*}
\int (1 + y^{4\Bbbk})|\Ls^{2\Bbbk}\xi_L|^2 &\lesssim b_1^{-2(1 - \delta)}\Es_{2\Bbbk}\int_{y \geq B_1}y^{4\Bbbk}y^{2(2L - \gamma - 4\Bbbk)}y^{d-1}dy\lesssim b_1^{2\eta(1-\delta)}\Es_{2\Bbbk}, 
\end{align*}
hence, using \eqref{est:Es2K1} and \eqref{eq:ODEbkl}, we get
\begin{equation*}
\left|\frac{\lambda_s}{\lambda}\int \Lambda q \Ls^{2\Bbbk}\xi_L \right| \lesssim b_1 \left(\int \frac{|\py q|^2}{1 + y^{4\Bbbk - 2}} \right)^\frac{1}{2}\left(\int (1 + y^{4\Bbbk})|\Ls^{2\Bbbk}\xi_L|^2  \right)^\frac{1}{2} \lesssim b_1 b_1^{\eta(1 - \delta)}\Es_{2\Bbbk}.
\end{equation*}
From \eqref{est:LkxiL2}, \eqref{eq:PsibtilE2k} and \eqref{eq:ModtilE2k}, we have
\begin{align*}
\left|\int \Ls^\Bbbk (\tilde{\Psi}_b + \widetilde{Mod}) \Ls^\Bbbk \xi_L\right| &\lesssim \left(\int |\Ls^\Bbbk \xi_L|^2 \right)^\frac{1}{2}\left(\int|\Ls^\Bbbk (\tilde \Psi_b + \widetilde{Mod})|^2 \right)^\frac{1}{2}\\
&\quad \lesssim b_1b_1^{\eta(1 - \delta)}\Es_{2\Bbbk} + b_1 b_1^{L + (1 - \delta)(1 + \eta)}\sqrt{\Es_{2\Bbbk}}.
\end{align*}
In the same manner, we have the estimate
\begin{align*}
\int (1 + y^{4})|\Ls^{\Bbbk + 1}\xi_L|^2 &\lesssim b_1^{-2(1 - \delta)}\Es_{2\Bbbk}\int_{y \geq B_1}y^{4}y^{2(2L - \gamma - 2(\Bbbk+1))}y^{d-1}dy\lesssim b_1^{2\eta(1-\delta)}\Es_{2\Bbbk}, 
\end{align*}
from which together \eqref{est:HqE2k} and \eqref{est:NqE2k}, we get the bound
\begin{align*}
\left|\int \Ls^{\Bbbk - 1} (\Hc(q) + \Nc(q)) \Ls^{\Bbbk + 1}\xi_L \right|& \lesssim \left(\int \frac{|\Ls^{\Bbbk - 1}(\Hc(q) + \Nc(q))|^2}{1 + y^4} \right)^\frac{1}{2}\left(\int (1 + y^4)|\Ls^{\Bbbk + 1}\xi_L|^2\right)^\frac{1}{2}\\
&\quad \lesssim b_1 b_1^{\eta(1 - \delta)}\Es_{2\Bbbk} + b_1 b_1^{L + (1- \delta)(1 + \eta)}\sqrt{\Es_{2\Bbbk}}.
\end{align*}
Collecting these final bounds into \eqref{eq:A3exp} yields 
\begin{equation}\label{est:A3}
\left|\int \Ls^\Bbbk(\ps q - \ps \xi_L)  \Ls^\Bbbk \xi_L\right| \lesssim b_1 b_1^{\eta(1 - \delta)}\Es_{2\Bbbk} + b_1 b_1^{L + (1- \delta)(1 + \eta)}\sqrt{\Es_{2\Bbbk}}.
\end{equation}

\medskip

\noindent Substituting \eqref{id:Aextract}, \eqref{est:A1}, \eqref{est:A2} and \eqref{est:A3} into \eqref{eq:EnerID4} concludes the proof of \eqref{eq:Es2sLya} as well as Proposition \ref{prop:E2k}.
\end{proof}

\subsection{Conclusion of Proposition \ref{prop:redu}.}
We give the proof of Proposition \ref{prop:redu} in this subsection in order to complete the proof of Theorem \ref{Theo:1}. Note that this section corresponds to Section 6.1 of \cite{MRRcjm15}. Here we follow exactly the same lines as in \cite{MRRcjm15} and no new ideas are needed. We divide the proof into 2 parts:

- \textit{Part 1: Reduction to a finite dimensional problem.} Assume that for a given $K > 0$ large and an initial time $s_0 \geq 1$ large, we have $(b(s), q(s)) \in \Sc_K(s)$ for all $s \in [s_0, s_1]$ for some $s_1 \geq s_0$. By using \eqref{eq:ODEbkl}, \eqref{eq:ODEbLimproved}, \eqref{eq:Es2sLya} and \eqref{eq:Es2sLyam}, we derive new bounds on $\Vc_1(s)$, $b_k(s)$ for $\ell + 1 \leq k \leq L$ and $\Es_{2(\hbar + m)}$ for $1 \leq m \leq L+1$, which are better than those defining $\Sc_K(s)$ (see Definition \ref{def:Skset}). It then remains to control $(\Vc_2(s), \cdots, \Vc_\ell(s))$. This means that the problem is reduced to the control of a finite dimensional function $(\Vc_2(s), \cdots, \Vc_\ell(s))$ and then get the conclusion $(i)$ of Proposition \ref{prop:redu}.

- \textit{Part 2: Transverse crossing.} We aim at proving that if $(\Vc_2(s), \cdots, \Vc_\ell(s))$ touches 
$$\partial \hat \Sc _K(s):= \partial\left(-\frac{K}{s^{\frac{\eta}{2}}(1 - \delta)}, \frac{K}{s^{\frac{\eta}{2}}(1 - \delta)}\right)^{\ell - 1}$$
at $s = s_1$, it actually leaves $\partial \hat \Sc_K(s)$ at $s=s_1$ for $s_1 \geq s_0$, provided that $s_0$ is large enough. We then get the conclusion $(ii)$ of Proposition \ref{prop:redu}.

\paragraph{$\bullet\,$ Reduction to a finite dimensional problem.} We give the proof of item $(i)$ of Proposition \ref{prop:redu} in this part. Given $K > 0$, $s_0 \geq 1$ and the initial data at $s = s_0$ as in Definition \ref{def:1}, we assume for all $s \in [s_0, s_1]$, $(b(s), q(s)) \in \Sc_K(s)$ for some $s_1 \geq s_0$. We claim that for all $s \in [s_0, s_1]$,
\begin{align}
&|\Vc_1(s)| \leq s^{-\frac \eta 2(1- \delta)},\label{est:impV1}\\
&|b_k(s)| \lesssim s^{-(k + \eta(1 - \delta))} \quad \text{for}\;\; \ell + 1 \leq k \leq L,\label{est:impbk}\\
&\Es_{2m} \leq \left\{ \begin{array}{ll}
\frac{K}{2} s^{-\frac{\ell(4m - d)}{2\ell - \gamma}} &\quad \text{for}\quad \hbar + 2 \leq m \leq \ell + \hbar,\\
\frac{1}{2}s^{-2(m - \hbar -1) - 2(1 - \delta) + K\eta} &\quad \text{for}\quad \ell + \hbar + 1 \leq m \leq \Bbbk - 1.
\end{array}\right.\label{est:impE2m}\\
&\Es_{2\Bbbk} \leq \frac{K}{2} s^{-(2L + 2(1 - \delta)(1 + \eta))},\label{est:impE2k}
\end{align}
Once these estimates are proved, it immediately follows from Definition \ref{def:Skset} of $\Sc_K$ that if $(b(s_1), q(s_1)) \in \partial\Sc_K(s_1)$, then $(\Vc_2, \cdots, \Vc_\ell))(s_1)$ must be in $\partial \hat \Sc_K(s_1)$, which concludes the proof of item $(i)$ of Proposition \ref{prop:redu}.\\

Before going to the proof of \eqref{est:impV1}-\eqref{est:impE2k}, let us compute explicitly the scaling parameter $\lambda$. To do so, let us note from \eqref{eq:Ukbke} and the a priori bound on $\Uc_1$ given in Definition \ref{def:Skset} that 
$b_1(s) = \frac{c_1}{s} + \frac{\Uc_1}{s} = \frac{\ell}{(2\ell - \gamma)s} + \Oc\left(\frac{1}{s^{1 + c\eta}}\right).$ 
Using \eqref{eq:ODEbkl} yields
\begin{equation}\label{eq:lam10}
-\frac{\lambda_s}{\lambda} = \frac{\ell}{(2\ell - \gamma)s} + \Oc\left(\frac{1}{s^{1 + c\eta}}\right),
\end{equation}
from which we write
\begin{equation*}
\left|\frac{d}{ds}\left\{\log \Big(s^{\frac{\ell}{2\ell - \gamma}}\lambda(s)\Big)\right\}\right| \lesssim \frac{1}{s^{1 + c\eta}}.
\end{equation*}
We now integrate by using the initial data value $\lambda(s_0) = 1$ to get 
\begin{equation}\label{eq:Lamdas}
\lambda(s) = \left(\frac{s_0}{s}\right)^\frac{\ell}{2\ell - \gamma}\left[1 + \Oc\left(s^{-c\eta}\right)\right] \quad \text{for}\;\; s_0 \gg 1.
\end{equation}
This implies that 
\begin{equation}\label{est:b1so_s}
s_0^{-\frac{\ell}{2\ell - \gamma}} \lesssim \frac{s^{-\frac{\ell}{2\ell - \gamma}} }{\lambda(s)}\lesssim s_0^{-\frac{\ell}{2\ell - \gamma}}.
\end{equation}

\noindent - \textit{Improved control of $\Es_{2\Bbbk}$}: We aim at using \eqref{eq:Es2sLya} to derive the improved bound \eqref{est:impE2k}. To do so, we inject the bound of $\Es_{2\Bbbk}$ given in Definition \ref{def:Skset} into the monotonicity formula \eqref{eq:Es2sLya} and integrate in time by using $\lambda(s_0) = 1$: for all $s \in [s_0, s_1)$,
\begin{align*}
\Es_{2\Bbbk}(s) \leq C \lambda(s)^{4\Bbbk - d}\left[\Es_{2\Bbbk}(s_0) + \left(\frac{K}{M^{2\delta}} + \sqrt K +  1\right)\int_{s_0}^s\frac{\tau^{-(2L + 1 + 2(1 - \delta)(1 + \eta))}}{\lambda(\tau)^{4\Bbbk - d}}d\tau\right].
\end{align*}
Using \eqref{est:b1so_s}, we estimate
\begin{align*}
&\lambda(s)^{4\Bbbk - d}\int_{s_0}^s\frac{\tau^{-(2L + 1 + 2(1 - \delta)(1 + \eta))}}{\lambda(\tau)^{4\Bbbk - d}}d\tau\\
& \quad \lesssim s^{- \frac{\ell(4\Bbbk - d)}{2\ell - \gamma}}\int_{s_0}^s \tau^{\frac{\ell(4\Bbbk - d)}{2\ell - \gamma} - (2L + 1 + 2(1 - \delta)(1 + \eta))}d\tau \lesssim s^{-(2L + 2(1 - \delta)(1 + \eta))}.
\end{align*}
Here we used the fact that the integral is divergent because
$$\frac{\ell(4\Bbbk - d)}{2\ell - \gamma} - [2L + 1 + 2(1 - \delta)(1 + \eta)] = \frac{2\gamma L}{2\ell - \gamma} + \Oc_{L \to +\infty}(1) \gg -1.$$
Using again \eqref{est:b1so_s} and the initial bound \eqref{eq:intialbounE2m}, we estimate
$$\lambda(s)^{4\Bbbk - d}\Es_{2\Bbbk}(s_0) \leq \left(\frac{s_0}{s}\right)^\frac{\ell(4\Bbbk - d)}{2\ell - \gamma} s_0^{-\frac{10L\ell}{2\ell - \gamma}} \lesssim s^{-(2L + 2(1 - \delta)(1 + \eta))},$$
for $L$ large enough. Therefore, we obtain 
$$\Es_{2\Bbbk}(s) \leq C\left(\frac{K}{M^{2\delta}} + \sqrt K + 1\right)s^{-(2L + 2(1 - \delta)(1 + \eta))} \leq \frac{K}{2}s^{-(2L + 2(1 - \delta)(1 + \eta))}, $$
for $K = K(M)$ large enough. This concludes the proof of \eqref{est:impE2k}.\\

\noindent - \textit{Improved control of $\Es_{2m}$.} We can improve the control of $\Es_{2m}$ by using the monotonicity formula \eqref{eq:Es2sLyam}. We distinguish two cases: \\
- \underline{Case 1:} $\hbar +2 \leq m \leq \ell + \hbar$. From the bound of $\Es_{2m}$ given in Definition \ref{def:Skset} and $b_1(s) \sim \frac{1}{s}$, we integrate \eqref{eq:Es2sLyam} in time $s$ by using $\lambda(s_0) = 1$ to find that
\begin{align*}
\Es_{2m}(s) &\leq C \lambda(s)^{4m - d}\left[\Es_{2m}(s_0)  + \sqrt{K}\int_{s_0}^s \frac{ \tau^{-\frac{\ell}{2\ell - \gamma}\left(2m - \frac{d}{2}\right) -(m - \hbar + 1-\delta - C\eta)}}{\lambda(\tau)^{4m - d}}d\tau\right.\\
& \qquad \qquad \qquad \qquad \left. + \int_{s_0}^s \frac{ \tau^{-(2m - 2\hbar - 1 + 2(1 - \delta) - C\eta)}}{\lambda(\tau)^{4m - d}}d\tau\right]
\end{align*}
Using the initial bound \eqref{eq:intialbounE2m} and \eqref{est:b1so_s}, we estimate 
$$C\lambda(s)^{4m - d}\Es_{2m}(s_0)\lesssim s^{-\frac{\ell}{2\ell - \gamma}(4m - d)},$$
for $s_0$ large. \\
Using \eqref{est:b1so_s} and the identity 
\begin{align*}
&\frac{\ell}{2\ell - \gamma}\left(2m - \frac{d}{2}\right) - (m - \hbar + 1 - \delta -C\eta) \\
& \quad = - \frac{\gamma}{2} - 1 + C\eta  + \frac{\gamma}{2\ell - \gamma}(m - \hbar - \delta - \frac{\gamma}{2}) \leq -1 - \frac{\gamma \delta}{2\ell - \gamma} + C\eta < -1,
\end{align*}
we estimate
\begin{align*}
&\lambda(s)^{4m - d}\int_{s_0}^s \frac{ \tau^{-\frac{\ell}{2\ell - \gamma}\left(2m - \frac{d}{2}\right) -(m - \hbar + 1-\delta - C\eta)}}{\lambda(\tau)^{4m - d}}d\tau \\
&\lesssim s^{-\frac{\ell}{2\ell - \gamma}(4m - d)}\int_{s_0}^s \tau^{\frac{\ell}{2\ell - \gamma}\left(2m -\frac d 2\right) - (m - \hbar + 1 - \delta - C\eta)} d\tau \\
& \lesssim s^{-\frac{\ell}{2\ell - \gamma}(4m - d)} \int_{s_0}^s \frac{d\tau}{\tau^{1 + \epsilon}} \lesssim  s^{-\frac{\ell}{2\ell - \gamma}(4m - d)}.
\end{align*}
Similarly, thanks to the identity 
\begin{align*}
&\frac{\ell}{2\ell - \gamma}\left(4m - d\right) - (2m - 2\hbar - 1 +2(1- \delta) -C\eta) \\
& \quad = - \gamma - 1 + C\eta  + \frac{\gamma}{2\ell - \gamma}(2m - 2\hbar - 2\delta - \gamma) \leq -1 - \frac{2\gamma \delta}{2\ell - \gamma} + C\eta < -1,
\end{align*}
we obtain
$$\lambda(s)^{4m - d}\int_{s_0}^s \frac{ \tau^{-(2m - 2\hbar - 1 + 2(1 - \delta) - C\eta)}}{\lambda(\tau)^{4m - d}}d\tau \lesssim s^{-\frac{\ell}{2\ell - \gamma}(4m - d)}.$$
Therefore, we deduce that
$$\Es_{2m}(s) \leq C(1 + \sqrt{K})s^{-\frac{\ell}{2\ell - \gamma}(4m - d)} \leq \frac{K}{2}s^{-\frac{\ell}{2\ell - \gamma}(4m - d)},$$
for $K$ large, which yields the improved bound \eqref{est:impE2m} for $\hbar + 2 \leq m \leq \ell + \hbar$.\\
- \underline{Case 2:} $\ell + \hbar +1 \leq m \leq \Bbbk - 1$. Proceeding as the previous case, we arrive at 
\begin{align*}
\Es_{2m}(s) &\leq C \lambda(s)^{4m - d}\left[\Es_{2m}(s_0) + \int_{s_0}^s \frac{\tau^{-\left[2m - 2\hbar - 1 + 2(1 - \delta) - \left(C + \frac K2\right)\eta\right]}}{\lambda(\tau)^{4m - d}}d\tau\right].
\end{align*}
From the identity
\begin{align}
&\frac{\ell}{2\ell - \gamma}\left(4m - d\right) - (2m - 2\hbar - 1 +2(1- \delta) - \left(C + \frac K2 \right)\eta) \nonumber\\
& \quad = - \gamma - 1 + \left(C + \frac K2 \right)\eta)  + \frac{\gamma}{2\ell - \gamma}(2m - 2\hbar - 2\delta - \gamma)\nonumber\\
& \qquad \geq -1 + \frac{2\gamma (1 -\delta)}{2\ell - \gamma} + \left(C + \frac K2 \right)\eta) > -1,\label{ide2g}
\end{align}
together with \eqref{est:b1so_s}, we estimate 
\begin{align*}
&\lambda(s)^{4m - d}\int_{s_0}^s \frac{\tau^{-\left[2m - 2\hbar - 1 + 2(1 - \delta) - \left(C + \frac K2\right)\eta\right]}}{\lambda(\tau)^{4m - d}}d\tau\\
& \quad \lesssim s^{-\frac{\ell(4m - d)}{2\ell - \gamma}}\int_{s_0}^s\tau^{\frac{\ell(4m - d)}{2\ell - \gamma} -\left[2m - 2\hbar - 1 + 2(1 - \delta) - \left(C + \frac K2\right)\eta\right]} d\tau\\
& \qquad \lesssim s^{-\left[2(m - \hbar - 1) + 2(1 - \delta) - \left(C + \frac K2\right)\eta\right]}\leq  \frac 14 s^{-\left[2(m - \hbar - 1) + 2(1 - \delta) - K\eta\right]}.
\end{align*}
Using \eqref{ide2g}, \eqref{est:b1so_s} and the initial bound \eqref{eq:intialbounE2m}, we derive 
\begin{align*}
C\lambda(s)^{4m - d}\Es_{2m}(s_0) \lesssim s^{-\frac{\ell(4m - d)}{2\ell - \gamma}} & \lesssim s^{-\left[2(m - \hbar - 1) + 2(1 - \delta) - \left(C + \frac K2\right)\eta\right]} \\
&\quad \leq  \frac 14 s^{-\left[2(m - \hbar - 1) + 2(1 - \delta) - K\eta\right]}.
\end{align*}
This concludes the proof of \eqref{est:impE2m}. \\

\noindent - \textit{Control of the stable modes $b_k$'s.} We now end the control of the stable modes $(b_{\ell + 1}, \cdots, b_L)$, in particular, we prove \eqref{est:impbk}. We first treat the case when $k = L$. Let 
$$\tilde{b}_L = b_L + \frac{\left<\Ls^Lq,\chi_{B_0}\Lambda Q \right>}{\left<\Lambda Q, \chi_{B_0}\Lambda Q \right>},$$
then from \eqref{est:CLxiL} and \eqref{est:impE2k}, 
$$|\tilde b_L - b_L| \lesssim b_1^{-(1 - \delta)}\sqrt{\Es_{2\Bbbk}} \lesssim b_1^{L + \eta(1 - \delta)}.$$
It follows from the improved modulation equation \eqref{eq:ODEbLimproved}, 
\begin{align*}
|(\tilde{b}_L)_s + (2L - \gamma)b_1\tilde{b}_L|&\lesssim b_1|\tilde b_L - b_L| + \frac{1}{B_0^{2\delta}}\left[C(M)\sqrt{\Es_{2\Bbbk}} + b_1^{L +(1 - \delta)}\right]\\
&\quad \lesssim b_1^{L + 1 + \eta(1 -\delta)}.
\end{align*}
This follows
$$\left|\frac{d}{ds} \left\{\frac{\tilde b_L}{\lambda^{2L - \gamma}} \right\}\right| \lesssim \frac{b_1^{L + 1 + \eta(1 - \delta)}}{\lambda^{2L - \gamma}}.$$
Integrating this identity in time from $s_0$ and recalling that $\lambda(s_0) = 1$ yields
\begin{align*}
\tilde{b}_L(s) \lesssim C\lambda(s)^{2L - \gamma}\left(\tilde{b}_L(s_0) + \int_{s_0}^s\frac{b_1(\tau)^{L + 1 + \eta(1 - \delta)}}{\lambda(\tau)^{2L - \gamma}} d\tau\right).
\end{align*}
Using \eqref{est:CLxiL}, $b_1(s) \sim \frac{1}{s}$, the initial bounds \eqref{eq:initbk} and \eqref{eq:intialbounE2m} together with \eqref{est:b1so_s}, we estimate
$$\lambda(s)^{2L - \gamma}\tilde{b}_L(s_0) \lesssim \left(\frac{s_0}{s}\right)^{\frac{\ell(2L - \gamma)}{2\ell - \gamma}}\left(s_0^{-\frac{5\ell(2L - \gamma)}{2\ell - \gamma}} + s_0^{\eta(1 - \delta)}s_0^{-\frac{5L\ell}{2\ell - \gamma}} \right) \lesssim s^{- L - \eta(1 - \delta)},$$
and 
\begin{align*}
\lambda(s)^{2L - \gamma}\int_{s_0}^s\frac{b_1(\tau)^{L + 1 + \eta(1 - \delta)}}{\lambda(\tau)^{2L - \gamma}} d\tau &\lesssim s^{-\frac{\ell(2L - \gamma)}{2\ell - \gamma}}\int_{s_0}^s \tau ^{ \frac{\ell(2L - \gamma)}{2\ell - \gamma} - L - 1 - \eta(1 - \delta)}d\tau\\
&\quad \lesssim s^{-L - \eta(1 - \delta)}.
\end{align*}
Therefore, 
$$b_L(s) \lesssim |\tilde{b}_L(s)| + |\tilde{b}_L(s) - b_L(s)| \lesssim s^{-L - \eta(1 - \delta)},$$
which concludes the proof of \eqref{est:impbk} for $k = L$.
Now we will propagate this improvement that we found for the bound of $b_L$ to all $b_k$ for all $\ell + 1 \leq k \leq L - 1$. To do so we do a descending induction where the initialization is for $k=L$. Let assume  the bound 
$$|b_k|\lesssim  b_1^{k + \eta(1 - \delta)},$$
for $k+1$ and let's prove it for $k$. 
Indeed, from \eqref{eq:ODEbkl} and the induction bound, we have 
$$\left|(b_k)_s - (2k - \gamma)\frac{\lambda_s}{\lambda}b_k\right| \lesssim b_1^{L + 1} + |b_{k + 1}| \lesssim b_1^{k + 1 + \eta(1 - \delta)},$$
which follows
$$\left|\frac{d}{ds}\left\{\frac{b_k}{\lambda^{2k - \gamma}}\right\} \right| \lesssim \frac{b_1^{k + 1 + \eta(1 - \delta)}}{\lambda^{2k - \gamma}}.$$
Integrating this identity in time as for the case $k = L$, we end-up with 
\begin{align*}
b_k(s) &\lesssim C\lambda(s)^{2k - \gamma}\left(b_k(s_0) + \int_{s_0}^s\frac{b_1(\tau)^{k + 1 + \eta(1 - \delta)}}{\lambda(\tau)^{2k - \gamma}} d\tau\right)\\
&\quad \lesssim s^{-k - \eta(1 - \delta)},
\end{align*}
where we used the initial bound \eqref{eq:initbk}, \eqref{est:b1so_s} and $k \geq \ell + 1$. This concludes the proof of \eqref{est:impbk}.

\noindent - \textit{Control of the stable mode $\Vc_1$.} We recall from \eqref{eq:Ukbke} and \eqref{def:VctoUc} that 
$$b_k = b_k^e + \frac{\Uc_k}{s^k}, \quad 1 \leq k \leq \ell, \quad \Vc = P_\ell \Uc,$$
where $P_\ell$ diagonalize the matrix $A_\ell$ with spectrum \eqref{eq:diagAlPl}. From \eqref{eq:bkk1}, and \eqref{eq:ODEbkl}, we estimate for $1 \leq k \leq \ell - 1$, 
$$|s(\Uc_k)_s - (A_\ell \Uc)_k| \lesssim s^{k + 1}|(b_k)_s + (2k - \gamma)b_1 b_k - b_{k + 1}| + |\Uc|^2 \lesssim s^{-L + k} + |\Uc|^2,$$
From \eqref{eq:bell}, \eqref{eq:ODEbkl} and the improved bound \eqref{est:impbk}, we have 
$$|s(\Uc_\ell)_s - (A_\ell \Uc)_\ell| \lesssim s^{\ell + 1}\left(|(b_k)_s + (2k - \gamma)b_1 b_\ell - b_{\ell + 1}| + |b_{\ell + 1}|\right) + |\Uc|^2 \lesssim s^{-\eta(1 - \delta)} + |\Uc|^2.$$
Using the diagonalization \eqref{eq:diagAlPl}, we obtain 
\begin{equation}\label{eq:sVs}
s\Vc_s = D_\ell \Vc + \Oc(s^{-\eta(1 - \delta)}).
\end{equation}
s
Using \eqref{eq:diagAlPl} again yields the control of the stable mode $\Vc_1$:
$$|(s\Vc_1)_s| \lesssim s^{-\eta(1 - \delta)}.$$
Thus from the initial bound \eqref{eq:initbk}, 
$$|s^{\eta(1 - \delta)}\Vc_1(s)| \leq \left(\frac{s_0}{s}\right)^{1 - \eta(1 - \delta)}s_0^{\eta(1 - \eta)}\Vc_1(s_0) + 1\lesssim s_0^{\eta(1 - \delta)},$$
which yields \eqref{est:impV1} for $s_0 \geq s_0(\eta)$ large enough.\\

\paragraph{$\bullet \,$ Transverse crossing.} We give the proof of item $(ii)$ of Proposition \ref{prop:redu} in this part. We compute from \eqref{eq:sVs} and \eqref{eq:diagAlPl} at the exit time $s = s_1$:
\begin{align*}
&\frac{1}{2}\frac{d}{ds}\left(\sum_{k = 2}^\ell|s^{\frac{\eta}{2}(1 - \delta)}\Vc_k(s)|^2 \right)_{\big|_{s= s_1}} \\
&\quad = \left(s^{\eta(1 - \delta) - 1} \sum_{k = 2}^\ell \left[\frac{\eta}{2}(1 - \delta)\Vc_k^2(s) + s\Vc_k(\Vc_k)_s \right]\right)_{\big|_{s= s_1}}\\
&\qquad = \left(s^{\eta(1 - \delta) - 1} \Bigg[\sum_{k = 2}^\ell \left[\frac{k\gamma}{2k - \gamma} + \frac{\eta}{2}(1 - \delta)\right]\Vc_k^2(s) + \Oc\left(\frac{1}{s^{\frac{3}{2}\eta(1 - \delta)}}\right)\Bigg] \right)_{\big|_{s= s_1}}\\
&\quad \qquad \geq \frac{1}{s_1}\left[c(d,\ell) \sum_{k = 2}^\ell |s_1^{\frac{\eta}{2}(1 - \delta)}\Vc_k(s_1)|^2 +  \Oc\left(\frac{1}{s_1^{\frac{\eta}{2}(1 - \delta)}}\right)\right]\\
& \qquad \qquad \geq \frac{1}{s_1}\left[c(d,\ell) +  \Oc\left(\frac{1}{s_1^{\frac{\eta}{2}(1 - \delta)}}\right)\right] > 0,
\end{align*}
where we used item $(i)$ of Proposition \ref{prop:redu} in the last step. This completes the proof of Proposition \ref{prop:redu}.

\appendix
\section{Coercivity of the adapted norms.}
We give in this section the coercivity estimates for the operator $\Ls$ as well as the iterates of $\Ls$  under some suitable orthogonality condition. We first recall the standard Hardy type inequalities for the class of radially symmetric functions, 
$$\Dc_{rad} = \{f \in \Cc_c^\infty(\Rd) \; \text{with radial symmetry}\}.$$
For simplicity, we write
$$\int f := \int_0^{+\infty} f(y)y^{d-1}dy.$$
and
$$D^k = \left\{\begin{array}{ll} \Delta^m &\quad \text{if}\;\; k = 2m,\\
 \partial_y \Delta^m &\quad \text{if} \;\; k= 2m + 1.
\end{array}\right.$$

We have the following:
\begin{lemma}[Hardy type inequalities]\label{lemm:Hardy} Let $d \geq 7$ and $f \in \Dc_{rad}$, then\\
$(i)$ (Hardy near the origin)
\begin{equation*}
\int_{0}^1 \frac{|\py f|^2}{y^{2i}} \geq \frac{(d - 2 - 2i)^2}{4}\int_{0}^1 \frac{f^2}{y^{2+2i}} - C(d)f^2(1), \quad i = 0, 1, 2.
\end{equation*}
$(ii)$ (Hardy away from zero for the non-critical exponent) Let $\alpha > 0, \alpha \ne \frac{d-2}{2}$, then
\begin{align*}
&\int_1^{+\infty} \frac{|\py f|^2}{y^{2\alpha}} \geq \left(\frac{d - (2\alpha + 2)}{2}\right)^2\int_1^{+\infty} \frac{f^2}{y^{2 + 2\alpha}} - C(\alpha,d)f^2(1).
\end{align*}
$(iii)$ (Hardy away from zero for the critical exponent) Let $\alpha = \frac{d-2}{2}$, then
$$
\int_1^{+\infty} \frac{|\py f|^2}{y^{2\alpha}} \geq \frac 14 \int_1^{+\infty} \frac{f^2}{y^{2 + 2\alpha} (1 + \log y)^2} - C(d)f^2(1).$$
$(iv)$ (General weighted Hardy) For any $\mu > 0$, $k \geq 2$ be an integer and $1 \leq j \leq k-1$,
$$\int \frac{|D^jf|^2}{1 + y^{\mu + 2(k - j)}} \lesssim_{j,\mu} \int \frac{|D^k f|^2}{1 + y^\mu} + \int \frac{f^2}{1 + y^{\mu + 2k}}.$$
\end{lemma}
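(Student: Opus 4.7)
The plan is to prove parts (i), (ii), (iii) by a unified integration-by-parts argument in the radial variable and reduce part (iv) to these via induction on $j$.

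For (i)–(iii), set $\beta=2i+2$ or $\beta=2\alpha+2$ and denote $I=\int f^2 y^{d-1-\beta}\,dy$ over the relevant interval, $J=\int f_y^2 y^{d-1-(\beta-2)}\,dy$. When $d-\beta\neq 0$, I would use the exact identity $y^{d-1-\beta}=\frac{1}{d-\beta}\frac{d}{dy}(y^{d-\beta})$, integrate by parts once, and estimate the cross term via Cauchy--Schwarz:
\begin{equation*}
I = \frac{[f^2 y^{d-\beta}]_{\partial}}{d-\beta}-\frac{2}{d-\beta}\int ff_y\,y^{d-\beta}\,dy \le \frac{|f^2(1)|}{|d-\beta|}+\frac{2}{|d-\beta|}\sqrt{I}\sqrt{J}.
\end{equation*}
Applying Young's inequality $\frac{2}{|d-\beta|}\sqrt I\sqrt J\le \tfrac12 I+\frac{2J}{(d-\beta)^2}$ and absorbing $\tfrac12 I$ on the left-hand side gives $J\ge\bigl(\tfrac{d-\beta}{2}\bigr)^2I-Cf^2(1)$, which is precisely (i) and (ii). Note that for (i) the hypothesis $d\ge 7$ ensures $d-\beta=d-2-2i>0$ for $i=0,1,2$ and the boundary term at $0$ vanishes because $f$ is smooth at the origin; for (ii) the boundary at infinity vanishes thanks to compact support.

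For (iii), the critical exponent $\alpha=\tfrac{d-2}{2}$ makes $d-\beta=0$ and the previous identity degenerates. The standard remedy is to replace $\tfrac{y^{d-\beta}}{d-\beta}$ by its logarithmic regularization $-\tfrac{1}{1+\log y}$, whose derivative equals $\tfrac{1}{y(1+\log y)^2}$. Since $d-1-(2\alpha+2)=-1$, the measure $y^{d-1-\beta}dy$ becomes $\tfrac{dy}{y}$, so
\begin{equation*}
I_{\rm crit}=\int_1^{+\infty}\frac{f^2}{y(1+\log y)^2}dy= f^2(1)+2\int_1^{+\infty}\frac{ff_y}{1+\log y}\,dy,
\end{equation*}
and Cauchy--Schwarz $\left|\int\frac{ff_y}{1+\log y}\right|\le \sqrt{I_{\rm crit}}\sqrt{\int_1^{+\infty} y f_y^2\,dy}=\sqrt{I_{\rm crit}}\sqrt{J_{\rm crit}}$, followed by the same absorption argument, yields the constant $\tfrac14$.

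For (iv), I would proceed by descending induction on $j$ from $k-1$ down to $1$, the base being $k=2,\,j=1$. In the base case, I integrate $\int|\partial_y f|^2\,w(y)y^{d-1}dy$ by parts with $w(y)=1/(1+y^{\mu+2})$, producing $-\int fw\Delta f$ (plus a controllable term where $\partial_y$ lands on $w$, which brings an extra factor $y/(1+y^{\mu+2})^2$ that can be absorbed into the $f^2/(1+y^{\mu+4})$ bucket), and I split by Cauchy--Schwarz
\begin{equation*}
\Bigl|\int fw\,\Delta f\Bigr|\le \Bigl(\int\tfrac{|\Delta f|^2}{1+y^\mu}\Bigr)^{1/2}\Bigl(\int\tfrac{f^2}{1+y^{\mu+4}}\Bigr)^{1/2},
\end{equation*}
the exponents matching because $w^2(1+y^\mu)(1+y^{\mu+4})\sim 1$. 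The induction step reduces $\int|D^j f|^2/(1+y^{\mu+2(k-j)})$ to either $\int|D^{j+2}f|^2/(1+y^{\mu+2(k-j-2)})$ or $\int|D^{j+1}f|^2/(1+y^{\mu+2(k-j-1)})$ by the same integration by parts, depending on the parity of $j$. The main obstacle I foresee is bookkeeping the boundary terms and handling the degenerate Hardy rung when the induction touches the critical exponent $\alpha=(d-2)/2$: at that step one must fall back on (iii), which produces a $(1+\log y)^{-2}$ weight. However, this logarithmic loss is harmless in (iv) because the polynomial weights $1/(1+y^{\mu+2(k-j)})$ with $\mu>0$ leave a margin that absorbs any $(1+\log y)^{-2}$ factor. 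All boundary contributions at $y=0$ and $y=\infty$ vanish since $f\in\mathcal{D}_{rad}$ is smooth and compactly supported.
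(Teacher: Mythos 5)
Your argument is correct and is the standard integration-by-parts/Cauchy--Schwarz/absorption proof of these Hardy inequalities; the paper does not prove the lemma itself but refers to Lemma B.1 of \cite{MRRcjm15}, where essentially this same argument is carried out, so you are on the paper's route. One minor remark on (iv): the critical Hardy exponent never actually intervenes there, because the endpoint quantity $\int f^2/(1+y^{\mu+2k})$ is already furnished on the right-hand side and the interpolation chain $a_j\lesssim \varepsilon a_{j+1}+C_\varepsilon a_{j-1}+\dots$ closes without ever invoking part (iii), so the logarithmic fallback you describe is unnecessary (and, as stated, absorbing a $(1+\log y)^{-2}$ loss into a polynomial weight would not be legitimate anyway).
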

\begin{proof} The proof can be found in \cite{MRRcjm15}, Lemma B.1.
\end{proof}

From the Hardy type inequalities, we derive the following coercivity of $\As^*$: 
\begin{lemma}[Weight coercivity of $\As^*$]\label{lemm:coerAst} Let $\alpha \geq 0$, there exists $c_\alpha > 0$ such that for all $f \in \Dc_{rad}$:
\begin{equation}\label{eq:coerAst}
\int \frac{|\As^* f|^2}{y^{2i}(1 + y^{2\alpha})} \geq c_\alpha\left(\int \frac{|\py f|^2}{y^{2i}(1 + y^{2\alpha})} + \int \frac{f^2}{y^{2i + 2}(1 + y^{2\alpha})}\right), \quad i = 0, 1,2.
\end{equation}
\end{lemma}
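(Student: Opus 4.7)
The plan is to reduce everything to standard weighted Hardy estimates (Lemma \ref{lemm:Hardy}) via the factorization $\As^* f = \frac{1}{y^{d-1}\Lambda Q}\py(y^{d-1}\Lambda Q f)$. The key idea is that by writing $\As^*$ in divergence form with respect to the measure $y^{d-1}\Lambda Q\,dy$, the substitution $g := y^{d-1}\Lambda Q f$ turns $\As^* f$ into a plain derivative $g'/(y^{d-1}\Lambda Q)$, at which point the problem becomes a pure Hardy inequality for $g$. Indeed, with this change of variable,
\begin{equation*}
\int \frac{|\As^*f|^2}{y^{2i}(1+y^{2\alpha})}y^{d-1}dy = \int \frac{|g'|^2}{y^{d-1+2i}(1+y^{2\alpha})(\Lambda Q)^2}dy,
\end{equation*}
and the claimed lower bound $\int f^2/[y^{2i+2}(1+y^{2\alpha})]\, y^{d-1}dy = \int g^2/[y^{d+1+2i}(1+y^{2\alpha})(\Lambda Q)^2]dy$ amounts to a Hardy-type inequality for $g$ with an explicit power weight whose exponent can be read off from the asymptotics of $\Lambda Q$.

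The second step is to split the integration region at $y=1$ and apply the appropriate part of Lemma \ref{lemm:Hardy}. Near the origin, $\Lambda Q(y) = y(1+O(y^2))$, so both weights are (up to a bounded factor) pure powers of $y$; part $(i)$ of Lemma \ref{lemm:Hardy} applies with constant $(3+2i)^2/4>0$ after absorbing the boundary term $g(1)^2$. Away from the origin, $\Lambda Q(y) \sim a_0\gamma\, y^{-\gamma}$ and the weight behaves like a pure power of $y$ tempered by $(1+y^{2\alpha})$; part $(ii)$ of Lemma \ref{lemm:Hardy} applies with non-zero constant, except at the single (exceptional) choice $2\alpha = d-1-2i-2\gamma$ where one invokes the critical-exponent version $(iii)$, losing only a logarithm that is harmless for the qualitative inequality. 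In both regimes the resulting boundary term at $y=1$ is bounded by the integral on the opposite side, giving
\begin{equation*}
\int \frac{|\As^*f|^2}{y^{2i}(1+y^{2\alpha})}y^{d-1}dy \geq c_\alpha \int \frac{f^2}{y^{2i+2}(1+y^{2\alpha})}y^{d-1}dy.
\end{equation*}

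Finally, the $|\py f|^2$ piece of \eqref{eq:coerAst} follows for free from the $f^2/y^2$ piece. Indeed, the definition $\As^* f = \py f + \frac{d-1+V}{y}f$ combined with the asymptotics \eqref{eq:asympV} of $V$ (which give $|d-1+V(y)| \leq C$ for all $y \geq 0$) yields the pointwise bound $|\py f|^2 \leq 2|\As^* f|^2 + C\, f^2/y^2$. Integrating against the weight $w = y^{d-1}/[y^{2i}(1+y^{2\alpha})]$ and invoking the bound just established, one obtains $\int |\py f|^2 w \leq (2+C/c_\alpha)\int |\As^* f|^2 w$, which together with the $f^2$-bound produces the full coercivity \eqref{eq:coerAst} with a (possibly smaller) constant $c_\alpha >0$.

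The only genuine obstacle is bookkeeping the boundary terms at $y=1$ and handling the critical-exponent case in $(iii)$ of Lemma \ref{lemm:Hardy}; neither is a real difficulty because the two integrals (near $0$ and near $\infty$) each control a constant multiple of the boundary term $g(1)^2$, and the logarithmic loss in the critical case does not affect the positivity of $c_\alpha$. The dimensional condition $d \geq 7$ enters only to ensure the Hardy constants are safely positive for all $i \in \{0,1,2\}$.
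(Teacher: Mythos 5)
Your route is genuinely different from the paper's: you try to prove the coercivity directly by substituting $g = y^{d-1}\Lambda Q f$ and running weighted Hardy inequalities on $[0,1]$ and $[1,\infty)$, whereas the paper first establishes only a \emph{sub}coercive estimate (with negative remainder terms $-f^2(1) - \int f^2/(1+y^{2i+2\alpha+4})$) and then removes those remainders by a compactness/contradiction argument that rules out the kernel element $\beta/(y^{d-1}\Lambda Q)$ via its singularity at the origin. The problem is that the one step your argument cannot afford to wave at --- absorbing the boundary term $g^2(1)$ --- is precisely the step the paper's entire Step 2 exists to handle, and your justification for it does not close. Each application of Lemma \ref{lemm:Hardy} gives you something of the form $B \geq cA - Cg^2(1)$ with $B$ a gradient integral and $A$ the corresponding zero-order integral; knowing, as you assert, that "the integrals control $g^2(1)$" means at best $g^2(1) \leq C'A$, and substituting gives $B \geq (c - CC')A$, whose constant has no reason to be positive. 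What actually saves the argument is a sign observation you do not make: in the integration by parts on $[0,1]$, because the weight exponent there satisfies $2\beta+1>0$, the boundary contribution at $y=1$ enters with the \emph{favorable} sign, so that the origin-side estimate yields both $\int_0^1 |g'|^2 W \gtrsim \int_0^1 g^2 W y^{-2}$ \emph{and} $g^2(1) \lesssim \int_0^1 |g'|^2 W$ (controlled by the gradient integral, not the zero-order one); only then can the unfavorable $-Cg^2(1)$ produced on $[1,\infty)$ be absorbed into the left-hand side. This is also exactly where the kernel of $\As^*$ is being excluded (a constant $g$ is inadmissible near the origin), a point your write-up never addresses.

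There are secondary errors that would also need repair. The near-origin weight for $g$ corresponds, in the notation of Lemma \ref{lemm:Hardy}$(i)$, to the exponent $2j = 2d+2i$, which is outside the range $j=0,1,2$ for which that part is stated, and the resulting constant is $(d+2+2i)^2/4$, not $(3+2i)^2/4$. Your identification of the critical exponent at infinity is also off: a short computation shows the exceptional case of Lemma \ref{lemm:Hardy}$(ii)$ would require $2\alpha = 2\gamma - d - 2i < 0$, so for $\alpha \geq 0$ and $d \geq 7$ it never occurs --- which is fortunate, because your fallback to part $(iii)$ would \emph{not} be harmless: the logarithmically weakened weight $y^{-2-2\alpha}(1+\log y)^{-2}$ is strictly weaker than what \eqref{eq:coerAst} asserts, so the lemma as stated would not follow in that case. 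The final reduction of the $|\py f|^2$ term to the $f^2/y^2$ term via $|\py f|^2 \leq 2|\As^* f|^2 + Cf^2/y^2$ is correct and coincides with the paper's "trivial bound".
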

\begin{proof} We proceed into two steps:

\noindent \textit{- Step 1: Subcoercive estimate for $\As^*$:} We first prove the following subcoercive bound for $\As^*$: for $i = 0, 1, 2$ and $\alpha \geq 0$, 
\begin{align}
\int\frac{|\As^* f|^2}{y^{2i}(1 + y^{2\alpha})} \gtrsim \int\frac{f^2}{y^{2i + 2}(1 + y^{2\alpha})} &+ \int \frac{|\py f|^2}{y^{2i}(1 + y^{2\alpha})}\nonumber\\
 &\quad - f^2(1) - \int \frac{f^2}{1 + y^{2i + 2\alpha + 4}}.\label{eq:subcoerAst}
\end{align}
From the definition \eqref{def:Astar} of $\As^*$ and the asymptotic of $V$ given in \eqref{eq:asympV}, we use an integration by parts to estimate near the origin:
\begin{align*}
\int_{y \leq 1}\frac{|\As^* f|^2}{y^{2i}(1 + y^{2\alpha})} &\gtrsim \int_{y \leq 1}\frac{1}{y^{2i}}\left|\py f + \frac{d}{y}f + \Oc(|yf|)\right|^2\\
&\quad \gtrsim \int_{y \leq 1}\frac{|\py f|^2}{y^{2i}} + d\int_{y \leq 1}\frac{\py (f^2)}{y^{2i + 1}} + d^2\int_{y \leq 1}\frac{f^2}{y^{2i + 2}} + \Oc\left(\int_{y \leq 1}\frac{f^2}{y^{2i - 2}}\right)\\
&\qquad \gtrsim \int_{y\leq 1} \frac{|\py f|^2}{y^{2i}} + (2 + 2i)d\int_{y\leq 1}\frac{f^2}{y^{2i +2}} + df^2(1)+ \Oc\left(\int_{y\leq 1} \frac{f^2}{y^{2i - 2}}\right)\\
& \quad \qquad \gtrsim \int_{y \leq 1}\left(\frac{|\py f|^2}{y^{2i}} + \frac{f^2}{y^{2i + 2}}\right) - \int_{y \leq 1}y^2f^2.
\end{align*}
Away from the origin, we use \eqref{eq:asympV} to estimate:
\begin{align*}
\int_{y \geq 1}\frac{|\As^* f|^2}{y^{2i}(1 + y^{2\alpha})} &\gtrsim \int_{y \geq 1} \frac{1}{y^{2i + 2\alpha}}\left(\py f + \frac{d - 1 - \gamma}{y}f \right)^2 - \int_{y\geq 1}\frac{f^2}{y^{2i + 2\alpha + 4}}.
\end{align*}
We make the change of variable $g = y^{d - 1 - \gamma} f$ and use the Hardy inequality given in part $(ii)$ of Lemma \ref{lemm:Hardy} to write
\begin{align*}
\int_{y \geq 1}\frac{|\py(y^{d - 1 - \gamma} f)|^2}{y^{2i + 2\alpha  +2(d - 1 - \gamma)}}dy &= \int_{y \geq 1}\frac{|\py g|^2}{y^{2i + 2\alpha + 2(d-1-\gamma)}}dy \gtrsim \int_{y \geq 1}\frac{g^2}{y^{2i + 2\alpha + 2(d-1-\gamma) + 2}}dy - g^2(1)\\
&\quad \gtrsim \int_{y \geq 1}\frac{f^2}{y^{2i + 2\alpha + 2}} - f^2(1).
\end{align*}
Gathering the above bounds together with the trivial bound from \eqref{eq:asympV},
$$\int_{y \geq 1} \frac{|\py f|^2}{y^{2i + 2\alpha}} \lesssim \int_{y \geq 1} \frac{|\As^* f|^2}{y^{2i + 2\alpha}} + \int_{y \geq 1}\frac{f^2}{y^{2i + 2\alpha + 2}},$$
yields the subcoercivity \eqref{eq:subcoerAst}.\\

\noindent \textit{- Step 2: Coercivity of $\As^*$:} We now argue by contradiction to show the coercivity of $\As^*$. Assume that \eqref{eq:coerAst} does not hold. Up to a renormalization, we consider the sequence $f_n \in \Dc_{rad}$ with 
\begin{equation}\label{eq:asumpfn}
\int \frac{f_n^2}{y^{2 i +2}(1 + y^{2\alpha})} + \int \frac{|\partial_y f_n|^2}{y^{2i}(1 + y^{2\alpha})} = 1 \quad \text{and} \quad \int \frac{|\As^* f_n|^2}{y^{2i}(1 + y^{2\alpha})} \leq \frac{1}{n}.
\end{equation}
This implies from \eqref{eq:subcoerAst},
\begin{equation}\label{eq:boundfn}
f_n^2(1) + \int \frac{f_n^2}{1 + y^{2i + 2\alpha + 4}} \gtrsim 1.
\end{equation}
From \eqref{eq:asumpfn}, the sequence $f_n$ is bounded in $H^1_{loc}$. Hence, from a standard diagonal extraction argument, there exists $f_\infty \in H^1_{loc}$ such that up to a subsequence, 
$$f_n \rightharpoonup f_\infty \quad \text{in} \quad H^1_{loc},$$
and from the local compactness of one dimensional Sobolev embeddings:
$$f_n \rightarrow f_\infty \quad \text{in}\quad L^2_{loc}, \quad f_n(1) \rightarrow f_\infty(1).$$
This implies from \eqref{eq:asumpfn} and \eqref{eq:boundfn},
\begin{equation}\label{eq:regufinA}
f_\infty^2(1) + \int \frac{f_\infty^2}{1 + y^{2i + 2\alpha + 4}} \gtrsim 1 \quad \text{and} \quad \int \frac{f_\infty^2}{y^{2i + 2}(1 + y^{2\alpha})} \lesssim 1,
\end{equation}
which means that $f_\infty \ne 0$. 
On the other hand, from \eqref{eq:asumpfn} and the lower semi continuity of norms for the weak topology, we have 
$$\As^* f_\infty = 0, $$
hence, 
$$f_\infty = \frac{\beta}{y^{d-1} \Lambda Q} \quad \text{for some $\beta \ne 0$}.$$
Since $\Lambda Q \sim y$ near the origin, we have 
$$\int_{y \leq 1} \frac{f_\infty^2}{y^{2i + 2}} \gtrsim \int_{y \leq 1}\frac{y^{d-1}}{y^{2d + 2i + 2}}dy = \int_{y \leq 1}\frac{dy}{y^{d+2i + 3}} = +\infty,$$
which contradicts the a priori regularity of $f_\infty$ given in \eqref{eq:regufinA}. This concludes the proof of Lemma \ref{lemm:coerAst}.
\end{proof}

We also need the following subcoercivity of $\As$. 
\begin{lemma}[Weight coercivity of $\As$] \label{lemm:subcoerA} Let $p \geq 0$ and $i = 0, 1, 2$ such that $|2p + 2i - (d - 2 - 2\gamma)| \ne 0$, where $\gamma \in (1,2]$ is defined by \eqref{def:gamome}, there holds:
\begin{align}
\int \frac{|\As f|^2}{y^{2i}(1 + y^{2p})} \gtrsim \int \frac{|\py f|^2}{y^{2i}(1 + y^{2p})} & + \int \frac{f^2}{y^{2i + 2}(1 + y^{2p})} \nonumber\\
&\quad - \left[f^2(1) + \int \frac{f^2}{1 +y^{2i + 2p + 4}} \right].\label{eq:subcoerA}
\end{align}
Assume in addition that 
$$\left<f, \Phi_M\right> = 0 \quad \text{if}\quad 2i + 2p > d - 2\gamma - 2,$$
where $\Phi_M$ is defined in \eqref{def:PhiM}, we have 
\begin{equation}\label{eq:coerA}
\int \frac{|\As f|^2}{y^{2i}(1 + y^{2p})} \gtrsim \int \frac{|\py f|^2}{y^{2i}(1 + y^{2p})} + \int \frac{f^2}{y^{2i + 2}(1 + y^{2p})}.
\end{equation}
\end{lemma}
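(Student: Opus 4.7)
The strategy mirrors that of Lemma \ref{lemm:coerAst}: first establish the subcoercivity \eqref{eq:subcoerA} by direct expansion combined with Hardy-type inequalities, then upgrade to the full coercivity \eqref{eq:coerA} via a contradiction argument that exploits the kernel $\As(\Lambda Q) = 0$.

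For the subcoercivity, expand
\[
|\As f|^2 = |\partial_y f|^2 - \frac{2V}{y} f \partial_y f + \frac{V^2}{y^2} f^2
\]
and split into $\{y \le 1\}$ and $\{y \ge 1\}$. On the interior region, the asymptotic $V = 1 + O(y^2)$ from \eqref{eq:asympV}, combined with integration by parts on the cross term (which generates a factor $(d - 2 - 2i)$ in front of $\int f^2\, y^{d-3-2i}dy$ and a boundary contribution $O(f^2(1))$), reduces matters to Lemma \ref{lemm:Hardy}(i). On the exterior, use the compact form $\As f = -\Lambda Q\, \partial_y(f/\Lambda Q)$ and set $g = f/\Lambda Q$, which at infinity behaves like $c\, y^\gamma f$ by \eqref{eq:asymLamQ}. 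Then
\[
\int_{y \ge 1} \frac{|\As f|^2}{y^{2i}(1 + y^{2p})}\, y^{d-1}dy \asymp \int_{y \ge 1} \frac{|\partial_y g|^2}{y^{2(i + p + \gamma)}}\, y^{d-1}dy,
\]
and Lemma \ref{lemm:Hardy}(ii) applied with exponent $\alpha = i + p + \gamma$ yields the desired bound on $\int f^2/y^{2i + 2p + 2}$ at infinity. The hypothesis $|2i + 2p - (d - 2 - 2\gamma)| \ne 0$ is exactly the condition $\alpha \ne (d-2)/2$ needed to avoid the logarithmic loss of Lemma \ref{lemm:Hardy}(iii); the sub-leading $O(y^{-2})$ correction to $V$ generates an error term absorbed into $\int f^2/(1 + y^{2i + 2p + 4})$, which matches the right-hand side of \eqref{eq:subcoerA}.

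To upgrade to \eqref{eq:coerA}, argue by contradiction. If the estimate fails, a sequence $f_n \in \Dc_{rad}$ exists with the normalization $\int |\partial_y f_n|^2/[y^{2i}(1+y^{2p})] + \int f_n^2/[y^{2i+2}(1+y^{2p})] = 1$, together with $\int |\As f_n|^2/[y^{2i}(1+y^{2p})] \to 0$ and, in the case $2i + 2p > d - 2 - 2\gamma$, the orthogonality $\langle f_n, \Phi_M\rangle = 0$. The subcoercivity forces $f_n^2(1) + \int f_n^2/(1 + y^{2i+2p+4}) \gtrsim 1$, so weak $H^1_{loc}$ compactness together with the one-dimensional Rellich embedding yields a nonzero limit $f_\infty$ satisfying $\As f_\infty = 0$, hence $f_\infty = \beta \Lambda Q$ for some $\beta \ne 0$.

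The conclusion splits according to the sign of $2i + 2p - (d - 2 - 2\gamma)$. By \eqref{eq:asymLamQ}, $\Lambda Q \sim y$ at the origin and $\Lambda Q \sim a_0 \gamma\, y^{-\gamma}$ at infinity, so $\int (\Lambda Q)^2 y^{d-1}/[y^{2i+2}(1+y^{2p})]\,dy$ converges if and only if $2i + 2p > d - 2 - 2\gamma$. In the subcritical range $2i + 2p < d - 2 - 2\gamma$, the a priori bound on $f_\infty$ inherited from the normalization by lower semicontinuity is incompatible with $f_\infty = \beta \Lambda Q$ for $\beta \ne 0$, and no orthogonality is needed. In the supercritical range, pass $\langle f_n, \Phi_M\rangle = 0$ to the weak limit (legitimate because $\Phi_M$ is compactly supported) to obtain $\beta \langle \Lambda Q, \Phi_M\rangle = 0$, and the nondegeneracy \eqref{eq:PhiMLamQ} forces $\beta = 0$. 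The excluded critical value is ruled out by hypothesis. The main technical obstacle is the careful bookkeeping of the $O(y^{-2})$ correction from $V$ in the exterior Hardy step, ensuring that all lower-order remainders fit into the right-hand side of \eqref{eq:subcoerA}; this is a routine but delicate adaptation of the corresponding point in Lemma \ref{lemm:coerAst}.
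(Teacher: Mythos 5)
Your proposal is correct and follows essentially the same route as the paper: a direct subcoercivity estimate (interior expansion with integration by parts, exterior Hardy inequality after the substitution $g = f/\Lambda Q$, equivalently $g \sim y^\gamma f$, with the hypothesis $|2i+2p-(d-2-2\gamma)|\ne 0$ ruling out the critical Hardy exponent), followed by the standard compactness/contradiction argument identifying the obstruction as $\beta\Lambda Q$ and eliminating it either by non-integrability of the weight or by the orthogonality to $\Phi_M$. The only cosmetic difference is your use of the exact factorization $\As f = -\Lambda Q\,\partial_y(f/\Lambda Q)$ in the exterior region, where the paper works with the approximate form $\partial_y f + \gamma f/y$; both reduce to the same application of Lemma \ref{lemm:Hardy}(ii).
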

\begin{proof} The proof is very similar to the proof of Lemma \ref{lemm:coerAst}. We proceed into two steps. The first step is to derive the subcoercive estimate \eqref{eq:subcoerA}. In the second step, we use a compactness argument to show the coercivity of $\As$ under a suitable condition.

\noindent \textit{- Step 1: Subcoercive estimate for $\As$.} From the definition \eqref{def:As} of $\As$ and the asymptotic of $V$ given in \eqref{eq:asympV}, we estimate near the origin
\begin{align*}
\int_{y \leq 1} \frac{|\As f|^2}{y^{2i}(1 + y^{2p})} &\gtrsim \int_{y \geq 1} \frac 1 {y^{2i}}\left|-\py f + \frac{f}{y} + \Oc(|yf|)\right|^2\\
& \quad \gtrsim \int_{y \leq 1} \frac{|\py f|^2}{y^{2i}} + \int_{y \leq 1} \frac{f^2}{y^{2i+2}} - \int_{y \leq 1}\frac{\py(f^2)}{y^{2i+1}} - \int_{y\leq 1}\frac{f^2}{y^{2i-2}}\\
& \qquad \gtrsim \int_{y \leq 1} \frac{|\py f|^2}{y^{2i}} + (d-2i-1)\int_{y \leq 1} \frac{f^2}{y^{2i+2}} - f^2(1) - \int_{y\leq 1}\frac{f^2}{y^{2i-2}}\\
& \qquad \quad \gtrsim \int_{y \leq 1} \frac{|\py f|^2}{y^{2i}} + \int_{y \leq 1} \frac{f^2}{y^{2i+2}} - f^2(1) - \int_{y\leq 1}y^2f^2.
\end{align*}
Away from the origin, we estimate from \eqref{eq:asympV}:
$$\int_{y \geq 1} \frac{|\As f|^2}{y^{2i}(1 + y^{2p})} \gtrsim \int_{y \geq 1}\frac{1}{y^{2i + 2p }}\left(\py f + \frac{\gamma}{y}f\right)^2 - \int_{y \geq 1}\frac{f^2}{y^{2i + 2p + 4}}.$$
We make the change of variable $g = y^\gamma f$. From the assumption $|2i + 2p - (d - 2 - 2\gamma)| \ne 0$, we use the Hardy inequality given in part $(ii)$ of Lemma \ref{lemm:Hardy} to write
\begin{align*}
\int_{y \geq 1}\frac{|\partial_y(y^\gamma f)|^2}{y^{2i + 2p  + 2\gamma}} = \int_{y \geq 1}\frac{|\py g|^2}{y^{2i + 2p+ 2\gamma}} &\gtrsim \int_{y \geq 1}\frac{g^2}{y^{2i + 2p + 2 + 2\gamma}} - g^2(1)\\
&\quad \gtrsim \int_{y \geq 1}\frac{f^2}{y^{2i + 2p + 2}} - f^2(1).
\end{align*}
Note also that we have the trivial bound from \eqref{eq:asympV},
$$\int_{y \geq 1} \frac{|\As f|^2}{y^{2i + 2p}} + \int_{y \geq 1}\frac{f^2}{y^{2i + 2p + 2}} \gtrsim \int_{y \geq 1} \frac{|\py f|^2}{y^{2i + 2p}}.$$
The collection of the above bounds yields the subcoercivity \eqref{eq:subcoerA}.\\

\noindent \textit{- Step 2: Coercivity of $\As$:} Arguing as the proof of \eqref{eq:coerAst}, we end up with the existence of $f_\infty \ne 0$ such that
$$ \int \frac{f_\infty^2}{y^{2i + 2}(1 + y^{2p})} \lesssim 1 \quad \text{and} \quad \As f_\infty = 0.$$
Hence, from the definition \eqref{def:As} of $\As$, we have
$$f_\infty = \beta \Lambda Q \quad \text{for some $\beta \ne 0$.}$$
If $2i + 2p > d - 2\gamma - 2$, we use the orthogonality condition to deduce that
$$0 = \left<f_\infty, \Phi_M\right> = \beta\left<\Lambda Q, \chi_M \Lambda Q\right>,$$
thus, $\beta = 0$. If $2i + 2p \leq d - 2\gamma - 2$, we use the fact that $\Lambda Q \sim \frac{1}{y^\gamma}$ as $y \to +\infty$ to estimate
$$\int_{y \geq 1}\frac{|\Lambda Q|^2 y^{d-1}dy}{y^{2i + 2}(1 + y^{2p})} \gtrsim \int_{y \geq 1} y^{d - 1 - 2\gamma - 2i - 2p - 2} dy  \gtrsim \int_{y \geq 1} y^{-1}dy= +\infty,$$
which contradicts with the regularity of $f_\infty$. This concludes the proof of Lemma \ref{lemm:subcoerA}.
\end{proof}

From the coercivities of $\As$ and $\As^*$, we claim the following coercivity for $\Ls$:
\begin{lemma}[Weighted coercivity of $\Ls$ under a suitable orthogonality condition] \label{lemm:coerL}  Let $k \in \mathbb{N}$, $i = 0, 1, 2$ and $M = M(k)$ large enough, then there exists $c_{M,k} > 0$ such that for all $f \in \Dc_{rad}$ satisfying the orthogonality 
$$\left<f, \Phi_M\right> = 0 \quad \text{if}\;\; 2i + 2k > d - 2\gamma - 4,$$ 
where $\Phi_M$ is defined by \eqref{def:PhiM}, $\hbar$ is given in \eqref{def:kdeltaplus}, there holds:

\begin{equation}\label{eq:coerL}
\int \frac{|{\Ls} f|^2}{y^{2i}(1 + y^{2k})} \geq c_{M,k} \int\left( \frac{|\partial_{yy}f|^2}{y^{2i}(1 + y^{2k})} +  \frac{|\py f|^2}{y^{2i}(1 + y^{2k + 2})} +  \frac{|f|^2}{y^{2i + 2}(1 + y^{2k + 2})}\right).
\end{equation}
and 
\begin{equation}\label{eq:coerL1}
\int \frac{|{\Ls} f|^2}{y^{2i}(1 + y^{2k})} \geq c_{M,k} \int \left(\frac{|\As f|^2}{y^{2i + 2}(1 + y^{2k})} + \int \frac{|f|^2}{y^{2i}(1 + y^{2k + 4})} \right).
\end{equation}
\end{lemma}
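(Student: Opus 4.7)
The strategy is to exploit the factorization $\Ls = \As^* \As$ from Lemma \ref{lemm:factorL}, combined with the weighted coercivities of $\As^*$ and $\As$ already established in Lemmas \ref{lemm:coerAst} and \ref{lemm:subcoerA}. First, apply the $\As^*$ coercivity \eqref{eq:coerAst} with $\alpha = k$ to the test function $g = \As f$:
\begin{equation*}
\int \frac{|\Ls f|^2}{y^{2i}(1+y^{2k})} = \int \frac{|\As^*(\As f)|^2}{y^{2i}(1+y^{2k})} \gtrsim \int \frac{|\py (\As f)|^2}{y^{2i}(1+y^{2k})} + \int \frac{|\As f|^2}{y^{2i+2}(1+y^{2k})}.
\end{equation*}
The second term on the right already delivers the $|\As f|^2$ piece of \eqref{eq:coerL1}.

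Next, invoke the $\As$ coercivity \eqref{eq:coerA} applied to $f$ with parameters $i' := i+1$ and $p := k$. The non-degeneracy $|2p + 2i' - (d-2-2\gamma)| \neq 0$ reduces to $|2i + 2k + 4 - (d - 2\gamma)| \neq 0$, which holds because $d - 2\gamma = 4\hbar + 4\delta$ with $\delta \in (0,1)$ by \eqref{def:kdeltaplus}. The orthogonality requirement $\langle f, \Phi_M\rangle = 0$ is triggered precisely when $2i' + 2p > d - 2\gamma - 2$, i.e.\ $2i + 2k > d - 2\gamma - 4$, matching the hypothesis of the lemma exactly. This yields
\begin{equation*}
\int \frac{|\As f|^2}{y^{2i+2}(1+y^{2k})} \gtrsim \int \frac{|\py f|^2}{y^{2i+2}(1+y^{2k})} + \int \frac{f^2}{y^{2i+4}(1+y^{2k})}.
\end{equation*}
Because $y^{-2i-2}(1+y^{2k})^{-1} \gtrsim y^{-2i}(1+y^{2k+2})^{-1}$ and $y^{-2i-4}(1+y^{2k})^{-1} \gtrsim y^{-2i-2}(1+y^{2k+2})^{-1}$ (equality at infinity, sharper at the origin), these two contributions dominate the $|\py f|^2$ and $f^2$ terms of \eqref{eq:coerL}; both terms of \eqref{eq:coerL1} follow as well, the $f^2$-piece being even weaker.

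It remains to recover the $|\partial_{yy} f|^2$ term of \eqref{eq:coerL}. Starting from $\As f = -\py f + (V/y) f$, differentiation gives $\partial_{yy} f = -\py(\As f) + \py(V/y)\, f + (V/y)\py f$, and the asymptotics \eqref{eq:asympV} of $V$ at $0$ and $\infty$ yield $|V/y| \lesssim 1/y$ and $|\py(V/y)| \lesssim 1/y^2$ uniformly, so that
\begin{equation*}
|\partial_{yy} f|^2 \lesssim |\py(\As f)|^2 + \frac{|\py f|^2}{y^2} + \frac{f^2}{y^4}.
\end{equation*}
Integrating against $1/(y^{2i}(1+y^{2k}))$, the first term is provided by the first display above (the $\As^*$ step), while the last two are absorbed by the bounds of the $\As$ step. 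This completes \eqref{eq:coerL}.

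The principal technical obstacle is the index restriction $i \in \{0,1,2\}$ in Lemmas \ref{lemm:coerAst} and \ref{lemm:subcoerA}: the shifted index $i' = i+1$ used above is borderline when $i = 2$. For that case I would either invoke the general weighted Hardy inequality (part $(iv)$ of Lemma \ref{lemm:Hardy}) to stretch the available weights, or rerun the subcoercivity plus local-compactness scheme used in the proofs of the $\As^*$ and $\As$ coercivities directly for $\Ls$, where the orthogonality $\langle f, \Phi_M\rangle = 0$ combined with the explicit kernel $\mathrm{Ker}(\Ls) = \mathrm{Span}(\Lambda Q,\Gamma)$ given in \eqref{eq:kernalLc} rules out the only possible weak limit (the singular element $\Gamma$ being excluded by the a priori regularity furnished by the subcoercive bound). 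The choice $M = M(k)$ enters here to guarantee $\langle \chi_M \Lambda Q, \Lambda Q\rangle \gtrsim M^{d-2\gamma}$ remains effective against the higher-$k$ weights.
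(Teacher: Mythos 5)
Your chaining strategy --- first the full coercivity of $\As^*$ applied to $\As f$ with $\alpha=k$, then the full coercivity \eqref{eq:coerA} of $\As$ applied to $f$ with shifted index $i'=i+1$, $p=k$ --- is a genuinely different and more modular route than the paper's, which instead assembles a \emph{sub}coercive estimate for $\Ls$ (using \eqref{eq:subcoerA} applied to $f$ with $p=k+1$ and to $\py f$ with $p=k$, keeping the same $i$) and then removes the leftover negative terms $f^2(1)+|\py f(1)|^2+\dots$ by a compactness/contradiction argument that exploits $\ker \Ls=\mathrm{Span}(\Lambda Q,\Gamma)$ together with the orthogonality to $\Phi_M$. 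For $i\in\{0,1\}$ your argument is sound: the index bookkeeping is right (the orthogonality threshold $2i'+2p>d-2\gamma-2$ does reduce exactly to $2i+2k>d-2\gamma-4$, the non-degeneracy holds since $\delta\notin\{0,\tfrac12\}$, the weight comparisons $y^{-2i-2}(1+y^{2k})^{-1}\gtrsim y^{-2i}(1+y^{2k+2})^{-1}$ etc. are correct, and the recovery of $\partial_{yy}f$ from $\py(\As f)$, $\py f/y$ and $f/y^2$ via \eqref{eq:asympV} is fine). What this buys is that no new compactness argument is needed at the level of $\Ls$; the price is that you consume one unit of the index $i$.

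That price is precisely where the gap sits. The case $i=2$ requires the $\As$-coercivity at $i'=3$, which Lemma \ref{lemm:subcoerA} does not provide, and $i=2$ is not a removable corner case: \eqref{eq:coerL1} is used with the weight $y^{-4}(1+y^{4k})^{-1}$ in the proof of Lemma \ref{lemm:coeLk}, i.e.\ exactly at $i=2$. Your first proposed fix (the general weighted Hardy inequality of Lemma \ref{lemm:Hardy}$(iv)$) does not obviously repair this: extending the $\As$-coercivity to $i'=3$ runs into the critical Hardy exponent $2i'=d-2$ when $d=8$, where only the logarithmic inequality of Lemma \ref{lemm:Hardy}$(iii)$ is available, and the target weight $y^{-2i'-2}=y^{-8}$ is not even locally integrable against $y^{d-1}dy$ near the origin for $d\leq 8$, so the statement would have to be reformulated rather than merely ``stretched.'' Your second fix --- rerunning the subcoercivity-plus-compactness scheme directly for $\Ls$, using the orthogonality to $\Phi_M$ and the singularity of $\Gamma$ at the origin to kill the weak limit --- is exactly the paper's proof and does close the argument uniformly in $i=0,1,2$; so the honest conclusion is that your shortcut covers $i\leq 1$ but the full lemma still needs the paper's compactness step.
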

\begin{proof} We proceed into two steps: \\
\noindent - \textit{Step 1: Subcoercivity of $\Ls$.}  We apply Lemma \ref{lemm:coerAst} to $\As f$ with $\alpha = k$ and note that 
$$\py (\As f) = \As(\py f) + \py \left(\frac Vy\right) f,$$
to write
\begin{align}
\int \frac{|{\Ls} f|^2}{y^{2i}(1 + y^{2k})} &\gtrsim \int \frac{|\As f|^2}{y^{2i + 2}(1 + y^{2k})} + \int \frac{|\partial_y (\As f)|^2}{y^{2i}(1 + y^{2k})}\label{eq:tmpcoerLA}\\
& \quad \gtrsim \int \frac{|\As f|^2}{y^{2i}(1 + y^{2k + 2})} + \int \frac{|\partial_y (\As f)|^2}{y^{2i}(1 + y^{2k})}\nonumber\\
&\qquad \gtrsim \int \frac{|\As f|^2}{y^{2i}(1 + y^{2k + 2})} + \int \frac{|\As (\py f)|^2}{y^{2i}(1 + y^{2k})} - \int \frac{|f|^2}{y^{2i + 2}(1 + y^{2k})}.\label{eq:subcoerLax1}
\end{align}
Applying Lemma \ref{lemm:subcoerA} to $f$ with $p = k + 1$ and noting that the condition $|2(k + 1) + 2i - (d - 2 - 2\gamma)| \neq 0$ is always satisfied (if not, we have $d = 4 + 2\sqrt{(k + 1 + i)^2 + 2} \not \in \mathbb{N}$), we have 
\begin{align*}
\int \frac{|\As f|^2}{y^{2i}(1 + y^{2k  +2})} \gtrsim \int \frac{|\py f|^2}{y^{2i}(1 + y^{2k  +2})} & + \int \frac{f^2}{y^{2i+2}(1 + y^{2k + 2})}\\
&\quad - \left[f^2(1) + \int \frac{f^2}{1 +y^{2k + 2i + 6}} \right].
\end{align*}
We apply again Lemma \ref{lemm:subcoerA} to $\py f$ with $p = k$ to estimate
\begin{align*}
\int \frac{|\As (\py f)|^2}{y^{2i}(1 + y^{2k})} \gtrsim \int \frac{|\partial_{yy} f|^2}{y^{2i}(1 + y^{2k })} & + \int \frac{|\py f|^2}{y^{2i + 2}(1 + y^{2k})}\\
&\quad - \left[|\py f(1)|^2 + \int \frac{|\py f|^2}{1 +y^{2k + 2i + 4}} \right].
\end{align*}
Injecting these bounds into \eqref{eq:subcoerLax1} yields the subcoercive estimate for $\Ls$,
\begin{align}
\int \frac{|{\Ls} f|^2}{y^{2i}(1 + y^{2k})} &\gtrsim \int \frac{|\partial_{yy}f|^2}{y^{2i}(1 + y^{2k})} + \int \frac{|\py f|^2}{y^{2i}(1 + y^{2k + 2})} + \int \frac{f^2}{y^{2i + 2}(1 + y^{2k + 2})}\nonumber\\
& \quad - \left[ f^2(1) + |f_y(1)|^2 + \int \frac{|f_y|^2}{1 + y^{2k + 2i + 4}} + \int \frac{f^2}{1 + y^{2k + 2i + 6}} \right]. \label{eq:subcoerL}
\end{align}

\noindent - \textit{Step 2: Coercivity of $\Ls$.} We argue by contradiction. Assume that \eqref{eq:coerL} does not hold. Up to a renormalization, there exists a a sequence of functions $f_n \in \Dc_{rad}$ such that 
\begin{equation}\label{eq:asumpfnL}
\int \frac{|\Ls f_n|^2}{y^{2i}(1 + y^{2k})} \leq \frac{1}{n}, \quad \int \frac{|\partial_{yy}f_n|^2}{y^{2i}(1 + y^{2k})} + \int \frac{|\partial_y f_n|^2}{y^{2i}(1 + y^{2k + 2})} + \int \frac{|f_n|^2}{y^{2i + 2}(1 + y^{2k + 2})} = 1.
\end{equation}
This implies from \eqref{eq:subcoerL},
\begin{equation}\label{eq:boundfnL}
f_n^2(1) + |\partial_y f_n(1)|^2 + \int \frac{|\partial_y f_n|^2}{1 + y^{2k + 2i + 4}} + \int \frac{f_n^2}{y^2(1 + y^{2k + 2i + 6})} \gtrsim 1.
\end{equation}
From \eqref{eq:asumpfnL}, the sequence $f_n$ is bounded in $H^2_{loc}$. Hence, from a standard diagonal extraction argument, there exists $f_\infty \in H^2_{loc}$ such that up to a subsequence, 
$$f_n \rightharpoonup f_\infty \quad \text{in} \quad H^2_{loc},$$
and from the local compactness of one dimensional Sobolev embeddings:
$$f_n \rightarrow f_\infty \quad \text{in}\quad H^1_{loc},$$
and
$$f_n(1) \rightarrow f_\infty(1),\quad  \partial_y f_n(1) \rightarrow \partial_y f_\infty(1).$$
This implies from \eqref{eq:asumpfnL} and \eqref{eq:boundfnL},
$$f_\infty^2(1) + |\partial_y f_\infty(1)|^2 + \int \frac{|\partial_y f_\infty|^2}{1 + y^{2k + 2i + 4}} + \int \frac{f_\infty^2}{y^2(1 + y^{2k + 2i + 6})} \gtrsim 1,$$
which means that $f_\infty \ne 0$.  
On the other hand, from \eqref{eq:asumpfnL} and the lower semi continuity of norms for the weak topology, we deduce that $f_\infty$ is a non trivial function in the kernel of $\Ls$, namely that
$$ \Ls f_\infty = 0, $$
which follows
$$f_\infty =  \mu\Gamma + \beta \Lambda Q,$$
where $\mu$ and $\beta$ two real numbers. \\
From \eqref{eq:asumpfnL} and the lower semicontinuity, we have 
$$\int \frac{f_\infty^2}{y^{2 i + 2}(1 + y^{2k+2})} < +\infty.$$
Recall from \eqref{eq:asymGamma} that $\Gamma \sim \frac{1}{y^{d-1}}$ as $y \to 0$. This yields the estimate
$$\int_{y \leq 1} \frac{\Gamma^2}{y^{2i + 2}(1 + y^{2k + 2})} \gtrsim \int_{y \leq 1} \frac{dy}{y^{2i + 2 + d - 1}} = +\infty,$$
hence, $\mu = 0$.

From \eqref{eq:asymLamQ}, we have $\Lambda Q \sim \frac{1}{y^{\gamma}}$ as $y \to +\infty$. If $2i + 2k \leq d - 2\gamma - 4$, we have
$$\int_{y \geq 1} \frac{|\Lambda Q|^2 y^{d-1}dy }{y^{2i + 2}(1 + y^{2k + 2})} \gtrsim \int_{y \geq 1} y^{d - 1 - 2i - 2k - 4 - 2\gamma} dy \gtrsim \int_{y \geq 1} y^{-1}dy = + \infty,$$
hence, $\beta = 0$. If $2i + 2k > d - 2\gamma - 4$, we use the orthogonality condition to deduce
$$0= \left<f_\infty, \Phi_M\right> = \beta\left<\Lambda Q, \chi_M \Lambda Q\right>,$$
which yields $\beta = 0$, hence $f_\infty = 0$. The contradiction then follows and the coercivity \eqref{eq:coerL} is proved. The estimate \eqref{eq:coerL1} simply follows from \eqref{eq:coerL} and \eqref{eq:tmpcoerLA}. This finished the proof of Lemma \ref{lemm:coerL}.
\end{proof}

We are now in a position to  prove the coercivity of ${\Ls}^k$ under a suitable orthogonality condition.  We claim the following:
\begin{lemma}[Coercivity of the iterate of $\Ls$] \label{lemm:coeLk}  Let $k \in \mathbb{N}$ and $M = M(k)$ large enough, then there exists $c_{M,k} > 0$ such that for all $f \in \Dc_{rad}$ satisfying the orthogonal condition 
$$\left<f, \Ls^m \Phi_M\right> = 0, \quad 0 \leq m \leq k - \hbar,$$
where $\hbar$ is defined as in \eqref{def:kdeltaplus}, there holds:
\begin{align} \label{eq:coerLk}
\Es_{2k + 2}(f) = \int |\Ls^{k+1} f|^2 &\geq c_{M,k}\left\{\int \frac{|\As (\Ls^k f)|^2}{y^2} \right.\nonumber \\
& \quad +\left. \sum_{m = 0}^k \int \frac{|\Ls^m f|^2}{y^4(1 + y^{4(k - m)})} + \sum_{m = 0}^{k-1}\frac{|\As(\Ls^{m}f)|^2}{y^6(1 + y^{4(k - m - 1)})}\right\}.
\end{align}
\end{lemma}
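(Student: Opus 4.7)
My plan is to prove the inequality by induction on $k$, combining a direct application of Lemma \ref{lemm:coerL} to the iterated image $\Ls^k f$ with the inductive hypothesis applied to $\Ls f$, and closing out the leftover $m=0$ terms by a separate chain argument. The base case $k=0$ is Lemma \ref{lemm:coerL} applied to $f$ itself with $i=0$ and trivial weight; the only orthogonality potentially needed (when $\hbar=0$) is $\langle f, \Phi_M \rangle = 0$, which is the $m=0$ instance of the hypothesis.

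For the inductive step, I first apply Lemma \ref{lemm:coerL} to $\Ls^k f$ with $i=0$ and weight $1$, producing the level-$k$ contributions $\int |\As \Ls^k f|^2 / y^2$ and $\int |\Ls^k f|^2/y^4$; the required orthogonality $\langle \Ls^k f, \Phi_M \rangle = \langle f, \Ls^k \Phi_M \rangle = 0$ (needed exactly when $\hbar = 0$) is the $m=k$ instance of the hypothesis. Next, I invoke the inductive hypothesis with $f$ replaced by $\Ls f$ and $k$ by $k-1$: the orthogonality conditions required become $\langle \Ls f, \Ls^m \Phi_M \rangle = \langle f, \Ls^{m+1} \Phi_M \rangle = 0$ for $0 \le m \le (k-1) - \hbar$, which is the set of hypotheses indexed by $1, \ldots, k - \hbar$. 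After the shift $m \to m+1$, this yields the bounds for $\int |\Ls^m f|^2/(y^4(1+y^{4(k-m)}))$ and $\int |\As \Ls^m f|^2/(y^6(1+y^{4(k-1-m)}))$ for every $m \ge 1$.

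The missing level $m=0$ terms $\int f^2 / (y^4(1+y^{4k}))$ and $\int |\As f|^2/(y^6(1+y^{4(k-1)}))$ are not recovered by the preceding two steps and require a direct chain of applications of Lemma \ref{lemm:coerL} climbing from $f$ all the way up to $\Ls^{k+1} f$. At each stage $m = 0, 1, \ldots, k$, I choose $i$ and the weight exponent $k'$ so that applying Lemma \ref{lemm:coerL} to $\Ls^m f$ produces exactly the $m$-th weighted norm I want (for instance $i=1$, $k' = 2(k-m)-1$ for the $f^2/y^4$ type terms), and a pointwise monotonicity of the form $1/(y^{2i}(1+y^{2k'})) \gtrsim 1/(y^{2i'}(1+y^{2k''}))$ on the relevant ranges lets the $|\Ls^{m+1}f|^2$-norm appearing on the left be absorbed into the weighted norm produced at stage $m+1$, so the entire chain telescopes down to $\int |\Ls^{k+1}f|^2$. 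Every orthogonality requirement $\langle \Ls^m f, \Phi_M\rangle = 0$ arising along the chain is transferred to $\langle f, \Ls^m \Phi_M \rangle = 0$ via the self-adjointness of $\Ls$ with respect to the measure $y^{d-1}\,dy$, and the threshold $2i + 2k' > d - 2\gamma - 4 = 4(\hbar + \delta - 1)$ in Lemma \ref{lemm:coerL} triggers this requirement precisely when $m \le k - \hbar$, matching the range in the hypothesis.

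The main obstacle is this bookkeeping of weight parameters $(i, k')$ along the chain: they must be picked so that the weights telescope cleanly with no loss of a power of $y$ either at the origin or at infinity, and so that every orthogonality threshold actually encountered falls within the range $0 \le m \le k - \hbar$ furnished by the assumption. The integer $\hbar$ defined in \eqref{def:kdeltaplus} is tailored exactly so that $d - 2\gamma - 4 = 4(\hbar + \delta - 1)$ produces the matching condition $m \ge \hbar$ at each level of the iteration, confirming that the set of assumed orthogonalities is both necessary and sufficient for the full coercivity estimate.
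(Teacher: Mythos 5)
Your overall strategy is the paper's: induct on $k$, apply the inductive hypothesis to $g=\Ls f$ (which, after the shift $m\to m+1$ of the orthogonality conditions, delivers the top term $\int |\As\Ls^k f|^2/y^2$ and all levels $m\ge 1$), and then recover the two missing $m=0$ terms by one more appeal to the single-step coercivity of $\Ls$; your computation of when the threshold $2i+2k'>d-2\gamma-4$ forces the orthogonality, namely $m\le k-\hbar$, is exactly the point of the definition of $\hbar$ and matches the paper. Two corrections, though. First, your choices of the index $i$ do not reproduce the origin weights in \eqref{eq:coerLk}: Lemma \ref{lemm:coerL} applied with $i=0$ yields $\int f^2/(1+y^4)$ (or $\int f^2/(y^2(1+y^2))$), not $\int f^2/y^4$, and your ``for instance $i=1$'' choice in the chain gives origin weight $y^2$ rather than $y^4$. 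The correct bookkeeping is $i=2$ in \eqref{eq:coerL1}, which applied to the already-controlled $m=1$ term $\int |\Ls f|^2/(y^4(1+y^{4(k-1)}))$ produces precisely $\int |\As f|^2/(y^6(1+y^{4(k-1)}))$ and $\int f^2/(y^4(1+y^{4k}))$ in a single stroke; for the base case the paper instead uses Lemma \ref{lemm:coerAst} (with $i=0$) followed by Lemma \ref{lemm:subcoerA} applied to $f$ with $i=1$, $p=0$, which is what actually yields $\int f^2/y^4$. Second, your entire ``chain from $f$ up to $\Ls^{k+1}f$'' is redundant inside an induction: the hypothesis applied to $\Ls f$ already controls the $m=1$ weighted norm, so only one further application of Lemma \ref{lemm:coerL} is needed per inductive step (your separate step applying Lemma \ref{lemm:coerL} to $\Ls^k f$ is likewise unnecessary, since the top term and the $m=k$ term both come out of the inductive hypothesis applied to $\Ls f$). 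With $i$ set to $2$ and the chain collapsed to that single application, your argument is the paper's proof.
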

\begin{proof} We argue by induction on $k$. For $k = 0$, we apply Lemma \ref{lemm:coerAst} to $\As f$ with $i = 0$ and $\alpha = 0$, then  Lemma \ref{lemm:subcoerA} to $f$ with $i = 1$ and $p = 0$ to write
$$\Es_2(f) = \int |\Ls f|^2 \gtrsim \int \frac{|\As f|^2}{y^2} \gtrsim \int \frac{|\As f|^2}{y^2} + \int \frac{f^2}{y^4}.$$
Note that we had to use the orthogonality condition $\left< f, \Phi_M\right>$ when $\hbar = 0$. In fact, the case $\hbar = 0$ only happens when $d = 7$. In this case, the condition $2 > d - 2\gamma - 2$ is fulfilled when applying Lemma \ref{lemm:coerAst} with $i = 1$ and $p = 0$.

We now assume the claim for $k \geq 0$ and prove it for $k + 1$. We have the orthogonality condition 
$$\left<f, \Ls^{m}\Phi_M\right> = 0, \quad 0 \leq m \leq k + 1-\hbar.$$
Let $g = \Ls f$, then we have 
$$\left<g, \Ls^m \Phi_M\right> = 0, \quad 0 \leq m \leq k - \hbar.
$$
By induction hypothesis, we write
\begin{align*}
\int |\Ls ^{k+2} f|^2 &= \int |\Ls^{k+1} g|^2\\
&\gtrsim \int \frac{|\As (\Ls^k g)|^2}{y^2} + \sum_{m = 0}^k \int \frac{|\Ls^m g|^2}{y^4(1 + y^{4(k-m)})} + \sum_{m = 0}^{k-1}\frac{|\As(\Ls^{m}g)|^2}{y^6(1 + y^{4(k - m - 1)})}\\
&= \int \frac{|\As (\Ls^{k+1}f)|^2}{y^2} + \sum_{m = 1}^{k+1} \int \frac{|\Ls^m f|^2}{y^4(1 + y^{4(k + 1 -m)})} + \sum_{m = 1}^{k}\frac{|\As(\Ls^{m}f)|^2}{y^6(1 + y^{4(k - m)})}.
\end{align*}
Note that we have the orthogonality condition $\left<f, \Phi_M\right> = 0$ when $k \geq \hbar - 1$. The case $k \leq \hbar - 2$ implies that 
$$4 + 4k \leq 4 + 4\left(\frac{d}{4} - \frac{\gamma}{2} - \delta\right) - 8 \leq d - 2\gamma - 4.$$
Hence, we use the coercivity bound \eqref{eq:coerL1} to derive
$$\int \frac{|\Ls f|^2}{y^4(1 + y^{4k})} \gtrsim \int \frac{|\As f|^2}{y^6(1  +y^{4k})} + \int \frac{f^2}{y^4(1 + y^{4k + 4})},$$
which concludes the proof of Lemma \ref{lemm:coeLk}.
\end{proof}

\section{Interpolation bounds.}
We derive in this section interpolation bounds on $q$ which are the consequence of the coercivity property given in Lemma \ref{lemm:coeLk}. We have the following:
\begin{lemma}[Interpolation bounds]\label{lemm:interbounds} $\quad$\\
$(i)$ Weighted bounds for $q_i$: for $1 \leq m \leq \Bbbk$,
\begin{equation}\label{eq:qmbyE2k}
\int |q_{2m}|^2 + \sum_{i = 0}^{2k - 1}\int \frac{|q_i|^2}{y^2(1 + y^{4m - 2i - 2})} \leq C(M)\Es_{2m}.
\end{equation}
$(ii)$ Development near the origin:
\begin{equation}\label{eq:expandqat0}
q = \sum_{i = 1}^{\Bbbk}c_i T_{\Bbbk - i} + r_q,
\end{equation}
with bounds 
$$|c_i| \lesssim \sqrt{\Es_{2\Bbbk}},$$
$$|\py^j r_q| \lesssim y^{2\Bbbk - \frac{d}{2} - j}|\ln(y)|^{\Bbbk}\sqrt{\Es_{2\Bbbk}}, \quad 0 \leq j \leq 2\Bbbk - 1, \;\; y < 1.$$
$(iii)$ Bounds near the origin for $q_i$ and $\py^i q$: for $y \leq \frac 12$,
\begin{align*}
|q_{2i}| + |\py^{2i}q| &\lesssim y^{-\frac{d}{2} + 2}|\ln y|^\Bbbk\sqrt{\Es_{2\Bbbk}}, \quad \text{for}\quad 0 \leq i \leq \Bbbk - 1,\\
|q_{2i - 1}| + |\py^{2i-1}q|& \lesssim y^{-\frac{d}{2} + 1}|\ln y|^\Bbbk\sqrt{\Es_{2\Bbbk}}, \quad \text{for} \quad 1 \leq i \leq \Bbbk.
\end{align*}
$(iv)$ Weighted bounds for $\py^iq$: for $1 \leq m \leq \Bbbk$,
\begin{equation}\label{eq:weipykq}
\sum_{i = 0}^{2m}\int \frac{|\py^i q|^2}{1 + y^{4m - 2i}} \lesssim \Es_{2m}.
\end{equation}
Moreover, let $(i,j) \in \mathbb{N}\times \mathbb{N}^*$ with $2 \leq i + j \leq 2\Bbbk$, then 
\begin{equation}\label{eq:weipyijq}
\int \frac{|\py^i q|^2}{1 + y^{2j}} \lesssim \left\{\begin{array}{ll}
\Es_{2m} &\quad \text{for}\quad i+j = 2m, \;\; 1 \leq m \leq \Bbbk,\\
\sqrt{\Es_{2m}}\sqrt{\Es_{2(m+1)}}&\quad \text{for}\quad  i+j = 2m + 1,\;\; 1 \leq m \leq \Bbbk - 1.
\end{array} \right.
\end{equation}
$(v)$ Pointwise bound far away. Let $(i,j) \in \mathbb{N}\times \mathbb{N}$ with $1 \leq i + j \leq 2\Bbbk - 1$, we have for $y \geq 1$,
\begin{equation}\label{eq:pointwise_yg1}
\left|\frac{\py^i q}{y^{j}}\right|^2 \lesssim \frac 1{y^{d-2}}\left\{\begin{array}{ll}
\Es_{2m} &\quad \text{for}\quad i+j + 1 = 2m, \;\; 1 \leq m \leq \Bbbk,\\
\sqrt{\Es_{2m}}\sqrt{\Es_{2(m+1)}}&\quad \text{for}\quad  i+j = 2m,\;\; 1 \leq m \leq \Bbbk - 1.
\end{array} \right.
\end{equation}
\end{lemma}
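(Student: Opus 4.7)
The five items split into two groups: (i) and (iv)--(v) are energy and Sobolev equivalences flowing from the coercivity of $\Ls^k$, while (ii)--(iii) describe the pointwise local behavior of $q$ near $y = 0$. I would establish them in the order (i), (iv), (v), (ii), (iii).

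Item (i) follows by direct application of Lemma \ref{lemm:coeLk} with $k = m - 1$: the required orthogonalities $\langle q, \Ls^j \Phi_M\rangle = 0$ for $0 \leq j \leq m - 1 - \hbar$ are a subset of \eqref{eq:orthqPhiM} since $m - 1 - \hbar \leq \Bbbk - 1 - \hbar = L$, and $C(M)$ absorbs the coercivity constant's dependence on $M$.

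For (iv), one converts bounds on $q_i$ into bounds on $\py^i q$. The definitions $\As = -\py + V/y$ and $\Ls = -\py^2 - \tfrac{d-1}{y}\py + Z/y^2$ with $V, Z$ smooth (see \eqref{eq:asympV} and \eqref{def:Lc}) give $\py q = -q_1 + (V/y) q$, $\py^2 q = -q_2 - \tfrac{d-1}{y}\py q + (Z/y^2) q$, and similarly for higher order, so each $\py^i q$ differs from $\pm q_i$ by a linear combination of terms $\py^k q \cdot \Phi_{i,k}(y)$ with $k < i$ and $|\Phi_{i,k}(y)| \lesssim y^{-(i-k)}$. Inductively applying weighted Hardy (Lemma \ref{lemm:Hardy}) together with (i) dispatches these remainders and produces \eqref{eq:weipykq}; the odd parity case in \eqref{eq:weipyijq} is obtained by Cauchy--Schwarz interpolating between the even bounds for $\Es_{2m}$ and $\Es_{2(m+1)}$. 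Item (v) then follows from (iv) via the standard radial trace inequality: for smooth radial $f$ decaying at infinity and $y \geq 1$,
$$y^{d-2}|f(y)|^2 \lesssim \left(\int_y^\infty \tau^{d-3}|f|^2\,d\tau\right)^{1/2}\left(\int_y^\infty \tau^{d-1}|\py f|^2\,d\tau\right)^{1/2},$$
applied to $f = \py^i q/y^j$ with indices chosen so the two integrals match $\Es_{2m}$ and $\Es_{2(m+1)}$ via \eqref{eq:weipyijq}.

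Items (ii)--(iii) exploit the smoothness of $q$ near the origin, inherited from $u_0 \in H^\infty$ and preserved by the parabolic flow. Taylor expansion gives $q(y) = \sum_{k=0}^{\Bbbk-1}\alpha_k y^{2k+1} + \rho(y)$ (only odd powers, by radiality), and the admissibility $T_j(y) = y^{2j+1} + \mathcal{O}(y^{2j+3})$ from Lemma \ref{lemm:GenLk} lets one rewrite the polynomial part in the basis $\{T_{\Bbbk - i}\}_{i=1}^{\Bbbk}$ by a triangular change of coefficients, yielding \eqref{eq:expandqat0}. The quantitative bounds come from writing $\Ls^\Bbbk r_q = \Ls^\Bbbk q \in L^2$ and inverting $\Bbbk$ times via the explicit formula \eqref{eq:invLc}, imposing at each step the boundary condition at $0$ that kills the $T_{\Bbbk - i}$ components; this produces the weighted control $|\py^j r_q| \lesssim y^{2\Bbbk - d/2 - j}|\log y|^\Bbbk \sqrt{\Es_{2\Bbbk}}$, where the logarithmic factor records the number of crossings of the critical Hardy exponent $\alpha = (d-2)/2$ in part (iii) of Lemma \ref{lemm:Hardy}. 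Item (iii) is then a direct consequence of (ii) together with $|T_j(y)| \lesssim y^{2j+1}$ and the explicit action of $\Ls^i$ and $\As$ on the $T_j$'s. The main technical obstacle is precisely this sharp logarithmic bookkeeping through the $\Bbbk$ iterated inversions, together with the verification $|c_i| \lesssim \sqrt{\Es_{2\Bbbk}}$ via a coercivity-type argument identifying the Taylor coefficients with controlled pairings; all remaining steps reduce to routine weighted manipulations using the appendix machinery.
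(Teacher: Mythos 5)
Your proposal follows essentially the same route as the paper: (i) is the coercivity Lemma \ref{lemm:coeLk} applied with $k=m-1$; (iv) and (v) come from the triangular relation between $\py^i q$ and the $q_i$'s plus the one-dimensional trace/Cauchy--Schwarz argument at infinity; and (ii)--(iii) are obtained by the $\Bbbk$-fold iterated inversion of $\As^*$ and $\As$ starting from $q_{2\Bbbk}\in L^2$, with the $\Gamma$-mode excluded by its singularity at the origin and the $\Lambda Q$-coefficients pinned down by local $L^2$ control at a point $a\in(1/2,1)$. One cosmetic correction: the logarithmic factor in (ii) does not come from the critical Hardy exponent of Lemma \ref{lemm:Hardy}(iii) (which concerns the behavior at infinity) but from the factor $\int_a^y dx/x$ produced each time $\As$ is inverted near the origin, where $\Lambda Q\sim y$ makes the integrand borderline.
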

\begin{proof} $(i)$ The estimate \eqref{eq:qmbyE2k} directly follows from Lemma \ref{lemm:coeLk}. \\
$(ii)$ We claim that for $1 \leq m \leq \Bbbk$, $q_{2\Bbbk -2m}$ admits the Taylor expansion at the origin
\begin{equation}\label{eq:expandq2k}
q_{2\Bbbk - 2m} = \sum_{i = 1}^m c_{i,m}T_{m - i} + r_{2m},
\end{equation}
with the bounds 
$$|c_{i,m}| \lesssim \sqrt{\Es_{2\Bbbk}},$$
$$|\py^j r_{2m}| \lesssim y^{2m - \frac{d}{2} - j}|\ln(y)|^m\sqrt{\Es_{2\Bbbk}}, \quad 0 \leq j \leq 2m - 1, \;\; y < 1,$$
The expansion \eqref{eq:expandqat0} then follows from \eqref{eq:expandq2k} with $m = \Bbbk$.

We proceed by induction in $m$ for the proof of \eqref{eq:expandq2k}. For $m = 1$, we write from the definition \eqref{def:Astar} of $\As^*$,
$$r_1(y) = q_{2\Bbbk - 1}(y) = \frac{1}{y^{d-1}\Lambda Q}\int_0^y q_{2\Bbbk} \Lambda Q x^{d-1}dx + \frac{d_1}{y^{d-1}\Lambda Q}.$$
Note from \eqref{eq:qmbyE2k} that $\int \frac{|q_{2\Bbbk - 1}|^2}{y^2} \lesssim \Es_{2\Bbbk}$ and from \eqref{eq:asymLamQ} that $\Lambda Q \sim y$ as $y \to 0$, we deduce that $d_1 = 0$. Using the Cauchy-Schwartz inequality, we derive the pointwise estimate
$$|r_1(y)| \leq \frac{1}{y^d} \left(\int_0^y |q_{2\Bbbk}|^2 x^{d-1}dx\right)^\frac{1}{2}\left(\int_0^y x^2x^{d-1} dx\right)^\frac{1}{2} \lesssim y^{-\frac{d}{2} + 1}\sqrt{\Es_{2\Bbbk}}, \quad y < 1.$$ 
We remark that there exists $a \in (1/2,1)$ such that 
$$|q_{2\Bbbk - 1}(a)|^2 \lesssim \int_{y \leq 1} |q_{2\Bbbk - 1}|^2 \lesssim \Es_{2\Bbbk}.$$
We then define 
$$r_2(y) = -\Lambda Q \int_a^y \frac{r_1}{\Lambda Q}dx,$$
and obtain from the pointwise estimate of $r_1$, 
$$|r_2(y)| \lesssim y y^{-\frac{d}{2} + 1}\sqrt{\Es_{2\Bbbk}}\int_a^y \frac{dx}{x} \lesssim y^{-\frac{d}{2} + 2}|\ln(x)|\sqrt{\Es_{2\Bbbk}}, \quad y < 1.$$
By construction and the definition \eqref{def:As} of $\As$, we have 
$$\As r_2 = r_1 = q_{2\Bbbk - 1}, \quad \Ls r_2 = \As^*q_{2\Bbbk-1} = q_{2\Bbbk} = \Ls q_{2\Bbbk - 2}.$$
Recall that $\text{span}(\Ls) = \{\Lambda Q, \Gamma\}$ where $\Gamma$ admits the singular behavior \eqref{eq:asymGamma}. From \eqref{eq:qmbyE2k}, we have $\int\frac{|q_{2\Bbbk - 2}|^2}{y^4} \lesssim \Es_{2\Bbbk} < +\infty$. This implies that there exists $c_2 \in \Rb$ such that 
$$q_{2\Bbbk - 2} = c_2 \Lambda Q + r_2.$$
Moreover, there exists $a \in (1/2,1)$ such that
$$|q_{2\Bbbk - 2}(a)|^2 \lesssim \int_{|y| \leq 1} |q_{2\Bbbk - 2}|^2\lesssim \Es_{2\Bbbk},$$
which follows
$$|c_2| \lesssim \sqrt{\Es_{2\Bbbk}}, \quad |q_{2\Bbbk - 2}| \lesssim y^{-\frac{d}{2} + 2}|\ln(y)|\sqrt{\Es_{2\Bbbk}}, \quad y < 1.$$
Since $\As r_2 = r_1$, we then write from the definition \eqref{def:As} of $\As$, 
$$|\py r_2| \lesssim |r_1| + \left|\frac{r_2}{y}\right| \lesssim y^{-\frac{d}{2} +2}|\ln(y)|\sqrt{\Es_{2\Bbbk}}, \quad y < 1.$$
This concludes the proof of \eqref{eq:expandq2k} for $m = 1$.   

We now assume that \eqref{eq:expandq2k} holds for $m \geq 1$ and prove it for $m + 1$. The term $r_{2m}$ is built as follows:
$$r_{2m - 1} = \frac{1}{y^{d-1}\Lambda Q}\int_0^y r_{2m - 2} \Lambda Q x^{d-1}dx, \quad r_{2m} = - \Lambda Q\int_a^y \frac{r_{2m -1}}{\Lambda Q} dx, \quad a\in (1/2,1).$$
We now use the induction hypothesis to estimate
\begin{align*}
|r_{2m + 1}| &= \left|\frac{1}{y^{d-1} \Lambda Q}\int_0^y r_{2m} \Lambda Q x^{d-1}dx \right| \\
&\quad \lesssim \frac{1}{y^d}\sqrt{\Es_{2\Bbbk}}\int_0^yx^{2m + \frac{d}{2}}|\ln(x)|^m dx\\
&\qquad \lesssim y^{2m - \frac{d}{2}} \sqrt{\Es_{2\Bbbk}}\int_0^y |\ln(x)|^m dx \\
& \qquad \quad  \lesssim y^{2m - \frac{d}{2} + 1}|\ln(y)|^m \sqrt{\Es_{2\Bbbk}}.
\end{align*}
Here we used the following identity 
$$I_m = \int_0^y [\ln(x)]^mdx \lesssim y|\ln(y)|^m, \quad m \geq 1, \;\; y < 1.$$
Indeed, we have $I_1 = \int_0^y \ln (x) dx = y\ln(y) - y \lesssim y|\ln(y)|$ for $y < 1$. Assume the claim for $m \geq 1$, we use an integration by parts to estimate for $m +1$
\begin{align*}
I_{m+1} &= \int_0^y[\ln(x)]^m (x\ln(x) - x)'dx\\
&\quad  = y[\ln(y)]^{m+1} - y[\ln(y)]^m - m(I_m - I_{m-1})\lesssim y|\ln(y)|^{m+1}.
\end{align*}
Using an integration by parts yields $\int_{a}^y \frac{[\ln(x)]^m}{x}dx = \frac{[\ln(y)]^{m+1} - [\ln(a)]^{m+1}}{m+1}$. Hence, we have the estimate
\begin{align*}
|r_{2m + 2}| = \left|\Lambda Q\int_a^y \frac{r_{2m +1}}{\Lambda Q} dx\right| &\lesssim y^{2m - \frac{d}{2} + 2}\sqrt{\Es_{2\Bbbk}}\int_a^y \frac{|\ln(x)|^m}{x}dx\\
&\quad \lesssim y^{2m - \frac{d}{2} + 2} |\ln(y)|^{m+1} \sqrt{\Es_{2\Bbbk}}.
\end{align*}

By construction, we have 
$$\As r_{2m+2} = r_{2m + 1}, \quad \Ls r_{2m+2} = r_{2m}.$$
From the induction hypothesis and the definition \eqref{def:Tk} of $T_k$, we write
$$\Ls q_{2\Bbbk - 2(m + 1)} = q_{2\Bbbk - 2m} = \sum_{i = 1}^m c_{i,m}T_{m - i} + r_{2m} = \sum_{i = 1}^mc_{i,m}\Ls T_{m + 1 - i} + \Ls r_{2m + 2}.$$
The singularity \eqref{eq:asymGamma} of $\Gamma$ at the origin and the bound $\int_{y \leq 1} \frac{|q_{2 \Bbbk - 2(m+1)}|^2}{y^4} \lesssim \Es_{2\Bbbk}$ allows us to deduce that
$$q_{2\Bbbk - 2(m + 1)} = \sum_{i=1}^m c_{i,m}T_{m +1 - i} + c_{2m + 2}\Lambda Q + r_{2m + 2}.$$
From \eqref{eq:qmbyE2k}, we see that there exists $a \in (1/2,1)$ such that 
$$|q_{2\Bbbk - 2(m+1)}(a)|^2 \lesssim \int_{y \leq 1}|q_{2\Bbbk - 2(m+1)}|^2 \lesssim \Es_{2\Bbbk}.$$
Together with the induction hypothesis $|c_{i, m}| \lesssim \sqrt{\Es_{2\Bbbk}}$ and the pointwise estimate on $r_{2m + 2}$, we get the bound $|c_{2m + 2}| \leq \sqrt{\Es_{2\Bbbk}}$.

A brute force computation using the definitions of $\As$ and $\As^*$ and the asymptotic behavior \eqref{eq:asympV}  ensure that for any function $f$, 
\begin{equation}\label{eq:pyjf}
\py^j f = \sum_{i = 0}^j P_{i,j}f_i, \quad |P_{i,j}| \lesssim \frac{1}{y^{j - i}},
\end{equation}
and we estimate
\begin{align*}
|\py^j r_{2m + 2}|&\lesssim \sum_{i = 0}^j \frac{|r_{2m + 2 - i}|}{y^{j-i}}\\
&\quad \lesssim \sqrt{\Es_{2\Bbbk}}\sum_{i = 0}^j \frac{y^{2m + 2 - i - \frac{d}{2}}|\ln(y)|^{m+1}}{y^{j - i}} \lesssim y^{2m + 2 -\frac{d}{2} - j}|\ln(y)|^{m+1} \sqrt{\Es_{2\Bbbk}}.
\end{align*}
This concludes the proof of \eqref{eq:expandq2k} as well as \eqref{eq:expandqat0}.\\

$(iii)$ The proof of $(iii)$ directly follows from \eqref{eq:expandq2k}.\\

$(iv)$ We have from \eqref{eq:pyjf},
$$|\py^kq| \lesssim \sum_{j = 0}^k \frac{|q_j|}{y^{k - j}},$$
and thus, using \eqref{eq:qmbyE2k} and the pointwise bounds given in part $(iii)$ yields
\begin{align*}
\sum_{i = 0}^{2m}\int \frac{|\py^i q|^2}{1 + y^{4m - 2i}} & \lesssim \Es_{2m} + \sum_{i = 0}^{2m - 1} \int_{y < 1} |\py^i q|^2 + \sum_{i = 0}^{2m - 1}\int_{y > 1}\frac{|\py^i q|^2}{y^{4m - 2i}}\\
&\quad \lesssim \Es_{2m} + \Es_{2\Bbbk}\int_{y< 1}y|\ln y|^\Bbbk dy  + \sum_{i = 0}^{2m - 1} \sum_{j = 0}^i \int_{y > 1}\frac{|q_j|^2}{y^{4m - 2j}} \lesssim \Es_{2m}, 
\end{align*}
which concludes the proof of \eqref{eq:weipykq}. 

The estimate \eqref{eq:weipyijq} simply follows from \eqref{eq:weipykq}. Indeed, if $i + j = 2m$ with $1 \leq m \leq \Bbbk$, we have 
$$\int \frac{|\py^i q|^2}{1 + y^{2j}} = \int \frac{|\py^iq|^2}{1 + y^{4m - 2i}} \lesssim \Es_{2m}.$$
If $i + j = 2m + 1$ with $1 \leq m \leq \Bbbk - 1$, we write 
\begin{align*}\int \frac{|\py^i q|^2}{1 + y^{2j}} = \int \frac{|\py^iq|^2}{1 + y^{4m - 2i + 2}} &\lesssim \left(\int \frac{|\py^iq|^2}{1 + y^{4m - 2i}} \right)^\frac 12\left(\int \frac{|\py^iq|^2}{1 + y^{4m - 2i + 4}} \right)^\frac 12\\
&\quad \lesssim \sqrt{\Es_{2m}} \sqrt{\Es_{2(m+1)}}.
\end{align*}
 
$(v)$ Let $i,j \geq 0$ with $1 \leq i + j \leq 2\Bbbk -1$, then $2 \leq i + j + 1 \leq 2\Bbbk$ and we conclude from \eqref{eq:weipyijq} that for $y \geq 1$,
\begin{align*}
\left|\frac{\py^i q}{y^j} \right|^2 &\lesssim \left| \int_y^{+\infty} \partial_x \left(\frac{(\partial_x^i q)^2}{x^{2j}} \right)dx \right| \lesssim \frac{1}{y^{d-2}} \left\{\int_y^{+\infty} \frac{|\partial_x^i q|^2}{x^{2j + 2}} + \int_y^{+\infty} \frac{|\partial_x^{i+1} q|^2}{x^{2j}} \right\}\\
&\quad \lesssim \frac 1{y^{d-2}}\left\{\begin{array}{ll}
\Es_{2m} &\quad \text{for}\quad i+j + 1 = 2m, \;\; 1 \leq m \leq \Bbbk,\\
\sqrt{\Es_{2m}}\sqrt{\Es_{2(m+1)}}&\quad \text{for}\quad  i+j+1 = 2m + 1,\;\; 1 \leq m \leq \Bbbk - 1.
\end{array} \right.
\end{align*} 
This ends the proof of Lemma \ref{lemm:interbounds}.
 
\end{proof}

\section{Proof of \eqref{est:comtor}.}\label{ap:EstComm}
We give here the proof of \eqref{est:comtor}. Before going to the proof, we need the following Leibniz rule for $\Ls^k$.
\begin{lemma}[Leibniz rule for $\Ls^k$] \label{lemm:LeibnizLk}  Let $\phi$ be a smooth function and $k \in \mathbb{N}$, we have 
\begin{equation}\label{eq:LeibnizLk}
\Ls^{k + 1}(\phi f) = \sum_{m = 0}^{k+1}f_{2m}\phi_{2k+2, 2m} + \sum_{m = 0}^k f_{2m + 1}\phi_{2k + 2, 2m + 1},
\end{equation}
and 
\begin{equation}\label{eq:LeibnizALk}
\As\Ls^{k}(\phi f) = \sum_{m = 0}^{k}f_{2m + 1}\phi_{2k+1, 2m + 1} + \sum_{m = 0}^k f_{2m}\phi_{2k+1, 2m},
\end{equation}
where\\
- for $k = 0$,
\begin{align*}
&\phi_{1,0} = -\py \phi, \quad \phi_{1,1} = \phi,\\
&\phi_{2,0} = -\py^2 \phi - \frac{d-1 +2V}{y}\py \phi, \quad \phi_{2,1}= 2\py \phi, \quad \phi_{2,2} = \phi,
\end{align*}
- for $k \geq 1$
\begin{align*}
&\phi_{2k + 1, 0} = -\py \phi_{2k,0},\\
&\phi_{2k + 1, 2i} = - \py \phi_{2k, 2i} - \phi_{2k, 2i - 1}, \quad 1 \leq i \leq k,\\
&\phi_{2k+1, 2i + 1}= \phi_{2k, 2i} + \frac{d-1 + 2V}{y}\phi_{2k, 2i + 1} - \py\phi_{2k, 2i+1}, \quad 0 \leq i \leq k-1,\\
&\phi_{2k + 1, 2k + 1} = \phi_{2k, 2k} = \phi,\\
& \quad\\
&\phi_{2k + 2, 0} = \py \phi_{2k+1,0} + \frac{d-1 + 2V}{y}\phi_{2k+1, 0},\\
&\phi_{2k + 2, 2i} = \phi_{2k + 1, 2i - 1} + \py \phi_{2k + 1, 2i} + \frac{d-1 + 2V}{y}\phi_{2k + 1, 2i},\quad 1 \leq i \leq k,\\
&\phi_{2k+2, 2i + 1}= -\phi_{2k + 1,2i} + \py\phi_{2k+1, 2i + 1}, \quad 0 \leq i \leq k,\\
&\phi_{2k + 2, 2k + 2} = \phi_{2k + 1, 2k + 1} = \phi.
\end{align*}
\end{lemma}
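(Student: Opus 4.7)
\medskip

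\noindent \textbf{Proof plan.} The statement is a pure differential-calculus identity for the Schrödinger factorization $\Ls=\As^*\As$, so the plan is to prove it by induction on $k$, alternating between one application of $\As$ and one application of $\As^*$. The backbone of the computation consists of two elementary Leibniz rules, obtained directly from \eqref{def:As} and \eqref{def:Astar}:
\begin{equation*}
\As(\phi g)=\phi\,\As g - g\,\py\phi,\qquad \As^*(\phi g)=\phi\,\As^* g + g\,\py\phi.
\end{equation*}
First I would check the base case $k=0$. Applying the first identity with $g=f$ yields $\As(\phi f)=\phi f_1-(\py\phi)f_0$, which matches $\phi_{1,1}=\phi$, $\phi_{1,0}=-\py\phi$. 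Applying $\As^*$ to this expression and again using the two Leibniz rules (together with $\As^*f_1=\Ls f=f_2$) gives $\Ls(\phi f)=\phi f_2-2\py\phi\cdot\py f-\bigl(\py^2\phi+\tfrac{d-1}{y}\py\phi\bigr)f$; converting $\py f$ into $f_1$ via $\py f=-\As f+\tfrac{V}{y}f=-f_1+\tfrac{V}{y}f_0$ then produces the coefficients $\phi_{2,2},\phi_{2,1},\phi_{2,0}$ exactly as stated.

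For the inductive step, assume \eqref{eq:LeibnizLk} at level $k$, i.e.\ $\Ls^k(\phi f)=\sum_{m=0}^{k}f_{2m}\phi_{2k,2m}+\sum_{m=0}^{k-1}f_{2m+1}\phi_{2k,2m+1}$. Apply $\As$ term by term. The crucial point is that $\As f_{2m}=f_{2m+1}$ by definition, whereas $\As f_{2m+1}$ is not one of our named quantities. To handle this, I would use the relation $\As-\As^*=-2\py-\tfrac{d-1}{y}$ combined with $\As^*f_{2m+1}=f_{2m+2}$ and $\py f_{2m+1}=-\As f_{2m+1}+\tfrac{V}{y}f_{2m+1}$; solving the resulting one-line linear equation gives the key conversion
\begin{equation*}
\As f_{2m+1}=-f_{2m+2}+\frac{2V+d-1}{y}\,f_{2m+1}.
\end{equation*}
Substituting this into $\As(f_{2m+1}\phi_{2k,2m+1})=\phi_{2k,2m+1}\As f_{2m+1}-f_{2m+1}\py\phi_{2k,2m+1}$, together with $\As(f_{2m}\phi_{2k,2m})=\phi_{2k,2m}f_{2m+1}-f_{2m}\py\phi_{2k,2m}$, and collecting by $f_j$ produces precisely the recursion defining $\phi_{2k+1,0}$, $\phi_{2k+1,2m}$, $\phi_{2k+1,2m+1}$, $\phi_{2k+1,2k+1}$; this establishes \eqref{eq:LeibnizALk}.

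Finally, to pass from $\As\Ls^k$ to $\Ls^{k+1}=\As^*\As\Ls^k$, apply $\As^*$ to the expansion just obtained. Here the symmetric issue is that $\As^*f_{2m}$ is not a named quantity, which is resolved in the same way by the identity $\As^*f_{2m}=-f_{2m+1}+\tfrac{2V+d-1}{y}f_{2m}$, whereas $\As^*f_{2m+1}=f_{2m+2}$ directly. Using $\As^*(\phi g)=\phi\As^*g+g\py\phi$ term-by-term and re-indexing the $f_{2m+2}$ contribution as $f_{2m}$ gives exactly the four recurrences for $\phi_{2k+2,0},\phi_{2k+2,2m},\phi_{2k+2,2m+1},\phi_{2k+2,2k+2}$. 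There is no real analytic obstacle here; the only delicate point is the careful bookkeeping of indices (and of which of the two conversion identities is needed), together with the observation that the combination $\tfrac{2V+d-1}{y}$ appearing naturally in both conversions is exactly the coefficient that shows up in the stated recursion.
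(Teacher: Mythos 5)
Your proposal is correct and follows essentially the same route as the paper: induction on $k$, alternating applications of $\As$ and $\As^*$ via the two product rules $\As(\phi g)=\phi\As g-g\,\py\phi$, $\As^*(\phi g)=\phi\As^*g+g\,\py\phi$, and the conversion identity $\As+\As^*=\tfrac{d-1+2V}{y}$ to re-express $\As f_{2m+1}$ (resp. $\As^* f_{2m}$) in terms of the named quantities $f_j$. The bookkeeping you describe reproduces exactly the stated recurrences.
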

\begin{proof} We use the following relation,
\begin{align*}
\As(\phi f) = \phi \As f - \py \phi f, \quad  \As^*(\phi f) = \phi \As^* f + \py \phi f, \quad \As f + \As^*f = \frac{d-1 + 2V}{y}f,
\end{align*}
to compute
\begin{align*}
\As(\phi f) &= f_1 \phi + f(-\py \phi),\\
\Ls(\phi f) &= \As^*\As(\phi f) = f_2 \phi + f_1(2\py \phi) + f\left(-\py^2 \phi - \frac{d-1 + 2V}{y}\py \phi\right),
\end{align*}
which is the conclusion of \eqref{eq:LeibnizLk} and \eqref{eq:LeibnizALk} for $k = 0$.

Assume that \eqref{eq:LeibnizLk} and \eqref{eq:LeibnizALk} hold for $k \in \mathbb{N}$,  let us compute for $k \to k+1$. Using \eqref{eq:LeibnizLk}, we write 
\begin{align*}
\As\Ls^{k+1}(\phi f) &= \sum_{m = 0}^{k+1}\As \big[f_{2m}\phi_{2k+2, 2m}\big] + \sum_{m = 0}^k \left[-\As^* + \frac{d-1 +2V}{y}\right]f_{2m + 1}\phi_{2k + 2, 2m + 1}\\
&= \sum_{m = 0}^{k+1}\big\{f_{2m + 1} \phi_{2k + 2, 2m} + f_{2m}(-\py \phi_{2k+2, 2m})\big\}\\
& \quad + \sum_{m = 0}^k \Big\{f_{2m+2} (-\phi_{2k + 2, 2m + 1}) + f_{2m + 1} (-\py \phi_{2k + 2, 2m + 1})\\
&\qquad \qquad \qquad \qquad  \left.+ f_{2m + 1}\left(\frac{d-1 + 2V}{y}\phi_{2k + 2, 2m + 1}\right)\right\}\\
& = \sum_{m = 0}^k f_{2m + 1} \left(\phi_{2k + 2, 2m} - \py\phi_{2k + 2, 2m+1} + \frac{d-1 + 2V}{y} \phi_{2k+2, 2m + 1}\right)\\
& \quad + \sum_{m = 1}^k f_{2m}\left(-\py \phi_{2k+2, 2m} - \phi_{2k + 2, 2m + 1}\right) + f_{2k + 3} \phi_{2k+2, 2k + 2} + f (-\py \phi_{2k + 2, 0}),
\end{align*}
which yields the recurrence relation for $\phi_{2k + 3, j}$ with $0 \leq j \leq 2k + 3$.

Similarly, we write $\Ls^{k + 2}(\phi f) = \As^*\big[\As \Ls^{k+1}(\phi f)\big]$ and use the formula \eqref{eq:LeibnizALk} with $k+1$ to obtain the recurrence relation for $\phi_{2k + 4, j}$ with $0 \leq j \leq 2k + 4$. This concludes the proof of Lemma \ref{lemm:LeibnizLk}.
\end{proof}

\bigskip

Let us now give the proof of \eqref{est:comtor}. By induction and the definition \eqref{def:Llambda}, we have 
$$[\pt, \Ls_\lambda^{\Bbbk - 1}]v = \sum_{m = 0}^{\Bbbk - 2} \Ls_\lambda^m \left([\pt, \Ls_\lambda] \Ls_\lambda^{\Bbbk - 2 - m}v\right) = \sum_{m = 0}^{\Bbbk - 2} \Ls_\lambda^m \left(\frac{\pt Z_\lambda}{r^2} \Ls_\lambda^{\Bbbk - 2 - m}v\right).$$
Noting that $\frac{\pt Z_\lambda}{r^2} = \frac{b_1 \Lambda Z}{\lambda^4 y^2}$, we make a change of variables to obtain
\begin{align*}
\int \frac{1}{\lambda^2(1 + y^2)}\left|[\pt , \Ls_{\lambda}^{\Bbbk - 1}]v\right|^2 &= \frac{b_1^2}{\lambda^{4\Bbbk - d + 2}}\int \frac{1}{1 + y^2} \left|\sum_{m = 0}^{\Bbbk - 2}\Ls^m\left(\frac{\Lambda Z}{y^2} \Ls^{\Bbbk - 2 - m}q \right) \right|^2\\
& \lesssim \frac{b_1^2}{\lambda^{4\Bbbk - d + 2}} \sum_{m = 0}^{\Bbbk - 2}\int \frac{1}{1 + y^2}\left|\Ls^m\left(\frac{\Lambda Z}{y^2} \Ls^{\Bbbk - 2 - m}q \right) \right|^2.
\end{align*}
For $m = 0$, we use \eqref{eq:estLamZV} and \eqref{eq:Enercontrol} to estimate
$$\int \frac{1}{1 + y^2}\left|\left(\frac{\Lambda Z}{y^2} \Ls^{\Bbbk - 2}q \right) \right|^2 \lesssim \int \frac{|q^2_{2\Bbbk - 4}|}{1 + y^{10}} \lesssim \Es_{2\Bbbk}.$$
For $m = 1, \cdots, \Bbbk - 2$, we apply \eqref{eq:LeibnizLk} with $\phi = \frac{\Lambda Z}{y^2} = \frac{(d-1)\Lambda \cos(2Q)}{y^2}$ and note from \eqref{eq:asymQ} that
$$|\phi_{k, i}| \lesssim \frac{1}{1 + y^{2\gamma + 2 + (2k - i)}} \lesssim \frac{1}{1 + y^{4 + (2k - i)}}, \quad k \in \mathbb{N}^*, \;\; 0 \leq i \leq 2k,$$
which yields
\begin{align*}
\int \frac{1}{1 + y^2}\left|\Ls^m\left(\frac{\Lambda Z}{y^2} \Ls^{\Bbbk - 2 - m}q \right) \right|^2 \lesssim \sum_{i = 0}^{2m}\int\frac{q^2_{2\Bbbk - 4 - 2m - i}}{(1 + y^{10 + (4m - 2i)})} \lesssim \Es_{2\Bbbk}.
\end{align*}
Thus, 
$$\int \frac{1}{\lambda^2(1 + y^2)}\left|[\pt , \Ls_{\lambda}^{\Bbbk - 1}]v\right|^2 \lesssim \frac{b_1^2}{\lambda^{4\Bbbk - d + 2}} \Es_{2\Bbbk}.$$
Similarly, we use \eqref{eq:LeibnizALk} to get the estimate 
$$\int \left|\As[\pt , \Ls_{\lambda}^{\Bbbk - 1}]v\right|^2 \lesssim  \frac{b_1^2}{\lambda^{4\Bbbk - d + 2}}\Es_{2\Bbbk}.$$
This concludes the proof of \eqref{est:comtor}.


\begin{thebibliography}{61}
\providecommand{\natexlab}[1]{#1}
\providecommand{\url}[1]{\texttt{#1}}
\expandafter\ifx\csname urlstyle\endcsname\relax
  \providecommand{\doi}[1]{doi: #1}\else
  \providecommand{\doi}{doi: \begingroup \urlstyle{rm}\Url}\fi

\bibitem[Biernat(2015)]{BIEnon2015}
P.~Biernat.
\newblock Non-self-similar blow-up in the heat flow for harmonic maps in higher
  dimensions.
\newblock \emph{Nonlinearity}, 28\penalty0 (1):\penalty0 167--185, 2015.
\newblock ISSN 0951-7715.
\newblock \doi{10.1088/0951-7715/28/1/167}.
\newblock URL \url{http://dx.doi.org/10.1088/0951-7715/28/1/167}.

\bibitem[Biernat and Bizo{\'n}(2011)]{BBnon11}
P.~Biernat and P.~Bizo{\'n}.
\newblock Shrinkers, expanders, and the unique continuation beyond generic
  blowup in the heat flow for harmonic maps between spheres.
\newblock \emph{Nonlinearity}, 24\penalty0 (8):\penalty0 2211--2228, 2011.
\newblock ISSN 0951-7715.
\newblock \doi{10.1088/0951-7715/24/8/005}.
\newblock URL \url{http://dx.doi.org/10.1088/0951-7715/24/8/005}.

\bibitem[Biernat and Seki(2016)]{BSarxiv2016}
P.~Biernat and Y.~Seki.
\newblock Type {II} blow-up mechanism for supercritical harmonic map heat flow.
\newblock \emph{arXiv:1601.01831}, 2016.
\newblock URL \url{http://arxiv.org/abs/1601.01831}.

\bibitem[Biernat et~al.(2016)Biernat, Donninger, and Sch{\"o}rkhuber]{BDSar16}
P.~Biernat, R.~Donninger, and B.~Sch{\"o}rkhuber.
\newblock Stable self-similar blowup in the supercritical heat flow of harmonic
  maps.
\newblock \emph{arXiv:1610.09497}, 2016.

\bibitem[Bizo{\'n} and Wasserman(2015)]{BWimrn15}
P.~Bizo{\'n} and A.~Wasserman.
\newblock Nonexistence of shrinkers for the harmonic map flow in higher
  dimensions.
\newblock \emph{Int. Math. Res. Not. IMRN}, \penalty0 (17):\penalty0
  7757--7762, 2015.
\newblock ISSN 1073-7928.
\newblock \doi{10.1093/imrn/rnu176}.
\newblock URL \url{http://dx.doi.org/10.1093/imrn/rnu176}.

\bibitem[Bressan(1990)]{Breiumj90}
A.~Bressan.
\newblock On the asymptotic shape of blow-up.
\newblock \emph{Indiana Univ. Math. J.}, 39\penalty0 (4):\penalty0 947--960,
  1990.
\newblock ISSN 0022-2518.
\newblock \doi{10.1512/iumj.1990.39.39045}.
\newblock URL \url{http://dx.doi.org/10.1512/iumj.1990.39.39045}.

\bibitem[Bressan(1992)]{Brejde92}
A.~Bressan.
\newblock Stable blow-up patterns.
\newblock \emph{J. Differential Equations}, 98\penalty0 (1):\penalty0 57--75,
  1992.
\newblock ISSN 0022-0396.
\newblock \doi{10.1016/0022-0396(92)90104-U}.
\newblock URL \url{http://dx.doi.org/10.1016/0022-0396(92)90104-U}.

\bibitem[Bricmont and Kupiainen(1994)]{BKnon94}
J.~Bricmont and A.~Kupiainen.
\newblock Universality in blow-up for nonlinear heat equations.
\newblock \emph{Nonlinearity}, 7\penalty0 (2):\penalty0 539--575, 1994.
\newblock ISSN 0951-7715.
\newblock URL \url{http://stacks.iop.org/0951-7715/7/539}.

\bibitem[C.(2016{\natexlab{a}})]{Car16}
Collot C.
\newblock Non radial type {II} blow up for the energy supercritical semilinear
  heat equation.
\newblock \emph{arXiv:1604.02856}, 2016{\natexlab{a}}.
\newblock URL \url{http://arxiv.org/abs/1604.02856}.

\bibitem[C.(2016{\natexlab{b}})]{Car161}
Collot C.
\newblock Type {II} blow up manifolds for the energy supercritical wave
  equation.
\newblock \emph{arXiv:1407.4525}, 2016{\natexlab{b}}.
\newblock URL \url{http://arxiv.org/abs/1407.4525}.

\bibitem[C. et~al.(2016)C., F., and P.]{CMRar16}
Collot C., Merle F., and Rapha{\"e}l P.
\newblock Dynamics near the ground state for the energy critical nonlinear heat
  equation in large dimensions.
\newblock \emph{arXiv:1604.08323}, 2016.
\newblock URL \url{http://arxiv.org/abs/1604.08323}.

\bibitem[Chang et~al.(1992)Chang, Ding, and Ye]{CDYjdg92}
K.-C. Chang, W.~Y. Ding, and R.~Ye.
\newblock Finite-time blow-up of the heat flow of harmonic maps from surfaces.
\newblock \emph{J. Differential Geom.}, 36\penalty0 (2):\penalty0 507--515,
  1992.
\newblock ISSN 0022-040X.
\newblock URL \url{http://projecteuclid.org/euclid.jdg/1214448751}.

\bibitem[Chen and Ding(1990)]{CDinvm90}
Y.~M. Chen and W.~Y. Ding.
\newblock Blow-up and global existence for heat flows of harmonic maps.
\newblock \emph{Invent. Math.}, 99\penalty0 (3):\penalty0 567--578, 1990.
\newblock ISSN 0020-9910.
\newblock \doi{10.1007/BF01234431}.
\newblock URL \url{http://dx.doi.org/10.1007/BF01234431}.

\bibitem[Coron and Ghidaglia(1989)]{CGcrsa89}
J.-M. Coron and J.-M. Ghidaglia.
\newblock Explosion en temps fini pour le flot des applications harmoniques.
\newblock \emph{C. R. Acad. Sci. Paris S\'er. I Math.}, 308\penalty0
  (12):\penalty0 339--344, 1989.
\newblock ISSN 0249-6291.

\bibitem[C{\^o}te and Zaag(2013)]{CZcpam13}
R.~C{\^o}te and H.~Zaag.
\newblock Construction of a multisoliton blowup solution to the semilinear wave
  equation in one space dimension.
\newblock \emph{Comm. Pure Appl. Math.}, 66\penalty0 (10):\penalty0 1541--1581,
  2013.
\newblock ISSN 0010-3640.
\newblock \doi{10.1002/cpa.21452}.
\newblock URL \url{http://dx.doi.org/10.1002/cpa.21452}.

\bibitem[Duyckaerts et~al.(2013)Duyckaerts, Kenig, and Merle]{DKMcjm13}
T.~Duyckaerts, C.~Kenig, and F.~Merle.
\newblock Classification of radial solutions of the focusing, energy-critical
  wave equation.
\newblock \emph{Camb. J. Math.}, 1\penalty0 (1):\penalty0 75--144, 2013.
\newblock ISSN 2168-0930.
\newblock \doi{10.4310/CJM.2013.v1.n1.a3}.
\newblock URL \url{http://dx.doi.org/10.4310/CJM.2013.v1.n1.a3}.

\bibitem[Eells and Sampson(1964)]{ESajm64}
J.~Eells and J.~H. Sampson.
\newblock Harmonic mappings of {R}iemannian manifolds.
\newblock \emph{Amer. J. Math.}, 86:\penalty0 109--160, 1964.
\newblock ISSN 0002-9327.

\bibitem[Fan(1999)]{Fscsm99}
H.~Fan.
\newblock Existence of the self-similar solutions in the heat flow of harmonic
  maps.
\newblock \emph{Sci. China Ser. A}, 42\penalty0 (2):\penalty0 113--132, 1999.
\newblock ISSN 1006-9283.
\newblock \doi{10.1007/BF02876563}.
\newblock URL \url{http://dx.doi.org/10.1007/BF02876563}.

\bibitem[Filippas et~al.(2000)Filippas, Herrero, and Vel{\'a}zquez]{FHVslps00}
S.~Filippas, M.~A. Herrero, and J.~J.~L. Vel{\'a}zquez.
\newblock Fast blow-up mechanisms for sign-changing solutions of a semilinear
  parabolic equation with critical nonlinearity.
\newblock \emph{R. Soc. Lond. Proc. Ser. A Math. Phys. Eng. Sci.}, 456\penalty0
  (2004):\penalty0 2957--2982, 2000.
\newblock ISSN 1364-5021.
\newblock \doi{10.1098/rspa.2000.0648}.
\newblock URL \url{http://dx.doi.org/10.1098/rspa.2000.0648}.

\bibitem[Germain and Rupflin(2011)]{GRihp08}
P.~Germain and M.~Rupflin.
\newblock Selfsimilar expanders of the harmonic map flow.
\newblock \emph{Ann. Inst. H. Poincar\'e Anal. Non Lin\'eaire}, 28\penalty0
  (5):\penalty0 743--773, 2011.
\newblock ISSN 0294-1449.
\newblock \doi{10.1016/j.anihpc.2011.06.004}.
\newblock URL \url{http://dx.doi.org/10.1016/j.anihpc.2011.06.004}.

\bibitem[Germain et~al.(2016)Germain, Ghoul, and Miura]{GGMar16}
P.~Germain, T.~Ghoul, and H.~Miura.
\newblock On uniqueness for the harmonic map heat flow in supercritical
  dimensions.
\newblock \emph{arXiv:1601.06601}, 2016.

\bibitem[Ghoul and Masmoudi(2016)]{GMarx16}
T.~Ghoul and N.~Masmoudi.
\newblock Stability of infinite time blow up for the {P}atlak {K}eller {S}egel
  system.
\newblock \emph{arXiv:1610.00456}, 2016.

\bibitem[Ghoul et~al.(2016)Ghoul, Nguyen, and Zaag]{GNZpre16c}
T.~Ghoul, V.~T. Nguyen, and H.~Zaag.
\newblock Construction and stability of blowup solutions for a non-variational
  parabolic system.
\newblock \emph{arXiv:1610.09883}, 2016.
\newblock URL \url{https://arxiv.org/abs/1610.09883}.

\bibitem[Ghoul et~al.(2017)Ghoul, Nguyen, and Zaag]{GNZpre16a}
T.~Ghoul, V.~T. Nguyen, and H.~Zaag.
\newblock Blowup solutions for the exponential reaction-diffusion equation
  involving a critical power nonlinear gradient term.
\newblock \emph{J. Differential Equation, to appear}, 2017.
\newblock URL \url{https://arxiv.org/abs/1611.02474}.

\bibitem[Giga and Kohn(1987)]{GKiumj87}
Y.~Giga and R.~V. Kohn.
\newblock Characterizing blowup using similarity variables.
\newblock \emph{Indiana Univ. Math. J.}, 36\penalty0 (1):\penalty0 1--40, 1987.
\newblock ISSN 0022-2518.
\newblock \doi{10.1512/iumj.1987.36.36001}.
\newblock URL \url{http://dx.doi.org/10.1512/iumj.1987.36.36001}.

\bibitem[Giga et~al.(2004)Giga, Matsui, and Sasayama]{GMSiumj04}
Y.~Giga, S.~Matsui, and S.~Sasayama.
\newblock Blow up rate for semilinear heat equations with subcritical
  nonlinearity.
\newblock \emph{Indiana Univ. Math. J.}, 53\penalty0 (2):\penalty0 483--514,
  2004.
\newblock ISSN 0022-2518.
\newblock \doi{10.1512/iumj.2004.53.2401}.
\newblock URL \url{http://dx.doi.org/10.1512/iumj.2004.53.2401}.

\bibitem[Herrero and Vel{\'a}zquez(1994)]{HVcras94}
Miguel~A. Herrero and Juan J.~L. Vel{\'a}zquez.
\newblock Explosion de solutions d'\'equations paraboliques semilin\'eaires
  supercritiques.
\newblock \emph{C. R. Acad. Sci. Paris S\'er. I Math.}, 319\penalty0
  (2):\penalty0 141--145, 1994.
\newblock ISSN 0764-4442.

\bibitem[Hillairet and Rapha{\"e}l(2012)]{HRapde12}
M.~Hillairet and P.~Rapha{\"e}l.
\newblock Smooth type {II} blow-up solutions to the four-dimensional
  energy-critical wave equation.
\newblock \emph{Anal. PDE}, 5\penalty0 (4):\penalty0 777--829, 2012.
\newblock ISSN 2157-5045.
\newblock \doi{10.2140/apde.2012.5.777}.
\newblock URL \url{http://dx.doi.org/10.2140/apde.2012.5.777}.

\bibitem[Jost(1981)]{Jmm81}
J.~Jost.
\newblock Ein {E}xistenzbeweis f\"ur harmonische {A}bbildungen, die ein
  {D}irichletproblem l\"osen, mittels der {M}ethode des {W}\"armeflusses.
\newblock \emph{Manuscripta Math.}, 34\penalty0 (1):\penalty0 17--25, 1981.
\newblock ISSN 0025-2611.
\newblock \doi{10.1007/BF01168706}.
\newblock URL \url{http://dx.doi.org/10.1007/BF01168706}.

\bibitem[Koch and Lamm(2012)]{KLajm12}
H.~Koch and T.~Lamm.
\newblock Geometric flows with rough initial data.
\newblock \emph{Asian J. Math.}, 16\penalty0 (2):\penalty0 209--235, 2012.
\newblock ISSN 1093-6106.
\newblock \doi{10.4310/AJM.2012.v16.n2.a3}.
\newblock URL \url{http://dx.doi.org/10.4310/AJM.2012.v16.n2.a3}.

\bibitem[Lin and Wang(2008)]{LWwsp08}
F.~Lin and C.~Wang.
\newblock \emph{The analysis of harmonic maps and their heat flows}.
\newblock World Scientific Publishing Co. Pte. Ltd., Hackensack, NJ, 2008.
\newblock ISBN 978-981-277-952-6; 981-277-952-3.
\newblock \doi{10.1142/9789812779533}.
\newblock URL \url{http://dx.doi.org/10.1142/9789812779533}.

\bibitem[Martel et~al.(2014)Martel, Merle, and Rapha{\"e}l]{MMRam14}
Y.~Martel, F.~Merle, and P.~Rapha{\"e}l.
\newblock Blow up for the critical generalized {K}orteweg--de {V}ries equation.
  {I}: {D}ynamics near the soliton.
\newblock \emph{Acta Math.}, 212\penalty0 (1):\penalty0 59--140, 2014.
\newblock ISSN 0001-5962.
\newblock \doi{10.1007/s11511-014-0109-2}.
\newblock URL \url{http://dx.doi.org/10.1007/s11511-014-0109-2}.

\bibitem[Martel et~al.(2015{\natexlab{a}})Martel, Merle, and
  Rapha{\"e}l]{MMRasp15}
Y.~Martel, F.~Merle, and P.~Rapha{\"e}l.
\newblock Blow up for the critical g{K}d{V} equation {III}: exotic regimes.
\newblock \emph{Ann. Sc. Norm. Super. Pisa Cl. Sci. (5)}, 14\penalty0
  (2):\penalty0 575--631, 2015{\natexlab{a}}.
\newblock ISSN 0391-173X.

\bibitem[Martel et~al.(2015{\natexlab{b}})Martel, Merle, and
  Rapha{\"e}l]{MMRjems15}
Y.~Martel, F.~Merle, and P.~Rapha{\"e}l.
\newblock Blow up for the critical g{K}d{V} equation. {II}: {M}inimal mass
  dynamics.
\newblock \emph{J. Eur. Math. Soc. (JEMS)}, 17\penalty0 (8):\penalty0
  1855--1925, 2015{\natexlab{b}}.
\newblock ISSN 1435-9855.
\newblock \doi{10.4171/JEMS/547}.
\newblock URL \url{http://dx.doi.org/10.4171/JEMS/547}.

\bibitem[Masmoudi and Zaag(2008)]{MZjfa08}
N.~Masmoudi and H.~Zaag.
\newblock Blow-up profile for the complex {G}inzburg-{L}andau equation.
\newblock \emph{J. Funct. Anal.}, 255\penalty0 (7):\penalty0 1613--1666, 2008.
\newblock ISSN 0022-1236.
\newblock \doi{10.1016/j.jfa.2008.03.008}.
\newblock URL \url{http://dx.doi.org/10.1016/j.jfa.2008.03.008}.

\bibitem[Matano and Merle(2004)]{MMcpam04}
H.~Matano and F.~Merle.
\newblock On nonexistence of type {II} blowup for a supercritical nonlinear
  heat equation.
\newblock \emph{Comm. Pure Appl. Math.}, 57\penalty0 (11):\penalty0 1494--1541,
  2004.
\newblock ISSN 0010-3640.
\newblock \doi{10.1002/cpa.20044}.
\newblock URL \url{http://dx.doi.org/10.1002/cpa.20044}.

\bibitem[Matano and Merle(2009)]{MMjfa09}
Hiroshi Matano and Frank Merle.
\newblock Classification of type {I} and type {II} behaviors for a
  supercritical nonlinear heat equation.
\newblock \emph{J. Funct. Anal.}, 256\penalty0 (4):\penalty0 992--1064, 2009.
\newblock ISSN 0022-1236.
\newblock \doi{10.1016/j.jfa.2008.05.021}.
\newblock URL \url{http://dx.doi.org/10.1016/j.jfa.2008.05.021}.

\bibitem[Merle and Raphael(2003)]{MRgfa03}
F.~Merle and P.~Raphael.
\newblock Sharp upper bound on the blow-up rate for the critical nonlinear
  {S}chr\"odinger equation.
\newblock \emph{Geom. Funct. Anal.}, 13\penalty0 (3):\penalty0 591--642, 2003.
\newblock ISSN 1016-443X.
\newblock \doi{10.1007/s00039-003-0424-9}.
\newblock URL \url{http://dx.doi.org/10.1007/s00039-003-0424-9}.

\bibitem[Merle and Raphael(2004)]{MRim04}
F.~Merle and P.~Raphael.
\newblock On universality of blow-up profile for {$L^2$} critical nonlinear
  {S}chr\"odinger equation.
\newblock \emph{Invent. Math.}, 156\penalty0 (3):\penalty0 565--672, 2004.
\newblock ISSN 0020-9910.
\newblock \doi{10.1007/s00222-003-0346-z}.
\newblock URL \url{http://dx.doi.org/10.1007/s00222-003-0346-z}.

\bibitem[Merle and Raphael(2005{\natexlab{a}})]{MRam05}
F.~Merle and P.~Raphael.
\newblock The blow-up dynamic and upper bound on the blow-up rate for critical
  nonlinear {S}chr\"odinger equation.
\newblock \emph{Ann. of Math. (2)}, 161\penalty0 (1):\penalty0 157--222,
  2005{\natexlab{a}}.
\newblock ISSN 0003-486X.
\newblock \doi{10.4007/annals.2005.161.157}.
\newblock URL \url{http://dx.doi.org/10.4007/annals.2005.161.157}.

\bibitem[Merle and Raphael(2005{\natexlab{b}})]{MRcmp05}
F.~Merle and P.~Raphael.
\newblock Profiles and quantization of the blow up mass for critical nonlinear
  {S}chr\"odinger equation.
\newblock \emph{Comm. Math. Phys.}, 253\penalty0 (3):\penalty0 675--704,
  2005{\natexlab{b}}.
\newblock ISSN 0010-3616.
\newblock \doi{10.1007/s00220-004-1198-0}.
\newblock URL \url{http://dx.doi.org/10.1007/s00220-004-1198-0}.

\bibitem[Merle and Zaag(1997)]{MZdm97}
F.~Merle and H.~Zaag.
\newblock Stability of the blow-up profile for equations of the type
  {$u_t=\Delta u+\vert u\vert ^{p-1}u$}.
\newblock \emph{Duke Math. J.}, 86\penalty0 (1):\penalty0 143--195, 1997.
\newblock ISSN 0012-7094.
\newblock \doi{10.1215/S0012-7094-97-08605-1}.
\newblock URL \url{http://dx.doi.org/10.1215/S0012-7094-97-08605-1}.

\bibitem[Merle et~al.(2011)Merle, Rapha{\"e}l, and Rodnianski]{MRRmasp11}
F.~Merle, P.~Rapha{\"e}l, and I.~Rodnianski.
\newblock Blow up dynamics for smooth equivariant solutions to the energy
  critical {S}chr\"odinger map.
\newblock \emph{C. R. Math. Acad. Sci. Paris}, 349\penalty0 (5-6):\penalty0
  279--283, 2011.
\newblock ISSN 1631-073X.
\newblock \doi{10.1016/j.crma.2011.01.026}.
\newblock URL \url{http://dx.doi.org/10.1016/j.crma.2011.01.026}.

\bibitem[Merle et~al.(2013)Merle, Rapha{\"e}l, and Rodnianski]{MRRim13}
F.~Merle, P.~Rapha{\"e}l, and I.~Rodnianski.
\newblock Blowup dynamics for smooth data equivariant solutions to the critical
  {S}chr\"odinger map problem.
\newblock \emph{Invent. Math.}, 193\penalty0 (2):\penalty0 249--365, 2013.
\newblock ISSN 0020-9910.
\newblock \doi{10.1007/s00222-012-0427-y}.
\newblock URL \url{http://dx.doi.org/10.1007/s00222-012-0427-y}.

\bibitem[Merle et~al.(2015)Merle, Rapha{\"e}l, and Rodnianski]{MRRcjm15}
F.~Merle, P.~Rapha{\"e}l, and I.~Rodnianski.
\newblock Type {II} blow up for the energy supercritical {NLS}.
\newblock \emph{Camb. Jour. Math.}, 3\penalty0 (4):\penalty0 439 -- 617, 2015.
\newblock \doi{10.4310/CJM.2015.v3.n4.a1}.
\newblock URL \url{http://dx.doi.org/10.4310/CJM.2015.v3.n4.a1}.

\bibitem[Mizoguchi(2004)]{Made04}
N.~Mizoguchi.
\newblock Type-{II} blowup for a semilinear heat equation.
\newblock \emph{Adv. Differential Equations}, 9\penalty0 (11-12):\penalty0
  1279--1316, 2004.
\newblock ISSN 1079-9389.

\bibitem[Mizoguchi(2007)]{Mma07}
N.~Mizoguchi.
\newblock Rate of type {II} blowup for a semilinear heat equation.
\newblock \emph{Math. Ann.}, 339\penalty0 (4):\penalty0 839--877, 2007.
\newblock ISSN 0025-5831.
\newblock \doi{10.1007/s00208-007-0133-z}.
\newblock URL \url{http://dx.doi.org/10.1007/s00208-007-0133-z}.

\bibitem[Nguyen and Zaag(2016{\natexlab{a}})]{NZens16}
V.~T. Nguyen and H.~Zaag.
\newblock Finite degrees of freedom for the refined blow-up profile for a
  semilinear heat equation.
\newblock \emph{Ann. Scient. {\'E}c. Norm. Sup. to appear}, 2016{\natexlab{a}}.
\newblock URL \url{https://arxiv.org/abs/1509.03520}.

\bibitem[Nguyen and Zaag(2016{\natexlab{b}})]{NZsns16}
V.~T. Nguyen and H.~Zaag.
\newblock Construction of a stable blow-up solution for a class of strongly
  perturbed semilinear heat equations.
\newblock \emph{Ann. Scuola Norm. Sup. Pisa Cl. Sci.}, 16\penalty0
  (4):\penalty0 1275--1314, 2016{\natexlab{b}}.
\newblock \doi{10.2422/2036-2145.201412_001}.
\newblock URL \url{http://dx.doi.org/10.2422/2036-2145.201412_001}.

\bibitem[Nouaili and Zaag(2015)]{NZcpde15}
N.~Nouaili and H.~Zaag.
\newblock Profile for a simultaneously blowing up solution to a complex valued
  semilinear heat equation.
\newblock \emph{Comm. Partial Differential Equations}, 40\penalty0
  (7):\penalty0 1197--1217, 2015.
\newblock ISSN 0360-5302.
\newblock \doi{10.1080/03605302.2015.1018997}.
\newblock URL \url{http://dx.doi.org/10.1080/03605302.2015.1018997}.

\bibitem[Rapha{\"e}l and Rodnianski(2012)]{RRmihes12}
P.~Rapha{\"e}l and I.~Rodnianski.
\newblock Stable blow up dynamics for the critical co-rotational wave maps and
  equivariant {Y}ang-{M}ills problems.
\newblock \emph{Publ. Math. Inst. Hautes \'Etudes Sci.}, pages 1--122, 2012.
\newblock ISSN 0073-8301.
\newblock \doi{10.1007/s10240-011-0037-z}.
\newblock URL \url{http://dx.doi.org/10.1007/s10240-011-0037-z}.

\bibitem[Rapha{\"e}l and Schweyer(2013)]{RScpam13}
P.~Rapha{\"e}l and R.~Schweyer.
\newblock Stable blowup dynamics for the 1-corotational energy critical
  harmonic heat flow.
\newblock \emph{Comm. Pure Appl. Math.}, 66\penalty0 (3):\penalty0 414--480,
  2013.
\newblock ISSN 0010-3640.
\newblock \doi{10.1002/cpa.21435}.
\newblock URL \url{http://dx.doi.org/10.1002/cpa.21435}.

\bibitem[Rapha{\"e}l and Schweyer(2014{\natexlab{a}})]{RSapde2014}
P.~Rapha{\"e}l and R.~Schweyer.
\newblock Quantized slow blow-up dynamics for the corotational energy-critical
  harmonic heat flow.
\newblock \emph{Anal. PDE}, 7\penalty0 (8):\penalty0 1713--1805,
  2014{\natexlab{a}}.
\newblock ISSN 2157-5045.
\newblock \doi{10.2140/apde.2014.7.1713}.
\newblock URL \url{http://dx.doi.org/10.2140/apde.2014.7.1713}.

\bibitem[Rapha{\"e}l and Schweyer(2014{\natexlab{b}})]{RSma14}
P.~Rapha{\"e}l and R.~Schweyer.
\newblock On the stability of critical chemotactic aggregation.
\newblock \emph{Math. Ann.}, 359\penalty0 (1-2):\penalty0 267--377,
  2014{\natexlab{b}}.
\newblock ISSN 0025-5831.
\newblock \doi{10.1007/s00208-013-1002-6}.
\newblock URL \url{http://dx.doi.org/10.1007/s00208-013-1002-6}.

\bibitem[Schweyer(2012{\natexlab{a}})]{Schfa12}
R.~Schweyer.
\newblock Type {II} blow-up for the four dimensional energy critical semi
  linear heat equation.
\newblock \emph{J. Funct. Anal.}, 263\penalty0 (12):\penalty0 3922--3983,
  2012{\natexlab{a}}.
\newblock ISSN 0022-1236.
\newblock \doi{10.1016/j.jfa.2012.09.015}.
\newblock URL \url{http://dx.doi.org/10.1016/j.jfa.2012.09.015}.

\bibitem[Schweyer(2012{\natexlab{b}})]{Sjfa12}
R.~Schweyer.
\newblock Type {II} blow-up for the four dimensional energy critical semi
  linear heat equation.
\newblock \emph{J. Funct. Anal.}, 263\penalty0 (12):\penalty0 3922--3983,
  2012{\natexlab{b}}.
\newblock ISSN 0022-1236.
\newblock \doi{10.1016/j.jfa.2012.09.015}.
\newblock URL \url{http://dx.doi.org/10.1016/j.jfa.2012.09.015}.

\bibitem[Struwe(1988)]{Sjdg88}
M.~Struwe.
\newblock On the evolution of harmonic maps in higher dimensions.
\newblock \emph{J. Differential Geom.}, 28\penalty0 (3):\penalty0 485--502,
  1988.
\newblock ISSN 0022-040X.
\newblock URL \url{http://projecteuclid.org/euclid.jdg/1214442475}.

\bibitem[Struwe(1996)]{Sams96}
M.~Struwe.
\newblock Geometric evolution problems.
\newblock In \emph{Nonlinear partial differential equations in differential
  geometry ({P}ark {C}ity, {UT}, 1992)}, volume~2 of \emph{IAS/Park City Math.
  Ser.}, pages 257--339. Amer. Math. Soc., Providence, RI, 1996.

\bibitem[van~den Berg et~al.(2003)van~den Berg, Hulshof, and King]{VHKsiam03}
J.~B. van~den Berg, J.~Hulshof, and J.~R. King.
\newblock Formal asymptotics of bubbling in the harmonic map heat flow.
\newblock \emph{SIAM J. Appl. Math.}, 63\penalty0 (5):\penalty0 1682--1717
  (electronic), 2003.
\newblock ISSN 0036-1399.
\newblock \doi{10.1137/S0036139902408874}.
\newblock URL \url{http://dx.doi.org/10.1137/S0036139902408874}.

\bibitem[Wang(2011)]{Warma11}
C.~Wang.
\newblock Well-posedness for the heat flow of harmonic maps and the liquid
  crystal flow with rough initial data.
\newblock \emph{Arch. Ration. Mech. Anal.}, 200\penalty0 (1):\penalty0 1--19,
  2011.
\newblock ISSN 0003-9527.
\newblock \doi{10.1007/s00205-010-0343-5}.
\newblock URL \url{http://dx.doi.org/10.1007/s00205-010-0343-5}.

\bibitem[Zaag(1998)]{ZAAihn98}
H.~Zaag.
\newblock Blow-up results for vector-valued nonlinear heat equations with no
  gradient structure.
\newblock \emph{Ann. Inst. H. Poincar{\'e} Anal. Non Lin{\'e}aire}, 15\penalty0
  (5):\penalty0 581--622, 1998.
\newblock ISSN 0294-1449.
\newblock \doi{10.1016/S0294-1449(98)80002-4}.
\newblock URL \url{http://dx.doi.org/10.1016/S0294-1449(98)80002-4}.

\end{thebibliography}

\def\cprime{$'$}

\end{document}